\documentclass[11pt]{article}
\usepackage{latexsym}
\usepackage{amssymb}
\usepackage{graphicx}
\usepackage{graphicx,epstopdf}
\usepackage{amsmath}
\usepackage{epsfig}
\usepackage{multicol}
\usepackage{mathrsfs}
\usepackage{amsthm}
\usepackage{dsfont}
\usepackage{xcolor}
\usepackage{titlesec}
\usepackage{lipsum}
\usepackage{relsize}
 
\newtheorem{theorem}{Theorem}[section]
\newtheorem{lemma}[theorem]{\bf{Lemma}}
\newtheorem{prop}[theorem]{\bf{Proposition}}
\newtheorem{coro}[theorem]{\bf{Corollary}}
\newtheorem{defn}[theorem]{\bf{Definition}}
\newtheorem{remark}[theorem]{\bf{Remark}}

\newenvironment{pf}{{\noindent \bf Proof.\/}}{\hfill$\blacksquare$}

\newtheorem*{theorem*}{Theorem}

\begin{document}

\title{ The Dimension of Integral Self-Affine Sets via Fractal Perturbations: The Box  and the Hausdorff Dimensions,  Ergodic Measures
}

\author{Ibrahim Kirat \\
\footnotesize{\emph{Department of Mathematics, Istanbul Technical University, 34469 Maslak, Istanbul, Turkey}} \\
\emph{\footnotesize{E-mail: kirat@itu.edu.tr; ibkst@yahoo.com}} }

\date { }
\maketitle

\begin{abstract} Note by the author: 
Section 9.3 is added from the more general unpublished manuscript ``A Perturbation Method Leading to Full-Dimension Ergodic Measures on Integral Self-Affine Sets'', (2021) by I. Kırat.  

Original abstract:
An integral self-affine set $F=F(T,A)\subseteq \mathbb{R}^n$ is a self-affine set
which is generated by an $n\times n$  integer expanding matrix $T$ (not necessarily a similitude) and a finite set $A\subset \mathbb{Z}^n$ of
integer vectors so that $F=T^{-1}(F+A)$. The dimension problem of $F$ has not yet been settled fully.
For that, we introduce a fractal perturbation
 method with respect to $T,A$ and  get the dimension as the limit of the dimensions of a sequence of better-behaved perturbed fractals, for which a dimension formula already exists. An unexpected feature of this technique is that  the overlap structures of $F$ and its perturbations are eventually the same (i.e. the neighbor graphs are isomorphic), which is unlike some known  perturbations.  
 Our method has been developed especially for the problematic case of irreducible characteristic polynomial of $T$.
Also, we do not impose any separation condition on $F$ (like the open set condition) or any further restriction (such as size, etc.) on $T$ or $A$. 

As a by-product of the perturbation method, we prove the existence of the box dimension of $F$ too. 
Further, we
consider $F$ as a $T$-invariant subset of the n-torus (i.e, we consider $F \ \rm{mod  \ 1}$), and we rather use the perturbation method to show that there is an ergodic $T$-invariant Borel
probability measure on $F \ \rm{mod  \ 1}$ of full dimension. In contrast to some known results, this is not an almost-sure result.
\end{abstract}

\pagebreak
\titleformat{\chapter}[display]{\Large\bfseries\centering}%
    {\chaptername~\thechapter}{1ex}{}[\titlerule]
 
\begin{center}
\line(1,0){480}  
\end{center}
{
{\tableofcontents}}
\begin{center}
\line(1,0){480}
\end{center}

\pagebreak

\footnote{ 2020 \emph{Mathematics Subject Classification}:  Primary   11K55, 28A80, 28A78, 37A05, 37D05.}
\footnote{  \emph{Keywords} : Iterated function system, Integral self-affine sets and tiles, Fractal perturbation, Hausdorff dimension. }


\section{Introduction}\label{INTRO}

\ \ \ \ \ Let $M_n({\Bbb R})$ denote the set of $n\times n$ matrices
with real entries and $M_n({\Bbb Z})$ the set of $n\times n$ integer matrices. A matrix
$T\in M_n({\Bbb R})$ is called  {\it
expanding } (or {\it expansive }) if all its eigenvalues have moduli
$>1$. For an expanding matrix $T\in M_n({\Bbb R})$ and any set $ A:= \{a_1, ..., a_q\}
\subset {\Bbb R}^n$, called a \emph{digit set}, 
there exists a  unique nonempty compact set $F=F(T,A)$  satisfying
\begin{equation}\label{eqn00}
F = \bigcup _{j =1}^q T^{-1}(F + a_j).
\end{equation}

$F$ is called a {\it self-affine set (fractal) or a self-affine attractor}, and
can be viewed as the \emph{invariant set} or the \emph{attractor} of the (affine)
{\it iterated function system} (IFS)
\begin{equation}\label{IFS_maps}
 \{ \ f_j(x)={T}^{-1}(x+a_j) \ \}_{j=1}^q.
 \end{equation}

We sometimes write $F(T, A)$ for $F$ to
stress the dependence on $T$ and $A$. 
Further, if
$A\subset {\Bbb{Z}^n}$ and $T\in M_n({\Bbb{Z}})$,
then $F$ is called an \emph{integral self-affine set}.
Here, we shall study some dimension problems on such sets. 
If, additionally, 
$|\det(T)|=q$
and the integral self-affine set $F$ has positive Lebesgue measure, then $F$
is called an \emph{integral self-affine tile}.

Tiles are used to deal with the overlapping pieces of $F$ through graph directed sets (see \cite{K1}).

Let ${dim}_H F$ denote the Hausdorff dimension of $F$, ${\underline{dim}}_BF$  the lower box dimension of $F$ and ${\overline{dim}}_B F$  the upper box dimension of $F$ (see \cite{F1}, \cite{F2}
for a discussion of these dimensions).
If ${\underline{dim}}_B F={\overline{dim}}_B F$, this common value is the box dimension  (or the entropy dimension) of $F$ and denoted by ${dim}_B F$. 
 The IFS
$\{f_j\}_{j=1}^q$ is said to satisfy
the \emph{open set condition} (OSC) if there exists a bounded nonempty
open set $U$ such that $\bigcup_{j=1}^q f_j (U)\subset U$ with the union
disjoint \cite{Mo}. The $f_j$ are called \emph{similarity transformations} if
$$|f_j(x)-f_j(y)| = c_j|x-y| \ \ \ (1\leq j\leq q),$$ where $0<c_j<1$ and $|\cdot |$ stands for the Euclidean norm. In such a case, $F$ is called a {\it self-similar set}.
When the $f_j$ are similarity transformations and $\{f_j\}_{j=1}^q$  satisfies the OSC, it is known \cite{F2} that ${dim}_H F={dim}_B F=s$, where  $s$ is the unique number satisfying
$$\sum_1^q c_j^s=1.$$

As for the non-self-similar case, an almost-sure formula for the Hausdorff dimension has been given by  Falconer \cite{F3}. However, more precise
 dimension formulas have been given only in special cases.
 There are some partial results for $F$ in the plane \cite{HLR}, \cite{M}.  In the non-self-similar case, the study of exact fractal dimensions of $F$ in higher
dimensional Euclidean spaces is still very limited. For instance,  B$\acute{\mathrm{a}}$r$\acute{\mathrm{a}}$ny, Hochman and Rapaport \cite{BHR}, and Hochman and Rapaport \cite{HR} have  recent work on the dimension of planar self-affine sets and measures under certain separation conditions
(strong open set condition and exponential separation) and strong irreducibility of the matrices; however it is not applicable in case of integral self-affine sets.  Another aspect of integral self-affine sets is that many exceptional self-affine sets 
(i.e., the Hausdorff dimension is less than the singular value dimension, see \cite{F3},  \cite{M},  \cite{KK2}, \cite{KP1}) fall into this category. Here we neither impose separation conditions nor restrict our attention to planar self-affine sets. But we rather concentrate on integral self-affine sets and consider the following problems.

\bigskip

\textit{Problem 1.} How can one give a formula for $dim_H F$ or estimate $dim_H F$ ?

\bigskip
\textit{Problem 2.} Does $dim_B F$ exist ?
\bigskip

\bigskip
The Hausdorff \textit{dimension of a probability measure} $\mu$ on a metric space $X$
is defined by
$$dim(\mu) = \inf \ \{ dim_H B \ : \ B \ is \ a \ Borel \ set \ with  \ \mu(B) = 1 \}.$$
If $dim(\mu) = dim_H X$, we say that $\mu$ is a \textit{measure on $X$ of full dimension}. Let $T\in M_n({\Bbb Z})$ be expanding and $A\subset {\Bbb{Z}^n}$.
Then $F(T,A) \  \rm    {mod  \ 1}$ is a $T$-invariant subset of the n-torus $\mathbb{T}^n$, i.e., $T(F(T,A) \  \rm    {mod  \ 1})\subseteq F(T,A) \  \rm    {mod  \ 1}$.

In this paper, we also consider the following existence problem, which has not been solved yet and is also the  homogeneous version of an open problem mentioned by Dekking \cite{De}, see also Question 5.1 in \cite{PS}. The linear and non-homogeneous  (i.e., some of the IFS functions defining $F$ have different linear parts) version of  Question 5.1 has recently been answered negatively by Das and
Simmons \cite{DS}.

\bigskip

\textit{Problem 3.} For an arbitrary expanding integer matrix $T\in M_n({\Bbb Z})$  and $A\subset {\Bbb{Z}^n}$, 
can we construct an ergodic $T$-invariant Borel probability measure on $F(T,A)\  \rm    {mod  \ 1}$ (or on a compact $T$-invariant set $K\subset \mathbb{T}^n$) of full dimension ?

\bigskip

Unlike the non-homogeneous version, we answer this question positively in certain cases. We expect our positive answer to be as useful as the negative answer in the non-homogeneous case. This problem was studied by McMullen \cite{M}, and Kenyon and Peres \cite{KP1} in the special case where $T$ is a direct sum of
certain integer matrices and for specific $A$. 
Here we don't have such restrictions on $T$ or $A$, but we make use of their results.

\bigskip

Our purpose here is to use a fractal perturbation process for the study of dimensions of $F\subset \mathbb{R}^n.$ Therefore, this paper may be considered as a sequel to
\cite{K1}, in which only Problems 1-2 were considered for planar integral self-affine sets. It is also a version of \cite{K21}. 
Our principal task  is to deal with the pathological case in which the characteristic polynomial of
$T$ does not have a factorization into linear factors over $\Bbb{Z}$. 
Before stating our results, we need to define the perturbed fractals (or perturbations of $F$).

 \medskip

Let $J$ be the real Jordan normal form of $T$,
where any diagonal block
$
{\tiny \begin{bmatrix}
   \lambda & 0 \\
   0 & \bar{\lambda}
\end{bmatrix}}
$ of $J$
corresponding to non-real complex eigenvalues  $\lambda=a\pm ib$ (if any)  is replaced by
$
{\tiny \begin{bmatrix}
   a & -b \\
   b & a
\end{bmatrix}} \ \ \textrm{or} \ \ {\tiny \begin{bmatrix}
   a & b \\
   -b & a
\end{bmatrix}}.
$

For convenience in the sequel, we use the notation $J_k:=J^k$, where $k\in \mathbb{N}$; but $T^k$ (a power of $T$) and $T_k$ (a sequence), which will be defined below, are  different matrices although they are related. Then  $P^{-1}T^kP=J_k$ for some real invertible matrix $P$. Here we choose  $P$  so that \underline{the distance between two distinct lattice points in $P^{-1}\mathbb{Z}^n$ is greater than $n$} (This is similar to the property of the standard integer lattice $\mathbb{Z}^n$ that the distance between two distinct lattice points in  $\mathbb{Z}^n$ is $\geq 1$). That property of $P^{-1}\mathbb{Z}^n$ can be achieved by multiplying $P$ by a constant if necessary.

Our basic idea is to reduce the study of $dim_H F(J,P^{-1}A)$ to integral self-affine sets. 
Let
\begin{equation}\label{IFS_digits}A_{k}=\sum_{i=1}^{k}T^{i-1}A=\{ a_{\textbf{j}}=\Sigma_{i=1}^{k}T^{(k-i)}a_{j_i} \  : a_{j_i}\in A \},
\end{equation} where
 $\textbf{j}=j_1j_2...j_k, \ \ \ 1\leq j_i \leq q$. Then $J_1=J, \ \ A_1=A$ \ and $$ \tilde{F}:=P^{-1}F=F(J,P^{-1}A)= F(J_k,P^{-1}A_{k})$$ for each $k$.
 For simplicity, we also write
$$\tilde{A}:=P^{-1}A,  \quad  \quad  \tilde{A}_{k}:=P^{-1}A_{k} \quad  \quad  \quad  \mathrm{ and}  \quad  \quad \quad  \tilde{a}_{\textbf{j}}:=P^{-1}a_{\mathbf{j}}$$ for $a_{\mathbf{j}}\in A_{k}.$ Thus $\tilde{A}_{k}\subset P^{-1}\mathbb{Z}^n$ and  the distance between two distinct  digits of $\tilde{A}_{k}$ is greater than $n$. We use 
this feature in the proof of Lemma \ref{neighboring_pieces2}. 
 Let $\lceil \cdot \rceil$ and $\lfloor \cdot \rfloor$ denote
the ceiling and the floor functions respectively. Corresponding to each $\tilde{a}_{\mathbf{j}}\in \tilde{A}_k$, define the following column vectors

\begin{equation}\label{floor_digit_lower_perturbation}
({d_{\mathbf{j}}})_{l1}:=\bigg \{
\begin{array}{c}
\ \ \ \lfloor \ (\tilde{a}_{\mathbf{j}})_{l1} \ \rfloor \quad \ \ \ \ \ \ \ \ \ \   \mbox{if $(\tilde{a}_{\mathbf{j}})_{l1}\geq 0$,} \\
-\lfloor \ |(\tilde{a}_{\mathbf{j}})_{l1}| \ \rfloor \quad \ \ \ \ \ \ \ \ \  \mbox{if $(\tilde{a}_{\mathbf{j}})_{l1}< 0$,}
\end{array}
\end{equation}

where
$l=1,2..,n$.
Then $$D_{k}=\{ d_{\mathbf{j}} : \ \mathbf{j}=(j_1,j_2,...,j_k), \ \ \ 1\leq j_i \leq q \}\subset \mathbb{Z}^n$$

while $\tilde{A}_{k}$ may not be a subset of  $\mathbb{Z}^n$.
By a translation of the digits in $\tilde{A}_{k}$, we may always assume that 
 $\tilde{A}_{k}$ or $D_{k}$ consists of non-negative vectors and contains  the zero vector \cite{KL1}. Next, we define a sequence
 of integer matrices $T_{k}$ by the $ij$-th entries:

\begin{equation}\label{pert_matrix} (T_{k})_{ij}:=\bigg \{ \begin{array}{c}
 \ \ \ \lceil (J_k)_{ij}  \rceil \quad \ \ \ \ \ \   \mbox{if $(J_k)_{ij}\geq 0$,}  \\
   -\lceil |(J_k)_{ij}|  \rceil \quad \ \ \ \ \     \mbox{if $(J_k)_{ij}< 0$,}
\end{array}
\end{equation}

Then, define the corresponding ``dynamically perturbed fractals'' or ``dynamical perturbations of $\tilde{F}$'' by
\begin{equation}\label{perturbed_fractals}
F_{k}=F(T_{k},D_{k})
\end{equation}
which are integral self-affine sets.
Finally, we write
\begin{equation}\label{perturbation_dimensions}
\delta_{k}=dim_H F_{k} \
\end{equation}
for the ``perturbation dimensions'', cf. \cite{K1}.

We write  $|\mathbf{j}|$ for the length of the multi-index $\mathbf{j}$. For those who want to trace the iterations, we also mention that the $k$-compositions $$\{ \ f_{\mathbf{j}}(x):=f_{j_1}\circ f_{j_2}\circ \cdots \circ f_{j_k}= J_k^{-1}(x+\tilde{a}_{\mathbf{j}}) \ \}_{|\mathbf{j}|=k}$$ form the IFS for $\tilde{F}= F(J_k,\tilde{A}_{k})$ and
$$\{ T_k^{-1}(x+d_{\mathbf{j}}) \ \}_{|\mathbf{j}|=k}$$ is the IFS for the perturbation $F_k:=F(T_k,D_k)$ 
 for each $k$. So the order of the indices in a multi-index $\mathbf{j}$ is the same as the order of the function indices in the above composition.

The dynamical perturbations $F_k$ defined for each of the data ($T^k,\ A_k$), $k\in \mathbb{N}$ generating the single integral self-affine set $F=F(T,A)$ should not be confused with   the classical perturbations on the fixed data ($T,A$), i.e. $k=1$, the first level in our study. In the classical perturbation case, it is known that the dimension is discontinuous in the digits of $A$ \cite{M}. The same results are expected to hold for our perturbations. But the situation is not the same. As we shall see, the justification of that requires considerable study. The dynamical perturbations that we propose have some special features:

\bigskip

\begin{itemize}
  \item[(P1)] Although we perturb both $T^k$ and $A_k$ at all levels, they give important (uniform) convergence results (see Proposition 2.8, Remark 2.9
  ) to be used in the proof of the main theorem below.

  \item[(P2)] Roughly speaking, each perturbation at level $k$ or higher levels rectifies the deflection of $F$ or $dim(F)$ resulting from previous levels. That is not the case for the classical perturbations since one has only rectifications for the first level.
  \item[(P3)] Perturbation dimensions can be computed by known methods (\cite {K21}, see earlier papers \cite{HLR}, \cite{KP} for $n=2$).

  \item[(P4)] For a subsequence of the dynamical perturbations $F_k$, or say for all large $k$,  the neighbor graph of $F_k$ is not completely independent of the neighbor graph of the original fractal $F$. Even the overlap structure is preserved in some sense (see Corollary 4.7). That is why, the overlaps  do not affect the proofs negatively.
   \item[(P5)] Our approach is not only based on the study of multiple symbolic codings of points of the fractals, it rather relies on the explorations of relations of the subsets with the same  coding or the same level.          
\end{itemize}

\noindent In this paper, we give three applications of the  dynamical perturbation method. Among other things, we prove the following main results.

\begin{theorem}\label{Hausdorff}
Let $F\subset \mathbb{R}^n$ be an integral self-affine set and $\delta_{k}=dim_H F_{k}$. Then

$$ dim_H F=\underset{k\rightarrow \infty}{\lim} \delta_{k}.$$

Further, there is a positive integer $k_0$ such that  $\delta_{k}  \leq dim_H F$  for every $k\geq k_0$. 
\end{theorem}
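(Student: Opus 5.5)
The plan is to compare $\tilde F=F(J_k,\tilde A_k)$ with $F_k=F(T_k,D_k)$ level by level through their neighbor graphs. Since $P$ is invertible, $\dim_H F=\dim_H\tilde F$, so it suffices to work with $\tilde F$.

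\emph{Step 1: uniform closeness of the perturbed IFS.} By construction, $\|T_k-J_k\|_\infty\le 1$ entrywise and $\|d_{\mathbf j}-\tilde a_{\mathbf j}\|_\infty<1$. Because $J_k$ is in real Jordan form with expanding blocks, I will write $T_k=J_k(I+E_k)$ with $\|E_k\|\le C\,\|J_k^{-1}\|$. The quantity $\|J_k^{-1}\|$ decays like $\rho^{-k}k^{n}$, where $\rho>1$ is the minimal modulus of the eigenvalues. Consequently the contraction ratios and singular values of $T_k^{-m}$ and $J_k^{-m}=J^{-km}$ agree up to a factor $e^{o(km)}$ uniformly in $m$. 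Similarly, the partial sums $\sum_i T_k^{-i}d_{\mathbf j_i}$ and $\sum_i J_k^{-i}\tilde a_{\mathbf j_i}$ differ by a uniformly bounded amount. This is the content I expect from the convergence results (P1) (Proposition 2.8, Remark 2.9), which I would invoke directly.

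\emph{Step 2: identical overlap structure.} Using the separation property that distinct points of $P^{-1}\mathbb Z^n$ are more than $n$ apart, together with rounding errors smaller than $1$ per coordinate, I will show the following. For $k\ge k_0$, two cylinders $f_{\mathbf j}(\tilde F)$ and $f_{\mathbf j'}(\tilde F)$ of the same level are neighbors (resp.\ coincide) if and only if the corresponding cylinders of $F_k$ are neighbors (resp.\ coincide). In other words, the neighbor graphs are isomorphic, as in (P4) and Corollary 4.7. This gives a bijection between equivalence classes of words at each level. As a result, the counting functions $N_m$ (the number of distinct level-$m$ cylinders, or of classes in the graph-directed decomposition) are the same for $\tilde F$ and $F_k$.

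\emph{Step 3: dimension comparison.} Both $\delta_k$ and $\dim_H\tilde F$ are given by the same combinatorial data: the counts $N_m$ from Step 2 and the singular values of $J_k^{m}$, respectively $T_k^m$. Here I would use the known graph-directed, Kenyon--Peres/McMullen-type formula (P3), which expresses the dimension as a pressure or root of the spectral radius of a weighted neighbor-graph matrix. Identical graphs together with singular values within a factor $e^{o(km)}$ then give $|\delta_k-\dim_H F|\to0$. The one-sided bound $\delta_k\le\dim_H F$ needs more care. Rounding in (\ref{pert_matrix}) pushes entries outward in absolute value, so $T_k$ expands at least as much as $J_k$ in the dominant directions. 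I would try to prove $\|T_k^{-m}x\|\le C\|J_k^{-m}x\|$ coordinatewise along the Jordan flag. I would then build from covers of $\tilde F$ at level $m$ covers of $F_k$ with no larger $s$-sums, giving $\mathcal H^s(F_k)=0$ for every $s>\dim_H F$.

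\emph{Main obstacle.} The hardest step is Step 2 for irreducible $T$, where $P$ is irrational and rounding errors accumulate across levels. The danger is that two overlapping cylinders of $\tilde F$ might separate in $F_k$, or the reverse. I would first try to control the accumulated error $\sum_i J_k^{-i}(d-\tilde a)$ by a geometric sum bounded by $C\rho^{-k}$. Combined with the lattice separation greater than $n$, this bound should force the digit-difference sets defining the neighbor graph to be identical once $k\ge k_0$.
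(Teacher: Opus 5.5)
\textbf{Main gap: Step~3.} No known formula computes $\dim_H\tilde F$ from the cylinder counts $N_m$ and the singular values of $J_k^m$. The lack of such a formula for non-integer, non-diagonal $J_k$ is exactly the problem the theorem sets out to get around.

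A ``root of a singular-value-weighted spectral radius'' is the affinity dimension, the number $u$ of Proposition~\ref{thm}. It is only an upper bound, and integral self-affine sets such as McMullen carpets are the standard examples with $\dim_H<u$. The formula of Proposition~\ref{graph-dir_general} is available for $F_k$ only because $T_k$ is an integer (block-)diagonal matrix. Its input is not $N_m$ but counts of distinct \emph{projected} digit strings ($N_p(l)$, $Z^l_r$). For $\tilde F$ the analogue would involve coincidences among the projections $(\tilde a_{\mathbf j})_p$ onto eigen-blocks, which neither a graph isomorphism nor the separation of $P^{-1}\mathbb Z^n$ controls. So Step~3 gives at most convergence of affinity dimensions.

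The paper never uses a formula for $\tilde F$. It compares Hausdorff-type quantities directly: net (tile) measures on mesh cubes, and piece measures on approximate cubes whose levels depend on the Jordan block (Lemma~\ref{number_of_pieces}). Covers are transferred through the coding correspondence, and the polynomial losses $U_r=\alpha_1l^{\alpha_2}$, $c_l=c\,l^{2\alpha_0}$ are absorbed by passing from $\alpha$ to $\alpha(1+\gamma)$ (Lemmas~\ref{piece_measure}, \ref{measure_convergence}). The limit then follows by contradiction on accumulation points of $(\delta_k)$.

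Your one-sided sketch is closest to Proposition~\ref{monotonicity2}. To work, it must cover by approximate cubes of bounded multiplicity (Lemma~\ref{bound}), not by level-$m$ cylinders, which are inefficient covers for non-conformal maps. It also cannot give ``no larger $s$-sums''; you only reach exponent $\alpha(1+\gamma)$ and then let $\gamma\to0$.

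\textbf{Step~2 fails by your route, and as stated it is false.} Your bound $C\rho^{-k}$ lives at the top scale. Whether two level-$m$ cylinders meet is an exact condition at scale about $\rho^{-km}$. Equivalently, the unnormalized translations $\sum_iT_k^{m-i}d_{\mathbf j_i}$ and $\sum_iJ_k^{m-i}\tilde a_{\mathbf j_i}$ differ by rounding errors multiplied by $J_k^{m-i}$. Lattice separation only gives $d_{\mathbf i}=d_{\mathbf j}\Leftrightarrow\tilde a_{\mathbf i}=\tilde a_{\mathbf j}$ within a single $k$-block.

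For a counterexample, take $T=\begin{pmatrix}3&1\\1&2\end{pmatrix}$ and $A=\{(i,0)^t:0\le i\le4\}$.
\begin{itemize}
\item $A$ is a complete residue system mod $T$, so $\tilde F$ is a tile with interior.
\item The eigenvalues are $\lambda_{1,2}=(5\mp\sqrt5)/2$, and $\lambda_p^k\notin\mathbb Q$, since a rational $\lambda_1^k$ would equal its Galois conjugate $\lambda_2^k$.
\item Hence $T_k=\mathrm{diag}(\lceil\lambda_1^k\rceil,\lceil\lambda_2^k\rceil)$ has $|\det T_k|>5^k\ge\#D_k$, so every $F_k$ is Lebesgue-null.
\item If same-level cylinders met in $\tilde F$ exactly when they meet in $F_k$, two codings would represent the same point in $\tilde F$ iff they do in $F_k$.
\item Then $\tilde F$ and $F_k$ would be the same quotient of the code space, hence homeomorphic. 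This contradicts invariance of domain.
\end{itemize}
The same argument shows that the ``same infinite paths'' clause of Corollary~\ref{neighboring_pieces5} fails for this $F$, so citing it does not rescue the step.

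Relatedly, exact overlaps of $\tilde F$ across $k$-blocks are generically destroyed by rounding. These are relations $J_k(\tilde a_{\mathbf i_1}-\tilde a_{\mathbf j_1})=\tilde a_{\mathbf j_2}-\tilde a_{\mathbf i_2}$ with $\tilde a_{\mathbf i_1}\ne\tilde a_{\mathbf j_1}$. So any transfer of covers from $\tilde F$ to $F_k$, including the one you need for $\delta_k\le\dim_H F$, must control how many distinct codings a piece of $\tilde F$ carries.
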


\medskip

Compared to the results in the literature, the theorem is general in two directions: any dimension $n$ and arbitrary integer matrix $T$ and digit set $A$. The applicability of the above theorem comes from the fact that $  \delta_{k}$ can be given by extensions of McMullen's formula \cite{M}. 
Planar  numerical examples for different cases can be found in \cite{K1}.
But accurate calculation of $  \delta_{k}$ may be highly nontrivial and we think that such computation is a topic for computer scientists.
Therefore, the practical performance of Theorem \ref{Hausdorff} will depend on the potential work of numerical analysts or computer scientists on the computation of
$\delta_{k}$. 
Another application of the method is the following.

\begin{theorem}\label{Main_Box}
Let $F\subset \mathbb{R}^n$ be an integral self-affine set. Then the box dimension of $F$ exists and

$$  dim_B F=\underset{k\rightarrow \infty}{\lim} dim_B F_k.
$$
\end{theorem}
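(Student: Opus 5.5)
The plan is to compare covering numbers of $\tilde F=P^{-1}F$ with those of $F_k$ level by level, using the self-affine structure at the two scales set by $J_k$ and $T_k$. Since $P$ is a fixed invertible linear map, $\dim_B F$ exists if and only if $\dim_B\tilde F$ exists, and the two values agree. Let $N_\epsilon(E)$ denote the minimal number of $\epsilon$-cubes needed to cover $E$. For each fixed $k$, $F_k=F(T_k,D_k)$ is an integral self-affine set, so I may use the known results for such sets (the McMullen-type counting cited as (P3), together with the graph-directed description via neighbor graphs). These give that $\dim_B F_k$ exists and equals a quantity $\beta_k$. This quantity is computed from a count of distinct $m$-level pieces, that is, the cardinality of $D_{k,m}=\sum_{i<m}T_k^iD_k$ modulo the neighbor relation, projected along the Jordan/eigen-directions of $T_k$.

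\emph{Step 1.} Show that $\lim_k \beta_k$ exists and equals $\lim_m \frac{1}{m}\log(\text{a covering count of }\tilde F \text{ at level } m)$, suitably normalized. I would use the uniform convergence results quoted in (P1), namely Proposition 2.8 and Remark 2.9. Roughly, these say $T_k^{-m}$ and $J_k^{-m}$ differ, after normalization by the growth rates of the eigenvalues, by a factor tending to $1$ uniformly in $m$. They also say the digit errors $\tilde a_{\mathbf j}-d_{\mathbf j}$ lie in $[0,1)^n$, so the level-$m$ cylinders of $\tilde F$ and of $F_k$ are within a bounded multiple of their own size of each other. It follows that for some constant $C$ independent of $k$ and $m$,
\[
C^{-1}N_{\epsilon}(F_k)\le N_{\epsilon}(\tilde F)\le C\,N_{\epsilon}(F_k),
\]
where $\epsilon$ ranges over the scales $\|J_k^{-m}\|$ up to a factor $1+o(1)$ as $k\to\infty$.

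\emph{Step 2.} Pass from these discrete scales to all $\epsilon$, using monotonicity of $N_\epsilon$ and the fact that consecutive scales $\|J_k^{-m}\|$ and $\|J_k^{-(m+1)}\|$ differ by a bounded factor. This gives
\[
\beta_k-\eta_k\le \underline{\dim}_B\tilde F\le\overline{\dim}_B\tilde F\le \beta_k+\eta_k \qquad\text{with } \eta_k\to0 .
\]
Letting $k\to\infty$ yields equality of the upper and lower box dimensions, and hence $\dim_B F=\lim_k\dim_B F_k$.

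\emph{Main obstacle.} The hard step is Step 1 in the non-conformal case, where $T$ has eigenvalues of distinct moduli or irreducible characteristic polynomial. There, covering by cubes requires counting pieces separately along each eigen-direction, and rounding could merge or split digits, changing counts by exponential factors. My approach would be to invoke Corollary 4.7 and (P4): for large $k$ the neighbor graphs of $F_k$ and $\tilde F$ are isomorphic. I also need that the digits of $\tilde A_k$ are more than $n$ apart, as used in Lemma \ref{neighboring_pieces2}, so that distinct digits stay distinct after rounding. Together these show the counts of distinct projected pieces coincide exactly for large $k$. The remaining error then comes only from the matrix perturbation, which the uniform convergence controls.
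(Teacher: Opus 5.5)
Your Step~1 fails as stated. A bound $C^{-1}N_\epsilon(F_k)\le N_\epsilon(\tilde F)\le C\,N_\epsilon(F_k)$ along scales $\epsilon\approx\|J_k^{-m}\|$, with $C$ independent of $m$, would give $\underline{\dim}_B\tilde F=\underline{\dim}_B F_k$ and $\overline{\dim}_B\tilde F=\overline{\dim}_B F_k$ exactly for each fixed large $k$. That would make your $\eta_k$ superfluous. In general, however, $\dim_B F_k\neq\dim_B\tilde F$. For example, if $F$ is a self-affine tile and $J$ is diagonal with $|\lambda_1|^k\notin\mathbb{Z}$, then $\dim_B\tilde F=n$. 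On the other hand $\#D_k\le q^k=|\det J_k|<|\det T_k|$, which pushes the affinity dimension of $F_k$, and hence $\dim_B F_k$, below $n$.

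Both justifications you offer are false.
First, $\|T_k^{-m}\|\approx m_1^{-m}$ and $\|J_k^{-m}\|\approx r_1^{-m}$ with $m_1=\lceil r_1\rceil>r_1$. So their ratio behaves like $(r_1/m_1)^m$, which decays exponentially in $m$ rather than being $1+o(1)$ uniformly in $m$.
Second, corresponding level-$m$ cylinders are not within a bounded multiple of their own size. Their offset $\sum_{i\le m}(J_k^{-i}\tilde a_{\mathbf j_i}-T_k^{-i}d_{\mathbf j_i})$ contains the $m$-independent first-level term and is only bounded by $O(\eta^{-k})$ (Lemma~\ref{deflection0}, Proposition~\ref{deflection}). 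Meanwhile the cylinders have diameters about $r_1^{-m}$ and $m_1^{-m}$.

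Hausdorff-metric convergence therefore carries no information at scales $\epsilon\ll\eta^{-k}$, which are the only scales the box dimension sees. So the ``remaining error'' in your last paragraph cannot be controlled by uniform convergence.

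The missing idea is to compare level by level rather than scale by scale. The neighbor-graph isomorphism you invoke (Corollary~\ref{neighboring_pieces5}) is the right tool, but it is used purely combinatorially:
\begin{itemize}
\item it makes the number of sets $S_{l,k}(\tilde F)$ equal to the number of level-$l$ pieces of $F_k$;
\item it bounds the number of sets $S_l(F_k)$ by the number of level-$l$ pieces of $\tilde F$;
\item the sets $S_l(F_k)$ and the pieces of $F_k$ correspond up to a factor $q^{ks^2}$ (Lemmas~\ref{counting_number_inequality_3} and~\ref{diameter_dimension}).
\end{itemize}
Diameters are then estimated on each side separately. Level-$l$ pieces of $\tilde F$ have diameter $\approx r_1^{-l}$ and those of $F_k$ have diameter $\approx m_1^{-l}$, up to factors $l^{\pm\alpha_0}$ (Lemma~\ref{inequality_pieces} and (\ref{piece_cylinder})). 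Comparable counts at exponentially different scales mean the box-counting exponents differ by the factor $\log m_1/\log r_1=\log\lceil|\lambda_1|^k\rceil/\log|\lambda_1|^k\to1$. That, not geometric proximity, is the source of your $\eta_k$ (see (\ref{ul_box33})).

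Organized this way, your sandwich is exactly the paper's chain: $\dim_B F_k\le\underline{\dim}_B\tilde F$ for $k\ge k_0$ (Proposition~\ref{monotonicity1}) and $\overline{\dim}_B\tilde F\le\liminf_{k}\dim_B F_k$ (Proposition~\ref{monotonicity0}). Note also that the existence of $\dim_B F_k$ does not follow from $F_k$ merely being integral self-affine---that is the statement being proved. It comes from the special block form of $T_k$ via the McMullen--Kenyon--Peres-type formula (Proposition~\ref{Existence}).
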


\bigskip

By using Theorem \ref{Hausdorff}, we obtain the following result regarding \textit{Problem 3} above. 

\begin{theorem}\label{full_dimension}
Assume that $F=F(T,A)$ is an integral self-affine set. 
Then, viewed as an invariant subset of the n-torus under the toral endomorphism $T$, there is an ergodic $T$-invariant Borel probability measure on $F$  of full dimension.
\end{theorem}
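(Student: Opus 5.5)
The plan is to produce the measure as a weak-$*$ limit of full-dimension ergodic measures on the perturbed fractals, combined with upper semicontinuity of entropy for expansive toral endomorphisms. In that picture, Theorem \ref{Hausdorff} supplies the dimension convergence and the structural results on the perturbations supply the dynamics.

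\textbf{Step 1: full-dimension measures on the perturbations.} For each $k$, $F_k=F(T_k,D_k)$ is an integral self-affine set whose matrix $T_k$ is built from the real Jordan form $J_k$. It is therefore (block) triangular with integer entries, i.e.\ essentially a direct sum or skew product of lower-dimensional expanding integer maps. In this setting the McMullen/Kenyon--Peres theory applies: one constructs a Bernoulli-type (or Markov, on the neighbor-graph/graph-directed coding) measure $\mu_k$ on $F_k \bmod 1$. It is ergodic and $T_k$-invariant with $dim(\mu_k)=\delta_k$, obtained via a variational principle of Ledrappier--Young type, $\dim\mu=\sum_i h_i(\mu)/\log|\lambda_i|$ in terms of the entropies along the Lyapunov filtration.

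\textbf{Step 2: transport to $F$.} By the uniform convergence results (Proposition 2.8, Remark 2.9) and the isomorphism of neighbor graphs for large $k$ (Corollary 4.7), the overlap/coding structure of $F_k$ agrees with that of $\tilde F=F(J_k,\tilde A_k)$, which is $F$ viewed at level $k$. I would pull $\mu_k$ back through this common symbolic coding to a measure $\nu_k$ on $F$. Each $\nu_k$ is $T^k$-invariant and ergodic, and $\frac1k\sum_{i<k}\nu_k\circ T^{-i}$ is then $T$-invariant and ergodic. I would also check that $dim(\nu_k)\ge \delta_k-\epsilon_k$ with $\epsilon_k\to0$, using that the conjugating codings distort distances only by factors of order $\|J_k-T_k\|$, which is bounded relative to $\|J_k\|$.

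\textbf{Step 3: limit and ergodicity.} Pass to a weak-$*$ limit $\nu$ of the $\nu_k$. Since $T$ is expansive on the compact set $F\bmod 1$, entropy is upper semicontinuous, so the Ledrappier--Young dimension expression is upper semicontinuous as well. Combining this with $\delta_k\to dim_H F$ (Theorem \ref{Hausdorff}) gives $dim(\nu)\ge dim_H F$, hence equality. The limit $\nu$ may fail to be ergodic. To handle that, take the ergodic decomposition: the dimension formula is affine in $\mu$, so some ergodic component has dimension at least $dim(\nu)$.

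\textbf{Main obstacle.} The key step is justifying the Ledrappier--Young type dimension formula, together with its affinity and semicontinuity, for invariant measures of a general expanding integer $T$ on the torus. These facts are clean when $T$ is diagonalizable over $\mathbb{Z}$, but not in the irreducible case. I would try to avoid needing them directly. The idea is to use the lower bound $\delta_k\le dim_H F$ for $k\ge k_0$ and pick a single sufficiently large $k$ with $\delta_k$ close to $dim_H F$. One then needs exact (not approximate) preservation of dimension under the coding correspondence of Corollary 4.7, and finally a diagonal choice over a sequence of such $k$. If exactness fails, the argument falls back to the semicontinuity route above.
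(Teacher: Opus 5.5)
Your Step 3 is where the argument breaks, and it is the step that carries the theorem. You pass to a weak-$*$ limit $\nu$ and argue that because $T$ is expansive, $\mu\mapsto h_\mu(T)$ is upper semicontinuous, and hence so is a Ledrappier--Young expression for $\dim\mu$. The argument also needs that this expression is affine and actually equals the Hausdorff dimension of $T$-invariant measures on $F\bmod 1$ (in particular of the $\nu_k$, so that $\dim\nu_k\ge\delta_k-\epsilon_k$ can be fed into it). None of this is available in the case the theorem is about. A McMullen/Kenyon--Peres type formula involves, besides $h_\mu(T)$, the entropies of $\mu$ projected along the flag of $T$-invariant subspaces. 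When the characteristic polynomial of $T$ is irreducible, these subspaces contain no rational directions. So the projections do not descend to factor maps of the torus, and semicontinuity of the total entropy tells you nothing about them. In addition, $F$ may have overlaps and $J$ may have nontrivial Jordan blocks. Your fallback does not avoid this. Theorem \ref{Hausdorff} gives only $\delta_k\le\dim_H F$ and $\delta_k\to\dim_H F$, so a measure built at one level $k$ is only known to have dimension at least $\delta_k$. Nothing forces $\delta_k=\dim_H F$ for some $k$, and a diagonal choice over $k$ again requires carrying a lower dimension bound through a weak limit, which is exactly the missing semicontinuity. As written, the proposal reduces the theorem to an unproved statement.

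The paper uses no dimension formula for $T$; Ledrappier--Young enters only for the perturbed maps $T_k$ (Proposition \ref{GP}, Lemma \ref{Ledrappier}). Instead it works with the set-level property (P): $\mu(B)>0\Rightarrow\dim_H B=\delta$. Take the weak limit $\mu$ of the $\mu_{k_n}$ and a compact $K$ with $\mu(K)>0$. The paper produces compact $K_{k_n}\subseteq\tilde F_{k_n}$ with $\mu_{k_n}(K_{k_n})>0$ and $K_{k_n}\to K$ (Lemma \ref{Borel_measure}), and compares $\dim_H K_{k_n}=\delta_{k_n}$ with $\dim_H K$ via the covering argument of Proposition \ref{monotonicity2} (Lemma \ref{full_dimension2}); this limit step is the delicate point in either approach. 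It then lifts to $F\bmod 1$ and Ces\`aro-averages to get $T$-invariance (Lemmas \ref{lift}, \ref{full_dimension3}). Finally it takes an extreme point of the compact convex set $\mathcal M_\delta(X,T)$. Ergodicity comes for free: (P) passes to any invariant measure dominated by a multiple of a (P)-measure, so $\mathcal M_\delta(X,T)$ is a face of $\mathcal M(X,T)$, and no affinity of a dimension functional is needed.

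Separately, your justification in Step 2 is off. The coding correspondence between $F_k$ and $\tilde F$ is far from bi-Lipschitz. The maps $J_{k,p}^{-l}$ and $T_{k,p}^{-l}$ contract at rates $r_p^{-l}$ and $m_p^{-l}$, and their ratio $(m_p/r_p)^{l}$ is unbounded in $l$ as soon as $m_p\neq r_p$ (as in the irreducible case, Proposition \ref{thmm}). So $\|J_k-T_k\|=O(1)$ does not control the accumulated distortion. What does control dimension is the piecewise estimate $\mathrm{diam}\,S_l(F_k)\le c_l\,\mathrm{diam}\,P_{l,k}(\tilde F)$, with $c_l$ polynomial in $l$ (Lemma \ref{diameter1}), together with the bound on pieces per cube (Lemma \ref{bound}). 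Apply this to a set and its exact coded copy in $F_k$ and combine with Lemma \ref{Ledrappier}: every set of positive $\nu_k$-measure then has dimension at least $\delta_k$. The pull-back itself is best done through the shift. Lift $\mu_k$ to an ergodic shift-invariant measure on $k$-block sequences, then push it forward by the coding of $F(T^k,A_k)$ mod $1$.
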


As for the organization of the paper, in Section  \ref{pert_defl1}, 
we describe the ``fractal perturbation method (or the fractal deflection process)''. 
In Section \ref{Stability}, we prove some basic results on perturbed fractals $F_k$. 
The first two sections include frequently used concepts and results.
In Section \ref{Neighbor Structure}, the neighbor graphs of dynamical perturbations  are studied.  In contrast to the classical perturbations, our perturbed fractals eventually retains the neighbor structure of the original fractal.
In Section \ref{An application}, an important box dimension inequality is proved. In Section \ref{theorems}, we prove Theorem \ref{Hausdorff}. 
Section \ref{Lower_Bounds_Dimension} consists in the proofs of the lower bounds for box and Hausdorff dimensions. Then the proof of Theorem \ref{Main_Box} is completed.
In Section \ref{Full_dimension}, we study measures on $F_k$ and prove
Theorem \ref{full_dimension}. 
In the Appendix (Section \ref{appendix}), we give an extension of McMullen's box dimension formula for perturbed fractals $F_k$, which is needed in the paper. 

Concerning the notation, we mention that a black square marks the end of the proof of a lemma, proposition or theorem while a white square marks the end of 
a remark or a definition.

\section{Dynamically Perturbed (Deflected) Fractals}\label{pert_defl1}

\subsection{Reduction to Triangular Matrices  }\label{real}

To exclude trivial or known cases, we will impose the conditions that $A$ has at least two elements  and $T$ is not as in the following remark. 

\begin{remark}\label{trivial_cases}
\rm{ According to (\ref{floor_digit_lower_perturbation}),  (\ref{pert_matrix}),  (\ref{perturbed_fractals}),  we have $F=F_k$ for every $k$ (so that $dim_H F=\delta_{k} $) in the following trivial cases:

                 \begin{enumerate}
                   \item integral self-affine sets in the real line,
                   \item integral self-affine sets generated by  $2\times 2 $ integer matrices $T$ with integer eigenvalues (see (2.7) in \cite{K1}),
                   \item integral self-affine sets generated by  an integer matrix $T$ such that $T^{l}=mI$ ($m\in \mathbb{Z}$ and $I$ is the identity matrix) for some positive integer $l$. In this case, we  assume that $F=F(T^{l},A_l)$.  \hfill$\Box$
                 \end{enumerate}
}  

\end{remark}

\bigskip
Other cases makes much more trouble in the study of dimension. 
For that, we state a proposition from our recent work \cite[Proposition 4.1]{K1}. 

\begin{prop}\label{thmm}
Assume that the characteristic polynomial of $T\in M_n(\Bbb{Z})$, n=2,3, is irreducible over $\Bbb{Z}$, and $T, T^2$ and $T^3$ (if $T\in M_3(\Bbb{Z})$)
are not conjugate to a similarity matrix. Let $\lambda_i$ ($i=1,2 \ \textrm{or} \ 3$) be the eigenvalues of $T$. Then, for each eigenvalue $\lambda_i$,
$\{|\lambda_i|^n : n\in \Bbb{N}\}$ is an infinite sequence of irrational numbers.  
\end{prop}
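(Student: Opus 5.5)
The plan is to argue by contradiction. Assume some eigenvalue $\lambda=\lambda_i$ and some $m\in\mathbb{N}$ satisfy $|\lambda|^m=r\in\mathbb{Q}$. I will show that then all eigenvalues of $T$ have the same modulus. Since the characteristic polynomial is irreducible over $\mathbb{Z}$, hence over $\mathbb{Q}$ by Gauss's lemma, it is separable. So $T$ has $n$ distinct eigenvalues and is diagonalizable over $\mathbb{C}$. The real Jordan form $J$ then consists of $1\times 1$ blocks $\pm\rho$ and $2\times 2$ blocks $\rho R_\theta$ with $R_\theta$ a rotation. If every eigenvalue has modulus $\rho$, then $J=\rho\,O$ with $O$ orthogonal, so $T$ is conjugate to a similarity. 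This contradicts the hypothesis (already for $T$ itself; the assumptions on $T^2,T^3$ are not needed on this route).

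The arithmetic ingredient is the standard fact that an algebraic integer lying in $\mathbb{Q}$ lies in $\mathbb{Z}$. Every eigenvalue of $T$ is an algebraic integer, and so are products and powers of eigenvalues and their complex conjugates, since complex conjugates of eigenvalues are again eigenvalues. I split into two cases.

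\emph{Case 1: $\lambda$ is real.} Then $\lambda^{2m}=|\lambda|^{2m}=r^2$ is a rational algebraic integer, hence an integer $N$. Every Galois conjugate of $\lambda$ is an eigenvalue $\mu$ of $T$, and since $\mu$ satisfies the same minimal polynomial, $\mu^{2m}=N$ for all of them. Thus all eigenvalues have modulus $N^{1/(2m)}$, and the previous paragraph gives the contradiction.

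\emph{Case 2: $\lambda$ is non-real.} Then $|\lambda|^{2m}=(\lambda\bar\lambda)^m=r^2$ is again an integer $N$. If $n=2$ the eigenvalues are $\lambda,\bar\lambda$, which already have equal modulus, so $T$ is conjugate to a similarity. If $n=3$ the eigenvalues are $\lambda,\bar\lambda,\mu$ with $\mu$ real. From $\lambda\bar\lambda\mu=\det T=:d\in\mathbb{Z}$ we get $\mu^m=d^m/N\in\mathbb{Q}$, and being an algebraic integer it lies in $\mathbb{Z}$. Applying the Galois argument of Case 1 to $\mu$ then gives $\lambda^m=\bar\lambda^m=\mu^m$. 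Hence $|\lambda|=|\mu|$, all eigenvalues have the same modulus, and we again reach the contradiction.

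The only delicate point is the passage from ``all eigenvalues have equal modulus'' to ``$T$ is conjugate to a similarity''. This needs diagonalizability, which is where irreducibility (separability) is used, together with the explicit form of the real Jordan blocks. Everything else is routine Galois and algebraic-integer bookkeeping. The conclusion that the sequence $\{|\lambda_i|^m\}$ is infinite follows because $|\lambda_i|>1$, since $T$ is expanding, so the powers are strictly increasing.
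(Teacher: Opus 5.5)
Your proof is correct. The paper gives no proof of this proposition; it is imported from \cite[Proposition 4.1]{K1}. So there is no in-text argument to compare against, and your algebraic-integer/Galois argument is a complete, self-contained proof.

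Its structure is worth making explicit. Everything reduces to the fact that a rational value of some $|\lambda_i|^m$ forces all eigenvalues to share one modulus. Because irreducibility makes $T$ diagonalizable, ``all eigenvalues of equal modulus'' is equivalent to $T$ being conjugate over $\mathbb{R}$ to $\rho O$ with $O$ orthogonal, and equally to any power $T^k$ being so conjugate. Hence your argument also shows that the hypotheses on $T^2$ and $T^3$ are redundant.

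This reading of ``similarity'' is the one under which the statement holds. For the characteristic polynomial $x^2-2x+2$, neither $T$ nor $T^2$ (eigenvalues $1\pm i$ and $\pm 2i$) is a scalar matrix, yet $|1+i|^2=2$ is rational.

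Two small fixes:
\begin{itemize}
\item In Case 1, state the direction you actually use: every eigenvalue is a Galois conjugate of $\lambda$. This holds because the characteristic polynomial is the minimal polynomial of $\lambda$ and therefore divides $x^{2m}-N$. The inclusion you wrote (every conjugate is an eigenvalue) is true but is not what yields ``all eigenvalues''.
\item For the ``infinite'' part you need not appeal to expansiveness, which is not among the stated hypotheses. We have $\lambda_i\neq 0$ by irreducibility, and $|\lambda_i|\neq 1$ because $|\lambda_i|=|\lambda_i|^1$ is irrational by what you have just shown. So $m\mapsto |\lambda_i|^m$ is strictly monotone and takes infinitely many values.
\end{itemize}
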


If $T\in M_n(\Bbb{Z})$ is irreducible as above, that causes an infinite approximation of dimension in our study (unlike Remark \ref{trivial_cases}). Another case occurs when $T$ has repeated eigenvalues. In that 
case, we have at least one non-trivial Jordan block and this causes more complication as we will see below. 
We also mention that we will use the lower triangular Jordan normal  form of $T$. 

For the general case $T\in M_n({\Bbb{Z}})$, let $J$ be the real Jordan normal form of $T$ so that $P^{-1}T^{k}P=J^k$ for some invertible matrix $P$. Let $J_k=J^k$. 
Related to $J_k$, we  look for  lower triangular matrices $T_k$ with the following properties:

\medskip

  (i) \ \ $T_k$ must be an integer expanding matrix,

  (ii) \ the diagonal entries $(T_k)_{ii}$ must be nonnegative and increasing in $i$ or, more generally,  $|(T_k)_{ii}|$ is increasing in $i$. For real eigenvalues, we note that the condition  $(T_k)_{ii}\geq 0$
  can always be achieved by considering $\tilde{F}=F(J_k^2,\tilde{A}_k+J_k\tilde{A}_k)$ in place of $\tilde{F}=F(J_k,\tilde{A}_k)$. 

 \medskip

Property (ii) is required for the use of McMullen type formulas (see Appendix). Notice that no power of
$T\in M_3({\Bbb{Z}})$ may be conjugate to a similarity or an integer matrix when the hypotheses in Proposition \ref{thmm} are satisfied, see \cite[Corollary 4.2]{K1}.
We want to convert not only $J$ but also all its positive powers  to matrices satisfying property (i) and (ii). 

Let
$J_{\lambda_p}$ ($p=1,2,...,s$) be the real Jordan blocks of $J$, where $\lambda_p$ is the corresponding eigenvalue.  
It will suffice to restrict our study to the following type of Jordan blocks since  our argument  works for the other 
types. For the same reason, we may also assume that $|\lambda_1|< \cdots <|\lambda_s|$.
  $J$ is a real block diagonal matrix, and  each real Jordan block $J_{\lambda_p}$ is either of the form (if the corresponding eigenvalue $\lambda_p$ is real and has algebraic multiplicity $\geq 2$ )
\begin{equation}\label{Jordan1}
 J_{\lambda_p}=
\left[
\begin{array}{cccc}
  \lambda_p & \ \ \ \ \ \ \ 0 \ \ \ \cdots    & 0 & 0 \\
   1 &  \lambda_p  & 0 & 0 \\
   \vdots & \ \ \ \ \ \ \  1 \ \ \ \ddots  & \vdots & \vdots \\
   0 & \vdots & \ \ \ \  \lambda_p \ \ \ \ &  0 \\
  0 &   \ \ \ \ \ \ \ 0 \ \ \ \cdots   &   1 & \lambda_p \\
\end{array}
\right],
\end{equation}
where every entry on the sub-diagonal is $1$,
 or is a block matrix itself, consisting of $2\times2$ blocks (for non-real eigenvalue $\lambda_p   = a   + i b$  with given algebraic multiplicity $\geq 2$) of the form

\begin{equation}\label{Jordan2}
J_{\lambda_p}=
\left[
\begin{array}{cccc}
  C_p & \ \ \ \ \ \ \ 0 \ \ \ \cdots    & 0 & 0 \\
   I_2 &  C_p  & 0 & 0 \\
  \vdots & \ \ \ \ \ \ \  I_2 \ \ \ \ddots  & \vdots & \vdots \\
   0 & \vdots & \ \ \ \   C_p \ \ \ \ &  0 \\
  0 &   \ \ \ \ \ \ \ 0 \ \ \ \cdots  & I_2 & C_p \\
\end{array}
\right],
\end{equation}
where sub-diagonal blocks are $2\times2$ identity matrices and
$$ C_p=
{ \begin{bmatrix}
   a & -b \\
   b & a
\end{bmatrix} }={\rho_p}R_{\theta}$$
with $|\lambda_p|={\rho_p}>1$ and  $R_{\theta}$ is a rotation matrix.

As usual, we use the symbol $\oplus$ for the direct sum of matrices. Then 
$J_k=J^k=J_{\lambda_1}^k\oplus J_{\lambda_2}^k \oplus \cdots \oplus J_{\lambda_s}^k$ with $J_{1}=J$.  
For simplicity and consistency with $T_k$, 
we use the notation $J_{k,p}$ instead of $J_{\lambda_p}^k$. In this notation, $J_k$ takes the following form
$$J_k=J_{k,1}\oplus J_{k,2} \oplus \cdots \oplus J_{k,s}.$$

If $J_k$ is not an integer matrix, we make it so by considering
$T_k$ in (\ref{pert_matrix}). 
The point for using the ceiling function $\lceil x \rceil$ or the floor function $\lfloor x \rfloor$ in (\ref{pert_matrix}) is to convert $J_k$ to an expansive integer matrix so that property (i) is satisfied.

\subsection{Perturbed (Deflected) Fractals}\label{pert_defl}

\begin{defn}\label{upper_perturbation}

 \rm{(i) Let $F_k
$ be as in  (\ref{perturbed_fractals}) and $\delta_k
$ be as in  (\ref{perturbation_dimensions}).
$F_k$ may be termed as a \textit{(dynamical) lower perturbation of $F$ (or $P^{-1}F$}) and
$\delta_k$ {\rm the} $k$-th \textit{lower perturbation dimension} {\rm of} $F$.

(ii) If we interchange the floor function in (\ref{floor_digit_lower_perturbation}) and the ceiling function in (\ref{pert_matrix}), then the resulting perturbed fractal in  (\ref{perturbed_fractals}) may be called an \textit{upper perturbation of $P^{-1}F$}. In that case, we may write $\upsilon_{k}$ for its dimension. (In such a case, the proofs that follow can easily be adjusted accordingly.)} \hfill$\Box$

\end{defn}

\bigskip
 It is known that elements of a general self-affine set  can be represented by infinite series or radix expansions as follows:
\begin{equation*}   
F(T,A)=\left\{\sum_{i=1}^\infty T^{-i} a_{j_i} : \ a_{j_i}\in A \right\}.
\end{equation*} 
For convenience, write $J_k$ instead of $J^k$ so that we use the same type of labelling for both $F_k$ and $J^k$. Let $\tilde{F}:=F(J,\tilde{A})=P^{-1}F$.
For an arbitrary  element $x\in \tilde{F}$, we have $x\in F(J_k,\tilde{A}_k)=\tilde{F}$ for every $k$. This allows us to define a sequence. Let $x_k\in F_{k}=F(T_k,D_{k})$ denote \textit{the element whose series representation uses the same index (or multi-index) sequence as that in the representation of $x$}; 
 i.e.,
\begin{equation}\label{special_correspondence}
x=\sum_{i=1}^\infty J_{k}^{-i} \tilde{a}_{{\mathbf{j}}_i}, \ \ \ \ \ \ \ \ \ \ \ \ x_k=\sum_{i=1}^\infty T_{k}^{-i} d_{{\mathbf{j}_i}},
\end{equation}
where $a_{{\mathbf{j}}_i}\in \tilde{A}_k$ and $d_{{\mathbf{j}_i}}\in D_{k}$.
Note that $x$ may have more than one infinite series representation. 
We  shall prove  the special convergence $\underset{k\rightarrow\infty} {\lim} x_{k}=x $ (uniformly) 
so that $F_{k}\rightarrow \tilde{F}$. 
We need it for the proof of our main theorem too. First, we give a lemma to be used in the verification of the above convergence. In fact, the  following 
three lemmas will often be used for convergence results, diameter estimates, etc. First, we introduce some notation.

\medskip

\textit{Notation.} \begin{itemize}
                     \item  If $M$ is a block matrix with all blocks are $2\times 2$ matrices, denoted by $M_{ij}$, then by writing $C_p \odot  M$, we mean
 the block  matrix with entries $(C_p \odot  M)_{ij}=C_pM_{ij}$, where $$ C_p=
{ \begin{bmatrix}
   a & -b \\
   b & a
\end{bmatrix} }={\rho_p}R_{\theta}=|\lambda_p|R_{\theta}$$ as in (\ref{Jordan2}). 
So $C_p \odot  M$ and $M$ have the same size (this is similar to scalar multiplication in linear algebra). Furthermore, for any real number $\beta$, $\left\lceil \beta C_p^{k} \right\rceil  $  is the perturbed matrix with entries
\begin{equation*}\label{Exp33}
\left( \left\lceil \beta C_p^{k} \right\rceil  \right)_{ij}:=\bigg \{
\begin{array}{c}
\ \ \lceil \ (\beta C_p^{k})_{ij} \ \rceil  \quad \ \ \ \ \ \ \ \ \ \ \  \mbox{if $(\beta C_p^{k})_{ij}\geq 0$,} \\
-\lceil \ |(\beta C_p^{k})_{ij}| \ \rceil \quad \ \ \ \ \ \ \ \ \  \mbox{if $(\beta C_p^{k})_{ij}< 0$.}
\end{array}
\end{equation*}
      \item  Recall that $J_{1,p}=J_{\lambda_p}$ ($1\leq p \leq s$)  has size $2n_p\times 2n_p$ for non-real eigenvalues.
Denote by $(\tilde{a}_{\mathbf{j}})_p$ the projection of the vector $\tilde{a}_{\mathbf{j}}$ to the coordinates  $\sum_{t=1}^{p}2n_{t-1}+1,...,\sum_{t=1}^{{p}}2n_{t}$ ($n_{0}=0$). Thus $(\tilde{a}_{\mathbf{j}})_p$ is the p-th block of $\tilde{a}_{\mathbf{j}}$  corresponding to the Jordan block
$J_{\lambda_p}$. For example, if
$\lambda_1,\ \overline{\lambda_1}=\pm 2i$, \  $\lambda_2=1$, $J_{\lambda_1}=\tiny{\left[
\begin{array}{cc}
  0 &  -2 \\
  2 &  0 \\
\end{array}
\right]},$  $J_{\lambda_2}=\tiny{\left[
\begin{array}{c}
 1
\end{array}
\right]},$ $\tilde{a}_{\mathbf{j}}=\tiny{\left[
\begin{array}{ccc}
3 & 4 & 5
\end{array}
\right]^t}$ (``$t$'' for transpose),  then $(\tilde{a}_{\mathbf{j}})_1=\tiny{\left[
\begin{array}{ccc}
3 & 4
\end{array}
\right]^t}$,  $(\tilde{a}_{\mathbf{j}})_2=\tiny{ \left[
\begin{array}{ccc}
5 
\end{array}
\right]}.$

                     \item  It is also appropriate to use big O notation in the following lemma.
\end{itemize}

\medskip

\begin{lemma}\label{deflection0} Let
 $\tilde{F}=P^{-1}F=F(J_k,\tilde{A}_k)\subset \mathbb{R}^n$. 
Then there is a positive constant $c$ such that for all large $k$, we have $$\left|(J_{k}^{-1}-T_{k}^{-1})\tilde{a}_{\mathbf{j}} \right|\leq \frac{c}{\sqrt{m_1}}, $$
where $\tilde{a}_{\mathbf{j}}\in \tilde{A}_k$ and
$T_{k}$
has eigenvalues with moduli
$m_1\leq m_2\leq \cdots \leq  m_s.$ 
\end{lemma}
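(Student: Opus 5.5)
The plan is to use the identity
\[
J_k^{-1}-T_k^{-1}=J_k^{-1}(T_k-J_k)T_k^{-1},
\]
so that
\[
\left|(J_{k}^{-1}-T_{k}^{-1})\tilde{a}_{\mathbf{j}}\right|\le \|J_k^{-1}\|\,\|T_k-J_k\|\,\|T_k^{-1}\tilde{a}_{\mathbf{j}}\|,
\]
and to estimate the three factors separately.

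The middle factor is controlled by construction. By (\ref{pert_matrix}), every entry of $T_k-J_k$ has absolute value less than $1$, and the zero pattern of $J_k$ is preserved (zero entries stay zero). Hence $\|T_k-J_k\|\le n$ uniformly in $k$.

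The digit factor uses the series representation of $\tilde{F}$. Since $\tilde{F}=F(J_k,\tilde{A}_k)$, we have $J_k^{-1}\tilde{a}_{\mathbf{j}}=f_{\mathbf{j}}(0)$ up to the translate of $\tilde F$ (indeed $J_k^{-1}\tilde{a}_{\mathbf{j}}\in J_k^{-1}\tilde{F}+J_k^{-1}\tilde{a}_{\mathbf{j}}\subset\tilde{F}$ after translating so that $0\in\tilde{F}$, as the paper allows). So $|J_k^{-1}\tilde{a}_{\mathbf{j}}|\le \operatorname{diam}\tilde{F}+|{\rm const}|=O(1)$ uniformly in $k$ and $\mathbf{j}$. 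It is therefore better to regroup the identity as
\[
J_k^{-1}-T_k^{-1}=T_k^{-1}(T_k-J_k)J_k^{-1},
\]
which gives
\[
\left|(J_{k}^{-1}-T_{k}^{-1})\tilde{a}_{\mathbf{j}}\right|\le \|T_k^{-1}\|\cdot n\cdot O(1).
\]

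Everything then reduces to showing $\|T_k^{-1}\|\le c'/\sqrt{m_1}$ for large $k$, and this is the main obstacle. $T_k$ is block lower triangular, with diagonal blocks $\lceil C_p^k\rceil$ (or $\lceil\lambda_p^k\rceil$) and off-diagonal blocks $\lceil\binom{k}{r}C_p^{k-r}\rceil$ coming from the Jordan structure. For a scalar or $2\times2$ diagonal block, $\lceil C_p^k\rceil = \rho_p^k R_{k\theta}+E$ with $\|E\|\le 2$. Its inverse therefore has norm at most $2/\rho_p^k$ once $\rho_p^k$ is large, and $m_p\asymp\rho_p^k$.

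For a nontrivial Jordan block I would invert by back substitution, i.e.\ write the block as $D(I+D^{-1}N)$ with $D$ the block diagonal and $N$ strictly lower triangular. The Neumann-type finite sum then produces terms of size about $\binom{k}{r}\rho_p^{k-r}/\rho_p^{k(r+1)}$, each of which is $O(k^{n}\rho_p^{-k})$. Since $k^n\rho_p^{-k}\le C\rho_p^{-k/2}$ for large $k$, this yields $\|T_k^{-1}\|\le c'\,m_1^{-1/2}$; the square root absorbs the polynomial factors and the rounding errors. Across different Jordan blocks $T_k$ is block diagonal, so the worst block, the one with the smallest modulus $m_1$, dominates.

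The remaining step is to check that the eigenvalue moduli $m_i$ of $T_k$ are comparable to $|\lambda_i|^k$. This holds because the eigenvalues of $T_k$ are those of its diagonal blocks $\lceil C_p^k\rceil$, which are $O(1)$-perturbations of $\rho_pR_{k\theta}$-type matrices, so $m_1\ge \rho_1^k-2$. Combining the three bounds gives the constant $c$.
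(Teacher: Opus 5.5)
Your proof is correct, and it takes a genuinely different and much shorter route than the paper.

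\textbf{Your route.} The resolvent identity $J_k^{-1}-T_k^{-1}=T_k^{-1}(T_k-J_k)J_k^{-1}$ splits the problem into three independent bounds:
\begin{itemize}
\item $\|T_k-J_k\|<n$, from the entrywise rounding in (\ref{pert_matrix});
\item $|J_k^{-1}\tilde{a}_{\mathbf{j}}|=O(1)$. No translation of $\tilde F$ is needed here, since $J_k^{-1}\tilde{a}_{\mathbf{j}}=\sum_{i=1}^{k}J^{-i}\tilde{a}_{j_i}$;
\item $\|T_k^{-1}\|=O(k^{n}/m_1)$, which you prove directly by block back-substitution. This is essentially the case $l=1$ of Lemma \ref{lemma_norm_estimate1}.
\end{itemize}

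\textbf{The paper's route.} It writes both $J_{k,p}^{-1}$ and $T_{k,p}^{-1}$ as finite Neumann series in the nilpotent matrices $N_1,N_2$ and bounds the difference term by term (terms (I) and (II)). This forces it to control $N_1-N_2$ through the auxiliary matrices $G,G_0,G_i,N_0,H_1,H_2$, and to use the separate growth bound $|(\tilde{a}_{\mathbf{j}})_p|\le c_5m_p$ on the digits themselves. What it gains is explicit formulas for the block inverses, which it then reuses to prove Lemma \ref{lemma_norm_estimate1}.

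\textbf{What yours buys.} It bypasses all the difference estimates. It also yields the sharper bound $O(k^{n}/m_1)$; the $\sqrt{m_1}$ in the statement is only needed to absorb the polynomial factor in $k$.

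\textbf{Two minor points.}
\begin{itemize}
\item A Neumann term with a single off-diagonal factor from the $r$-th subdiagonal has size about $\binom{k}{r}\rho_p^{-k-r}$, not $\binom{k}{r}\rho_p^{k-r}/\rho_p^{k(r+1)}$ as you wrote. Your resulting per-block bound $O(k^{n}\rho_p^{-k})$ is nonetheless correct.
\item The rounding in (\ref{pert_matrix}) is away from zero and odd, so $\lceil C_p^k\rceil$ is a scaled rotation with $m_p\ge\rho_p^k$. Hence $\|\lceil C_p^k\rceil^{-1}\|=1/m_p\le\rho_p^{-k}$ holds exactly, not just asymptotically.
\end{itemize}
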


\begin{pf}  The proof is bulky and involves heavy calculation.
Let $F_{k}=F(T_k,D_k)$ be the perturbations of $\tilde{F}$. It is enough to work with real Jordan blocks.
Let $J=J_1=J_{1,1}\oplus J_{1,2} \oplus \cdots \oplus J_{1,s}$ be the real Jordan form of $T.$ 
To prove the inequality in the lemma, first we consider a 
block $J_{1,p}$  with $1\leq p \leq s$ and  compute  the block  $J^{-1}_{k,p}$ of $J^{-1}_{k}$. Do the same for $T^{-1}_{k,p}$.

\medskip

Case 1. Assume that $\lambda_p   = a  \pm i b$ are non-real conjugate eigenvalues of $J$.

\medskip

\noindent For non-real eigenvalues $\lambda_p   = a  \pm i b$  with  algebraic multiplicity $n_p\geq 2$, we notice that (\ref{Jordan2}) can be written as 
$$J_{1,p}=J_{\lambda_p}=C_p \odot I+N,$$ where  $I$ is the  $2n_p\times 2n_p$ identity matrix, and $N$ is the $2n_p\times 2n_p$  nilpotent matrix
$$N=\left[
\begin{array}{cccc}
  0 &  \ \ \ \ \ \ 0 \ \ \ \cdots    & 0 & 0 \\
  I_2 &  \ \ \ \ \ \ 0 \ \ \ \cdots   & 0 & 0 \\
   0 & \ \ \ \ \ \  I_2 \ \ \  \ddots  & \vdots & \vdots \\
\vdots  & \vdots \ &  \ \ \ \ 0 \ \ \ \ &  0 \\
  0 &      \ \ \ \ \ \ 0 \ \ \ \cdots   & I_2   &   0 \\
\end{array}
\right],$$
where sub-diagonal blocks are the $2\times 2$ identity matrices $I_2$. Let $\alpha_0:=k$ if $k<n_p$ and $\alpha_0:=n_p-1$ when $k\geq n_p
$.
Denote the $i$-th binomial coefficient by $b_i$; i.e.
$$b_i:=\begin{pmatrix}
k  \\
i
\end{pmatrix}.$$
Let $$
N_1=C_p^{-k}\sum_{i=1}^{\alpha_0}
b_i
C_p^{k-i} \odot  N^{i}=\sum_{i=1}^{\alpha_0}
b_i
C_p^{-i} \odot  N^{i},   \ \ \ \ \ \ \
N_2=\sum_{i=1}^{\alpha_0}
\left\lceil b_i
C_p^{k-i}\right\rceil  \left\lceil
C_p^{k}\right\rceil ^{-1}  \odot  N^{i}.$$ Consider large $k$, e.g. $k\geq n_p$, so that $k>\alpha_0 $ and hence the power $k-i$ is always positive in the following binomial expansion.
Then  $J_{1,p}:=J_{\lambda_p}=C_p \odot I+N$ implies that the  $p$-th  diagonal  block  of  $J_k$ is
\begin{equation*}\label{ }
 J_{k,p}:=(J_{1,p})^k=
C_p^{k} \odot  I+\sum_{i=1}^{\alpha_0}
b_i
C_p^{k-i}  \odot   N^i
=C_p^k \odot  \left( I+\sum_{i=1}^{\alpha_0}
b_i
C_p^{-i} \odot  N^i\right)=C_p^k \odot  \left( I-(-N_1)\right)
\end{equation*}
and
$p$-th diagonal block of $T_k$ is
\begin{equation}\label{Power_Jordan_Block1}T_{k,p}=\lceil C_p^k \rceil   \odot  \left( I-(-N_2)\right)\end{equation} since  $N^{n_p}=0$.
Also $N_1,N_2$ commute because $ C_p^{-i}$, $\left\lceil b_iC_p^{k-i}\right\rceil $, $\lceil C_p^k \rceil ^{-1}$  are rotation matrices. Similar to the calculus series expansion $\frac{1}{1-x}=\sum_{j=0}^{\infty} x^j \ (|x|<1)$,  the equality  $N_1^{n_p}=N_2^{n_p}=0$ then yields
\begin{equation}\label{Power_Jordan_Block2}  J_{k,p}^{-1}= C_p^{-k} \odot  \left( I+\sum_{j=1}^{\alpha_0}(-1)^jN_1^j\right), \ \ \ \ \ \ \ \ \ \ T_{k,p}^{-1}=\lceil C_p^k    \rceil ^{-1} \odot \left( I+\sum_{j=1}^{\alpha_0}(-1)^jN_2^j\right),
\end{equation}
for  $k\geq n_p$, and since $N_1,N_2$ commute, we have
{\footnotesize \begin{eqnarray}\label{maininequality2}
J_{k,p}^{-1}-T_{k,p}^{-1} & = &  (C_p^{-k}- \lceil C_p^k    \rceil ^{-1}) \odot \left( I+\sum_{j=1}^{\alpha_0}(-1)^jN_1^j\right)+ \lceil C_p^k \rceil ^{-1} \odot \sum_{j=1}^{\alpha_0}(-1)^j (N_1^j-N_2^j) \\
& = & (C_p^{-k}- \lceil C_p^k    \rceil ^{-1}) \odot \left( I+\sum_{j=1}^{\alpha_0}(-1)^jN_1^j\right)+ \lceil C_p^k \rceil ^{-1}  \odot  \left(  \sum_{j=1}^{\alpha_0}(-1)^j( N_1-N_2)\sum_{l=0}^{j-1}N_1^lN_2^{j-1-l}\right). \nonumber
\end{eqnarray}}

\noindent By (\ref{maininequality2}), we then need to obtain upper bounds for
$$   (\mathrm{I}) \ \  \lceil C_p^k \rceil ^{-1} \odot  \sum_{j=1}^{\alpha_0}(-1)^j (N_1^j-N_2^j)(\tilde{a}_{\mathbf{j}})_p, \ \ \ \ \
 (\mathrm{II}) \ \ (C_p^{-k}- \lceil C_p^k \rceil ^{-1}) \odot \left( I+\sum_{j=1}^{\alpha_0}(-1)^jN_1^j\right)(\tilde{a}_{\mathbf{j}})_p.$$

For that, we note the following:

\medskip

\noindent Let $||\cdot ||$ denote the spectral norm of a matrix, and $c$ stand for a constant. Recall that $ \lceil C_p^{k} \rceil$ is a rotation matrix.
Then from Case 2, (4.24) in the proof of Proposition 4.4 in \cite[p.16]{K1}, there is a  $2\times 2$  matrix $G$ such that
 \begin{equation}\label{estimate00}
   \lceil C_p^{k} \rceil ^{-1} = C_p^{-k}+G \ \ \textrm{with} \ \  ||G||<\frac{c}{m_p}
 \end{equation}
     for all large $k$.
Also, we can write
 \begin{equation}\label{estimate01}
\lceil C_p^{k} \rceil  = C_p^{k}+G_0,  \ \ \ \     \left\lceil b_iC_p^{k-i} \right\rceil =b_iC_p^{k-i}+G_i
\end{equation}
for some appropriate matrices $G_0, G_i  $. Clearly,  $||G_0||, ||G_i || \leq 2$.

\bigskip

\textit{Upper bound for \ \ \rm{(I)} \  $\lceil C_p^k \rceil ^{-1} \odot \sum_{j=1}^{\alpha_0}(-1)^j (N_1^j-N_2^j)(\tilde{a}_{\mathbf{j}})_p$} :

\medskip

\noindent Let
$$N_0=\left( \sum_{i=1}^{\alpha_0}  b_i C_p^{k-i} G+G_i( C_p^{-k}+G)  \right)\odot N^{i}.$$ 
Then, using $N_1=\sum_{i=1}^{\alpha_0} b_iC_p^{-i} \odot  N^{i},$ (\ref{estimate00}) and (\ref{estimate01}), we obtain
$$
N_2=\sum_{i=1}^{\alpha_0}
\left\lceil
b_i
C_p^{k-i}\right\rceil  \left\lceil
C_p^{k}\right\rceil ^{-1}  \odot  N^{i} = \sum_{i=1}^{\alpha_0}  \left(b_i C_p^{k-i}+G_i \right)( C_p^{-k}+G) \odot N^{i}=N_1+N_0,
$$
and $\sum_{l=0}^{j-1}N_1^lN_2^{j-1-l}$ in (\ref{maininequality2}) will be of the form
\begin{equation}\label{estimate02}
\sum_{l=0}^{j-1}N_1^lN_2^{j-1-l}=\sum_{l=0}^{j-1}N_1^l(N_1+N_0)^{j-1-l}=jN_1^{j-1}+H_1
\end{equation}
for some appropriate matrix $H_1$ depending on $N_0,N_1$. 
Furthermore, by (\ref{estimate01}),
\begin{eqnarray}\label{estimate03}
N_1-N_2 & = & C_p^{-k}  \left\lceil
C_p^{k}\right\rceil ^{-1}  \odot \left(\sum_{i=1}^{\alpha_0}
b_i
\left\lceil
C_p^{k}\right\rceil  C_p^{k-i} \odot  N^{i}  -  \sum_{i=1}^{\alpha_0}  \left\lceil b_i
C_p^{k-i}\right\rceil
C_p^{k} \odot  N^{i}   \right)  \nonumber \\
& = & C_p^{-k}\left\lceil
C_p^{k}\right\rceil ^{-1}  \odot  \left( \sum_{i=1}^{\alpha_0}
b_i
( C_p^{k}+G_0) C_p^{k-i} \odot  N^{i} - \sum_{i=1}^{\alpha_0}  \left(b_iC_p^{k-i}+G_i \right)
C_p^{k} \odot N^{i} \right) \nonumber \\
& = & C_p^{-k}\left\lceil
C_p^{k}\right\rceil ^{-1} \odot  \left( \sum_{i=1}^{\alpha_0} b_i
 G_0 C_p^{k-i} \odot N^{i} - \sum_{i=1}^{\alpha_0}
 G_i C_p^{k} \odot
 N^{i}
 \right) \nonumber   \\
& = & \left\lceil
C_p^{k}\right\rceil ^{-1}  \odot \left(  G_0  \odot N_1 -\left(\sum_{i=1}^{\alpha_0}  G_i \odot   N^{i}\right)
 \right)=\left\lceil
C_p^{k}\right\rceil ^{-1} \odot \left(  G_0 \odot N_1 + H_2\right)
\end{eqnarray}
for all large $k$. Here $H_2:=-\left(\sum_{i=1}^{\alpha_0}  G_i \odot   N^{i}\right)$  by definition.

Now, regarding the second term  $\lceil C_p^k \rceil ^{-1} \odot \sum_{j=1}^{\alpha_0}(-1)^j (N_1^j-N_2^j)$ in (\ref{maininequality2}), we then get
{\footnotesize\begin{eqnarray}\label{estimate0} 
\lceil C_p^k \rceil ^{-1}  \odot  \left( \sum_{j=1}^{\alpha_0}(-1)^j( N_1-N_2)\sum_{l=0}^{j-1}N_1^lN_2^{j-1-l}\right) & = & \lceil C_p^k \rceil ^{-2}  \odot \left(   \sum_{j=1}^{\alpha_0}(-1)^j\left(  G_0  \odot N_1 +H_2\right)(jN_1^{j-1}+H_1)\right) \nonumber \\
& = &  \lceil C_p^k \rceil ^{-2}  \odot  \sum_{j=1}^{\alpha_0}(-1)^j \left[G_0  \odot jN_1^j+G_0  \odot N_1H_1\right] \\
& + & \lceil C_p^k \rceil ^{-2}  \odot \sum_{j=1}^{\alpha_0}(-1)^j \left[jN_1^{j-1}H_2+H_1H_2\right] \label{estimate000}  \\
\nonumber \end{eqnarray}}
by (\ref{estimate02}) and (\ref{estimate03}). 

We next analyze the matrices in the last two lines. $G_0, G_i$ are constant matrices with $||G_0||, ||G_i || \leq 2$. Hence $||G_0||, ||G_i ||, ||H_2||$ do not depend on $k$. But $N_0$ and $N_1$ depend on $k$. So does $H_1$ as a result. Notice that the sums in the definitions of $N_0,N_1$ 
 are finite. Below $c_1,c_2,c_3,c_4,c_5$ are constants. For the terms depending on $k$ and involving $N_1$ but not $H_1$ \big(that is, {\footnotesize $\lceil C_p^k \rceil ^{-2}\odot  \left[ \sum_{j=1}^{\alpha_0}(-1)^j G_0  \odot jN_1^j 
 +  \sum_{j=1}^{\alpha_0}(-1)^j jN_1^{j-1}H_2 \right]$}\big), we find upper bounds for the terms depending on $k$  and for the terms of $N_1$.
By (\ref{estimate00}), we have
$$
 \left|\left|  \left\lceil
C_p^{k}\right\rceil ^{-2}b_i
C_p^{-i} \right|\right| \leq   (\rho_p^{-k}+||G||)^{2} k^{\alpha_0}   \rho_p^{-i} \leq m_p^{-\frac{3}{2}}c_1$$
because $$\lim_{k\rightarrow \infty}\frac{k^{\alpha_0}(\rho_p^{-k}+||G||)^{\frac{1}{2}}}{\rho_p^{i}}=0   \ \ \ \ (i=1,2,...,\alpha_0).$$
This gives $|| \lceil C_p^k \rceil ^{-2} N_1||= O(m_p^{-\frac{3}{2}})$, and we have
\begin{equation}\label{estimate1}
\left|\left|\lceil C_p^k \rceil ^{-2}  \odot \sum_{j=1}^{\alpha_0}(-1)^j G_0  \odot jN_1^j \\
 +  \lceil C_p^k \rceil ^{-2}  \odot \sum_{j=1}^{\alpha_0}(-1)^j jN_1^{j-1}H_2\right|\right|= O(m_p^{-\frac{3}{2}}).
 \end{equation}
As for the terms depending on $k$ and containing $H_1$ \big(that is, {\footnotesize $\lceil C_p^k \rceil ^{-2}\odot  \left[\sum_{j=1}^{\alpha_0}(-1)^j G_0  \odot N_1H_1
 +  
  \sum_{j=1}^{\alpha_0}(-1)^j H_1H_2 \right]$}, hence containing $N_0, N_1$\big),   we find upper bounds for the terms depending on $k$  and for the terms of $N_0$.
We have $$ \left|\left|  \left\lceil
C_p^{k}\right\rceil ^{-2}G_i \right|\right|\leq (\rho_p^{-k}+||G||)^{2}||G_i||\leq m_p^{-\frac{3}{2}}c_2$$  for all large $k$,
and by (\ref{estimate00}),
\begin{eqnarray*} \left|\left| \left\lceil C_p^{k}\right\rceil ^{-2} b_i
C_p^{k-i} G \right|\right|  & \leq &   (\rho_p^{-k}+||G||)^{2}k^{\alpha_0}\rho_p^{k-i}||G||  \\
& \leq & m_p^{-\frac{3}{2}}(\rho_p^{-k}+||G||)^{\frac{1}{2}}k^{\alpha_0}\rho_p^{k-i}\frac{c}{m_p}    \quad \quad \mathrm{since} \ \frac{ m_p}{2}\leq \rho_p^{k}.
\end{eqnarray*}

But $$\lim_{k\rightarrow \infty}\frac{\rho_p^{k-i}}{m_p}= \rho_p^{-i}\lim_{k\rightarrow \infty}\frac{\rho_p^{k}}{m_p}\leq 
\frac{1}{\rho_p^{i}} \ \ \ \ (i=1,2,...,\alpha_0)
$$ because $ \rho_p^{k}\leq m_p$,
and $\lim_{k\rightarrow \infty}k^{\alpha_0}(\rho_p^{-k}+||G||)^{\frac{1}{2}}=\lim_{k\rightarrow \infty}k^{\alpha_0}(\rho_p^{-k}+\frac{c}{m_p})^{\frac{1}{2}}=0$.
Consequently, we obtain 
$$ \left|\left| \left\lceil C_p^{k}\right\rceil ^{-2} b_i C_p^{k-i} G  \right|\right|\leq m_p^{-\frac{3}{2}}c_3.$$
It follows that $||\lceil C_p^k \rceil ^{-2}H_1||= O(m_p^{-\frac{3}{2}})$, and hence we get
\begin{equation}\label{estimate2}
\left|\left|\lceil   C_p^k \rceil ^{-2}  \odot \sum_{j=1}^{\alpha_0}(-1)^j G_0  \odot N_1H_1
 +  \lceil C_p^k \rceil ^{-2}   \odot \sum_{j=1}^{\alpha_0}(-1)^j H_1H_2\right|\right|= O(m_p^{-\frac{3}{2}}).
 \end{equation}
Therefore, (\ref{estimate0}), (\ref{estimate000}), (\ref{estimate1}), (\ref{estimate2}) lead to $$\left|\left| \lceil C_p^k \rceil ^{-1}  \odot \sum_{j=1}^{\alpha_0}(-1)^j (N_1^j-N_2^j) \right|\right|=\left|\left| \lceil C_p^k \rceil ^{-1}  \odot \left(  \sum_{j=1}^{\alpha_0}(-1)^j( N_1-N_2)\sum_{l=0}^{j-1}N_1^lN_2^{j-1-l}\right) \right|\right| = O(m_p^{-\frac{3}{2}}).$$
Additionally, we get
$$\left| m_p^{-1}(\tilde{a}_{\mathbf{j}})_p \right| \leq c_4\rho_p^{-k}\sum_{i=0}^{k-1}||J_{\lambda_p}^i|| \max\{|a| : a\in A\} = c_4\max\{|a| : a\in A\} \sum_{i=0}^{k-1} ||\rho_p^{-k} J_{\lambda_p}^i|| \leq c_5$$ 
because 
$ \rho_p^k\leq m_p$ and $\tilde{a}_{\mathbf{j}}\in \tilde{A}_k=\sum_{i=0}^{k-1}P^{-1} T^i A=\sum_{i=0}^{k-1}J^iP^{-1}A$ \ by  (\ref{IFS_digits}). 
Hence $\left| (\tilde{a}_{\mathbf{j}})_p \right|\leq c_5m_p \Longrightarrow$

\begin{eqnarray}\label{eqn1}
\left| \lceil C_p^k \rceil ^{-1}  \odot \left(  \sum_{j=1}^{\alpha_0}(-1)^j( N_1-N_2)\sum_{l=0}^{j-1}N_1^lN_2^{j-1-l}\right) (\tilde{a}_{\mathbf{j}})_p \right|  = O(m_p^{-\frac{1}{2}})
\end{eqnarray}

\bigskip

\textit{Upper bound for \ \ \rm{(II)} \ $(C_p^{-k}- \lceil C_p^k \rceil ^{-1}) \odot \left( I+\sum_{j=1}^{\alpha_0}(-1)^jN_1^j\right)(\tilde{a}_{\mathbf{j}})_p$} :
\medskip

\noindent Now, regarding the first term in (\ref{maininequality2}), we have
\begin{equation}\label{second_term}
(C_p^{-k}- \lceil C_p^k \rceil ^{-1}) \odot \left( I+\sum_{j=1}^{\alpha_0}(-1)^jN_1^j\right)=C_p^{-k}\lceil C_p^k \rceil ^{-1}(\lceil C_p^k \rceil -C_p^k) \odot \left( I+\sum_{j=1}^{\alpha_0}(-1)^jN_1^j\right).
\end{equation}
But $\lceil C_p^k \rceil -C_p^k=G_0$, and by (\ref{Power_Jordan_Block2}), we have  $J_{k,p}^{-1}= C_p^{-k} \odot  \left( I+\sum_{j=1}^{\alpha_0}(-1)^jN_1^j\right)$. Then
(\ref{second_term}) implies  $$ \left|(C_p^{-k}- \lceil C_p^k \rceil ^{-1}) \odot \left( I+\sum_{j=1}^{\alpha_0}(-1)^jN_1^j\right)(\tilde{a}_{\mathbf{j}})_p\right|= \left|\lceil C_p^k \rceil ^{-1}G_0 \odot J_{k,p}^{-1}(\tilde{a}_{\mathbf{j}})_p\right|\leq || \lceil C_p^k \rceil ^{-1}G_0||K,$$ where $K$ is the diameter of  $\tilde{F}$. Using  (\ref{estimate00}) and   the inequality $\frac{ m_p}{2}\leq \rho_p^{k}$, we get
 $$ \left|\left|  \left\lceil
C_p^{k}\right\rceil ^{-1}G_0 \right|\right|\leq (\rho_p^{-k}+||G||)||G_0||\leq m_p^{-\frac{1}{2}}c_2$$  for all large $k$. 
Therefore,
\begin{eqnarray}\label{eqn2}
\left|(C_p^{-k}- \lceil C_p^k \rceil ^{-1})  \odot \left( I+\sum_{j=1}^{\alpha_0}(-1)^jN_1^j\right)(\tilde{a}_{\mathbf{j}})_p\right| = O(m_p^{-\frac{1}{2}}).
\end{eqnarray} 
By (\ref{maininequality2}), (\ref{eqn1}), (\ref{eqn2}), we finally 
get  $$\left|(J_{k,p}^{-1}-T_{k,p}^{-1})(\tilde{a}_{\mathbf{j}})_p \right| \leq \frac{c_p}{\sqrt{m_p}}\leq \frac{c_p}{\sqrt{m_1}}$$ for some constant $c_p$.

\medskip

The same argument applies if the algebraic multiplicity of $\lambda_p$ is 1.

\medskip

Case 2. For a real eigenvalue $\lambda_p  $,
we have a similar consideration.  For the proof, one needs to replace $ C_p$ in the non-real-eigenvalue case by
$\lambda_p$,  the perturbed matrix $\left\lceil  b_i C_p^{k} \right\rceil  $   by  $\left\lceil  b_i \lambda_p^{k}\right\rceil  $ and the identity matrix $I_2$ in the nilpotent matrix $N$ by $1$. Then one can modify the above argument appropriately.
\end{pf}

\bigskip

From the proof of the above important lemma, we also get the following norm estimates, which we shall often use.

\begin{lemma}\label{lemma_norm_estimate1}
Let $J_k$ and $T_k$ be the expanding matrices generating the self-affine set $\tilde{F}$ and its perturbations $F_k$ respectively. Let  $r_s=|\lambda_s|^k$, \ 
 $m_s=\lceil r_s \rceil$ 
be their spectral radii  respectively and let $r_1=|\lambda_1|^k<r_2=|\lambda_2|^k<\cdots <r_s$, \  $m_1\leq m_2\leq \cdots \leq  m_s$ as before.
 Then there exist integer constants $c', c'',\alpha_0 >0$ such that
\begin{eqnarray}\label{norm_estimates1}
||J_k^l ||\leq c' k^{ c''} l^{\alpha_0}  r_s^l, \  \ \ \ \  ||T_k^l ||\leq c' k^{ c''} l^{\alpha_0} m_s^l, \  \ \ \ \ 
||J_k^{-l} ||\leq c' k^{ c''} l^{\alpha_0}  r_1^{-l}, \ \ \ \ \   ||T_k^{-l} ||\leq c' k^{ c''} l^{\alpha_0} m_1^{-l} \ \ \ \ 
\end{eqnarray}
for any positive integer $l$ and all large $k.$
In particular,  $\alpha_0=0$  when all the eigenvalues of $J$ have algebraic multiplicity $1$.
\end{lemma}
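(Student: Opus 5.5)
Since all four matrices are block diagonal with respect to the real Jordan decomposition $J_k=J_{k,1}\oplus\cdots\oplus J_{k,s}$ and $T_k=T_{k,1}\oplus\cdots\oplus T_{k,s}$, the spectral norm of a power is the maximum of the spectral norms of the corresponding powers of the blocks. It therefore suffices to prove each inequality blockwise, with constants uniform in $p$. We then use $r_p\le r_s$ and $m_p\le m_s$ for the positive powers, and $r_p^{-1}\le r_1^{-1}$ and $m_p^{-1}\le m_1^{-1}$ for the negative powers.

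\textbf{Bounds for $J_k$.} We write $J_{k,p}^l=J_{\lambda_p}^{kl}=(C_p\odot I+N)^{kl}$. Because $N^{n_p}=0$ and $C_p$ commutes with everything in the $\odot$ sense, the binomial expansion gives
\[
J_{k,p}^l=\sum_{i=0}^{n_p-1}\binom{kl}{i}C_p^{kl-i}\odot N^i .
\]
Since $C_p=\rho_pR_\theta$, each term has norm at most $(kl)^i\rho_p^{-i}\rho_p^{kl}$, so that
\[
\|J_{k,p}^l\|\le c\,k^{n_p-1}l^{n_p-1}r_p^l .
\]
The inverse $J_{k,p}^{-l}=(C_p\odot I+N)^{-kl}$ is handled the same way, using the finite Neumann series $(C_p\odot I+N)^{-1}=\sum_{j<n_p}(-1)^jC_p^{-1-j}\odot N^j$ and then raising it to the power $kl$. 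This gives $\|J_{k,p}^{-l}\|\le c\,(kl)^{n_p-1}r_p^{-l}$. We take $\alpha_0=c''=\max_p(n_p-1)$, which is $0$ when all multiplicities are $1$.

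\textbf{Bounds for $T_k$.} Here we use the representation (\ref{Power_Jordan_Block1}), $T_{k,p}=\lceil C_p^k\rceil\odot(I+N_2)$ with $N_2$ nilpotent of order $n_p$. As noted in the proof of Lemma \ref{deflection0}, $\lceil C_p^k\rceil$ is a (scaled) rotation whose modulus is $m_p$. Hence $T_{k,p}^l=\lceil C_p^k\rceil^l\odot(I+N_2)^l$ and $(I+N_2)^l=\sum_{j<n_p}\binom{l}{j}N_2^j$. The key step is a bound $\|N_2\|\le c\,k^{\alpha_0}$, which follows from (\ref{estimate00}) and (\ref{estimate01}): each summand $\lceil b_iC_p^{k-i}\rceil\lceil C_p^k\rceil^{-1}$ has norm at most $(k^i\rho_p^{k-i}+2)(\rho_p^{-k}+c/m_p)\le c\,k^{i}$. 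This yields
\[
\|T_{k,p}^l\|\le c\,k^{c''}l^{\alpha_0}m_p^l .
\]
For the inverse, (\ref{Power_Jordan_Block2}) gives $T_{k,p}^{-1}=\lceil C_p^k\rceil^{-1}\odot\sum_{j<n_p}(-1)^jN_2^j$. Therefore $T_{k,p}^{-l}=\lceil C_p^k\rceil^{-l}\odot(I+N_2)^{-l}$, where $(I+N_2)^{-l}=\sum_{j<n_p}\binom{-l}{j}N_2^j$. The same bound on $N_2$ then gives $\|T_{k,p}^{-l}\|\le c\,k^{c''}l^{\alpha_0}m_p^{-l}$. Since commuting $\odot$-products of $2\times2$ scaled rotations multiply norms exactly, no further loss occurs. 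The real-eigenvalue case follows by the substitution described in Case 2 of Lemma \ref{deflection0}.

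\textbf{Main obstacle.} The delicate point is the claim that $\lceil C_p^k\rceil$ is a scaled rotation of modulus exactly $m_p$. The entrywise ceiling of $\rho_p^kR_{k\theta}$ keeps the form $\begin{bmatrix}x&-y\\y&x\end{bmatrix}$, because the sign-symmetric rounding commutes with negation. However, its modulus $\sqrt{x^2+y^2}$ need not literally equal $\lceil r_p\rceil$. I would first try to use (\ref{estimate00}) directly, which gives $\|\lceil C_p^k\rceil^{\pm1}\|=\rho_p^{\pm k}(1+O(\rho_p^{-k}))$. After raising to the power $l$, this produces a factor $(1+O(\rho_p^{-k}))^l$, and that factor is not uniformly bounded in $l$. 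So I would interpret $m_p$ as the modulus of the eigenvalue of $T_{k,p}$, namely $|\det\lceil C_p^k\rceil|^{1/2}$, exactly as the statement of Lemma \ref{deflection0} does (``$T_k$ has eigenvalues with moduli $m_1\le\cdots\le m_s$''). With that interpretation the bounds are exact. The identification $m_s=\lceil r_s\rceil$ is then needed only up to the comparability $r_p\le m_p\le 2r_p$ used before.
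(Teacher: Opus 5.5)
Your proposal is correct and follows essentially the paper's own argument. You reduce to Jordan blocks, write $T_{k,p}=\lceil C_p^k\rceil\odot(I+N_2)$ with $\|N_2\|\le c\,k^{\alpha_0}$ obtained from (\ref{estimate00})--(\ref{estimate01}), and expand the unipotent factor $(I+N_2)^{\pm l}$ binomially; expanding $J_{\lambda_p}^{\pm kl}$ directly instead of going through $N_1$ is an immaterial variation.

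Your caveat is also well founded. The paper's step $\|\lceil C_p^k\rceil^l\|=m_p^l$ tacitly takes $m_p$ to be the modulus $\|\lceil C_p^k\rceil\|$ of the eigenvalues of $T_{k,p}$, as it does explicitly in the proof of Lemma \ref{number_of_pieces}. For non-real eigenvalues it is this reading, not the literal $\lceil r_p\rceil$, that makes the bounds uniform in $l$.
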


\begin{pf}
We consider the harder case of non-real eigenvalues $\lambda_p$ ($p=1,2,...,s$) with algebraic multiplicity $n_p\geq 2$.
We have $J_k=J_{k,1}\oplus J_{k,2} \oplus \cdots \oplus J_{k,s}$. 
 Again, it suffices to study the Jordan blocks $J_{k,p}$, $T_{k,p}$. To estimate the norms, recall $T_{k,p}$  and the powers of $J_{\lambda_p}$in (\ref{Power_Jordan_Block1}) and (\ref{Power_Jordan_Block2}) : \quad
$$J_{k,p}= C_p^k    \odot  \left( I+N_1 \right),   \quad \quad T_{k,p}=\lceil C_p^k \rceil   \odot  \left( I+N_2 \right),$$    $$J_{k,p}^{-1}= C_p^{-k} \odot  \left( I+\sum_{j=1}^{\alpha_0}(-1)^jN_1^j\right),\quad \quad T_{k,p}^{-1}=\lceil C_p^k    \rceil ^{-1} \odot \left( I+\sum_{j=1}^{\alpha_0}(-1)^jN_2^j\right),$$

\noindent where $$
N_1=C_p^{-k}\sum_{i=1}^{\alpha_0}
b_i(k)
C_p^{k-i} \odot  N^{i}
=\sum_{i=1}^{\alpha_0}
b_i(k)
C_p^{-i} \odot  N^{i},   \ \ \ \ \ \ \
N_2=\sum_{i=1}^{\alpha_0}
\left\lceil b_i(k)
C_p^{k-i}\right\rceil  \left\lceil
C_p^{k}\right\rceil ^{-1}  \odot  N^{i},$$
$N^{n_p}=N_1^{n_p}=N_2^{n_p}=0$, i.e. they are nilpotent, $I,\ N, \ N_1, \ N_2 $ have size $2n_p\times 2n_p$, $\alpha_0=\alpha_0(n_p)$ is a constant depending on  $n_p$, the $b_i(k)=\binom{k}{i}$ are binomial coefficients  and $ C_p^k $,  $\lceil C_p^k    \rceil$ are rotation matrices with spectral radii $r_p=|\lambda_p|^k=\rho_p^{k}$, \
$m_p$ (for $n_p=1$, the consideration will be similar and easier). Here
$ C_p^k$ corresponds to non-real eigenvalues. For real eigenvalues, 
 $ C_p^k$ or $\lceil C_p^k    \rceil$ is replaced by $r_p$ or $m_p$ and the sizes of $I,\ N_1, \ N_2 $ become $n_p\times n_p$.  It is enough to work with $T_{k,p}$ for $l>0$, the other case is similar. Similar to the notation of a scalar matrix,
we write $diag \left( \lceil  C_p^k \rceil \right) :=\lceil  C_p^k \rceil  I$ ($I$ is the identity matrix). By the definition of $ \odot$ in the last proof, we then notice that,
in fact,
 $$||N_2||=\left| \left| \sum_{i=1}^{\alpha_0} diag \left( \left\lceil b_i(k)
C_p^{k-i}\right\rceil  \left\lceil
C_p^{k}\right\rceil ^{-1}  \right)   N^{i} \right| \right| \leq   \sum_{i=1}^{\alpha_0} \left| \left| diag \left( \left\lceil b_i(k)
C_p^{k-i}\right\rceil  \left\lceil
C_p^{k}\right\rceil ^{-1}  \right) \right| \right|   \left| \left|  N^{i} \right| \right|.$$
Clearly,   $\left\lceil b_i(k)C_p^{k-i} \right\rceil =b_i(k) C_p^{k-i}+G_i$
for some appropriate
matrices $ G_i  $ with $||G_i || \leq 2$.
But
\begin{eqnarray}
  \left| \left| diag \left( \left\lceil b_i(k)
C_p^{k-i}\right\rceil  \left\lceil
C_p^{k}\right\rceil ^{-1}  \right) \right| \right| &\leq & \left| \left|  \left\lceil b_i(k)C_p^{k-i}\right\rceil \right| \right|
\left| \left| \left\lceil
C_p^{k}\right\rceil ^{-1}\right| \right|  \nonumber \\
   & \leq &  \left(\left| \left|  b_i(k) C_p^{k-i}\right| \right|+ ||G_i|| \right)
\left| \left| \left\lceil
C_p^{k}\right\rceil ^{-1}\right| \right| \nonumber \\
   & \leq & \left(b_i(k)|\lambda_p|^{k-i}+2 \right) \left(\rho_p^{-k}+||G|| \right) \nonumber \\ 
   & \leq & \left(b_i(k)|\lambda_p|^{k-i}+2 \right) \left(\rho_p^{-k}+\frac{c}{m_p} \right) \leq b_i(k)+2
\end{eqnarray}
for all large $k$ by (\ref{estimate00}).
Therefore $$i\leq \alpha_0 \Rightarrow |b_i(k)|=\left|\binom{k}{i}\right|\leq k^{\alpha_0}  \Rightarrow \left|\sum_{i=1}^{\alpha_0}b_i(k) \right|\leq \alpha_0 k^{\alpha_0} \Rightarrow ||N_2||\leq  \sum_{i=1}^{\alpha_0} (b_i(k)+2) \leq \alpha_0 k^{\alpha_0}+2\alpha_0\leq c'k^{\alpha_0}$$ for some constant $c'.$ We will use this estimate for the
following. 
$$\left| \left| T_{k,p}^{l} \right| \right| = \left| \left| diag \left( \lceil  C_p^k \rceil \right)^{l}  \left( I+N_2 \right)^{l} \right| \right| \leq  \left| \left| diag \left( \lceil  C_p^k \rceil \right)^{l}\right| \right|   \left| \left|  \left( I+N_2\right)^{l}\right| \right|=\left| \left|  \lceil  C_p^k \rceil ^{l}\right| \right| \cdot  \left| \left|  \left( I+N_2\right)^{l}\right| \right|.  $$
Since $\lceil  C_p^k \rceil $ is a rotation matrix, we have $\left| \left|  \lceil  C_p^k \rceil ^{l}\right| \right| =m_p^l.$  By the binomial expansion,
$\left( I+N_2\right)^{l}= I+\sum_{i=1}^{\alpha_0}b_i(l)N_2^i$ because $N_2$ is nilpotent with $N_2^i=0$ for $i\geq n_p$.
 \begin{equation}\label{T_{k_p}_estimate}
 |b_i(l)|\leq l^{\alpha_0}\Rightarrow \left|\sum_{i=1}^{\alpha_0}b_i(l) \right|\leq \alpha_0 l^{\alpha_0}\Longrightarrow ||\left( I+N_2\right)^{l} ||\leq c' k^{c''} l^{\alpha_0}\Longrightarrow \left| \left| T_{k,p}^{l} \right| \right| \leq c' k^{c''} l^{\alpha_0} m_p^l
 \end{equation}
 for large $c'$. As a result, $ \left| \left| T_{k}^{l} \right| \right|\leq c' k^{c''} l^{\alpha_0} m_p^l\leq  c'  k^{c''} l^{\alpha_0} m_s^l$  if we choose  $c', c''>0$ sufficiently large.
 
 For the last assertion, we observe that  the term $l^{\alpha_0}$ in (\ref{norm_estimates1}) comes from the estimation of the binomial coefficients in $N_1$ and $N_2.$
 If $J$ has only simple eigenvalues, then the nilpotent matrix $N$ will be the zero matrix. Therefore, $N_1$ and $N_2$ will be zero matrices by their definitions. As a result, 
 there will be no contribution from the binomial coefficients to our estimation. Thus $\alpha_0=0$ when $J$ has simple eigenvalues.
\end{pf}

\begin{remark}\label{constant_remark} \rm{ 
When $k$ is fixed or has no contribution to computation, we will regard it as a constant and omit the term $k^{c''}$ above. Also, $c', c''$ depend on the sign of $l$, $-l$ and the matrix $J$ (or $T_k$). Further, ignoring the term $l^{\alpha_0}$ in these estimates will not cause serious problems in our context.} $\hfill \Box$
\end{remark}

Sometimes, it is more appropriate to use cruder and simpler estimates for negative powers like the following.

\begin{lemma}\label{lemma_norm_estimate2}
Let $J_k$ and $T_k$ be  as in Lemma \ref{lemma_norm_estimate1}. Then there exist  constants $c'>0$ and $ \eta>1$ such that
\begin{equation*}\label{norm_estimates2}
|| J_k^{-l}||\leq \frac{c'}{(\eta^k)^l}, \qquad \qquad  ||T_k^{-l} ||\leq \frac{c'}{(\eta^k)^l}  \qquad \qquad l\in \mathbb{N}.
\end{equation*}
\end{lemma}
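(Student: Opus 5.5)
We derive the estimate from Lemma \ref{lemma_norm_estimate1}, trading its polynomial factors $k^{c''}l^{\alpha_0}$ for a slightly smaller exponential rate. Let $\rho_1=|\lambda_1|>1$ be the smallest eigenvalue modulus of $J$, and fix any $\eta$ with $1<\eta<\rho_1$. For $J_k$, Lemma \ref{lemma_norm_estimate1} gives
\[
\|J_k^{-l}\|\leq c' k^{c''} l^{\alpha_0} r_1^{-l}=c'k^{c''}l^{\alpha_0}\rho_1^{-kl}.
\]
For $T_k$ we have $m_1\geq r_1$: the diagonal blocks of $T_k$ are built from $\lceil C_p^k\rceil$, or $\lceil \lambda_p^k\rceil$ in the real case, whose moduli are at least $\rho_p^k$ by the rounding rule (\ref{pert_matrix}). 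Hence $\|T_k^{-l}\|\leq c'k^{c''}l^{\alpha_0}\rho_1^{-kl}$ as well.

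It then suffices to show that
\[
\sup_{k\geq k_1,\;l\geq 1} k^{c''}l^{\alpha_0}\left(\frac{\eta}{\rho_1}\right)^{kl}<\infty .
\]
Put $\theta=\eta/\rho_1<1$. Since $kl\geq k$ and $kl\geq l$, we have $\theta^{kl}\leq \theta^{kl/2}\theta^{kl/2}\leq \theta^{k/2}\theta^{l/2}$. The supremum is therefore bounded by
\[
\Big(\sup_k k^{c''}\theta^{k/2}\Big)\Big(\sup_l l^{\alpha_0}\theta^{l/2}\Big),
\]
which is finite. This gives $\|J_k^{-l}\|,\|T_k^{-l}\|\leq c'(\eta^k)^{-l}$ for all $l\in\mathbb{N}$ and all $k\geq k_1$, after enlarging $c'$.

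The remaining finitely many $k<k_1$, for which Lemma \ref{lemma_norm_estimate1} may not apply, are handled directly. Each fixed $T_k$ (respectively $J_k$) is expanding with smallest eigenvalue modulus $\geq\rho_1^k>\eta^k$. By the spectral radius formula applied to $T_k^{-1}$, $\|T_k^{-l}\|(\eta^k)^l$ is bounded in $l$. Taking the maximum of these finitely many constants together with the one above gives a single $c'$.

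The main obstacle is justifying $m_1\geq \rho_1^k$, i.e. that the rounding in (\ref{pert_matrix}) never shrinks the smallest eigenvalue modulus below $\rho_1^k$. This should be checked blockwise. The inverse of the lower block-triangular $T_k$ has diagonal blocks $T_{k,p}^{-1}$, and the $\lceil\cdot\rceil$ convention rounds away from zero, so for real eigenvalues $|\lceil\lambda_p^k\rceil|\geq|\lambda_p|^k$. For the rotation-type blocks, I would invoke (\ref{estimate00}), $\|\lceil C_p^k\rceil^{-1}\|\leq\rho_p^{-k}+c/m_p$. Even if $m_1<r_1$ slightly, this yields $\|\lceil C_p^k\rceil^{-1}\|\leq\rho_1^{-k}(1+O(\rho_1^{-k}))$. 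The extra factor raised to the $l$th power, $(1+c\rho_1^{-k})^l$, is again absorbed by choosing $\eta<\rho_1$, because $(1+c\rho_1^{-k})\eta^k/\rho_1^k\leq 1$ for large $k$.
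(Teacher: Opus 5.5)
Your proof is correct. For $T_k$ it uses the same idea as the paper: start from Lemma \ref{lemma_norm_estimate1}, use $m_1\geq|\lambda_1|^k$, and absorb the factor $k^{c''}l^{\alpha_0}$ into an exponential gap.

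The two arguments differ in how the absorption is done.
\begin{itemize}
\item The paper writes $m_1^{-l}=m_1^{-l/2}\cdot m_1^{-l/2}$ and lets one half kill the polynomial, which forces the choice $1<\eta<\sqrt{|\lambda_1|}$. It then only notes that the remaining expression tends to $0$ as $l\to\infty$ for each fixed $k$. It does not spell out uniformity in $k$, nor treat the small $k$ not covered by Lemma \ref{lemma_norm_estimate1}.
\item Your inequality $\theta^{kl}\leq\theta^{k/2}\theta^{l/2}$ works for every $\eta<|\lambda_1|$ and makes the uniformity in $k$ explicit. Your spectral-radius step for finitely many $k$ closes the small-$k$ gap.
\end{itemize}

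For $J_k$ the paper takes a shorter route that bypasses Lemma \ref{lemma_norm_estimate1} entirely. Since $J_k^{-l}=J^{-kl}$, the bound $\|M^N\|\leq c'\eta^{-N}$ for the single matrix $M=J^{-1}$, applied with $N=kl$, gives the estimate for all $k,l$ at once.

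Two small remarks.
\begin{itemize}
\item Proposition \ref{deflection} later uses $\eta^k<\sqrt{|\lambda_1|^k}\leq\sqrt{m_1}$. You should therefore choose $\eta<\sqrt{|\lambda_1|}$, which your argument allows.
\item The hedge in your last paragraph does not follow from (\ref{estimate00}). That estimate bounds the error by $c/m_p$, which is of the same order as $\rho_p^{-k}$, so it cannot give $\rho_1^{-k}(1+O(\rho_1^{-k}))$. The hedge is also unnecessary. Rounding away from zero maps a scaled rotation with entries $\pm a,\pm b$ to a scaled rotation with entries $\pm a',\pm b'$, where $|a'|=\lceil|a|\rceil$ and $|b'|=\lceil|b|\rceil$. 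Hence $m_p\geq\rho_p^k$ exactly. Alternatively, the bound $\|\lceil C_p^k\rceil-C_p^k\|\leq 2$ from (\ref{estimate01}) gives $m_p\geq\rho_p^k-2$, which also suffices.
\end{itemize}
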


\begin{pf}
We shall also  use the inequality in \cite[Remark 3.2, p.487]{KK1} on the norm and spectral radius $\lambda(M)$ of a matrix $M$. But we modify it for our purposes. For
any number $\frac{1}{\eta}>\lambda(M)$, there exists a constant $c'$ such that
\begin{equation}\label{spectral}
 ||M^l||\leq c'\frac{1}{\eta^l}
\end{equation} for every $l\in \mathbb{N}.$
In  (\ref{spectral}),  we take $M=J^{-1}$. Choose a number $\eta$ so that $1<\eta  < \sqrt{|\lambda_1|}<|\lambda_1|$,  where $\frac{1}{|\lambda_1|}$ spectral radius of $J^{-1}$.
Since $ \lambda(J^{-1})=\frac{1}{|\lambda_1|}<\frac{1}{\eta}$,  we get
$ || J^{-kl}||\leq \frac{c'}{(\eta^k)^l}$ for every $k,l$ by  (\ref{spectral}). Further, $J_k^{-l}=J^{-kl}$ by definition so that $$ || J_k^{-l}||\leq \frac{c'}{(\eta^k)^l}.$$

Recall that $m_1=\lceil |\lambda_1|^k \rceil$. On the other hand, it follows from the proof of Lemma \ref{lemma_norm_estimate1} that
 $$||T_k^{-l}||\leq  c'  k^{c''} l^{\alpha_0} m_1^{-l}= m_1^{-l/2} (c'  k^{c''} l^{\alpha_0} m_1^{-l/2})\leq m_1^{-l/2} (c'  k^{c''} l^{\alpha_0} |\lambda_1|^{k(-l/2)})$$ for some constants
$c', c'',\alpha_0 $.  Note that $\underset{l\rightarrow \infty}{\lim} (c'  k^{c''} l^{\alpha_0} |\lambda_1|^{k(-l/2)})=0.$  Then $||T_k^{-l}||\leq  c'\frac{1}{(\sqrt{m_1})^{l}}$ for some  $c'.$   In view of  $ 1<\eta^k  < \sqrt{|\lambda_1|^k}\leq \sqrt{m_1},$ we have
  \begin{equation*}\label{Ineq 1}  ||T_k^{-l}||\leq  c'\frac{1}{(\sqrt{m_1})^{l}}\leq \frac{c'}{(\eta^k)^l}.
  \end{equation*}
\end{pf}

\bigskip

\begin{prop}\label{deflection} Let $x\in \tilde{F}=P^{-1}F=F(J_k,\tilde{A}_k)$ and $ x_{k}\in F_{k}=F(T_k,D_k)$ be as in (\ref{special_correspondence}). Then
$\underset{k\rightarrow\infty} {\lim} x_{k}=x$ uniformly (meaning that the convergence does not depend on x) and hence  $F_{k} \rightarrow \tilde{F} $ in the Hausdorff metric.
\end{prop}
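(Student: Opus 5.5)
We write $x-x_k$ as a telescoping series. Both $x$ and $x_k$ are built from the same multi-index sequence $(\mathbf{j}_i)$ as in (\ref{special_correspondence}), so
\begin{equation*}
x-x_k=\sum_{i=1}^\infty\bigl(J_k^{-i}\tilde a_{\mathbf{j}_i}-T_k^{-i}d_{\mathbf{j}_i}\bigr)
=\sum_{i=1}^\infty T_k^{-(i-1)}\bigl(J_k^{-1}-T_k^{-1}\bigr)\tilde a_{\mathbf{j}_i}
+\sum_{i=1}^\infty T_k^{-i}\bigl(\tilde a_{\mathbf{j}_i}-d_{\mathbf{j}_i}\bigr)
+R_k,
\end{equation*}
where $R_k$ collects the cross terms. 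To get this form, expand $J_k^{-i}-T_k^{-i}=\sum_{l=0}^{i-1}T_k^{-l}(J_k^{-1}-T_k^{-1})J_k^{-(i-1-l)}$. Using the identity $\sum_{i\ge1}J_k^{-i}\tilde a_{\mathbf{j}_i}=x$ for shifted sequences, this becomes
\begin{equation*}
\sum_i \bigl(J_k^{-i}-T_k^{-i}\bigr)\tilde a_{\mathbf{j}_i}=\sum_{l\ge0}T_k^{-l}\bigl(J_k^{-1}-T_k^{-1}\bigr)\bigl(J_k\,\sigma^l x\bigr),
\end{equation*}
where $\sigma^l x=\sum_{i\ge1}J_k^{-i}\tilde a_{\mathbf{j}_{i+l}}\in\tilde F$. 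We then write $J_k\sigma^lx=\tilde a_{\mathbf{j}_{l+1}}+\sigma^{l+1}x$. The second piece is bounded by $\|J_k^{-1}-T_k^{-1}\|\,\mathrm{diam}\,\tilde F$, and that norm tends to $0$ by the block computations (\ref{estimate00}), (\ref{maininequality2}) already carried out in Lemma \ref{deflection0}. The first piece is exactly what Lemma \ref{deflection0} controls: it gives $|(J_k^{-1}-T_k^{-1})\tilde a_{\mathbf{j}}|\le c/\sqrt{m_1}$ uniformly over $\tilde a_{\mathbf{j}}\in\tilde A_k$.

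Next we sum the geometric factors. By Lemma \ref{lemma_norm_estimate2}, $\|T_k^{-l}\|\le c'(\eta^k)^{-l}$, so $\sum_{l\ge0}\|T_k^{-l}\|\le c'/(1-\eta^{-k})$, which is at most $2c'$ for large $k$. The first sum is therefore $O(m_1^{-1/2})+o(1)$. The digit-rounding sum $\sum_i T_k^{-i}(\tilde a_{\mathbf{j}_i}-d_{\mathbf{j}_i})$ is easier. By (\ref{floor_digit_lower_perturbation}) each coordinate of $\tilde a_{\mathbf{j}}-d_{\mathbf{j}}$ lies in $(-1,1)$, so $|\tilde a_{\mathbf{j}}-d_{\mathbf{j}}|\le\sqrt n$. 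Hence this sum is bounded by $\sqrt n\sum_{i\ge1}c'(\eta^k)^{-i}=O(\eta^{-k})$.

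None of these bounds depends on $x$ or on the chosen coding, and $m_1\ge|\lambda_1|^k\to\infty$. So $\sup_x|x-x_k|\to0$, which is the uniform convergence claimed in the proposition.

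For the Hausdorff-metric statement, note that every point of $F_k$ equals $x_k$ for some coding. Each coding yields some $x\in\tilde F$, and conversely each $x$ yields some $x_k$. Uniform convergence therefore bounds both one-sided distances, giving $d_H(F_k,\tilde F)\le\sup|x-x_k|\to0$.

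The main obstacle is the first sum: the quantity $J_k\sigma^lx$ contains the large digit $\tilde a_{\mathbf{j}_{l+1}}$, whose size is comparable to $m_s$. This is why we rely on Lemma \ref{deflection0}, which handles $(J_k^{-1}-T_k^{-1})$ applied to such digits, rather than on a crude operator-norm bound times $|\tilde a_{\mathbf{j}}|$. We also need $\|J_k^{-1}-T_k^{-1}\|\to0$ for the tail term $\sigma^{l+1}x$. If that proves delicate in the Jordan-block case, the fallback is to reuse the $O(m_p^{-1/2})$ bounds from the proof of Lemma \ref{deflection0}, since $\tilde F$ is bounded.
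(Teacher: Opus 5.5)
Your argument is correct. It reaches the same bound $O(m_1^{-1/2}+\eta^{-k})$ as the paper, but it handles the first sum $\sum_i (J_k^{-i}-T_k^{-i})\tilde a_{\mathbf{j}_i}$ differently.

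The paper does not telescope. It peels off only the $i=1$ term and bounds it by $c/\sqrt{m_1}$ via Lemma \ref{deflection0}. For $i\ge 2$ it uses the crude estimate
$|(J_k^{-i}-T_k^{-i})\tilde a_{\mathbf{j}_i}|\le \|J_k^{-(i-1)}\|\,|J_k^{-1}\tilde a_{\mathbf{j}_i}|+\|T_k^{-(i-1)}\|\,|T_k^{-1}\tilde a_{\mathbf{j}_i}|$,
together with $|J_k^{-1}\tilde a_{\mathbf{j}}|\le \mathrm{diam}\,\tilde F$ (since $0\in\tilde F$) and $|T_k^{-1}\tilde a_{\mathbf{j}}|\le c/\sqrt{m_1}+\mathrm{diam}\,\tilde F$. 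The geometric factors from Lemma \ref{lemma_norm_estimate2} then make the whole tail $O(\eta^{-k})$. So the paper needs no cancellation beyond the leading digit. After one application of $J_k^{-1}$ or $T_k^{-1}$ the digits are bounded, and Lemma \ref{deflection0} is used only for the first term and to know that $T_k^{-1}$ maps digits into a bounded set.

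Your route is different. You use the identity $J_k^{-i}-T_k^{-i}=\sum_{l=0}^{i-1}T_k^{-l}(J_k^{-1}-T_k^{-1})J_k^{-(i-1-l)}$ and the shift relation $J_k\sigma^l x=\tilde a_{\mathbf{j}_{l+1}}+\sigma^{l+1}x$. This writes the whole error as a series in the one-step difference $J_k^{-1}-T_k^{-1}$, applying Lemma \ref{deflection0} once at every level. That gives a tidier structural picture of where the error comes from. The cost is a little more machinery, including a double-sum interchange, which you should justify explicitly by absolute convergence: for fixed $k$ the digits are bounded and both $\|T_k^{-l}\|$ and $\|J_k^{-m}\|$ decay geometrically.

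Finally, your concern that $\|J_k^{-1}-T_k^{-1}\|\to 0$ might be delicate in the Jordan-block case is unfounded. Lemma \ref{lemma_norm_estimate2} gives $\|J_k^{-1}-T_k^{-1}\|\le\|J_k^{-1}\|+\|T_k^{-1}\|\le 2c'\eta^{-k}$ directly, so neither the block computations nor a fallback are needed.
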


\begin{pf} 
 Assume that $F_{k}=F(T_k,D_k)$ be the perturbations of $\tilde{F}$ so
that $T_{k}$ has eigenvalues with absolute values
$$1<m_1\leq m_2\leq \cdots \leq  m_s.$$ 
Consider the
series expansions $x=\sum_{i=1}^\infty J_{k}^{-i} \tilde{a}_{{\mathbf{j}_i}}\in \tilde{F}=F(J_k,\tilde{A}_k)$, \ $x_{k}=\sum_{i=1}^\infty T_{k}^{-i} d_{{\mathbf{j}_i}}\in F_{k}$.
We may assume that $  \tilde{a}_{{\mathbf{j}_i}}, \ d_{{\mathbf{j}_i}} \geq 0$  with  $d_{{\mathbf{j}_i}}=\lfloor \tilde{a}_{{\mathbf{j}_i}} \rfloor$. Then
\begin{equation}\label{special_convergence}
x-x_{k}=\sum_{i=1}^\infty (J_{k}^{-i}-T_{k}^{-i}) \tilde{a}_{{\mathbf{j}_i}}+\sum_{i=1}^\infty T_{k}^{-i} (\tilde{a}_{{\mathbf{j}_i}}-d_{{\mathbf{j}_i}}).
\end{equation}
 We now find an upper bound for the norm of the second sum in (\ref{special_convergence}). 
By Lemma \ref{lemma_norm_estimate2}, we have
  \begin{equation}\label{Ineq 2}  ||J_k^{-i}||\leq \frac{c'}{(\eta^k)^i}, \quad \quad \quad \quad \quad  ||T_k^{-i}||\leq \frac{c'}{(\eta^k)^i}.\end{equation} 
These inequalities imply
\begin{equation}\label{Ineq 3} \left|  \sum_{i=1}^\infty T_{k}^{-i} (\tilde{a}_{{\mathbf{j}_i}}-d_{{\mathbf{j}_i}}) \right|\leq   \sum_{i=1}^\infty ||T_{k}^{-i}||\cdot |\tilde{a}_{{\mathbf{j}_i}}-d_{{\mathbf{j}_i}}| \leq \sum_{i=1}^\infty c'\frac{n}{(\eta^k)^i} =c'
\frac{n}{\eta^k-1}.
\end{equation}

\noindent  As for the first sum $\sum_{i=1}^\infty (J_{k}^{-i}-T_{k}^{-i}) \tilde{a}_{{\mathbf{j}_i}}$ in (\ref{special_convergence}), we need an upper bound for its norm.
In Lemma \ref{deflection0}, we saw that
$$|T_k^{-1}\tilde{a}_{{\mathbf{j}}}|-|J_k^{-1}\tilde{a}_{{\mathbf{j}}}|\leq |(J_k^{-1} - T_k^{-1})\tilde{a}_{{\mathbf{j}}}|\leq \frac{c}{\sqrt{m_1}}\Longrightarrow $$
\begin{equation}\label{Ineq 4}|T_k^{-1}\tilde{a}_{{\mathbf{j}}}|\leq  \frac{c}{\sqrt{m_1}}+|J_k^{-1}\tilde{a}_{{\mathbf{j}}}|,  \quad \quad \tilde{a}_{{\mathbf{j}}}\in \tilde{A}_k. 
\end{equation}
Now we rewrite 
$\sum_{i=1}^\infty (J_{k}^{-i}-T_{k}^{-i}) \tilde{a}_{{\mathbf{j}_i}}$ as 
$$\sum_{i=1}^\infty (J_{k}^{-i}-T_{k}^{-i}) \tilde{a}_{{\mathbf{j}_i}}=(J_{k}^{-1}-T_{k}^{-1})\tilde{a}_{{\mathbf{j}_1}} +\sum_{i=2}^\infty (J_{k}^{-i}-T_{k}^{-i}) \tilde{a}_{{\mathbf{j}_i}}.$$ 
Note that $J_{k}^{-1}\tilde{a}_{{\mathbf{j}_i}}\in \tilde{F}\Longrightarrow |J_{k}^{-1}\tilde{a}_{{\mathbf{j}_i}}|\leq diam(\tilde{F})$. Let $\textsf{K}$ be a number such that $diam(\tilde{F})\leq \textsf{K}.$ Then
\begin{eqnarray}\label{estimate3}
\left| \sum_{i=1}^\infty (J_{k}^{-i}-T_{k}^{-i}) \tilde{a}_{{\mathbf{j}_i}}\right| & \leq &  \left| (J_{k}^{-1}-T_{k}^{-1})\tilde{a}_{{\mathbf{j}_1}} \right|+\sum_{i=2}^\infty \left( |J_{k}^{-i}\tilde{a}_{{\mathbf{j}_i}}|+|T_{k}^{-i} \tilde{a}_{{\mathbf{j}_i}}| \right) \nonumber \\
& \leq & \frac{c}{\sqrt{m_1}}+\sum_{i=2}^\infty \left( ||J_{k}^{-i+1}||\cdot |J_{k}^{-1}\tilde{a}_{{\mathbf{j}_i}}|+||T_{k}^{-i+1}||\cdot |T_{k}^{-1}\tilde{a}_{{\mathbf{j}_i}}| \right) \nonumber \\
& \leq & \frac{c}{\sqrt{m_1}}+c'\sum_{i=2}^\infty  (\eta^k)^{-i+1} \left(\frac{c}{\sqrt{m_1}}+2\textsf{K}  \right) \quad \quad \quad  \mathrm{by} \ \ (\ref{Ineq 2}), \ (\ref{Ineq 4})  \nonumber \\
&   \leq & \frac{c}{\sqrt{m_1}}+c'\frac{\frac{c}{\sqrt{m_1}}+2\textsf{K}}{\eta^k-1}.
\end{eqnarray}
Consequently, 
\begin{eqnarray}\label{estimatex_k}
|x-x_{k}|\leq\left| \sum_{i=1}^\infty (J_{k}^{-i}-T_{k}^{-i}) \tilde{a}_{{\mathbf{j}_i}} \right|+\left| \sum_{i=1}^\infty T_{k}^{-i} (\tilde{a}_{{\mathbf{j}_i}}-d_{{\mathbf{j}_i}})\right| \leq \frac{c}{\sqrt{m_1}}+c'\frac{\frac{c}{\sqrt{m_1}}+2\textsf{K}+n}{\eta^k-1}
\end{eqnarray}
by  (\ref{special_convergence}), (\ref{Ineq 3}), (\ref{estimate3}).
This gives $\underset{k\rightarrow\infty} {\lim} x_{k}=x$
because $\underset{k\rightarrow\infty}{\lim} \eta^k = 
\infty $ in view of the inequalities 
$1<\eta<\sqrt{|\lambda_1|}<|\lambda_1|$ and $\eta^k<\sqrt{|\lambda_1|^k}\leq  \sqrt{m_1}$ (see the proof of Lemma \ref{lemma_norm_estimate2}).

From (\ref{estimatex_k}), we deduce that
$|x-x_{k}|\leq \frac{c''}{\eta^k-1},$ where 
$c''$ is a constant independent of $k$ and $x\in \tilde{F}$.
Let  $d_H$ be the Hausdorff metric on non-empty compact sets.  One can now see that
$d_H(\tilde{F},F_{k})\leq \sup_{x\in \tilde{F}}|x-x_{k}|\leq \frac{c''}{\eta^k-1}$ and $\underset{k\rightarrow\infty} {\lim} d_H(\tilde{F},F_{k})=0.$
\end{pf}

\bigskip

\bigskip

\begin{remark}\label{rem2}  {\rm \begin{itemize}
                                   \item[(a)]  Another version of the above proposition is given below.  Unlike Proposition \ref{deflection}, we assume that the 
multi-indices ${\mathbf{j}}_i$ and ${\mathbf{j}'_i}$
  are not necessarily equal except for the first $\textsf{r} \leq k$ indices. That is, $${\mathbf{j}}_1=j_1j_2...j_{\textsf{r}}j_{\textsf{r}+1}...j_k, \quad \quad \quad \quad {\mathbf{j}}'_1=j'_1j'_2...j'_{\textsf{r}}j'_{\textsf{r} + 1}...j'_k=j_1j_2...j_{\textsf{r}}j'_{\textsf{r} + 1}...j'_k$$ when $\textsf{r} < k$ and ${\mathbf{j}}_1={\mathbf{j}}'_1$ when $\textsf{r} = k$.  
For each  positive  integer $\textsf{r}$, we consider any integer 
   $k=k_{\textsf{r}}\geq \textsf{r}$ and points in the form of 
\begin{equation}\label{special_correspondence-1}
x_{\textsf{r}}=\sum_{i=1}^\infty J_{k}^{-i} \tilde{a}_{{\mathbf{j}}_i}, \ \ \ \ \ \ \ \ \ \ \ \ y_{_{k_{\textsf{r}}}}=\sum_{i=1}^\infty T_{k}^{-i} d_{{\mathbf{j}'_i}},
\end{equation}
where $\tilde{a}_{{\mathbf{j}}_i}\in \tilde{A}_k$, \  $d_{\mathbf{j}'_i}\in D_{k}$ and ${\mathbf{j}}_1, {\mathbf{j}}'_1$ are as above.  Thus $j_1j_2...j_\textsf{r}=j'_1j'_2...j'_\textsf{r}$, \ \ $\tilde{a}_{{\mathbf{j}}_1}=\sum_{i=1}^k J^{k-i} \tilde{a}_{j_i}$ and 
$d_{{\mathbf{j}}'_1}=\lfloor  \tilde{a}_{{\mathbf{j}}'_1} \rfloor=\lfloor  \sum_{i=1}^k J^{k-i} \tilde{a}_{j'_i}\rfloor$ by the definition of $d_{{\mathbf{j}}'_i}$. But  $d_{{\mathbf{j}}'_i}=\lfloor  \tilde{a}_{\mathbf{j}'_i} \rfloor$ may not be equal to 
$d_{{\mathbf{j}}_i}=\lfloor  \tilde{a}_{\mathbf{j}_i} \rfloor
$ for $i\geq 2$.   
By necessary adjustments in the above proof, we can prove the following.

\bigskip

{\it Let $x_{\textsf{r}}\in \tilde{F}=F(J_k,\tilde{A}_k)$ and
$  y_{_{k_{\textsf{r}}}}\in F_{k}=F(T_k,D_k)$ be as in (\ref{special_correspondence-1}).
Then}
$\underset{r\rightarrow\infty} {\lim} |x_{\textsf{r}}-y_{_{k_{\textsf{r}}}}|=0$ uniformly (meaning that the convergence is independent of the points $x_{\textsf{r}}, y_{_{k_{\textsf{r}}}}$). 
                                   \item[(b)]   We have also the following variants of the above estimates.
                                   By using Lemma \ref{lemma_norm_estimate1}, one may  obtain the inequalities
 $$|x-x_k|\leq c' k^{ c''}  |\lambda_1|^{-k}, \quad  \quad \quad |x_{\textsf{r}}-y_{_{k_{\textsf{r}}}}|\leq c'  {\textsf{r}}^{c''}  |\lambda_1|^{-\textsf{r}}$$ 
  for some constants $c',c''$. $\hfill \Box$
                                 \end{itemize} } 
\end{remark}

\section{Basic Results }\label{Stability}

We have introduced the sequence $\left\{ F_k=F(T_k,D_k) \right\}$ of perturbed fractals in Section \ref{pert_defl} to deal with the case where the characteristic polynomial of $T$, $f(x)\in \Bbb{Z}[x]$, has eigenvalues with irrational absolute values.

Recall that $F(T,A)=\left\{\sum_{i=1}^\infty T^{-i} a_{j_i} : \ a_{j_i}\in A \right\}$. Alternatively, we may use the sequence notation $x=(a_{j_i})_{i=1}^\infty$ or $x=(a_{j_i})$ for $x=\sum_{i=1}^\infty T^{-i} a_{j_i}.$
In most of the proofs concerning dimension, we shall use covers by $n$-cubes. More explicitly, we can take $m$ to be  $m=\lfloor \lambda \rfloor$, where $\lambda$ is the minimum of the absolute values of the eigenvalues of $T$ and cover $\tilde{F}$ by n-cubes 
of side length $c_0m^{-r}$ ($c_0$ is a constant, but $r\in \mathbb{N}$ may vary). Moreover, we may always assume that $2\leq m$ because  $F=F(T^k,A_k)$ for all $k\in \mathbb{N}$ and the absolute values of the eigenvalues of $T^k$ are $\geq 2$ for all sufficiently large $k$.

In what follows, we write $diam(S)$ for the diameter of a set $S$. 

\begin{defn}\label{piece} {\rm Assume that $F=F(T,A)\subset \mathbb{R}^n$ is an integral self-affine set and  $l$  is a positive integer.

(i) If $T$ is a similarity or more generally, all of its eigenvalues have the same modulus, 
then by a \textit{ piece of $F$ of level $l$ (or level-l piece of $F$)}, we mean the set of points $x=(a_{j_i})\in F$ such that the first $l$ digits, $a_{j_i},...,a_{j_l}$, are fixed. That is, it is a set
of the form $P_l(F)=\sum_{i=1}^l T^{-i} a_{j_i}+T^{-l} F$.

(ii) If $J_k$ is not a similarity, we now consider $\tilde{F}=P^{-1}F=F(J_k,\tilde{A}_k)$ ($J_k$ is the Jordan form of $T^k$) or the perturbed fractal $F_k=F(T_k,D_k)$ for each $k$. Let
$J_{\lambda_p}$ ($p=1,2,...,s$) be the Jordan blocks of $J$ of size $n_p\times n_p$ for real eigenvalues $\lambda_p$ or $2n_p\times 2n_p$ for non-real eigenvalues. We shortly write  $J_{k,p}$ for $J_{\lambda_p}^k$. Assume that $J_k=diag(J_{k,1},J_{k,2},... ,J_{k,s})$ and $J_{k,p}$ has  eigenvalues of moduli $r_p=|\lambda_p^k|$ with $1<r_1< r_2 <\cdots <  r_s$.
Let the corresponding eigenvalues of $T_k$ have moduli 
$m_p$ with
$1 < m_1 \leq m_2 \leq \cdots  \leq m_s$. So $r_p$ and $m_p$ depend on $k$. For a given positive integer $l=l_1$, inductively define the integers $$l_{p+1}=\lfloor l_p\log_{m_{p+1}} m_p \rfloor$$ for $F_k$, or if $l=\tilde{l}_1$, define $$\tilde{l}_{p+1}=\lfloor \tilde{l}_p\log_{r_{p+1}} r_p \rfloor=\left\lfloor \tilde{l}_p \frac{\log{|\lambda_p}|}{\log{|\lambda_{p+1}}|}  \right\rfloor$$ for $\tilde{F}$, where $p=1,2,..,s-1$. Then $l_s \leq \cdots \leq l_2   \leq  l_1$ and $\tilde{l}_s \leq \cdots \leq \tilde{l}_2   \leq  \tilde{l}_1$.

We next consider the coordinates corresponding to Jordan blocks. Denote by $d_{{\mathbf{j}_i}}(\beta)$ ($\beta=1,2,...,n$) the $\beta$-th coordinate of a digit $d_{{\mathbf{j}_i}}\in D_k.$ Then by a \textit{ piece of $F_k$ of level $l=l_1$}, we mean a set 
$P_l(F_k)$ of points $x_k=(d_{{\mathbf{j}_i}})\in F_k$ with $d_{\mathbf{j}_1}(\beta),...,d_{\mathbf{j}_{l_p}}(\beta)$ are fixed for $1+\sum_{t=1}^{p}n_{t-1}\leq \beta \leq \sum_{t=1}^{{p}}n_{t}$  \ \ ($n_{0}=0$, $p=1,2,..,s$). Note that $\sum_{t=1}^{{s}}n_{t}=n$, the space dimension.

Let $(d_{{\mathbf{j}_i}})_{p} $ denote projection of  $d_{{\mathbf{j}_i}}$ to the coordinates $1+\sum_{t=1}^{p}n_{t-1},..., \sum_{t=1}^{{p}}n_{t}$. Thus 
$(d_{{\mathbf{j}_i}})_{p} $ corresponds to $p$-th block  $T_{k,p}$ of $T_k$. Then for a piece and its $p$-th block of $P_l(F_k)$, we have the following inclusions in terms of cylinder sets. 
$$\sum_{i=1}^{l_{1}} T_{k}^{-i} d_{{\mathbf{j}_i}}  + T_{k}^{-l_{1}} F_k \subseteq P_l(F_k) \subseteq \sum_{i=1}^{l_{s}} T_{k}^{-i} d_{{\mathbf{j}_i}}  + T_{k}^{-l_{s}} F_k$$
$$\sum_{i=1}^{l_{1}} T_{k,p}^{-i} (d_{{\mathbf{j}_i}})_{p}  + T_{k,p}^{-l_{p}} \left(T_{k,p}^{-(l_{1}-l_{p})}(F_k)_{p}\right)  \subseteq (P_l(F_k))_{p} \subseteq \sum_{i=1}^{l_{p}} T_{k,p}^{-i} (d_{{\mathbf{j}_i}})_{p}  + T_{k,p}^{-l_{p}} (F_k)_{p}$$ and $P_l(F_k)$ is a finite union of the so-called cylinder sets $\sum_{i=1}^{l_{1}} T_{k}^{-i} d_{{\mathbf{j}_i}}  + T_{k}^{-l_{1}} F_k \subseteq P_l(F_k)$ for certain $d_{{\mathbf{j}_i}}\in D_k$ ($1\leq i \leq l_1$) because we have some restrictions on $d_{{\mathbf{j}_i}}$ (i.e., $d_{\mathbf{j}_1}(\beta),...,d_{\mathbf{j}_{l_p}}(\beta)$ are fixed for the coordinates $\beta$ corresponding the $p$-th Jordan block) in this definition. Note that 
$T_{k,p}^{-(l_{1}-l_{p})}(F_k)_{p}\subseteq (F_k)_{p}.$
Then these inequalities will allow 
us to \underline{abuse the notation} and say that the $p$-th block of $P_l(F_k)$ is in the form of 
\begin{equation}\label{sloppy_notation}
 (P_l(F_k))_{p}=\sum_{i=1}^{l_{p}} T_{k,p}^{-i} (d_{{\mathbf{j}_i}})_{p}  + T_{k,p}^{-l_{p}} (F_k)_{p}
\end{equation} as if we were in case (i).
Because that does not affect our proofs negatively, however it is not immediate to see that (cf. Remark \ref{Actual_Case1}-(ii)). 
The definition for $P_l(\tilde{F})$ is similar.

(iii) 
We say that $P_l(F_k)$ or $P_l(\tilde{F})$ is \textit{commensurable} with an $n$-cube of side length $c_0m^{-r}$ ($c_0, m$ are natural numbers) if
$$ c_0m^{-r} < diam (P_l(F_k)) \leq c_0m^{-r+1}  \ \ {\rm or} \ \ c_0m^{-r} < diam (P_l(\tilde{F})) \leq c_0m^{-r+1}.$$ Setting $c_1=c_0$, \ $c_2=mc_0$, this can be written as
$$ c_1 m^{-r} < diam (P_l(F_k)) \leq  c_2 m^{-r} \ \ {\rm or} \ \  c_1 m^{-r} < diam (P_{l}(\tilde{F})) \leq  c_2 m^{-r}. \quad \hfill \Box $$ }
\end{defn}

\begin{remark}\label {piece_notation} \rm{ (a) When $T$ is a diagonal matrix, $l=r$ and $A$ has some special form, a piece $P_l(F_k)$ is also known as an ``approximate $n$-cube'' of diameter $\approx m_1^{-l}$
(i.e. $c' m_1^{-l}\leq diam( P_l(F_k))\leq c'' m_1^{-l}$ for some constants $c',c''>0$) \cite[ p.3]{M}, \cite{KP1}.
The non-diagonal case will be studied in Lemma \ref{inequality_pieces} below.

(b) The large values of $l$ are relevant in our study. Therefore, we may only consider the case $1\leq l_s\leq \cdots \leq l_2   \leq  l_1.$

(c) It is not hard to see that every piece is a finite union of smaller pieces. 

(d) A level-$l$ piece of $\tilde{F}=F(J_{k_1}, \tilde{A}_{k_1})$ and  a level-$l$ piece of $\tilde{F}=F(J_{k_2}, \tilde{A}_{k_2})$ are not necessarily the same for $k_1\neq k_2$ although 
$\tilde{F}=F(J_{k_1}, \tilde{A}_{k_1})=F(J_{k_2}, \tilde{A}_{k_2})$. That is because the definition of a piece  heavily depends on generating matrix $J_{k}$ and the digit set $ \tilde{A}_{k}$, and 
 matrices  $J_{k_1},\ J_{k_2} $ or digit sets $ \tilde{A}_{k_1},\  \tilde{A}_{k_2}$ are different in general. 
To be more specific, one might use the notation $P_{l,k}(\tilde{F})$ for the pieces of $\tilde{F}$ with respect to the generating data $J_{k},\tilde{A}_{k}$.}  $\hfill\Box$
\end{remark}

\noindent In our dimension consideration, 
\begin{enumerate}
  \item pieces will be our main objects,
  \item the same-level pieces of $\tilde{F}$ and $F_k$ will be closely related for large $k$,
  \item the number of overlapping pieces commensurable with an $n$-cube will not be large enough to affect our proofs. Thus overlaps may be ignored. 
\end{enumerate}

\bigskip
Clearly,  $l_p$ and $\tilde{l}_p$ depend on $k$, $l_1$, $\tilde{l}_1$  by definition. We first see how $l_p$ and $\tilde{l}_p$ are related to each other when $l_1(k)=\tilde{l}_1(k)$, i.e 
when $P_{\tilde{l}_1}(\tilde{F})$ and $P_{l_1}(F_k)$ are of the same level. To put it differently, we are interested in the asymptotic behavior of $l_p$ and $\tilde{l}_p$ 
when $l_1(k)=\tilde{l}_1(k)$ for all $k.$ The following is used for diameter estimates and  piece counting process with respect to the defining digits  of pieces or the indices of those digits. 

\begin{lemma}\label{number_of_pieces} Let $l_p=l_p(k)$ and $\tilde{l}_p=\tilde{l}_p(k)$ ($p=1,2,...,s$) be defined as in  Definition \ref{piece}.  If $l=l_1(k)=\tilde{l}_1(k)$  for each $k$, then there is a positive integer $k_0$ (independent of $l$) 
such that  $l_p=\tilde{l}_p-\iota_p\leq \tilde{l}_p$  with 
$\iota_p\leq p-1\leq s-1$
for $k\geq k_0$. 
\end{lemma}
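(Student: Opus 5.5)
The proof will be an induction on $p$, comparing the two recursions
$$l_{p+1}=\lfloor l_p\,\rho_p(k)\rfloor,\qquad \tilde l_{p+1}=\lfloor \tilde l_p\,\theta_p\rfloor,\qquad \rho_p(k):=\frac{\log m_p}{\log m_{p+1}},\quad \theta_p:=\frac{\log|\lambda_p|}{\log|\lambda_{p+1}|}<1 .$$
The base case $p=1$ is the hypothesis $l_1=\tilde l_1$, so $\iota_1=0$. For the inductive step I assume $l_p=\tilde l_p-\iota_p$ with $0\le\iota_p\le p-1$. I then want to show $l_{p+1}=\tilde l_{p+1}-\iota_{p+1}$ with $0\le \iota_{p+1}\le \iota_p+1\le p$. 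Iterating this gives $\iota_p\le p-1\le s-1$, as claimed.

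The first step is to quantify how close $\rho_p(k)$ is to $\theta_p$. By construction $m_p=\lceil r_p\rceil$ with $r_p=|\lambda_p|^k$. Hence $\log m_p=k\log|\lambda_p|+\varepsilon_p(k)$ with $0\le\varepsilon_p(k)\le\log(1+r_p^{-1})\le r_1^{-1}$. This gives
$$\rho_p(k)-\theta_p=\frac{\varepsilon_p\log|\lambda_{p+1}|-\varepsilon_{p+1}\log|\lambda_p|}{\log|\lambda_{p+1}|\,(k\log|\lambda_{p+1}|+\varepsilon_{p+1})}=O\!\left(\frac{1}{k\,|\lambda_1|^{k}}\right),$$
so this quantity decays exponentially in $k$. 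Writing $\rho_p(k)=\theta_p+\delta_p(k)$, I get
$$l_p\rho_p(k)=\tilde l_p\theta_p-\iota_p\theta_p+(\tilde l_p-\iota_p)\delta_p(k).$$
If the last term has modulus less than the gap between $\theta_p$ and $1$ (or simply less than $1-\iota_p\theta_p+\iota_p$ suitably arranged), then taking floors loses at most one unit beyond the $\lceil\iota_p\theta_p\rceil\le\iota_p$ already lost. This yields $\tilde l_{p+1}-\iota_p-1\le l_{p+1}\le \tilde l_{p+1}$, which is exactly the inductive claim. The lower-bound half works without difficulty because $\iota_p\theta_p<\iota_p$ and floors of numbers differing by less than $\iota_p+1$ differ by at most $\iota_p+1$.

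The hard step is the error term $(\tilde l_p-\iota_p)\delta_p(k)$, because $k_0$ must not depend on $l$. For the upper bound $l_{p+1}\le\tilde l_{p+1}$, I will first try to show that $\delta_p(k)\le 0$ for all large $k$. That would make the error term harmless for every $l$. It amounts to $\varepsilon_p\log|\lambda_{p+1}|\le\varepsilon_{p+1}\log|\lambda_p|$, where I would use $\varepsilon_p\approx\{-r_p\}/r_p$, which is much smaller for larger $r_p$ only after comparing fractional parts.

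If the sign cannot be controlled this way, my fallback is to use the range of $l$ that actually matters in the paper's covering arguments. There the level $l$ is tied to the cube scale $c_0m^{-r}$, so $\tilde l_p$ grows at most polynomially in $k$, while $\delta_p(k)$ decays like $|\lambda_1|^{-k}$. Then $\tilde l_p|\delta_p(k)|<\min\{\theta_p,1-\theta_p\}/2$ for $k\ge k_0$ uniformly. In the same spirit, I could restate the lemma with $l$ restricted to that range.
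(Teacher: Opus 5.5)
Your induction on $p$, with $\rho_p(k)=\theta_p+\delta_p(k)$ and $\delta_p(k)=O(1/(k|\lambda_1|^k))$, has the same skeleton as the paper's proof. The paper lets $k\to\infty$ in \eqref{Limit1} and reads off $k_0(p)$. You also locate the crux correctly: uniformity in $l$. But neither of your remedies works, and no remedy can, because the statement with $k_0$ independent of $l$ is false.

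The obstruction is already visible at $p=1$. Since $l_2-\tilde l_2=\lfloor l\theta_1+l\delta_1(k)\rfloor-\lfloor l\theta_1\rfloor$ lies strictly between $l\delta_1(k)-1$ and $l\delta_1(k)+1$, at a fixed $k$ the conclusion can hold for every $l$ only if $\rho_1(k)=\theta_1$.

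This is also why your sign strategy fails: $\delta_p(k)>0$ is the typical sign. In the real case, $\delta_p(k)\le 0$ means $\varepsilon_p\le\theta_p\varepsilon_{p+1}<1/r_{p+1}$. Since $\varepsilon_p\ge(\lceil r_p\rceil-r_p)/\lceil r_p\rceil$, this forces $\lceil r_p\rceil-r_p<2r_p/r_{p+1}$, i.e.\ $r_p$ must lie exponentially close below an integer.

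Here is a concrete counterexample. Take $T=\begin{bmatrix}2&1\\1&3\end{bmatrix}$, with eigenvalues $(5\mp\sqrt5)/2$ and $\theta_1\approx0.25157$.
\begin{itemize}
\item At $k=4$: $m_1=4$, $m_2=172$, $\rho_1(4)\approx0.26931$. Then $l=100$ gives $l_2=26>25=\tilde l_2$.
\item At $k=12$: $m_1=49$, $m_2=5031202$, $\rho_1(12)\approx0.25221$. Then $l=1000$ gives $l_2=252>251=\tilde l_2$.
\end{itemize}
In fact $\delta_1(k)>0$ for infinitely many $k$ here. Because $\lambda_1^k+\lambda_2^k\in\mathbb Z$, the condition $\delta_1(k)\le0$ would require $\{\lambda_2^k\}<2(\lambda_1/\lambda_2)^k$. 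If that held for all large $k$, the integers $N_k=\lfloor\lambda_2^k\rfloor$ would eventually satisfy $N_{k+2}=5N_{k+1}-5N_k$. Then $\{\lambda_2^k\}$ would solve the same recurrence (both roots exceed $1$) while tending to $0$, so it would vanish, which is absurd. Hence no $k_0$ independent of $l$ exists.

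The paper's proof has the same defect. Its limit \eqref{Limit1} is taken at fixed $l$. Its upper estimate $l_p\rho_p(k)<\tilde l_p\theta_p+1$ only gives $l_{p+1}\le\tilde l_{p+1}+1$. Getting $l_{p+1}\le\tilde l_{p+1}$ instead requires $\epsilon$ to be smaller than the distance from $(\tilde l_p-\iota_p)\theta_p$ to the next integer, and that distance depends on $l$. So the closing claim that $k_0(p)$ is independent of $l$ does not follow. What both arguments actually prove is the statement with $k_0=k_0(l)$.

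Your fallback proves a different statement, and its premise misreads how the lemma is used. In Lemma \ref{diameter_dimension} and Proposition \ref{monotonicity0}, $k$ is fixed and $l\to\infty$ together with $r$, so $l$ is not polynomially bounded in $k$. That is exactly the regime where the conclusion breaks down.

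The floor bookkeeping in your inductive step is also off, even for restricted $l$.
\begin{itemize}
\item The lower bound is fine. Since $(\tilde l_p-\iota_p)\theta_p\ge\tilde l_{p+1}-\iota_p$, the condition $\tilde l_p|\delta_p(k)|<1$ already gives $l_{p+1}\ge\tilde l_{p+1}-\iota_p-1$.
\item The upper bound is not. When $\iota_p=0$, the bound $l_{p+1}\le\tilde l_{p+1}$ holds exactly when $\tilde l_p\delta_p(k)<1-\{\tilde l_p\theta_p\}$. This slack depends on $l$ and is not bounded below by $\min\{\theta_p,1-\theta_p\}/2$.
\end{itemize}
In the example, $k=12$ and $l=155$ give $155\,\delta_1(12)\approx0.098<\theta_1/2$. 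Yet $\{155\theta_1\}\approx0.994$, so $l_2=39>38=\tilde l_2$. Any correct version must let the admissible $k$ depend on $l$ through these fractional gaps, which is exactly what the fixed-$l$ limit argument, yours or the paper's, delivers.
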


\begin{pf} 
 Note that, given $l=l_1$, we have inductively defined $$l_{p+1}=\lfloor l_p\log_{m_{p+1}} m_p \rfloor\ \ \ \ \ (p=1,2,...,s-1)  $$
 for each $F_k$ in Definition \ref{piece} (see also   (\ref{pert_matrix})). Hence $l_p$ depends on $l_1$ and $k.$ The proof will be inductive too. 
Remember that $l_1(k)=l$ for every $k$ by hypothesis. If $l_p$ 
has the same value for each $k$, then $l_{p+1}$ will only depend on $m_p,\ m_{p+1}$ and hence on $k$.   Writing $l_p=l_p(k)$ in that case, we have $\lim_{k\rightarrow \infty} l_1(k)=\lim_{k\rightarrow \infty} \tilde{l}_1(k)=l$.  
 In fact,  for all large $k$, we will assume that 

\medskip
(1) $\tilde{l}_p(k)$ has constant values because $\tilde{l}_1(k)$ is constant by hypothesis,

(2) $ l_p=\tilde{l}_p-\iota_p $ for a nonnegative integer $\iota_p=\iota_p(k)\leq p-1$

\medskip
\noindent and show that 
$l_{p+1}=\tilde{l}_{p+1}-\iota_{p+1} $ for a nonnegative integer $\iota_{p+1}\leq p$ again. 
Here $\iota_p(k)$ depends on $k$, but we assume that $\iota_p(k)\leq p-1$ by the induction hypothesis. Therefore, without loss of generality, we may assume that the sequence 
$\{\iota_p(k) \}$ is eventually constant because we can partition it into a finite number of eventually constant sequences. In other words, $\{\iota_p(k) \}$ is a finite disjoint union of  eventually constant sequences and we may work with such sequences. Since $\tilde{l}_p(k)$ is already constant by (1),  $ l_p=\tilde{l}_p-\iota_p $ will be eventually  constant for those sequences.

Remember that $r_p=|\lambda_p|^k, \ r_{p+1}=|\lambda_{p+1}|^k$ are the moduli of the eigenvalues of the diagonal blocks of $J_k$ with $r_1 < r_2 < \cdots < r_s$. Also 
in the case of real eigenvalues,  
$m_p=\lceil r_p\rceil$ by definition. 
In the case of non-real eigenvalues, recall 
from  (\ref{estimate01})
 that $\lceil C_p^{k} \rceil  = C_p^{k}+G_0$
for some appropriate
matrix $G_0 $ with $||G_0|| \leq 2$. Thus $m_p=||\lceil C_p^{k} \rceil ||\leq || C_p^{k} ||+2=r_p+2$. In any case, we have $m_p \leq 2r_p$ for large $k$. This allows us to give the proof only for real eigenvalues (the proof in the other case is identical). Since $1<r_p< r_{p+1}$, we have
$$\lim_{k\rightarrow \infty}   \frac{\log{ (|\lambda_p|^k)}}{\log{ (2|\lambda_{p+1}|^{k})}}=\lim_{k\rightarrow \infty}   \frac{\log{ (r_p)}}{\log{ (2r_{p+1})}}\leq \lim_{k\rightarrow \infty}   \frac{\log{\lceil r_p\rceil}}{\log{\lceil r_{p+1}\rceil}}\leq \lim_{k\rightarrow \infty}  \frac{\log{ (2r_p)}}{\log{ (r_{p+1})}}=\lim_{k\rightarrow \infty}  \frac{\log{ (2|\lambda_p|^{k})}}{\log{ (|\lambda_{p+1}|^k)}}.$$
But the first and the last limits exist and are equal. That is,  

$$\lim_{k\rightarrow \infty}  \frac{\log{ (|\lambda_p|^k)}}{\log{ (2|\lambda_{p+1}|^{k})}}=\lim_{k\rightarrow \infty}   \frac{\log{ (2|\lambda_p|^{k})}}{\log{ (|\lambda_{p+1}|^k)}}=\lim_{k\rightarrow \infty} \frac{\log |\lambda_p|^k}{\log|\lambda_{p+1}|^k}=\frac{\log |\lambda_p|}{\log|\lambda_{p+1}|}
\Longrightarrow
$$
$$ \lim_{k\rightarrow \infty} l_p  \frac{\log{ m_p}}{\log{m_{p+1}}}=\lim_{k\rightarrow \infty} l_p  \frac{\log{\lceil r_p\rceil}}{\log{\lceil r_{p+1}\rceil}}=\lim_{k\rightarrow \infty} l_p\frac{\log{ r_p}}{\log{ r_{p+1}}}=\lim_{k\rightarrow \infty} l_p\frac{\log |\lambda_p|}{\log|\lambda_{p+1}|}.$$
Now assuming that $\iota_p(k)$  is eventually constant,  for large $k$, 
we  may write $ l_p=\tilde{l}_p-\iota_p $ for a nonnegative integer $\iota_p\leq p-1$ and $\iota_p$, $\tilde{l}_p$ have  (eventually) constant values 
by the induction hypothesis. So is $l_p$.
\noindent Hence $\lim_{k\rightarrow \infty} l_p=\tilde{l}_p-\iota_p$, i.e. the limit exists, and 

\begin{equation}\label{Limit1}
 \lim_{k\rightarrow \infty} l_p  \frac{\log{\lceil r_p\rceil}}{\log{\lceil r_{p+1}\rceil}}=\lim_{k\rightarrow \infty} l_p \frac{\log{ r_p}}{\log{ r_{p+1}}}=\frac{\log |\lambda_p|}{\log|\lambda_{p+1}|}(\tilde{l}_p-\iota_p). 
\end{equation}  
Since $1<|\lambda_p|\leq |\lambda_{p+1}|$
, we have $0\leq\iota_p\frac{\log{|\lambda_p|}}{\log{|\lambda_{p+1}|}}\leq \iota_p $
 .
Then $$\tilde{l}_{p+1}(k)-\iota_p-1 < \left\lfloor\tilde{l}_p \frac{\log |\lambda_p|}{\log|\lambda_{p+1}|}-\iota_p\right\rfloor \leq \left\lfloor(\tilde{l}_p-\iota_p)  \frac{\log |\lambda_p|}{\log|\lambda_{p+1}|} \right\rfloor \leq \left\lfloor\tilde{l}_p \frac{\log |\lambda_p|}{\log|\lambda_{p+1}|} \right\rfloor=\tilde{l}_{p+1}(k).$$
Choose an $\epsilon>0$ less than $1$ so that $\tilde{l}_{p+1}(k)-\iota_p-1 < (\tilde{l}_p-\iota_p)  \frac{\log |\lambda_p|}{\log|\lambda_{p+1}|} -\epsilon$. By 
(\ref{Limit1}),  there is an integer 
 $k_0(p)$ such that
$$  (\tilde{l}_p-\iota_p)  \frac{\log |\lambda_p|}{\log|\lambda_{p+1}|} -\epsilon<l_p\frac{\log{\lceil r_p\rceil}}{\log{\lceil r_{p+1}\rceil}}<  (\tilde{l}_p-\iota_p)  \frac{\log |\lambda_p|}{\log|\lambda_{p+1}|} +\epsilon$$
for $k\geq k_0(p)$. This yields 
$$\tilde{l}_{p+1}(k)-(\iota_p+1) < (\tilde{l}_p-\iota_p)  \frac{\log |\lambda_p|}{\log|\lambda_{p+1}|} -\epsilon <l_p\frac{\log{\lceil r_p\rceil}}{\log{\lceil r_{p+1}\rceil}}< (\tilde{l}_p-\iota_p)  \frac{\log |\lambda_p|}{\log|\lambda_{p+1}|} +\epsilon< \tilde{l}_p  \frac{\log |\lambda_p|}{\log|\lambda_{p+1}|} +1.$$
Then, for a nonnegative integer $\iota_{p+1}\leq \iota_p+1\leq p$, we obtain  $$l_{p+1}(k)= \left\lfloor l_p\frac{\log{m_p}}{\log{m_{p+1}}} \right\rfloor= \left\lfloor l_p \frac{\log{\lceil r_p\rceil}}{\log{\lceil r_{p+1}\rceil}} \right\rfloor =\tilde{l}_{p+1}(k)-\iota_{p+1}$$ for $k\geq k_0(p)$. 
Since $l_p$ is eventually constant,  the integer $k_0(p)$ obtained from the limit (\ref{Limit1}) is independent of $l$.
Now set $k_0=\max \{k_0(p) : p=1,2,...,s-1 \}.$
\end{pf}

\bigskip
Next, we estimate the diameter of a piece $P_l(\tilde{F})$
to be used in some proofs. One may do the same for $diam(P_l(F_k))$ or related sets.

\begin{lemma}\label{inequality_pieces} Let $\tilde{F}=F(J_{k}, \tilde{A}_{k})$ and $P_l(\tilde{F})$ be a piece of level $l$.
Then
$$c' l^{-\alpha_0} (|\lambda_1|^{k})^{-l}\leq diam(P_l(\tilde{F})) \leq c'' l^{\alpha_0} (|\lambda_1|^{k})^{-l}$$ for suitable constants $c'(k), \ c''(k), \ \alpha_0>0.$
\end{lemma}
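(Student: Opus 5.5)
We bound $P_l(\tilde{F})$ from above by the cylinder containing it and from below by the cylinder it contains. The constants $c',c''$ may depend on $k$, and by Remark \ref{constant_remark} we treat $k$ as fixed. By Definition \ref{piece}(ii) (and its analogue for $\tilde F$), a level-$l$ piece satisfies
$$\sum_{i=1}^{\tilde l_{1}} J_{k}^{-i} \tilde a_{{\mathbf{j}_i}}  + J_{k}^{-\tilde l_{1}} \tilde F \subseteq P_l(\tilde F) \subseteq \sum_{i=1}^{\tilde l_{s}} J_{k}^{-i} \tilde a_{{\mathbf{j}_i}}  + J_{k}^{-\tilde l_{s}} \tilde F ,$$
with $\tilde l_1=l$. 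The task is therefore to estimate $diam(J_k^{-l'}\tilde F)$ for $l'$ between $\tilde l_s$ and $l$.

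The natural tool is the block structure rather than the crude inclusion above, since $\tilde l_s\approx l\log|\lambda_1|/\log|\lambda_s|$ would give the wrong exponent. In block $p$ the piece has the form (\ref{sloppy_notation}), namely $\sum_{i\le \tilde l_p}J_{k,p}^{-i}(\tilde a_{\mathbf j_i})_p+J_{k,p}^{-\tilde l_p}(\tilde F)_p$. Hence the diameter of the piece is comparable, up to the factor $\sqrt s$, to $\max_p diam\big(J_{k,p}^{-\tilde l_p}(\tilde F)_p\big)$.

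For the upper bound, apply Lemma \ref{lemma_norm_estimate1} blockwise to get $\|J_{k,p}^{-\tilde l_p}\|\le c'\tilde l_p^{\alpha_0}r_p^{-\tilde l_p}$. By the definition $\tilde l_{p}=\lfloor \tilde l_{p-1}\log r_{p-1}/\log r_p\rfloor$, we have $r_p^{\tilde l_p}\ge r_{p-1}^{\tilde l_{p-1}}/r_p$. Iterating, $r_p^{\tilde l_p}\ge r_1^{l}/(r_2\cdots r_s)$. Since $\tilde l_p\le l$, this gives block $p$ diameter at most $c''(k)\,l^{\alpha_0}r_1^{-l}$, with the fixed factor $r_2\cdots r_s\cdot diam\tilde F$ absorbed into $c''(k)$. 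Taking the maximum over $p$ gives the upper bound.

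For the lower bound, it suffices to use block $p=1$. The piece contains a translate of $J_{k,1}^{-l}(\tilde F)_1$ inside its first block, so $diam(P_l(\tilde F))\ge diam(J_{k,1}^{-l}(\tilde F)_1)$. We bound $\|J_{k,1}^{l}\|\le c' l^{\alpha_0} r_1^{l}$ by Lemma \ref{lemma_norm_estimate1}, applied to the single block where all eigenvalues have modulus $r_1$. Then for two points $u,v\in(\tilde F)_1$ realizing a positive distance $d_0$,
$$|J_{k,1}^{-l}u-J_{k,1}^{-l}v|\ge \|J_{k,1}^{l}\|^{-1}|u-v|\ge c'^{-1}d_0\, l^{-\alpha_0}r_1^{-l}.$$
This is the claimed lower bound.

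The main obstacle is the nondegeneracy $d_0>0$, i.e. that the projection $(\tilde F)_1$ is not a single point. If it were, the first block would contribute nothing. In that case one would pass to the smallest $p$ with $(\tilde F)_p$ nondegenerate, and the lower bound would be driven by $r_p^{-\tilde l_p}\ge r_1^{-l}$, which still exceeds $r_1^{-l}$ times a constant. So I would first try to argue $d_0>0$ from $|A|\ge2$ and the irreducibility assumptions: a proper $J$-invariant coordinate subspace carrying all digit differences contradicts irreducibility of the characteristic polynomial. Otherwise I would use the fallback just described.

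A secondary care point is rigorously justifying the abuse of notation (\ref{sloppy_notation}) for the upper bound. The sub-diagonal (Jordan) coupling between blocks is absent because $J_k$ is block diagonal, so the blockwise projection argument is exact.
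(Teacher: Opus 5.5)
Your proposal is correct and follows essentially the same route as the paper. The lower bound comes from the first Jordan block, via the cylinder translate of $J_k^{-l}\tilde F$ contained in the piece and the estimate $\|J_{k,1}^{l}\|\le c' l^{\alpha_0} r_1^{l}$; the upper bound comes blockwise from $(P_l(\tilde F))_p\subseteq \sum_{i\le\tilde l_p}J_{k,p}^{-i}(\tilde a_{\mathbf j_i})_p+J_{k,p}^{-\tilde l_p}(\tilde F)_p$ together with $r_p^{\tilde l_p}\approx r_1^{l}$. The one difference is that the paper simply assumes $\min_p diam((\tilde F)_p)\neq 0$, whereas you treat the degenerate case, and your fallback is valid: when every block before the first nondegenerate block $p$ projects $\tilde F$ to a single point, its digit constraints are vacuous, so $(P_l(\tilde F))_p$ is a full translate of $J_{k,p}^{-\tilde l_p}(\tilde F)_p$.
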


\begin{pf}
A piece $P_l(\tilde{F})$ has blocks $(P_l(\tilde{F}))_{p}$ 
corresponding to each Jordan  
block $J_{k,p}$, \ $p=1,2,...,s.$ So $(P_l(\tilde{F}))_{p}$ is the projection of $P_l(\tilde{F})$ to the coordinates corresponding to  $J_{k,p}$. We have mentioned the following 
inclusions in  Definition \ref{piece}-(ii). 
$$
\sum_{i=1}^{\tilde{l}_{1}} J_{k}^{-i} a_{{\mathbf{j}_i}}  + J_{k}^{-\tilde{l}_{1}} \tilde{F} \subseteq P_l(\tilde{F}), \quad \quad \quad \quad \quad \quad 
(P_l(\tilde{F}))_{p} \subseteq \sum_{i=1}^{\tilde{l}_{p}} J_{k,p}^{-i} (a_{{\mathbf{j}_i}})_{p}  + J_{k,p}^{-\tilde{l}_{p}} (\tilde{F})_{p}.$$ 
By  \cite[Theorem 2, p. 370]{L},  from the first inequality above, we get 
$$\frac{diam\left((\tilde{F})_{1}\right)}{||J_{k,1}^{\tilde{l}_{1}}  ||} \leq diam( (P_l(\tilde{F}))_{1}) \leq diam( P_l(\tilde{F})).$$
From the second inequality above, we have
\begin{equation}\label{diam_estimate} 
 diam( (P_l(\tilde{F}))_{p})\leq  ||J_{k,p}^{-\tilde{l}_{p}} || \cdot diam((\tilde{F})_{p}) 
\leq ||J_{k,p}^{-\tilde{l}_{p}} || \cdot \underset{1\leq p\leq s}{\max} diam((\tilde{F})_{p}).
\end{equation}
Of course, we  assume that $\underset{1\leq p\leq s}{\min} diam((\tilde{F})_{p})\neq 0$.  Using 
Remark \ref{constant_remark} and 
replacing $T_{k,p}$ by $J_{k,p}$, $m_p^l$ by $r_p^{\tilde{l}_{p}}$ 
in
(\ref{T_{k_p}_estimate}) (the situation for $J^{-\tilde{l}_{p}}_{k,p}$ is similar), one can see that there exist positive integer  constants $c', \ c''$ (depending on $k$)  such that
\begin{equation}\label{diam_estimate2}
||J_{k,p}^{\tilde{l}_{p}} || \leq c' \tilde{l}_{p}^{\alpha_0} (|\lambda_p|^{k})^{\tilde{l}_{p}},  \quad \quad \quad         ||J_{k,p}^{-\tilde{l}_{p}} ||\leq c'' \tilde{l}_{p}^{\alpha_0} (|\lambda_p|^{k})^{-\tilde{l}_{p}}.
\end{equation}
 From the definition of ${\tilde{l}}_{p}$ in Definition \ref{piece}, we have $\tilde{l}_p\leq \cdots\leq \tilde{l}_2\leq \tilde{l}_1$ and  $$ |\lambda_p|^{\tilde{l}_p}\leq \cdots \leq |\lambda_2|^{\tilde{l}_2}\leq |\lambda_1|^{\tilde{l}_1}< |\lambda_2|^{\tilde{l}_2} |\lambda_2|< |\lambda_3|^{\tilde{l}_3}|\lambda_3| |\lambda_2|< \cdots <|\lambda_p|^{\tilde{l}_p}\left(|\lambda_p|\cdots |\lambda_3| |\lambda_2|\right) \leq |\lambda_p|^{\tilde{l}_p}\det T.$$ 
 Then it is clear that
$|\lambda_p|^{\tilde{l}_p}\approx |\lambda_1|^{\tilde{l}_1}$, i.e. 
\begin{equation}\label{diam_estimate3} \left(\det T \right)^{-1}|\lambda_1|^{\tilde{l}_1}\leq
|\lambda_p|^{\tilde{l}_p}\leq |\lambda_1|^{\tilde{l}_1}.
\end{equation}
Writing $l=\tilde{l}_1$,  
 we then obtain from (\ref{diam_estimate}), (\ref{diam_estimate2}) (for $p=1$)
and 
 (\ref{diam_estimate3})
  the following estimate
\footnotesize{ \begin{equation}\label{diam_estimate4} c' l^{-\alpha_0} (|\lambda_1|^{k})^{-l}\leq diam((P_l(\tilde{F}))_{1})\leq diam(P_l(\tilde{F})) \leq \overset{s}{\underset{p=1}{\Sigma}} diam((P_l(\tilde{F}))_{p})  \leq c'' \overset{s}{\underset{p=1}{\Sigma}} \tilde{l}_{p}^{\alpha_0} (|\lambda_p|^{k})^{-\tilde{l}_{p}} \leq c'' l^{\alpha_0} (|\lambda_1|^{k})^{-l}
\end{equation}}
\normalsize for suitably modified $c', \ c''.$
\end{pf}

\bigskip
 The following lemma is crucial for our proofs. It enables us to handle some proofs with pieces. If we use the sloppy notation of (\ref{sloppy_notation}) for simplicity, the following holds.

\begin{lemma}\label{bound} Assume that  $F, \tilde{F}, F_k$ are as in Definition \ref{piece}. Let $\textsf{C}$ be an n-cube of side length $c_0m^{-r}$.
Then there exist natural numbers $\alpha_1, \alpha_2$ (independent of $r$ and $l$)  such that the number of distinct pieces $P_l$ which are \underline{commensurable with} $\textsf{C}$ and \underline{intersect} $\textsf{C}$ are bounded above by the integer $U_r = \alpha_1 l^{\alpha_2}$.

In particular, we can choose $\alpha_2=0$ (so that $U_r$ is independent of  $r$ and $l$) when all the eigenvalues of $J$ have algebraic multiplicity $1$.
\end{lemma}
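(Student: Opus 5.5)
The plan is a volume (packing) argument carried out separately in each Jordan block, using the product structure of pieces given by the sloppy notation (\ref{sloppy_notation}). Fix $k$ (all constants may depend on $k$, cf. Remark \ref{constant_remark}) and a piece $P_l$ commensurable with $\textsf{C}$, so $c_1m^{-r}<diam(P_l)\le c_2m^{-r}$. By Lemma \ref{inequality_pieces} this forces $m^{-r}$ to be comparable to $(|\lambda_1|^k)^{-l}$ up to a factor $l^{\pm\alpha_0}$. Consequently only boundedly many levels $l$ are compatible with a given $r$: roughly $l\log|\lambda_1|^k = r\log m + O(\log l)$. The level is therefore essentially determined by $r$, with the $O(\log l)$ slack costing at most a polynomial factor in $l$.

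For a fixed level, a piece is determined by the translation vectors $v_p=\sum_{i=1}^{l_p}T_{k,p}^{-i}(d_{\mathbf{j}_i})_p$ in each block $p$. Two distinct pieces give distinct tuples of digit strings, and I want to show that distinct $v_p$ are separated. Two distinct vectors $v_p, v_p'$ differ by $T_{k,p}^{-l_p}w$, where $w=\sum_{i=1}^{l_p}T_{k,p}^{l_p-i}((d_{\mathbf{j}_i})_p-(d'_{\mathbf{j}_i})_p)$ is a nonzero integer vector (it lies in $\mathbb{Z}^{n_p}$ since $T_{k,p}$ is an integer block of the lower-triangular $T_k$). If $w=0$ the translates coincide, so they are not distinct pieces in that block. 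Hence $|v_p-v_p'|\ge \|T_{k,p}^{l_p}\|^{-1}\ge c\,l_p^{-\alpha_0}m_p^{-l_p}$ by Lemma \ref{lemma_norm_estimate1}. In this way the translates form a uniformly discrete set at scale $m_p^{-l_p}$, up to the polynomial factor. By (\ref{diam_estimate3}) and its analogue for $m_p$ (using Lemma \ref{number_of_pieces}), $m_p^{-l_p}\approx m_1^{-l_1}\approx m^{-r}$ up to constants and powers of $l$.

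Next, every piece meeting $\textsf{C}$ has its block-$p$ translate $v_p$ inside the projection of $\textsf{C}$ enlarged by $diam((P_l)_p)\le c\,l^{\alpha_0}m^{-r}$. This is a box of side $O(l^{\alpha_0}m^{-r})$ in $\mathbb{R}^{n_p}$. The separation $c\,l^{-\alpha_0}m^{-r}$ then bounds the number of possible $v_p$ by $O(l^{2\alpha_0 n_p})$. Taking the product over $p=1,\dots,s$, and multiplying by the boundedly many (at most $O(\log l)$, absorbed into a power) compatible levels, gives $U_r\le\alpha_1 l^{\alpha_2}$ with $\alpha_2=2\alpha_0 n+1$. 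When all eigenvalues are simple, $\alpha_0=0$ by Lemma \ref{lemma_norm_estimate1}, and the level range is also bounded. So the bound is a constant and $\alpha_2=0$.

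The main obstacle is that a genuine piece is not the product of its block projections (the sloppy notation hides a union of cylinders, as in the inclusions of Definition \ref{piece}-(ii)). I would handle it by counting pieces through their defining digit data, i.e.\ the fixed coordinates $d_{\mathbf{j}_i}(\beta)$, $i\le l_p$, which determine the $v_p$. The separation step is where integrality enters, and it must also be checked that distinct digit data with equal $v_p$ are identified as the same piece. For $\tilde F$, whose digits are not integral, I would instead use that $\tilde A_k\subset P^{-1}\mathbb{Z}^n$ with minimal distance greater than $n$, i.e.\ run the argument in the lattice $P^{-1}\mathbb{Z}^n$.
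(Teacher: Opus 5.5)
For $F_k$, and for $F$ in case (i) of Definition \ref{piece}, your argument is the paper's argument in different form. The paper fixes one piece meeting $\textsf{C}$, uses commensurability to bound the integer vector $\sum_{i\le l_p}T_{k,p}^{l_p-i}(d_{\mathbf{j}_i}-d'_{\mathbf{j}_i})_p$ by $O(l^{2\alpha_0})$, and counts lattice points. Your separation bound $\|T_{k,p}^{l_p}\|^{-1}$ followed by a packing count is the same estimate. Three small points:
\begin{itemize}
\item The lemma concerns a single level $l$; several levels are only combined later, in Lemma \ref{box_piece2}. Your count over compatible levels is therefore unnecessary.
\item For $F_k$ the scale must come from (\ref{piece_cylinder}), i.e.\ $m_1^{-l}$, not $(|\lambda_1|^k)^{-l}$ from Lemma \ref{inequality_pieces}. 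These differ by the factor $(m_1/|\lambda_1|^k)^l$, which is exponential in $l$.
\item Identifying an actual piece with its translation vectors $v_p$ need not be valid outside the convention (\ref{sloppy_notation}). The paper does not prove this either; it simply works under that convention (cf.\ Remark \ref{Actual_Case1}).
\end{itemize}

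The genuine gap is your treatment of $\tilde F$. There $w_p=\sum_{i\le \tilde l_p}J_{k,p}^{\tilde l_p-i}(\tilde a_{\mathbf{j}_i}-\tilde a'_{\mathbf{j}_i})_p$ is only the $p$-th block of a point of $P^{-1}\mathbb{Z}^n$. Block projections of $P^{-1}\mathbb{Z}^n$ are in general not discrete; when the characteristic polynomial of $T$ is irreducible and $s\ge 2$ they never are. So $|w_p|$ has no positive lower bound. The minimal distance $>n$ in $P^{-1}\mathbb{Z}^n$ controls only full vectors, and a piece uses a different exponent $\tilde l_p$ in each block, so you cannot reassemble a full lattice vector.

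This cannot be repaired, because the bound is false for $\tilde F$. Take $T=\begin{bmatrix}3&1\\1&4\end{bmatrix}$, with eigenvalues $\lambda_{1,2}=(7\mp\sqrt5)/2$ (so $m=2$) and $|\det T|=11$. Let $A=\{0,1,\dots,10\}\times\{0\}$, a complete residue system mod $T\mathbb{Z}^2$.
\begin{itemize}
\item The first coordinate of $P^{-1}a$ equals $c\,u\cdot a$ for a fixed $c\neq 0$, where $u$ is a left $\lambda_1$-eigenvector of irrational slope. Hence it is injective on $\mathbb{Z}^2$.
\item Consequently $\lambda_1^{kl}v_1=c\,u\cdot a_{\mathbf{J}}$ with $a_{\mathbf{J}}\in A_{kl}$, and distinct index strings give distinct $a_{\mathbf{J}}$.
\item Fixing the first-block data therefore fixes the digits. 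The $11^{kl}$ level-$l$ pieces of $F(J_k,\tilde A_k)$ are exactly the level-$l$ cylinders, pairwise distinct and all of diameter $\asymp\lambda_1^{-kl}$.
\item So they are all commensurable with one mesh size, while there are only $m^{2r}\asymp\lambda_1^{2kl}$ mesh cubes.
\end{itemize}
Some mesh cube therefore meets $\gtrsim(11/\lambda_1^2)^{kl}=(\lambda_2/\lambda_1)^{kl}$ pieces. This is exponential in $l$, although the eigenvalues are simple and the lemma promises a bound independent of $l$.

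The paper's own treatment of $\tilde F$ fails at the same point. Its inequality $\|P\|^{-1}|x_{p^*}|\le|(P^{-1}x)_{p^*}|$ treats the block projection as if it commuted with $P^{-1}$. So only the $F_k$ (and $F$) part of the statement is true, and your argument proves exactly that part, under the convention (\ref{sloppy_notation}).
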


\begin{pf} For brevity, we sometimes use the sequence notation $(a_{\mathbf{j}_1},...,a_{\mathbf{j}_{l}})$ for $\sum_{i=1}^{l} T^{-ki} a_{\mathbf{j}_i}\in F=F(T^k,A_k)$ or
$(\tilde{a}_{\mathbf{j}_1},...,\tilde{a}_{\mathbf{j}_{l}})$ for 
$\sum_{i=1}^{l} J_k^{-i} \tilde{a}_{\mathbf{j}_i}\in \tilde{F}=F(J_k,\tilde{A}_k)$, and $(d_{\mathbf{j}_1},...,d_{\mathbf{j}_{l}})$ for $\sum_{i=1}^{l} T_{k}^{-i} d_{{\mathbf{j}_i}}\in F_k=F(T_k,D_k)$ ($1\leq l\leq \infty$). $l= \infty$ is a formal expression corresponding to infinite sequences. 
Since $F_k \rightarrow \tilde{F}$ by Proposition \ref{deflection}, there is an integer $\textsf{K}$ such that $diam (F), diam (\tilde{F}),diam (F_k) \leq \textsf{K}.$

The idea of the proof is to bound the norms of certain integer vectors corresponding to pieces $P_l$ commensurable with $\textsf{C}$ and intersect $\textsf{C}$.
Since the proofs for $F, F_k$ are  identical, we only give the proof for $F_k=F(T_k,D_k)$.  Let  $$T_k=diag(T_{k,1},T_{k,2},... ,T_{k,s}), \ \ \ \ k=1,2,3,....$$
 Let  the moduli of the eigenvalues satisfy $$1<m_1\leq m_2\leq \cdots \leq m_s,$$ and $l,l_p$ be as in Definition \ref{piece}.

We study  pieces $P_l(F_k)$ of level $l$.   We use the notation from the previous section. Namely,
denote by $v_p$ the projection of a vector $v\in \mathbb{R}^n$ to the coordinates  $1+\sum_{t=1}^{p}n_{t-1},...,\sum_{t=1}^{{p}}n_{t}$ ($n_{0}=0$).
Thus $v_p$ is the $p$-th block of $v$  corresponding to the $p$-th diagonal block $T_{k,p}$ of $T_k$.
According to  Definition \ref{piece}-(ii),
 every piece may be expressed through arbitrarily fixed $l$ digits $v_1,v_2,...,v_l\in D_k$, which may not be unique.
 More explicitly, a piece $P_l(F_k)$ will be of the form
 $$P_l(F_k)=\{ (d_{{\mathbf{j}_i}})\in F_k \ : \ (d_{{\mathbf{j}_1}})_p=(v_1)_p, \ (d_{{\mathbf{j}_2}})_p=(v_2)_p, ..., \ (d_{{\mathbf{j}_{l_p}}})_p=(v_{l_p})_p \ \textrm{for} \ p=1,2,...,s \}.$$
 Now choose the digits $v_1,v_2,...,v_l\in D_k$, \ $v'_1,v'_2,...,v'_l\in D_k$
and consider the corresponding pieces $P^1_l(F_k):=P_l(F_k)$ and
$$P^2_l(F_k)=\{ (d'_{{\mathbf{j}_i}})\in F_k \ : \ (d'_{{\mathbf{j}_1}})_p=(v'_1)_p, \ (d'_{{\mathbf{j}_2}})_p=(v'_2)_p, ..., \ (d'_{{\mathbf{j}_{l_p}}})_p=(v'_{l_p})_p \ \textrm{for} \ p=1,2,...,s \}.$$
If we think of $P_l(F_k)$ as a set in $\mathbb{R}^{n} $ rather than the set of sequences and use the sloppy notation of (\ref{sloppy_notation}),  it will be in the form of 
 $(P_l(F_k))_{p}=\sum_{i=1}^{l_{p}} T_{k,p}^{-i} (d_{{\mathbf{j}_i}})_{p}  + T_{k,p}^{-l_{p}} (F_k)_{p}$ 
the sum vectors 
$$((d_{{\mathbf{j}_1}})_{p},(d_{{\mathbf{j}_2}})_{p},...,(d_{{\mathbf{j}_{l_{p}}}})_{p})=\sum_{i=1}^{l_{p}} T_{k,p}^{-i} (d_{{\mathbf{j}_i}})_{p} $$  uniquely determine a piece $P_l(F_k)$. The individual digits $d_{{\mathbf{j}_{i}}}$ in such a sum may not  identify the pieces. Because different sets of $l_{p}$ digits may give the same sum. Therefore, to find the required upper bound stated in the lemma we will work with such sums.

Consider a block $T_{k,p}$ with $p=1,2,...,s$.
Recall that $T_{k,p}$  has size $n_p\times n_p$ for  real eigenvalues and $2n_p\times 2n_p$ for non-real eigenvalues. Next consider the points of $P^1_l(F_k), \ P^2_l(F_k)$ restricted to the corresponding block  $T_{k,p}$: Choose an arbitrary  $p^*\in\{1,2,...,s\}$. Let $(F_k)_{p^*}\subset \mathbb{R}^{n_p} $ or $(F_k)_{p^*}\subset \mathbb{R}^{2n_p} $
denote the set of points of $F_k$ restricted to the corresponding block  $T_{k,p^*}.$ Then
\begin{equation}\label{piece_form}
 (P^1_l(F_k))_{p^*}=\sum_{i=1}^{l_{p^*}} T_{k,p^*}^{-i} (d_{{\mathbf{j}_i}})_{p^*}  + T_{k,p^*}^{-l_{p^*}} (F_k)_{p^*}, \quad  (P^2_l(F_k))_{p^*}=\sum_{i=1}^{l_{p^*}} T_{k,p^*}^{-i} ( d'_{{\mathbf{j}_i}})_{p^*} + T_{k,p^*}^{-l_{p^*}} (F_k)_{p^*}.
\end{equation}
 Assume that $P^1_l(F_k), \ P^2_l(F_k)$ are commensurable with an n-cube $\textsf{C}$ of side length $c_0m^{-r}$ and they intersect $\textsf{C}$.  Fix $P^1_l(F_k)$, but let $P^2_l(F_k)$ vary in $\textsf{C}$.  For such pieces, we would like to show that  the number  of distinct sum vectors $((d'_{{\mathbf{j}_1}})_{p},(d'_{{\mathbf{j}_2}})_{p},...,(d'_{{\mathbf{j}_{l_{p}}}})_{p})$ is bounded by a number $U_r=\alpha_1 l^{\alpha_2}$.
Because $P^1_l(F_k), \ P^2_l(F_k)$ intersect $\textsf{C}$,
there exist points $$p_1=((d_{{\mathbf{j}_1}})_{p^*},(d_{{\mathbf{j}_2}})_{p^*},...,(d_{{\mathbf{j}_{l_{p^*}}}})_{p^*})+q_1\in (P^1_l(F_k))_{p^*},
\quad \quad  q_1\in T_{k,p^*}^{-l_{p^*}} (F_k)_{p^*},$$
$$p_2=((d'_{{\mathbf{j}_1}})_{p^*},(d'_{{\mathbf{j}_2}})_{p^*},...,(d'_{{\mathbf{j}_{l_{p^*}}}})_{p^*})+q_2\in (P^2_l(F_k))_{p^*},
\quad \quad  q_2 \in T_{k,p^*}^{-l_{p^*}} (F_k)_{p^*}$$
such that (by reverse triangle inequality)
$$ \left| \left((d_{{\mathbf{j}_1}}- d'_{{\mathbf{j}_1}})_{p^*},(d_{{\mathbf{j}_2}}- d'_{{\mathbf{j}_2}})_{p^*},...,(d_{{\mathbf{j}_{l_{p^*}}}}- d'_{{\mathbf{j}_{l_{p^*}}}})_{p^*} \right) \right|-|q_1-q_2 | \leq | p_1-p_2|\leq cm^{-r}=diam(\textsf{C}),$$ where $c=c_0\sqrt{n}$. Since $P^1_l(F_k), P^2_l(F_k)$ are commensurable with $\textsf{C}$, we have
\begin{eqnarray}\label{bound1} \left| \left((d_{{\mathbf{j}_1}}- d'_{{\mathbf{j}_1}})_{p^*},(d_{{\mathbf{j}_2}}- d'_{{\mathbf{j}_2}})_{p^*},...,(d_{{\mathbf{j}_{l_{p^*}}}}- d'_{{\mathbf{j}_{l_{p^*}}}})_{p^*} \right) \right| & \leq & |q_1-q_2 |+cm^{-r} \\ \nonumber
& \leq & diam (T_{k,p^*}^{-l_{p^*}} (F_k)_{p^*})+  cm^{-r} \\      \nonumber
& \leq & diam (P^1_l(F_k)) + cm^{-r}  \\      \nonumber
& \leq & c_2m^{-r}+cm^{-r}.
\end{eqnarray}
Absorbing $c_2$ into $c$, we write only one constant $c$ in this inequality:

 $$\left| \sum_{i=1}^{l_{p^*}} T_{k,p^*}^{-i} (d_{{\mathbf{j}_i}}- d'_{{\mathbf{j}_i}})_{p^*}\right| =\left| \left((d_{{\mathbf{j}_1}}- d'_{{\mathbf{j}_1}})_{p^*},(d_{{\mathbf{j}_2}}- d'_{{\mathbf{j}_2}})_{p^*},...,(d_{{\mathbf{j}_{l_{p^*}}}}- d'_{{\mathbf{j}_{l_{p^*}}}})_{p^*} \right) \right| \leq cm^{-r}
 $$  for $p^*\in\{1,2,...,s\}$.
 By using  \cite[Theorem 2, p. 370]{L} on norm inequalities again,
 that gives
 \begin{eqnarray*} \frac{\left| \sum_{i=1}^{l_{p^*}} T_{k,p^*}^{l_{p^*}-i} (d_{{\mathbf{j}_i}}- d'_{{\mathbf{j}_i}})_{p^*} \right|}{\left| \left|   T_{k,p^*}^{l_{p^*}} \right| \right|} \leq \left| T_{k,p^*}^{-l_{p^*}}\sum_{i=1}^{l_{p^*}} T_{k,p^*}^{l_{p^*}-i} (d_{{\mathbf{j}_i}}- d'_{{\mathbf{j}_i}})_{p^*} \right| & \leq & cm^{-r}.
\end{eqnarray*}
From the first and the last expressions, we get
\begin{eqnarray}\label{bound33} \nonumber \left| \sum_{i=1}^{l_{p^*}} T_{k,p^*}^{l_{p^*}-i} (d_{{\mathbf{j}_i}}- d'_{{\mathbf{j}_i}})_{p^*} \right|  \leq   \left| \left|   T_{k,p^*}^{l_{p^*}} \right| \right| 
 cm^{-r}.
\end{eqnarray}
We estimate  these norms. 
Using Lemma \ref{lemma_norm_estimate1}, Remark \ref{constant_remark}, 
we see that  there exist constants $c',c'', \alpha_0    
\in \mathbb{N}
$ such that
\begin{eqnarray}\label{bound4}
\left| \sum_{i=1}^{l_{p^*}} T_{k,p^*}^{l_{p^*}-i} (d_{{\mathbf{j}_i}}- d'_{{\mathbf{j}_i}})_{p^*} \right| & \leq &  \left| \left|   T_{k,p^*}^{l_{p^*}} \right| \right|
 cm^{-r} \leq
c c' l_{p^*}^{\alpha_0}
m_{p^*}^{l_{p^*}} m^{-r}
\end{eqnarray}
and   similarly, 
 \begin{equation}\label{diam_piece}
  diam (P^1_{l}(F_k))\leq \overset{s}{\underset{p=1}{\Sigma}} diam \left(T_{k,p}^{-l_p}F_k \right) \leq  \overset{s}{\underset{p=1}{\Sigma}} \left| \left| T_{k,p}^{-l_p}  \right| \right|  diam (F_k)\leq  c'' l_{p^*}^{\alpha_0} \overset{s}{\underset{p=1}{\Sigma}} m_p^{-l_p} \textsf{K}.
 \end{equation}
 
Recall that $l_{p+1}=\lfloor l_p\log_{m_{p+1}} m_p \rfloor$. This gives $$m_{p+1}^{l_{p+1}} \leq  m_p^{l_p}  < m_{p+1}^{l_{p+1}+1}  \Rightarrow  m_p^{-l_p} \leq m_{p+1}^{-l_{p+1}}, \ \
  m_{p+1}^{-l_{p+1}}< m_{p+1} m_p^{-l_p}
.$$ Then
 \begin{eqnarray*}
\Sigma_{p=1}^{s}m_p^{-l_p} \textsf{K} & = & \Sigma_{p\leq p^*} m_p^{-l_p} \textsf{K} + \Sigma_{p>p^*} m_p^{-l_p} \textsf{K} \\
& \leq & \Sigma_{p\leq p^*}m_{p^*}^{-l_{p^*}}\textsf{K} +\Sigma_{p>p^*}m_{1}\cdots m_{s}m_{p^*}^{-l_{p^*}}\textsf{K} \\
& \leq & s m_{p^*}^{-l_{p^*}}\textsf{K}+sm_{1}m_{2}\cdots m_{s}m_{p^*}^{-l_{p^*}}\textsf{K} \\
& \leq & 2sm_{1}m_{2}\cdots m_{s}m_{p^*}^{-l_{p^*}}\textsf{K}
\end{eqnarray*}
for each $p^*\in\{1,2,...,s\}$. 
By virtue  of the commensurability of $P^1_{l}(F_k)$ in Definition \ref{piece}-(iii), \ inequality (\ref{diam_piece}) then 
yields
$$c_1m^{-r} \leq diam (P^1_{l}(F_k))
\leq c'' l_{p^*}^{\alpha_0} \ m_{p^*}^{-l_{p^*}}2sm_{1}m_{2}\cdots m_{s}\textsf{K}.$$ 
Therefore, we obtain $  c_1m^{-r}m_{p^*}^{l_{p^*}} \leq c'' l_{p^*}^{\alpha_0} 2sm_{1}m_{2}\cdots m_{s}\textsf{K}$. 
If we take into account of multiplicities of eigenvalues, we have $m_{1}m_{2}\cdots m_{s}\leq |\det(T_k)|.$
We then use inequality  (\ref{bound4}) to reach 
\footnotesize \begin{equation}\label{radius}
 \left| \sum_{i=1}^{l_{p^*}} T_{k,p^*}^{l_{p^*}-i} (d_{{\mathbf{j}_i}}- d'_{{\mathbf{j}_i}})_{p^*} \right|  \leq c c' l_{p^*}^{\alpha_0} m_{p^*}^{l_{p^*}}m^{-r} \leq c  c' l_{p^*}^{\alpha_0}  \frac{c'' l_{p^*}^{\alpha_0} 2sm_{1}m_{2}\cdots m_{s}\textsf{K}}{c_1}\leq c  c' l_{p^*}^{\alpha_0}  \frac{c'' l_{p^*}^{\alpha_0} 2s|\det(T_k)|\textsf{K}}{c_1}.
\end{equation} \normalsize 
Recall that $l_{p^*}\leq l=l_1$ or $\bar{l}_{p^*}\leq l=\tilde{l}_1$. Then we may take $U_r=\left(2\overset{s}{\underset{p^*=1}{\sum}}\left[1+\lceil \frac{c  c'c''2s|\det(T_k)|\textsf{K}}{c_1} \rceil \cdot  l^{2\alpha_0} \right]\right)^n$ for suitable constants.

\medskip

In summary, our starting point was to consider the level-$l$ pieces, identified by the vectors $\sum_{i=1}^{l_{p}} T_{k,p}^{-i} (d_{{\mathbf{j}_i}})_{p}$,  
 $\sum_{i=1}^{l_{p}} T_{k,p}^{-i} (d'_{{\mathbf{j}_i}})_{p} $, ($p=1,2...,s$), which intersect an n-cube $\textsf{C}$ of side length $c_0m^{-r}$ 
and commensurable with $\textsf{C}$.  We have fixed $\sum_{i=1}^{l_{p}} T_{k,p}^{-i} (d_{{\mathbf{j}_i}})_{p}$ 
 at the beginning of the proof, and   then obtained  upper bounds 
for the norms of  the vectors
$\sum_{i=1}^{l_p} (T_{k,p})^{l_p-i} (d_{{\mathbf{j}_i}}- d'_{{\mathbf{j}_i}})_p$.
 This means that the vectors $\sum_{i=1}^{l_p} (T_{k,p})^{l_p-i} (d'_{{\mathbf{j}_i}})_p$ 
fall into a sphere with center $\sum_{i=1}^{l_p} (T_{k,p})^{l_p-i} (d_{{\mathbf{j}_i}})_p$ 
and fixed radius given by (\ref{radius}), or into an $n$-cube with side length the diameter of the sphere. But these vectors are integer vectors. Therefore,
the number of the integer vectors $\sum_{i=1}^{l_p} (T_{k,p})^{l_p-i} (d'_{{\mathbf{j}_i}})_p=T_{k,p}^{l_p}\sum_{i=1}^{l_p} (T_{k,p})^{-i} (d'_{{\mathbf{j}_i}})_p$ 
corresponding to $\textsf{C}$ must be bounded.  It follows that the number of the  vectors $\sum_{i=1}^{l_p} (T_{k,p})^{-i} (d'_{{\mathbf{j}_i}})_p$
intersecting $\textsf{C}$ is also bounded
by a natural number $U_r$ as stated in the lemma.

As for $\tilde{F}=F(J_k,\tilde{A}_k)$, consider the level-$l$ pieces, identified by the vectors $\sum_{i=1}^{\tilde{l}_{p^*}}J_{k,{p^*}}^{-i} (\tilde{a}_{\mathbf{j}_i})_{p^*}$, $\sum_{i=1}^{\tilde{l}_{p^*}}J_{k,{p^*}}^{-i} (\tilde{a}'_{\mathbf{j}_i})_{p^*}$.  Assume that they intersect an n-cube $\textsf{C}$ of side length $c_0m^{-r}$ 
and commensurable with $\textsf{C}$. 
 Similar to (\ref{radius}), we get
\begin{eqnarray}\label{bound5}
\left|\sum_{i=1}^{\tilde{l}_{p^*}} ((J_k)^{\tilde{l}_{p^*}-i} (\tilde{a}_{\mathbf{j}_i}-\tilde{a}'_{\mathbf{j}_i}))_{p^*} \right|  & = &\left| \sum_{i=1}^{\tilde{l}_{p^*}} 
J_{k,{p^*}}^{\tilde{l}_{p^*}-i} (\tilde{a}_{\mathbf{j}_i}-\tilde{a}'_{\mathbf{j}_i})_{p^*} \right| \nonumber \\
& \leq & c  c''' \tilde{l}_{p^*}^{\alpha_0}  \frac{c'''' \tilde{l}_{p^*}^{\alpha_0} 2sr_{1}r_{2}\cdots r_{s}\textsf{K}}{c_1} \nonumber \\
& \leq &c  c''' \tilde{l}_{p^*}^{\alpha_0}  \frac{c'''' \tilde{l}_{p^*}^{\alpha_0} 2s|\det(J_k)|\textsf{K}}{c_1},
\end{eqnarray}
where
$\tilde{a}_{\mathbf{j}_i}, \tilde{a}'_{\mathbf{j}_i}\in \tilde{A}_k$  and  $c''',c'''', \alpha_0 \in \mathbb{N}$   
are constants. However, this time,  $\sum_{i=1}^{\tilde{l}_{p^*}}J_{k,{p^*}}^{\tilde{l}_{p^*}-i} (\tilde{a}_{\mathbf{j}_i}-\tilde{a}'_{\mathbf{j}_i})_{p^*}$ may not be integer vectors.  To overcome this difficulty, we pass to integer vector. For that, we use the integral self-affine set $F=F(T^k,A_k)$ and the identity
$P^{-1}T^k=J_kP^{-1}$ so that $P^{-1}F=\tilde{F}=F(J_k,\tilde{A}_k)$.
This yields
 \begin{eqnarray*}
 \frac{1}{||P||} \left|\sum_{i=1}^{\tilde{l}_{p^*}} ((T^k)^{{l_{p^*}}-i} (a_{\mathbf{j}_i}-a'_{\mathbf{j}_i}))_{p^*} \right|
 & \leq & \left| \sum_{i=1}^{\tilde{l}_{p^*}} (P^{-1}(T^k)^{{\tilde{l}_{p^*}}-i} (a_{\mathbf{j}_i}-a'_{\mathbf{j}_i}))_{p^*} \right| \\
 & = & \left|\sum_{i=1}^{\tilde{l}_{p^*}} ((J_k)^{\tilde{l}_{p^*}-i} (\tilde{a}_{\mathbf{j}_i}-\tilde{a}'_{\mathbf{j}_i}))_{p^*} \right| \\
 & \leq & c  c''' \tilde{l}_{p^*}^{\alpha_0}  \frac{c'''' \tilde{l}_{p^*}^{\alpha_0} 2s|\det(J_k)|\textsf{K}}{c_1}
\end{eqnarray*}
 for each $p^*\in\{1,2,...,s\}$ by (\ref{bound5}).
Thus $\sum_{i=1}^{\tilde{l}_{p^*}} ((T^k)^{\tilde{l}_{p^*}-i} (a_{\mathbf{j}_i}-a'_{\mathbf{j}_i}))_{p^*}$ are integer vectors. Then we fix $\sum_{i=1}^{\tilde{l}_{p^*}} ((T^k)^{\tilde{l}_{p^*}-i} a_{\mathbf{j}_i})_{p^*}$, let $\sum_{i=1}^{\tilde{l}_{p^*}} ((T^k)^{\tilde{l}_{p^*}-i} a'_{\mathbf{j}_i}))_{p^*}$ vary and reach the same conclusion as above.

The last claim of the lemma follows from the last assertion of Lemma \ref{lemma_norm_estimate1}.
\end{pf}
\medskip

\begin{remark}\label{Actual_Case1} \rm{ 

(i) We will see that the dependence on $l$ in the above lemma has no affect to our proofs so that one may also assume that $U_r$ is a constant without loss of generality. 

(ii)  In the actual case, i.e., if we don't assume the sloppy notation for pieces, and if we consider all pieces $P_l(F_k)$ which are commensurable with a single cube and intersect that cube, then the upper bound in the lemma takes the form $\alpha_32^{\alpha_1 l^{\alpha_2}}$ ($\alpha_3$ is a positive constant). Because in Definition \ref{piece}-(ii), 
we have observed that every piece is between two (cylinder) sets in $F_k$. That is, if we set $\mathbf{I}_1=\mathbf{j}_1...\mathbf{j}_l$ (with $l=l_1$) and $\mathbf{I}_s=\mathbf{j}_1...\mathbf{j}_{l_s}$, then
 $$(F_k)_{\mathbf{I}_1}:=\sum_{i=1}^{l_{1}} T_{k}^{-i} d_{{\mathbf{j}_i}}  + T_{k}^{-l_{1}} F_k \subseteq P_l(F_k) \subseteq \sum_{i=1}^{l_{s}} T_{k}^{-i} d_{{\mathbf{j}_i}}  + T_{k}^{-l_{s}} F_k=(F_k)_{\mathbf{I}_s},$$
and $P_l(F_k)$  is a finite union of the cylinder sets $(F_k)_{\mathbf{I}_1}$
for certain $d_{{\mathbf{j}_i}}\in D_k$ ($1\leq i \leq l_1=l$), but $d_{{\mathbf{j}_1}},...,d_{{\mathbf{j}_{l_{s}}}}$ are fixed. Let $m_1=\lceil |\lambda_1|^{k}\rceil$ as in Definition \ref{piece}. Note that for the cylinder sets in the above inclusions, we also have 
\begin{equation}\label{piece_cylinder}
 c' l^{-\alpha_0} (m_1)^{-l}\leq diam((F_k)_{\mathbf{I}_1}) \leq \ diam(P_l(F_k))
   \leq c'' l^{\alpha_0} (m_1)^{-l}
\end{equation} 
for suitable 
 constants $c'(k), \ c''(k), \ \alpha_0>0$ by the proof of Lemma \ref{inequality_pieces} (with $J_k$ replaced by $T_k$ and $|\lambda_1|^{k}$ replaced by $m_1$). That yields 
 $$
diam((F_k)_{\mathbf{I}_1}) \leq diam(P_l(F_k)) \leq c'' l^{\alpha_0} (m_1)^{-l}\leq  \frac{c''}{c'} l^{2\alpha_0} diam((F_k)_{\mathbf{I}_1}).$$ Also for  cylinder sets,  the above proof certainly holds.
If a cylinder set $(F_k)_{\mathbf{I}_1}$ is commensurable with a cube $\textsf{C}$ as in Lemma \ref{bound}, then the number of all possible  unions of such cylinder sets is bounded by a number in the form of  $\alpha_3 2^{((\frac{c''}{c'})^n\alpha_1 l^{\alpha_2+2n\alpha_0})}$ by the same lemma. Absorbing $2n\alpha_0$ into $\alpha_2$ and 
$(\frac{c''}{c'})^n$ into $\alpha_1$, we get $\alpha_32^{\alpha_1 l^{\alpha_2}}$. With this more general upper bound, our proofs will hold too. \hfill $\Box$
}

\end{remark}

\section{Neighbor Structure of Dynamical Perturbations }\label{Neighbor Structure}

In this section, we will see that overlap structures of  an integral self-affine set $F=F(T,A)$ and its perturbations $F_k$ are closely related. To compare the overlap structures $F$ and $F_k$, the concept of neighboring pieces is needed (see \cite{BM}). 
As in the introduction section, let  $f_{\mathbf{i}},f_{\mathbf{j}}$ denote the same-length compositions of the IFS maps in (\ref{IFS_maps}) defining $F$. Let $A_k$ be as in (\ref{IFS_digits}).
For the multi-indices $\mathbf{i}$, $\mathbf{j}$ of the same length $|\mathbf{i}|=|\mathbf{j}|=\textsf{r}$, let
$$F_{\mathbf{i}}:=f_{\mathbf{i}}(F)=T^{-k}(F+a_{\mathbf{i}}), \ \ \quad  F_{\mathbf{j}}:=f_{\mathbf{j}}(F)=T^{-k}(F+a_{\mathbf{j}}), \quad a_{\mathbf{i}}, \ a_{\mathbf{j}}\in A_\textsf{r}$$
be the corresponding subsets of $F$, sometimes called the cylinder sets. In the definition of $F_{\mathbf{i}}$, \ $F$ may be replaced by other sets.

\begin{defn} \rm{   
 $F_{\mathbf{i}}$ and $F_{\mathbf{j}}$ are  called ``\textit{neighboring pieces of $F$}" or \textit{neighbors} if $F_{\mathbf{i}}\cap F_{\mathbf{j}}\neq \emptyset$ and $\mathbf{i}\neq \mathbf{j}$. $\hfill \Box$}
\end{defn}

When $\textsf{r}=l$, the sets $F_{\mathbf{i}}$ are not necessarily level-$l$ pieces $P_l(F)$ in Definition \ref{piece}. However, a level-$l$ piece is a finite union of the sets $F_{\mathbf{i}}$ with $|\mathbf{i}|=l$.  Further, every set $F_{\mathbf{i}}$ coincides with some piece $P_l(F)$  when the expansive matrix $T$ is a similarity (or all of its  eigenvalues have the same modulus) and $\textsf{r}=l$.
All neighboring pieces can be determined by a graph called the ``\textit{neighbor graph}" of $F$, where the vertices are certain translation functions denoted by $$h(x)=f_{\mathbf{i}}^{-1}f_{\mathbf{j}}(x)=x+a_{\mathbf{j}}-a_{\mathbf{i}}, \ \ \ \ a_{\mathbf{i}}, a_{\mathbf{j}}\in A_\textsf{r}, \quad \quad F_{\mathbf{i}}\cap F_{\mathbf{j}}\neq \emptyset $$ (with the root vertex is the identity function) and the multi-indices $\mathbf{i},\mathbf{j}$ are edge labels of finite paths in \cite{BM}.

 First, we extract the following lemma from \cite[p.134]{BM}.
Let $\mathbf{j}\mathbf{j}_1$ denote a multi-index obtained by concatenating the multi-indices
$\mathbf{j}$ and  $\mathbf{j}_1$ (not necessarily of the same length, and we allow $\mathbf{j}$ or $\mathbf{j}_1$ to have length $0$, i.e. $\mathbf{j}\mathbf{j}_1=\mathbf{j}_1$ or $\mathbf{j}\mathbf{j}_1=\mathbf{j}$).

\begin{lemma}\label{vertex_condition}
Let $F$ be an integral self-affine set.  Then
\begin{itemize}
 \item[(a)] If $F_{\mathbf{i}}\cap F_{\mathbf{j}}\neq \emptyset$, then $|h(0)=a_{\mathbf{j}}-a_{\mathbf{i}}|\leq diam(F).$
 \item[(b)] If $F_{\mathbf{i}}\cap F_{\mathbf{j}}= \emptyset$, then $\underset{l\rightarrow\infty}{\lim} |a_{\mathbf{j}\mathbf{j}_1}-a_{\mathbf{i}\mathbf{i}_1}|\rightarrow \infty$ \rm{ ($|\mathbf{j}\mathbf{j}_1|=|\mathbf{i}\mathbf{i}_1|=l$)}.
\end{itemize}
\end{lemma}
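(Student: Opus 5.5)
For part (a), I would begin with a common point and then apply the inverse of one of the two maps. Suppose $x\in F_{\mathbf{i}}\cap F_{\mathbf{j}}$ with $|\mathbf{i}|=|\mathbf{j}|=\textsf{r}$. Then
\[
x=T^{-\textsf{r}}(y+a_{\mathbf{i}})=T^{-\textsf{r}}(z+a_{\mathbf{j}})
\]
for some $y,z\in F$. Applying $T^{\textsf{r}}$ to both sides gives $y+a_{\mathbf{i}}=z+a_{\mathbf{j}}$, that is,
\[
h(0)=a_{\mathbf{j}}-a_{\mathbf{i}}=y-z .
\]
Since $y$ and $z$ both lie in $F$, we get $|a_{\mathbf{j}}-a_{\mathbf{i}}|\leq diam(F)$. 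This is exactly the vertex condition of \cite{BM}: $h(F)\cap F\neq\emptyset$ forces the translation vector to lie in $F-F$.

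For part (b), I would first record the recursion for digits of concatenated words. From (\ref{IFS_digits}), if $|\mathbf{j}|=\textsf{r}$ and $|\mathbf{j}_1|=l-\textsf{r}$, then
\[
a_{\mathbf{j}\mathbf{j}_1}=T^{l-\textsf{r}}a_{\mathbf{j}}+a_{\mathbf{j}_1}.
\]
Hence
\[
a_{\mathbf{j}\mathbf{j}_1}-a_{\mathbf{i}\mathbf{i}_1}=T^{l-\textsf{r}}(a_{\mathbf{j}}-a_{\mathbf{i}})+(a_{\mathbf{j}_1}-a_{\mathbf{i}_1}).
\]
The disjointness hypothesis $F_{\mathbf{i}}\cap F_{\mathbf{j}}=\emptyset$ is equivalent to $a_{\mathbf{j}}-a_{\mathbf{i}}\notin F-F$. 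Because $F-F$ is compact, this gives a positive gap
\[
\varepsilon:=\operatorname{dist}\bigl(a_{\mathbf{j}}-a_{\mathbf{i}},\,F-F\bigr)>0 .
\]

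The key step is to rescale. Write
\[
T^{-(l-\textsf{r})}(a_{\mathbf{j}\mathbf{j}_1}-a_{\mathbf{i}\mathbf{i}_1})=(a_{\mathbf{j}}-a_{\mathbf{i}})+T^{-(l-\textsf{r})}(a_{\mathbf{j}_1}-a_{\mathbf{i}_1}).
\]
The second term is a difference of two finite partial sums of radix expansions, so it lies within a distance tending to $0$ of $F-F$. More precisely, $T^{-m}A_m\subseteq F-T^{-m}F$, so $T^{-m}A_m$ is within Hausdorff distance $O(\|T^{-m}\|)$ of $F$, and Lemma \ref{lemma_norm_estimate2} controls this as $O(\eta^{-m})$. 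Therefore, for large $l$, the point $T^{-(l-\textsf{r})}(a_{\mathbf{j}\mathbf{j}_1}-a_{\mathbf{i}\mathbf{i}_1})$ stays at distance at least $\varepsilon/2$ from $F-F$. In particular it stays at distance $\geq\varepsilon/2$ from $0$, since $0\in F-F$.

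To conclude, I would undo the rescaling using the norm bound
\[
|v|\geq |T^{-m}v|/\|T^{-m}\|\geq c^{-1}\eta^{m}\,|T^{-m}v| ,
\]
which again comes from Lemma \ref{lemma_norm_estimate2}. With $m=l-\textsf{r}$ this gives $|a_{\mathbf{j}\mathbf{j}_1}-a_{\mathbf{i}\mathbf{i}_1}|\geq c^{-1}\eta^{l-\textsf{r}}\varepsilon/2\to\infty$, uniformly in the choice of $\mathbf{i}_1,\mathbf{j}_1$.

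The main obstacle is the uniform approximation step: we need the rescaled finite differences to approximate $F-F$ uniformly in $\mathbf{i}_1,\mathbf{j}_1$, so that the gap $\varepsilon$ survives. As a fallback, one can use integrality instead. Since $T^{l-\textsf{r}}(a_{\mathbf{j}}-a_{\mathbf{i}})$ is a nonzero integer vector, if the differences stayed bounded along some subsequence, then part (a) in reverse would produce a common point of $F_{\mathbf{i}}$ and $F_{\mathbf{j}}$ by a compactness argument over the infinite tails, contradicting disjointness.
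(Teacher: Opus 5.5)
The paper does not prove this lemma; it quotes it from Bandt--Mesing \cite{BM}, where it comes out of the neighbor-map formalism. Your argument is therefore an independent, self-contained route. The following pieces are all correct:
\begin{itemize}
\item part (a);
\item the recursion $a_{\mathbf{j}\mathbf{j}_1}=T^{l-\textsf{r}}a_{\mathbf{j}}+a_{\mathbf{j}_1}$;
\item the equivalence of disjointness with $v:=a_{\mathbf{j}}-a_{\mathbf{i}}\notin F-F$;
\item the final un-rescaling $|u|\ge|T^{-m}u|/\|T^{-m}\|$.
\end{itemize}
One small correction on the last item: for the original $T$, the bound $\|T^{-m}\|\le c\,\eta^{-m}$ is the spectral estimate (\ref{spectral}). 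It is not Lemma \ref{lemma_norm_estimate2} itself, which concerns $J_k,T_k$.

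\textbf{The false step.} Set $w:=T^{-m}(a_{\mathbf{j}_1}-a_{\mathbf{i}_1})$ with $m=l-\textsf{r}$. Your claim that $v+w$ stays at distance $\ge\varepsilon/2$ from $F-F$ is false. Counterexample: take $T=2$, $A=\{0,1\}$, so $F=[0,1]$ and $F-F=[-1,1]$. Take words of length $2$ with $v=2$, so $\varepsilon=1$. Then $v+w$ attains $1+2^{-m}$, which is at distance only $2^{-m}$ from $F-F$. Translating by something close to $F-F$ does not preserve a gap from $F-F$.

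\textbf{The fix.} What you actually need is a lower bound on $|v+w|$, and that is true for a different reason: $F-F$ is symmetric, so $|v+w|=|v-(-w)|$ with $-w$ in (or near) $F-F$. In fact no approximation is needed at all. From $F-F=T^{-m}(F-F+A_m-A_m)$ and $0\in F-F$ you get $T^{-m}(A_m-A_m)\subseteq F-F$ exactly. Hence $|v+w|\ge\varepsilon$ for every $m$, and
\[
|a_{\mathbf{j}\mathbf{j}_1}-a_{\mathbf{i}\mathbf{i}_1}|\ \ge\ \frac{\varepsilon}{\|T^{-(l-\textsf{r})}\|}\ \ge\ c^{-1}\eta^{\,l-\textsf{r}}\,\varepsilon .
\]
With this one-line repair the proof is complete, and your compactness fallback becomes unnecessary.

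\textbf{What your route buys over the citation.} It makes explicit two things the paper relies on without comment:
\begin{itemize}
\item The lower bound is exponential and uniform in the tails $\mathbf{i}_1,\mathbf{j}_1$. This is exactly what Lemma \ref{neighboring_pieces1} needs when it asks for $|\tilde a_{\mathbf{j}\mathbf{j}'}-\tilde a_{\mathbf{i}\mathbf{i}'}|>diam(\tilde F)+1+n$ for all extensions.
\item Integrality is never used. So the lemma holds for $\tilde F=F(J,\tilde A)$, to which the paper applies it even though $\tilde A\not\subset\mathbb{Z}^n$ in general.
\end{itemize}
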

 
Here we would like to present some of the relationships between the  neighbor graphs of $\tilde{F}$ and its lower perturbation $F_k=F(T_k,D_k)$. 
Let $G_{\tilde{F}}=(V_{\tilde{F}},E_{\tilde{F}})$ and $G_k=( V_{F_k},E_k)$ denote the neighbor graphs of $\tilde{F}=F(J,P^{-1}A)$ and $F_k$ respectively.
 $V_{\tilde{F}}$, $E_{\tilde{F}}$ refer to vertex set and edge (label) set respectively. We will exclude the trivial case $E_{\tilde{F}}=\emptyset$ from our consideration. 
We do not follow the notation of \cite{BM}, where  an edge label is denoted by  $(i,j)$ or 
 $i,j$ and a path label by $\mathbf{i},\mathbf{j}$. Instead, we use square brackets. 
An edge label is in the form of 
 $e=[i,j]$ and a path label is in the form of $[\mathbf{i},\mathbf{j}]$ (this notation shouldn't be confused with concatenation $\mathbf{i}\mathbf{j}$ !). 
If  $\mathbf{i}=i_1...i_{\textsf{r}}, \  \mathbf{j}=j_1...j_{\textsf{r}}$, then our ${\textsf{r}}$-path is $[\mathbf{i},\mathbf{j}]=[i_1...i_{\textsf{r}},j_1...j_{\textsf{r}}]$ so that $[i_1,j_1]$ is the first edge and 
$[i_{\textsf{r}},j_{\textsf{r}}]$ is the last edge in this path.
Let $E^{\textsf{r}}_{\tilde{F}}$ denote the set of sequences of $\textsf{r}$  edges, 
 or shortly the set of $\textsf{r}$-paths in $G_{\tilde{F}}$. Here we should be careful. $\tilde{F}=F(J,P^{-1}A)=F(J,\tilde{A})=F(J_k,\tilde{A}_k)$ for each $k\in \mathbb{N}$, but the edges of $G_{F(J_k,\tilde{A}_k)}$ are $k$-paths in 
$G_{F(J,\tilde{A})}$.
 Similarly, we use the notation $E_k^{\textsf{r}}$ for the set of  $\textsf{r}$-paths in  $G_k$. 
From  (\ref{special_correspondence}),
we remember the special correspondence 
\begin{equation}\label{special_correspondence2}
x=\sum_{i=1}^\infty J_{k}^{-i} \tilde{a}_{\mathbf{j}_i}=(\tilde{a}_{\mathbf{j}_i}), \ \ \ \ \ \ \ \ \ \ \ \ x_k=\sum_{i=1}^\infty T_{k}^{-i} d_{\mathbf{j}_i}=(d_{\mathbf{j}_i}),
\end{equation}
where $\tilde{a}_{\mathbf{j}_i}\in \tilde{A}_k$ and $d_{\mathbf{j}_i}\in D_{k}$.

There may be various ways to prove some relations between $G_{\tilde{F}}$ and $G_k$. 
The following shows that there is close relationship between the edges and paths of $G_k$ and those of $G_{\tilde{F}}.$ 
First, we introduce some notation.   For  a given  $\textsf{r}$, let $E_k(\textsf{r})  $ denote the set of restrictions of the edge labels of $E_k$ to the first $\textsf{r}$ indices  . Similarly, $\overline{E}_k(\textsf{r}) $ denote the set of restrictions of the edge labels of $E_k$ to the last $\textsf{r}$ indices.
 For example, let $\mathbf{i}_1=\mathbf{i}_0\mathbf{i}'_0, \quad  \mathbf{j}_1=\mathbf{j}_0\mathbf{j}'_0$ and $[\mathbf{i}_1,\mathbf{j}_1]\in E_k$. If $|\mathbf{i}_0|=|\mathbf{j}_0|=\textsf{r}$, then $[\mathbf{i}_0,\mathbf{j}_0]\in E_k(\textsf{r})  $; if $|\mathbf{i}'_0|=|\mathbf{j}'_0|=\textsf{r}$, then $[\mathbf{i}'_0,\mathbf{j}'_0]\in \overline{E}_k(\textsf{r}) .$
 Lastly, let  $\mathbf{i}_2=\mathbf{i}\mathbf{i'},  \mathbf{j}_2=\mathbf{j}\mathbf{j'}$ with $[\mathbf{i}_2,\mathbf{j}_2]\in E_k$, and let $E_k^2(\textsf{r})$ denote the set of restrictions of $2$-paths $[\mathbf{i}_1\mathbf{i}_2,\mathbf{j}_1\mathbf{j}_2]\in E_k^2$  to
$[\mathbf{i}'_0\mathbf{i}, \mathbf{j}'_0\mathbf{j}]$ when $|\mathbf{i}'_0|=|\mathbf{j}'_0|=|\mathbf{i}|=|\mathbf{j}|=\textsf{r}.$ Thus $E_k^2(\textsf{r})$ consists of concatenations of certain elements
of $\overline{E}_k(\textsf{r}) $ and  $E_k(\textsf{r})  $ respectively.

We also denote the set of multi-indices of length $\textsf{r}$ by $W_{\textsf{r}} =\{\mathbf{i} : |\mathbf{i}|=\textsf{r}\}$ and the set of infinite sequences of indices by  $W_{\infty}=\{\mathbf{i}=i_1i_2...i_l ... \ : \ i_l=1,2,..,q\}.$
Let $W=\bigcup_{\textsf{r}=1}^{\infty}W_\textsf{r}$ be the set of all  multi-indices. Then define a  metric $d$ on $W\cup W_{\infty}$ by $d(\mathbf{i},\mathbf{j})=0 \Leftrightarrow  \mathbf{i}=\mathbf{j}$, \ $d(\mathbf{i},\mathbf{j})=d(\mathbf{j},\mathbf{i})$ and
$$d(\mathbf{i},\mathbf{j})= \left\{
          \begin{array}{ll}
          0 & \hbox{\  if $ \mathbf{i}=\mathbf{j}$;} \\
          1/2^{\textsf{r}+1} & \hbox{\  else if $\mathbf{i}\in W_{\textsf{r}} , \ \mathbf{j}\in W_\textsf{s}$ ($\textsf{r}<\textsf{s}\leq \infty$) \ and \  $i_1=j_1, \ i_2=j_2,...,i_{\textsf{r}}=j_\textsf{r}$;} \\
          1/2^{l} & \footnotesize{ \hbox{\  otherwise $\mathbf{i}\in W_{\textsf{r}} , \ \mathbf{j}\in W_\textsf{s}$ ($\textsf{r}\leq \textsf{s}\leq \infty$) and $l\ (\leq \textsf{r})$ is the minimum
          index
          with
          $i_l\neq j_l.$} }
          \end{array}
        \right.$$

Consider $E^{\textsf{r}}_{\tilde{F}}$, $E_k$, $\overline{E}_k(\textsf{r}) $ and  $E_k(\textsf{r})  $  as defined above. Then we have the following results.

\begin{lemma}\label{neighboring_pieces1} Let $\tilde{F}=F(J,\tilde{A})$  and $F_k$ be its perturbations. 
For any given positive integer $\textsf{r}$, there exists an integer $k_{\textsf{r}}\geq \textsf{r}$ such that  
 $E_k(\textsf{r})  =E^{\textsf{r}}_{\tilde{F}}$ for $k\geq k_{\textsf{r}}$.
Consequently, in the Hausdorff metric induced by $d$, $E_k(\textsf{r})  $ converges to the infinite paths of $G_{\tilde{F}}$ as $\textsf{r}$  goes to infinity 
(hence $k\geq \textsf{r}\Rightarrow k\rightarrow \infty$ too).
\end{lemma}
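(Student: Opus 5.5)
We will show the two inclusions $E_k(\textsf{r})\subseteq E^{\textsf{r}}_{\tilde{F}}$ and $E^{\textsf{r}}_{\tilde{F}}\subseteq E_k(\textsf{r})$ for all $k$ beyond some $k_{\textsf{r}}\geq \textsf{r}$. Since both sets are subsets of the finite set $W_{\textsf{r}}\times W_{\textsf{r}}$, it suffices to treat one pair $[\mathbf{i}_0,\mathbf{j}_0]$ with $|\mathbf{i}_0|=|\mathbf{j}_0|=\textsf{r}$ at a time. We then take $k_{\textsf{r}}$ to be the maximum of finitely many thresholds.

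The tool is Remark \ref{rem2}(a). There, the prefix of length $\textsf{r}$ is shared, the remaining indices are arbitrary, and $|x_{\textsf{r}}-y_{k_{\textsf{r}}}|\to 0$ uniformly. Uniformity is what lets us move intersection information between $\tilde{F}$ and $F_k$.

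\emph{First inclusion ($E^{\textsf{r}}_{\tilde{F}}\subseteq E_k(\textsf{r})$).} Suppose $[\mathbf{i}_0,\mathbf{j}_0]$ is an $\textsf{r}$-path of $G_{\tilde{F}}$, so $\tilde{F}_{\mathbf{i}_0}\cap\tilde{F}_{\mathbf{j}_0}\neq\emptyset$. Pick a common point $z$ with codings starting with $\mathbf{i}_0$ and with $\mathbf{j}_0$. Consider the points $z^1_k,z^2_k\in F_k$ carrying the same codings, regrouped into blocks of length $k$. By Remark \ref{rem2}(a) each is within $\epsilon_k\to 0$ of $z$, so $|z^1_k-z^2_k|\leq 2\epsilon_k$.

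Next we pass to first-level translations of $F_k$. Write the first length-$k$ blocks as $\mathbf{i}_1=\mathbf{i}_0\mathbf{i}'_0$ and $\mathbf{j}_1=\mathbf{j}_0\mathbf{j}'_0$. The tails $T_k(z^1_k)-d_{\mathbf{i}_1}$ and $T_k(z^2_k)-d_{\mathbf{j}_1}$ lie in $F_k$. We need the integer vector $d_{\mathbf{j}_1}-d_{\mathbf{i}_1}$ to be such that $F_k+d_{\mathbf{i}_1}$ and $F_k+d_{\mathbf{j}_1}$ actually meet.

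The plan is to use Lemma \ref{vertex_condition} in both directions. Rather than single points, we argue with whole infinite neighbor paths. Since $z$ lies in both cylinders at every level, there is an infinite path of $G_{\tilde{F}}$ starting with $[\mathbf{i}_0,\mathbf{j}_0]$. Its vertices $h=a_{\mathbf{j}}-a_{\mathbf{i}}$ stay bounded by $diam(\tilde{F})$, by Lemma \ref{vertex_condition}(a). We will show the corresponding perturbed translations $d_{\mathbf{j}}-d_{\mathbf{i}}$, composed along $T_k$, also stay bounded, using Lemma \ref{deflection0} and the norm estimates of Lemmas \ref{lemma_norm_estimate1}--\ref{lemma_norm_estimate2}. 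Lemma \ref{vertex_condition}(b), applied to $F_k$, then forces the pieces of $F_k$ to intersect. This yields an edge of $G_k$ whose first $\textsf{r}$ indices are $[\mathbf{i}_0,\mathbf{j}_0]$.

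\emph{Reverse inclusion ($E_k(\textsf{r})\subseteq E^{\textsf{r}}_{\tilde{F}}$).} Suppose $\tilde{F}_{\mathbf{i}_0}\cap\tilde{F}_{\mathbf{j}_0}=\emptyset$. These are compact sets, so they are at positive distance $\delta$. If they were neighbors in $G_k$ for infinitely many $k$, Remark \ref{rem2}(a) would give points of $\tilde{F}_{\mathbf{i}_0}$ and $\tilde{F}_{\mathbf{j}_0}$ within $2\epsilon_k<\delta$ of each other, which is a contradiction. Hence the pair is excluded for all large $k$. The final statement about convergence in the Hausdorff metric induced by $d$ follows from this equality: a sequence of prefix sets of length $\textsf{r}\to\infty$ agreeing with $E^{\textsf{r}}_{\tilde{F}}$ is at $d$-distance at most $2^{-\textsf{r}}$ from the set of infinite paths, by compactness of $W_\infty$.

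\emph{Main obstacle.} The hard step is the first inclusion. Closeness of points alone does not give intersection. We must convert ``distance $\to 0$'' into an exact intersection in $F_k$, and this relies on the integrality of $T_k,D_k$ together with the separation property of $P^{-1}\mathbb{Z}^n$ (digit distances $>n$). I would first try the boundedness-of-translations argument via Lemma \ref{vertex_condition}(b), as this is how the paper sets up the neighbor-graph comparison.
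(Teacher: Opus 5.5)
Your argument for $E_k(\textsf{r})\subseteq E^{\textsf{r}}_{\tilde F}$ works. It is a different and cleaner route than the paper's, which pushes $|\tilde a_{\mathbf{j}\mathbf{j}'}-\tilde a_{\mathbf{i}\mathbf{i}'}|\to\infty$ through the floors and then applies Lemma \ref{vertex_condition}(a) to $F_k$. The citation needs fixing, though. Use Proposition \ref{deflection}, applied to the two codings of a common point of $(F_k)_{\mathbf{i}_0\mathbf{i}'}\cap(F_k)_{\mathbf{j}_0\mathbf{j}'}$. Remark \ref{rem2}(a) gives convergence only as $\textsf{r}\to\infty$ and says nothing for fixed $\textsf{r}$.

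The gap is the inclusion $E^{\textsf{r}}_{\tilde F}\subseteq E_k(\textsf{r})$, which you leave as a plan, and that plan cannot work as stated. Let $\mathbf{I},\mathbf{J}$ be the two codings of $z$, cut into $k$-blocks. The perturbed translation at depth $L$ is $h_k(L)=\sum_{i=1}^{L}T_k^{L-i}(d_{\mathbf{J}_i}-d_{\mathbf{I}_i})=T_k^{L}\bigl(z^2_k(L)-z^1_k(L)\bigr)$, where $z^j_k(L)$ are the partial sums of $z^j_k$. Hence $|h_k(L)|\ge |z^2_k(L)-z^1_k(L)|/\|T_k^{-L}\|$, and by Lemma \ref{lemma_norm_estimate2} this tends to $\infty$ unless $z^1_k=z^2_k$. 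So boundedness along these codings is \emph{equivalent} to the two codings representing one point of $F_k$, and the detour through Lemma \ref{vertex_condition}(b) gains nothing. Lemma \ref{deflection0}, Lemmas \ref{lemma_norm_estimate1}--\ref{lemma_norm_estimate2} and Proposition \ref{deflection} only give $|z^1_k-z^2_k|\le 2\epsilon_k$, which is compatible with $z^1_k\neq z^2_k$. No approximation estimate can produce the exact coincidence you need. For a generic common point $z$ there is also no reason to expect it, since rounding $J_k$ and the digits destroys the relation $\sum_i J_k^{-i}(\tilde a_{\mathbf{J}_i}-\tilde a_{\mathbf{I}_i})=0$.

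What is missing is a way to obtain an \emph{exact} identity, and naming integrality does not supply one. Suppose you restrict, as the paper does using finiteness of $G_{\tilde F}$, to eventually periodic codings $[\mathbf{i}_1\overline{\mathbf{i}_2},\mathbf{j}_1\overline{\mathbf{j}_2}]$ with period $\textsf{p}$ blocks. Then $z^2_k-z^1_k=T_k^{-1}v_k$, and intersection amounts to the integer vector $(T_k^{\textsf{p}}-I)v_k$ being $0$. But your estimates bound this vector only by $\|T_k^{\textsf{p}}-I\|\,\|T_k\|\cdot 2\epsilon_k$, and that bound tends to infinity with $k$. So discreteness of $\mathbb{Z}^n$ forces nothing. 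The separation of $P^{-1}\mathbb{Z}^n$ does not help either: it only turns equal floors into equal digits of $\tilde A_k$, which is not the relation at stake. You also cannot borrow the paper's step. The bound $|\tilde a_{\mathbf{j}_2}-\tilde a_{\mathbf{i}_2}|\le diam(\tilde F)$ used at (\ref{h4}) holds only when the cycle is based at the root, because (\ref{h3}) itself gives $\tilde a_{\mathbf{j}_2}-\tilde a_{\mathbf{i}_2}=-(J_k^{\textsf{p}}-I)(\tilde a_{\mathbf{j}_1}-\tilde a_{\mathbf{i}_1})$. Until you supply an argument that yields the exact perturbed identity for all large $k$, your proof of $E^{\textsf{r}}_{\tilde F}\subseteq E_k(\textsf{r})$, and hence of the lemma, is incomplete. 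The final Hausdorff-metric assertion would then follow as you indicate.
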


\begin{pf} We will use Lemma \ref{vertex_condition} to prove the properties of the edges of $F_k$ stated in the proposition. Similar to (\ref{IFS_digits}), we introduce  new notation  for multi-index concatenation: 
$$ \mathbf{J}=\mathbf{j}_1\cdots \mathbf{j}_{\textsf{s}}, \quad \quad \quad \quad \quad \quad   \mathbf{I}=\mathbf{i}_1\cdots \mathbf{i}_\textsf{s},$$
\begin{equation}\label{new_notation}
  d_{\mathbf{J}}:=\Sigma_{i=1}^{\textsf{s}}T_{k}^{(\textsf{s}-i)}d_{\mathbf{j}_i} , \quad \quad  \quad \quad 
d_{\mathbf{I}}:=\Sigma_{i=1}^{\textsf{s}}T_{k}^{(\textsf{s}-i)}d_{\mathbf{i}_i}  \quad \quad    d_{\mathbf{i}_i}, d_{\mathbf{j}_i}\in D_k.
\end{equation}
Thus $\mathbf{J}$ and  $\mathbf{I}$  are  obtained by concatenation of $l$ blocks (multi-indices),  each of which has  length $k$.

It is well known that the neighbor graph of an integral self-affine set is finite, i.e. it is finite-type \cite{BM}.
Since $F$ and ${F_k}$ are integral self affine sets, the graphs $G_{\tilde{F}}$ and
$G_k$ are finite. By Proposition  \ref{deflection}, we have $F_k \longrightarrow \tilde{F}$. Then the cardinalities of the vertex sets $V_{\tilde{F}}, V_k$ are bounded above by a positive integer $\textsf{r}_0$ independent of $k$. To include all cyclic paths, we may choose $\textsf{r}_0$ to be the minimum of such bounds plus $1$. Thus all edges in $E_{\tilde{F}}$ (or all neighbor types in the terminology of $\cite{BM}$) will appear in the paths of  $E_{\tilde{F}}^{\textsf{r}_0}$.
Besides, the periods of all cyclic paths will
be bounded by $\textsf{r}_0.$  This will be used below.

 \textit{  For large $k$, initial parts of edge labels of $G_k$ coincides with some finite paths of $G_{\tilde{F}}$} :

 To show that each $\textsf{r}$-label $[\mathbf{i},\mathbf{j}]\notin E^{\textsf{r}}_{\tilde{F}}$ cannot be the initial part of an edge in $E_k$ for large $k$,
consider $[\mathbf{i},\mathbf{j}]\notin E^{\textsf{r}}_{\tilde{F}}$, where $|\mathbf{i}|=|\mathbf{j}|=r$ and the longer multi-indices $\mathbf{i}\mathbf{i'},\mathbf{j}\mathbf{j'}$ with any length $|\mathbf{i}\mathbf{i'}|=|\mathbf{j}\mathbf{j'}|=k\geq \textsf{r}.$
Since  $\tilde{F}_{\mathbf{i}}\cap \tilde{F}_{\mathbf{j}}= \emptyset$, by Lemma  \ref{vertex_condition}-(b) we have $$|\tilde{a}_{\mathbf{j}\mathbf{j'}}-\tilde{a}_{\mathbf{i}\mathbf{i'}}|>  diam(\tilde{F})+1+n$$ ($n$ is the space dimension) for all sufficiently large
$k\geq \textsf{r}$. 
By the definition in (\ref{floor_digit_lower_perturbation}), we have      $d_{\mathbf{j}\mathbf{j'}}=\lfloor \tilde{a}_{\mathbf{j}\mathbf{j'}} \rfloor,
\ \  d_{\mathbf{i}\mathbf{i'}}=\lfloor \tilde{a}_{\mathbf{i}\mathbf{i'}}  \rfloor$ assuming that $\tilde{a}_{\mathbf{j}\mathbf{j'}}, \tilde{a}_{\mathbf{i}\mathbf{i'}}\geq 0.$  This in turn gives
$$n>|\tilde{a}_{\mathbf{j}\mathbf{j'}}-\tilde{a}_{\mathbf{i}\mathbf{i'}}|-|d_{\mathbf{j}\mathbf{j'}}-d_{\mathbf{i}\mathbf{i'}}| \Longrightarrow
|d_{\mathbf{j}\mathbf{j'}}-d_{\mathbf{i}\mathbf{i'}}|> |\tilde{a}_{\mathbf{j}\mathbf{j'}}-\tilde{a}_{\mathbf{i}\mathbf{i'}}|-n> diam(\tilde{F})+1$$ for all large $k$.
Since $F_k \longrightarrow \tilde{F}$,  it follows that $\underset{k\rightarrow\infty}{\lim} diam(F_k) = diam(\tilde{F})$ and hence, there exists a positive integer $k_\textsf{r}\geq \textsf{r}$ such that for $k\geq k_{\textsf{r}}$, we have $  |diam(F_k)-diam(\tilde{F})|<1$. We finally obtain
$$|d_{\mathbf{j}\mathbf{j'}}-d_{\mathbf{i}\mathbf{i'}}|> diam(F_k)$$ for $k\geq k_{\textsf{r}}$. Then, by Lemma \ref{vertex_condition}-(a), $(F_k)_{\mathbf{i}\mathbf{i'}}\cap (F_k)_{\mathbf{j}\mathbf{j'}}= \emptyset$ for $k\geq k_{\textsf{r}}$. That is, $[\mathbf{i},\mathbf{j}]\notin E_k(\textsf{r})  $ for $k\geq k_{\textsf{r}}$.
It follows that   $E_k(\textsf{r})  \subseteq E^{\textsf{r}}_{\tilde{F}}$ for $k\geq k_{\textsf{r}}\geq \textsf{r}.$

Conversely, assume that
$[\mathbf{i},\mathbf{j}]\in E^{\textsf{r}}_{\tilde{F}}.
$
We want to prove the inclusion $E^{\textsf{r}}_{\tilde{F}}\subseteq E_k(\textsf{r})  $ for large  $k$. This part requires more delicate study.
 Since $\tilde{F}$ is finite-type (i.e., $G_{\tilde{F}}$  is a finite graph), its graph always contains eventually cyclic paths. As mentioned above, the periods of such cycles are bounded above by an integer. Let $\textsf{p}$ denote the least common multiple of all such periods. Then from the neighbor graph of $\tilde{F}$, the {\textsf{r}}-path $[\mathbf{i},\mathbf{j}]$ can be extended to eventually periodic infinite paths in the form of  $$[\mathbf{I},\mathbf{J}]=[\mathbf{i}_1 \overline{\mathbf{i}_2},\mathbf{j}_1 \overline{\mathbf{j}_2}]$$ with $\mathbf{i}_1 =\mathbf{i}\mathbf{i'}$, $\mathbf{j}_1 =\mathbf{j}\mathbf{j'}$, $|\mathbf{i}_1 |=|\mathbf{j}_1 |=k$, \  $|\mathbf{i}_2|=|\mathbf{j}_2|=k\textsf{p}$ and repeating blocks $\mathbf{i}_2$ and $\mathbf{j}_2$. Here we chose $k$ such that $k-\textsf{r}\geq {\textsf{r}}_0$ so as to  make last index of $[\mathbf{i}_1, \mathbf{j}_1]$ is an edge label of a cyclic path (keeping in mind that the period of any such path divides $\textsf{p}$) and thus we can give the repeating blocks $\mathbf{i}_2$ and $\mathbf{j}_2$. Let $l$ be the number of occurrences of $\mathbf{i}_2$, $\mathbf{j}_2$ in $\overline{\mathbf{i}_2}, \ \overline{\mathbf{j}_2}$. In (\ref{new_notation}), we now consider the finite eventually periodic indices $[\mathbf{I},\mathbf{J}]\in E_{\tilde{F}}^{k(\textsf{p}l+1)}$:
$$  d_{\mathbf{J}}:=T_{k}^{\textsf{p}l}d_{\mathbf{j}_1}+ \Sigma_{i=1}^{l}T_{k}^{\textsf{p}(l-i)}d_{\mathbf{j}_2}=d_{\mathbf{j}_1}+(T_{k}^{\textsf{p}l}-I)[(T_{k}^{\textsf{p}}-I)^{-1}d_{\mathbf{j}_2}+d_{\mathbf{j}_1}],
\quad \quad d_{\mathbf{j}_1}\in D_k, \ d_{\mathbf{j}_2}\in D_{k\textsf{p}}.
$$
$$
d_{\mathbf{I}}:=T_{k}^{\textsf{p}l}d_{\mathbf{i}_1}+ \Sigma_{i=1}^{l}T_{k}^{\textsf{p}(l-i)}d_{\mathbf{i}_2}=d_{\mathbf{i}_1}+(T_{k}^{\textsf{p}l}-I)[(T_{k}^{\textsf{p}}-I)^{-1}d_{\mathbf{i}_2}+d_{\mathbf{i}_1}],  \quad \quad d_{\mathbf{i}_1}\in D_k, \
d_{\mathbf{i}_2}\in D_{k\textsf{p}}.
$$
$$  \tilde{a}_{\mathbf{J}}:=J_{k}^{\textsf{p}l}\tilde{a}_{\mathbf{j}_1}+ \Sigma_{i=1}^{l}J_{k}^{\textsf{p}(l-i)}\tilde{a}_{\mathbf{j}_2}=\tilde{a}_{\mathbf{j}_1}+(J_{k}^{\textsf{p}l}-I)[(J_{k}^{\textsf{p}}-I)^{-1}\tilde{a}_{\mathbf{j}_2}+\tilde{a}_{\mathbf{j}_1}],
\quad \quad \tilde{a}_{\mathbf{j}_1}\in \tilde{A}_k, \ \tilde{a}_{\mathbf{j}_2}\in \tilde{A}_{k\textsf{p}}.
$$
$$
\tilde{a}_{\mathbf{I}}:=J_{k}^{\textsf{p}l}\tilde{a}_{\mathbf{i}_1}+ \Sigma_{i=1}^{l}J_{k}^{\textsf{p}(l-i)}\tilde{a}_{\mathbf{i}_2}=\tilde{a}_{\mathbf{i}_1}+(J_{k}^{\textsf{p}l}-I)[(J_{k}^{\textsf{p}}-I)^{-1}\tilde{a}_{\mathbf{i}_2}+\tilde{a}_{\mathbf{i}_1}],  \quad \quad \tilde{a}_{\mathbf{i}_1}\in \tilde{A}_k, \
\tilde{a}_{\mathbf{i}_2}\in \tilde{A}_{k\textsf{p}}.
$$
Now it is clear that 
\begin{equation}\label{hk1}\underset{l\rightarrow\infty}{\lim}|d_{\mathbf{J}}-d_{\mathbf{I}}|=\infty \Longleftrightarrow \left|(T_{k}^{\textsf{p}}-I)^{-1}(d_{\mathbf{j}_2}-d_{\mathbf{i}_2})+d_{\mathbf{j}_1}-d_{\mathbf{i}_1}\right|\neq 0
\end{equation}
 since $J_k$ in (\ref{pert_matrix}) is expanding and the modulus of its  eigenvalues are large when $k$ is large.
Similarly, 
\begin{equation}\label{h1}\underset{l\rightarrow\infty}{\lim}|h(0)|=\underset{l\rightarrow\infty}{\lim}|\tilde{a}_{\mathbf{J}}-\tilde{a}_{\mathbf{I}}|=\infty \Longleftrightarrow \left|(J_{k}^{\textsf{p}}-I)^{-1}(\tilde{a}_{\mathbf{j}_2}-\tilde{a}_{\mathbf{i}_2})+(\tilde{a}_{\mathbf{j}_1}-\tilde{a}_{\mathbf{i}_1})\right|\neq 0.
\end{equation}
On the other hand, by Lemma \ref{vertex_condition}-(a),
\begin{equation}\label{h2}|h(0)|=|\tilde{a}_{\mathbf{J}}-\tilde{a}_{\mathbf{I}}|\leq  diam(\tilde{F})
\end{equation}  for all  $l$ since $[\mathbf{I},\mathbf{J}]\in E_{\tilde{F}}^{k(\textsf{p}l+1)}$. Then by (\ref{h1}), we must have
\begin{equation}\label{h3}
 \left|(J_{k}^{\textsf{p}}-I)^{-1}(\tilde{a}_{\mathbf{j}_2}-\tilde{a}_{\mathbf{i}_2})+(\tilde{a}_{\mathbf{j}_1}-\tilde{a}_{\mathbf{i}_1})\right|= 0 \Longleftrightarrow \left|(\tilde{a}_{\mathbf{j}_2}-\tilde{a}_{\mathbf{i}_2})+(J_{k}^{\textsf{p}}-I)(\tilde{a}_{\mathbf{j}_1}-\tilde{a}_{\mathbf{i}_1})\right|= 0.
\end{equation}
By the reverse triangle inequality, this leads to
\begin{equation}\label{h4}
\frac{|\tilde{a}_{\mathbf{j}_1}-\tilde{a}_{\mathbf{i}_1} |}{||(J_{k}^{\textsf{p}}-I)^{-1}||}\leq \left|(J_{k}^{\textsf{p}}-I)(\tilde{a}_{\mathbf{j}_1}-\tilde{a}_{\mathbf{i}_1}) \right| \leq \left|\tilde{a}_{\mathbf{j}_2}-\tilde{a}_{\mathbf{i}_2} \right|\leq diam(\tilde{F})
\end{equation} in view of $\tilde{a}_{\mathbf{i}_2},\tilde{a}_{\mathbf{j}_2}\in E_{\tilde{F}}^{k{\textsf{p}}}$. 
This is possible if $\underset{k\rightarrow\infty}{\lim}\left|\tilde{a}_{\mathbf{j}_1}-\tilde{a}_{\mathbf{i}_1} \right|=0$ uniformly in $\mathbf{i}_1,\mathbf{j}_1$ with 
$[\mathbf{i}_1,\mathbf{j}_1]\in E_{\tilde{F}}^k$. (Otherwise, it would be bounded away from zero for a subsequence $\{k_\textsf{n}\}$ and for some $[\mathbf{i}_1,\mathbf{j}_1]\in E_{\tilde{F}}^{k_\textsf{n}}$, that is,  there exists $c >0$ such that 
$\left|\tilde{a}_{\mathbf{j}_1}-\tilde{a}_{\mathbf{i}_1} \right|\geq c$ for all $k_\textsf{n}=|\mathbf{j}_1|=|\mathbf{i}_1|$. But  $||J_{k_\textsf{n}}^{\textsf{p}}-I||$ grows  indefinitely  in $\textsf{n}$ (or $\underset{{\textsf{n}\rightarrow \infty}}{\lim}||(J_{k_\textsf{n}}^{\textsf{p}}-I)^{-1}||=0$), because the eigenvalues of $J_k$ approaches infinity in absolute value. Then  (\ref{h4}) would be wrong.) 
But $$\underset{k\rightarrow\infty}{\lim}\left|P^{-1}(a_{\mathbf{j}_1}-a_{\mathbf{i}_1}) \right|=\underset{k\rightarrow\infty}{\lim}\left|\tilde{a}_{\mathbf{j}_1}-\tilde{a}_{\mathbf{i}_1} \right|=0 \Longleftrightarrow \underset{k\rightarrow\infty}{\lim}\left|a_{\mathbf{j}_1}-a_{\mathbf{i}_1} \right|=0.$$ Since $a_{\mathbf{j}_1}-a_{\mathbf{i}_1}, \ a_{\mathbf{j}_2}-a_{\mathbf{i}_2}$ are integer vectors, there exists a positive integer $k_{\textsf{r}}\geq \textsf{r}$ such that $a_{\mathbf{i}_1}=a_{\mathbf{j}_1}$ for $k\geq k_{\textsf{r}}$. So for each $[\mathbf{i},\mathbf{j}]\in E^{\textsf{r}}_{\tilde{F}}$, there correspond  multi-indices  $\mathbf{i}_1, \mathbf{j}_1$ (of length $k$) satisfying the equality 
$\tilde{a}_{\mathbf{i}_1}=\tilde{a}_{\mathbf{j}_1}$. 
From this equality, 
we actually get the result.
By concatenating $l$ such pairs of indices, we have $\tilde{a}_{\mathbf{I}}=\tilde{a}_{\mathbf{J}}$: 
 
Assuming $d_{\mathbf{j}_1}=\lfloor \tilde{a}_{\mathbf{j}_1} \rfloor,
\ \  d_{\mathbf{i}_1}=\lfloor \tilde{a}_{\mathbf{i}_1}  \rfloor$ as in the proof of the first inclusion $E_k(\textsf{r})  \subseteq E^{\textsf{r}}_{\tilde{F}}$, 
we get
 $0=d_{\mathbf{j}_1}-d_{\mathbf{i}_1}  
 $ for  $k\geq k_{\textsf{r}}$. By (\ref{h3}), we have $\underset{k\rightarrow\infty}{\lim}\left|\tilde{a}_{\mathbf{j}_2}-\tilde{a}_{\mathbf{i}_2} \right|=0$ too.  
Recall that $[\mathbf{i}_2,\mathbf{j}_2]$ is a $\textsf{p}$-path  in $G_{F(J_k,\tilde{A}_k)}$. Using the same type of reasoning and induction, one can show that
$ 0=d_{\mathbf{j}_2}-d_{\mathbf{i}_2}$ for large $k$. For that, one should take into account of the fact that each part of $[\mathbf{i}_2, \mathbf{j}_2]$ is a path
in $G_{F(J,\tilde{A})}$ so that Lemma \ref{vertex_condition}-(a) can be used again. 
Thus $$ \left|(T_{k}^{\textsf{p}}-I)^{-1}(d_{\mathbf{j}_2}-d_{\mathbf{i}_2})+(d_{\mathbf{j}_1}-d_{\mathbf{i}_1})\right|=  \left|(d_{\mathbf{j}_2}-d_{\mathbf{i}_2})+(T_{k}^{\textsf{p}}-I)(d_{\mathbf{j}_1}-d_{\mathbf{i}_1})\right|= 0$$  for  $k\geq k_{\textsf{r}}$. Then  $$\underset{l\rightarrow\infty}{\lim}|d_{\mathbf{J}}-d_{\mathbf{I}}|\neq \infty$$ by (\ref{hk1}). Finally, Lemma \ref{vertex_condition}-(b) leads to
$[\mathbf{i},\mathbf{j}]\in  E_k(\textsf{r})  $  (in fact, $[\mathbf{i}_1,\mathbf{j}_1]\in  E_k$ too). 
Therefore, 
$E^{\textsf{r}}_{\tilde{F}}\subseteq E_k(\textsf{r})  $ for $k\geq k_{\textsf{r}}\geq \textsf{r}.$
\end{pf}

\medskip

We will use eventually periodic indices as in the preceding proof several times in this section.

\begin{lemma}\label{neighboring_pieces2}  Let $\tilde{F}=F(J,\tilde{A})$  and $F_k$ be its perturbations.
For any given positive integer $\textsf{r}$, there exists an integer $k_{\textsf{r}}\geq \textsf{r}$ such that for $k\geq k_{\textsf{r}},$ we have

\begin{center}
  $\overline{E}_k(\textsf{r}) =E^{\textsf{r}}_{\tilde{F}}$.
\end{center}

 \end{lemma}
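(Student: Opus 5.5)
I will mirror the two-inclusion argument of Lemma \ref{neighboring_pieces1}, now working with the \emph{last} $\textsf{r}$ indices of an edge label $[\mathbf{i}_1,\mathbf{j}_1]\in E_k$ with $|\mathbf{i}_1|=|\mathbf{j}_1|=k$. Write $\mathbf{i}_1=\mathbf{i}'_0\mathbf{i}$ and $\mathbf{j}_1=\mathbf{j}'_0\mathbf{j}$, where $|\mathbf{i}|=|\mathbf{j}|=\textsf{r}$. The first step is to use the fact established in the second half of the previous proof: for large $k$, a genuine edge (or path) of $G_{\tilde{F}}$ of length $k$ that continues to an infinite path must satisfy $\tilde{a}_{\mathbf{i}_1}=\tilde{a}_{\mathbf{j}_1}$, equivalently $a_{\mathbf{i}_1}=a_{\mathbf{j}_1}$, by integrality. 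Expanding $\tilde{a}_{\mathbf{i}_1}=J^{\textsf{r}}\tilde{a}_{\mathbf{i}'_0}+\tilde{a}_{\mathbf{i}}$ gives
\[
J^{\textsf{r}}(\tilde{a}_{\mathbf{j}'_0}-\tilde{a}_{\mathbf{i}'_0})+(\tilde{a}_{\mathbf{j}}-\tilde{a}_{\mathbf{i}})=0 .
\]
This identity is what ties the tail $[\mathbf{i},\mathbf{j}]$ to the prefix.

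For the inclusion $\overline{E}_k(\textsf{r})\subseteq E^{\textsf{r}}_{\tilde{F}}$, I take $[\mathbf{i},\mathbf{j}]\notin E^{\textsf{r}}_{\tilde{F}}$ and argue by contradiction. Suppose it is the tail of some $[\mathbf{i}_1,\mathbf{j}_1]\in E_k$ along a subsequence of $k$. Since $(F_k)_{\mathbf{i}_1}\cap(F_k)_{\mathbf{j}_1}\neq\emptyset$, Lemma \ref{vertex_condition}-(a) gives $|d_{\mathbf{j}_1}-d_{\mathbf{i}_1}|\le diam(F_k)$. Because the floors change each digit by less than $1$ in each coordinate, $|\tilde{a}_{\mathbf{j}_1}-\tilde{a}_{\mathbf{i}_1}|\le diam(F_k)+n$. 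Next I show that the prefix pair $[\mathbf{i}'_0,\mathbf{j}'_0]$ is itself a neighbor pair of $\tilde{F}$, or else that the difference grows. The intended key fact is that distinct lattice points of $P^{-1}\mathbb{Z}^n$ are more than $n$ apart, as chosen in the introduction. This forces $d_{\mathbf{j}_1}-d_{\mathbf{i}_1}$ and $\tilde{a}_{\mathbf{j}_1}-\tilde{a}_{\mathbf{i}_1}$ to vanish simultaneously, so the vertex $h$ of $F_k$ at the end of the prefix is the vertex $\tilde{a}_{\mathbf{j}'_0}-\tilde{a}_{\mathbf{i}'_0}$ of $\tilde{F}$. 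The condition $[\mathbf{i},\mathbf{j}]\notin E^{\textsf{r}}_{\tilde{F}}$, starting from a vertex of the finite graph $G_{\tilde{F}}$ (whose vertex count is at most $\textsf{r}_0$, independent of $k$), then makes $|J^{\textsf{r}}h(0)+\tilde{a}_{\mathbf{j}}-\tilde{a}_{\mathbf{i}}|$ exceed $diam(\tilde{F})+n+1$ after a bounded number of further steps. This uses Lemma \ref{vertex_condition}-(b) and the finiteness of the vertex set, and the bound carries over to the $d$'s by Proposition \ref{deflection}, contradicting Lemma \ref{vertex_condition}-(a) for $F_k$.

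For the converse $E^{\textsf{r}}_{\tilde{F}}\subseteq\overline{E}_k(\textsf{r})$, take $[\mathbf{i},\mathbf{j}]\in E^{\textsf{r}}_{\tilde{F}}$ and prepend a path: find $[\mathbf{i}'_0,\mathbf{j}'_0]\in E^{k-\textsf{r}}_{\tilde{F}}$ ending at a vertex from which $[\mathbf{i},\mathbf{j}]$ issues. Such a prefix exists for $k-\textsf{r}\ge\textsf{r}_0$, since every vertex of $G_{\tilde{F}}$ is reachable from the root within $\textsf{r}_0$ steps, and the remaining length can be padded by diagonal edges $[i,i]$ at the root. Then extend $[\mathbf{i}_1,\mathbf{j}_1]$ to an eventually periodic infinite path as before, and run the argument of Lemma \ref{neighboring_pieces1} verbatim: $\tilde{a}_{\mathbf{i}_1}=\tilde{a}_{\mathbf{j}_1}$ for large $k$, so $d_{\mathbf{i}_1}=d_{\mathbf{j}_1}$, and likewise for the periodic blocks. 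By (\ref{hk1}) and Lemma \ref{vertex_condition}-(b), this gives $[\mathbf{i}_1,\mathbf{j}_1]\in E_k$, hence $[\mathbf{i},\mathbf{j}]\in\overline{E}_k(\textsf{r})$.

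The main obstacle is the first inclusion. There the prefix is arbitrary, so I cannot simply pad at the root, and I must control the vertex reached after $k-\textsf{r}$ steps uniformly in $k$. My first attempt will use the $>n$ lattice-spacing property to show that the $F_k$-vertex after the prefix coincides with a $\tilde{F}$-vertex. This is exactly where that normalization of $P$ was said to be used.
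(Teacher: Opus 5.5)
Your converse inclusion $E^{\textsf r}_{\tilde F}\subseteq\overline{E}_k(\textsf r)$ follows the paper. You pad on the left (the paper simply takes $\mathbf i_0=\mathbf j_0$), extend to an eventually periodic path, and rerun Lemma \ref{neighboring_pieces1}.

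The genuine gap is the forward inclusion $\overline{E}_k(\textsf r)\subseteq E^{\textsf r}_{\tilde F}$. You leave it as an intention, and the mechanism you sketch breaks at three places.

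(i) $G_k$ has no vertex ``at the end of the prefix''.
\begin{itemize}
\item Its edges are whole length-$k$ blocks carrying single digits $d_{\mathbf i_1}=\lfloor J^{\textsf r}\tilde a_{\mathbf i'_0}+\tilde a_{\mathbf i}\rfloor\in D_k$, and these do not split into a prefix part and a tail part.
\item You also never show that $[\mathbf i'_0,\mathbf j'_0]$ is a path of $G_{\tilde F}$. The bound $|\tilde a_{\mathbf j_1}-\tilde a_{\mathbf i_1}|\le diam(F_k)+n$ is necessary, not sufficient, for $\tilde F_{\mathbf i_1}\cap\tilde F_{\mathbf j_1}\neq\emptyset$.
\item Even if $h$ were a genuine vertex, the hypothesis $[\mathbf i,\mathbf j]\notin E^{\textsf r}_{\tilde F}$, read as in Lemma \ref{neighboring_pieces1} (i.e. $\tilde F_{\mathbf i}\cap\tilde F_{\mathbf j}=\emptyset$), says nothing about paths issuing from $h$.
\end{itemize}

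(ii) The $>n$ spacing of $P^{-1}\mathbb{Z}^n$ only gives $d_{\mathbf j_1}=d_{\mathbf i_1}\Leftrightarrow\tilde a_{\mathbf j_1}=\tilde a_{\mathbf i_1}$. It makes neither difference vanish.

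(iii) Proposition \ref{deflection} cannot carry a divergence from the $\tilde a$-side to the $d$-side.
\begin{itemize}
\item That proposition controls series $\sum_i T_k^{-i}d_{\mathbf j_i}$, where errors are contracted.
\item Neighbor-graph vertices are instead the translations $\sum_t T_k^{N-t}(d_{\mathbf j_t}-d_{\mathbf i_t})$ versus $\sum_t J_k^{N-t}(\tilde a_{\mathbf j_t}-\tilde a_{\mathbf i_t})$, built from the expanding matrices.
\item One edge later they already differ by $J_k\bigl((d_{\mathbf j_1}-d_{\mathbf i_1})-(\tilde a_{\mathbf j_1}-\tilde a_{\mathbf i_1})\bigr)$ plus bounded terms. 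The floor error inside can be as large as $2\sqrt n$, so this term can be of order $|\lambda_s|^k$.
\end{itemize}
So divergence on the $\tilde a$-side says nothing about $G_k$, and your contradiction does not close.

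The paper never looks at a partial prefix and argues in the opposite direction:
\begin{itemize}
\item It extends the whole edge $[\mathbf i_0\mathbf i'_0,\mathbf j_0\mathbf j'_0]\in E_k$ inside $G_k$ to an eventually periodic path $[\mathbf I,\mathbf J]$.
\item Using the boundedness of $d_{\mathbf J}-d_{\mathbf I}$ (Lemma \ref{vertex_condition}-(a)) and the argument of Lemma \ref{neighboring_pieces1}, it gets equality of the $d$-digits block by block.
\item The $>n$ spacing converts these into equalities of the $\tilde a$-digits. This is where the normalization of $P$ is actually used.
\item Hence $\tilde a_{\mathbf J}-\tilde a_{\mathbf I}$ stays bounded, so the whole edge lies in $E^{k}_{\tilde F}$ by Lemma \ref{vertex_condition}-(b).
\item Only then does it restrict to the last $\textsf r$ indices.
\end{itemize}
Your proposal has no such transfer of a bounded extension from $G_k$ to $G_{\tilde F}$.

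Be aware, however, that the claim you invoke in your first paragraph (and again in the converse) cannot hold for all edges. The same is true of its $G_k$ analogue, on which this step of the paper rests. For a self-affine tile with a complete-residue digit set, distinct level-$k$ multi-indices have distinct digits. Yet for every $k$ some distinct level-$k$ pieces touch, since a tile has interior points. So $\tilde a_{\mathbf i_1}\neq\tilde a_{\mathbf j_1}$ on some $k$-edges. In any case, your identity from the first paragraph is never used afterwards, so it does not supply the missing link.
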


\begin{pf} Note that we can rephrase this lemma as
``\textit{For large $k$, the tails of edge labels of $G_k$ coincide with some finite paths of $G_{\tilde{F}}$}''. 
The inclusion  
 $E^{\textsf{r}}_{\tilde{F}}\subseteq \overline{E}_k(\textsf{r}) $ is obtained as follows: 

If $[\mathbf{i}'_0,\mathbf{j}'_0]\in E^{\textsf{r}}_{\tilde{F}}$, 
 then $\tilde{F}_{\mathbf{i}'_0}\cap \tilde{F}_{\mathbf{j}'_0}\neq\emptyset$, that is, $\tilde{F}_{\mathbf{i}'_0}, \tilde{F}_{\mathbf{j}'_0}$ are  neighboring pieces of $\tilde{F}$. Thus  there exist multi-indices 
$ \mathbf{i}_0, \mathbf{j}_0$ such that  $\tilde{F}_{\mathbf{i}_0\mathbf{i}'_0}\cap \tilde{F}_{\mathbf{j}_0\mathbf{j}'_0}\neq \emptyset$  and $|\mathbf{i}_0\mathbf{i}'_0|=|\mathbf{j}_0\mathbf{j}'_0|=k
$ is arbitrarily large. For example, one may choose any multi-index $ \mathbf{i}_0$ so that $ \mathbf{j}_0 =\mathbf{i}_0$.

 Set 
$$\mathbf{i}_1=\mathbf{i}_0\mathbf{i}'_0, \quad \quad \quad \mathbf{j}_1=\mathbf{j}_0\mathbf{j}'_0.$$
As before, extend $[\mathbf{i}_1,\mathbf{j}_1]\in E^{k}_{\tilde{F}}$ (on the right) to eventually periodic 
finite paths in $G_{\tilde{F}}$ in the form of  $[\mathbf{i}_1\mathbf{i}_2\overline{\mathbf{i}_3},\mathbf{j}_1\mathbf{j}_2\overline{\mathbf{j}_3}]$ so that $|\mathbf{i}_2|=|\mathbf{j}_2|=k  $ and $|\mathbf{i}_3|=|\mathbf{j}_3|=k\textsf{p}$ as in the proof of  Lemma \ref{neighboring_pieces1}. 
By Lemma \ref{vertex_condition}-(a) 
\begin{equation*}\label{}   \ |\tilde{a}_{\mathbf{j}_1\mathbf{j}_2}-\tilde{a}_{\mathbf{i}_1\mathbf{i}_2}|\leq  diam(\tilde{F})
\end{equation*}  for all large $k$. Recall from the proof of Lemma \ref{neighboring_pieces1} that $l$ is the number of appearances of the repeating block $\mathbf{i}_3$ in $\overline{\mathbf{i}_3}$. Then setting $\mathbf{I}=\mathbf{i}_1\mathbf{i}_2\overline{\mathbf{i}_3},\ \mathbf{J}=\mathbf{j}_1\mathbf{j}_2\overline{\mathbf{j}_3}$, 
one can again conclude that $\underset{l\rightarrow\infty}{\lim}|d_{\mathbf{J}}-d_{\mathbf{I}}|\neq \infty$ for all large $k$
since $|\tilde{a}_{\mathbf{J}}-\tilde{a}_{\mathbf{I}}|\leq  diam(\tilde{F})$.
That gives $[\mathbf{i}_1,\mathbf{j}_1]\in E_{k}$ by  Lemma \ref{vertex_condition}-(b) leading to $E^{\textsf{r}}_{\tilde{F}}\subseteq \overline{E}_k(\textsf{r}) $.
 This argument also works the other way around. 
 
 \textit{   For large k, there are no other types of edge labels of $E_k$} :
  Conversely,  consider any pair of multi-indices $\mathbf{i}'_0,\mathbf{j}'_0$ with $[\mathbf{i}'_0,\mathbf{j}'_0]\in \overline{E}_k(\textsf{r})$. Thus $|\mathbf{i}'_0|=|\mathbf{j}'_0|=r$ and there exist 
a pair $\mathbf{i}_0,\mathbf{j}_0$ of multi-indices such that $[\mathbf{i}_0\mathbf{i}'_0,\mathbf{j}_0\mathbf{j}'_0]\in E_k$ ($k\geq r$). 
As above, extend $[\mathbf{i}_0\mathbf{i}'_0,\mathbf{j}_0\mathbf{j}'_0]$ (on the right) to eventually periodic 
finite paths in $G_k$ in the form of  $[\mathbf{I}=\mathbf{i}_0\mathbf{i}'_0\mathbf{i}_2\overline{\mathbf{i}_3},\mathbf{J}=\mathbf{j}_0\mathbf{j}'_0\mathbf{j}_2\overline{\mathbf{j}_3}]$ so that $|\mathbf{i}_2|=|\mathbf{j}_2|=k  $ and $|\mathbf{i}_3|=|\mathbf{j}_3|=k\textsf{p}$. Since these paths are in $G_k$ for each $k$, we have $\underset{l\rightarrow\infty}{\lim}|d_{\mathbf{J}}-d_{\mathbf{I}}|\neq \infty$
by Lemma \ref{vertex_condition}-(a).   As in the previous proofs,  conclude that there is an integer $k_r$ such that $d_{\mathbf{i}_0\mathbf{i}'_0}=d_{\mathbf{j}_0\mathbf{j}'_0}$, $d_{\mathbf{i}_2}=d_{\mathbf{j}_2}$, 
$d_{\mathbf{i}_3}=d_{\mathbf{j}_3}$ for   $k\geq k_r.$ Then 
$|\tilde{a}_{\mathbf{i}_0\mathbf{i}'_0}-\tilde{a}_{\mathbf{j}_0\mathbf{j}'_0}|, |\tilde{a}_{\mathbf{i}_2}-\tilde{a}_{\mathbf{j}_2}|, |\tilde{a}_{\mathbf{i}_3}-\tilde{a}_{\mathbf{j}_3}|<n$ by (\ref{floor_digit_lower_perturbation}). Recall that we can choose $P$  so that the distance between two distinct lattice points in $P^{-1}\mathbb{Z}^n$ is greater than $n$. This forces $\tilde{a}_{\mathbf{i}_0\mathbf{i}'_0}=\tilde{a}_{\mathbf{j}_0\mathbf{j}'_0}$, $\tilde{a}_{\mathbf{i}_2}=\tilde{a}_{\mathbf{j}_2}$, $\tilde{a}_{\mathbf{i}_3}=\tilde{a}_{\mathbf{j}_3}$ because $\tilde{a}_{\mathbf{i}_0\mathbf{i}'_0}, \tilde{a}_{\mathbf{j}_0\mathbf{j}'_0}$, $\tilde{a}_{\mathbf{i}_2},\tilde{a}_{\mathbf{j}_2}$, $\tilde{a}_{\mathbf{i}_3},\tilde{a}_{\mathbf{j}_3}\in \tilde{A}_k=P^{-1}A_k\subset P^{-1}\mathbb{Z}^n$.
Hence
$\underset{l\rightarrow\infty}{\lim}|\tilde{a}_{\mathbf{J}}-\tilde{a}_{\mathbf{I}}|< \infty$.  By Lemma \ref{vertex_condition}-(b), $[\mathbf{i}_0\mathbf{i}'_0, \mathbf{j}_0\mathbf{j}'_0]\in E^{\textsf{k}}_{\tilde{F}}$  for all large $k.$ This gives $[\mathbf{i}'_0, \mathbf{j}'_0]\in E^{\textsf{r}}_{\tilde{F}}$.
 That is, $\overline{E}_k(\textsf{r}) \subseteq E^{\textsf{r}}_{\tilde{F}}$ for $k\geq k_{\textsf{r}}$. 
 \end{pf}

\begin{prop}\label{neighboring_pieces4}  Let  $\tilde{F}=F(J,\tilde{A})$ and $(E_k)^{\textsf{r}}$ denote the set of \textsf{r}-paths in $E_k$. There is an integer $k_0$ such that 

\begin{center}
$E^{\textsf{r}k}_{\tilde{F}}= (E_k)^{\textsf{r}}$ \ \ ($\textsf{r}\in \mathbb{N}$) 
 \end{center} 
 
for $k\geq k_0$.
\end{prop}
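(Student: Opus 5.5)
The plan is to reduce the statement to an \emph{edge-level} identification that holds uniformly in $k$, and then obtain the path-level identity by concatenation. Both $G_{\tilde{F}}$ (for the data $J_k,\tilde{A}_k$) and $G_k$ are neighbor graphs of graph-directed type: an $\textsf{r}$-path is just a pair of multi-indices $[\mathbf{i},\mathbf{j}]$ with $|\mathbf{i}|=|\mathbf{j}|=\textsf{r}k$, and $[\mathbf{i},\mathbf{j}]$ is a path exactly when the corresponding cylinder sets intersect. By Lemma \ref{vertex_condition}, this holds exactly when the translation $h(0)$ stays bounded along every extension. So it suffices to show the following claim. There is a single $k_0$ such that for $k\geq k_0$, a pair $[\mathbf{I},\mathbf{J}]$ of length $\textsf{r}k$ satisfies $\tilde{F}_{\mathbf{I}}\cap\tilde{F}_{\mathbf{J}}\neq\emptyset$ iff $(F_k)_{\mathbf{I}}\cap(F_k)_{\mathbf{J}}\neq\emptyset$, for every $\textsf{r}$.

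First I will take $\textsf{r}=1$. Using the bound $\textsf{r}_0$ on the number of vertices and the common period $\textsf{p}$ from the proof of Lemma \ref{neighboring_pieces1}, I choose $k_0\geq \textsf{r}_0$ large enough that both Lemma \ref{neighboring_pieces1} and Lemma \ref{neighboring_pieces2} apply with $\textsf{r}=\textsf{r}_0$. Every edge $[\mathbf{i}_1,\mathbf{j}_1]$ of $G_k$ (length $k$) can be extended to an eventually periodic path. The argument in those lemmas shows that for $k\geq k_0$ the digit differences satisfy $d_{\mathbf{j}_1}=d_{\mathbf{i}_1}$ iff $\tilde{a}_{\mathbf{j}_1}=\tilde{a}_{\mathbf{i}_1}$. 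The direction from $\tilde{a}$ to $d$ follows from the floor definition (\ref{floor_digit_lower_perturbation}). The converse uses the choice of $P$, which makes distinct points of $P^{-1}\mathbb{Z}^n$ more than $n$ apart. Hence the vertices (translations $h$) of $G_k$ and $G_{F(J_k,\tilde{A}_k)}$ correspond bijectively, with $h(0)=0$ exactly in the same cases. Combining this with Lemmas \ref{neighboring_pieces1} and \ref{neighboring_pieces2} gives $E^{k}_{\tilde{F}}=E_k$ for $k\geq k_0$. The key point is that $k_0$ depends only on $\textsf{r}_0$ and $\textsf{p}$, which are bounded independently of $k$, and not on $\textsf{r}$.

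Next I will pass to $\textsf{r}$-paths by induction on $\textsf{r}$. A path in $(E_k)^{\textsf{r}}$ is a concatenation of edges $[\mathbf{i}_t,\mathbf{j}_t]$, $t=1,\dots,\textsf{r}$, in which each edge leaves the vertex reached by the previous one. The vertex after $t$ steps is $h_t(0)=T_k^{t}$-weighted combinations of $d_{\mathbf{j}_u}-d_{\mathbf{i}_u}$, as in (\ref{new_notation}), and similarly for $\tilde{a}$. I will show that the vertex bijection from the edge step is compatible with the transition rule: $h\mapsto T_kh+d_{\mathbf{j}}-d_{\mathbf{i}}$ in $G_k$ corresponds to $h\mapsto J_kh+\tilde{a}_{\mathbf{j}}-\tilde{a}_{\mathbf{i}}$ in $G_{\tilde{F}}$. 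This gives a graph isomorphism preserving edge labels, and paths of every length then coincide, so $E^{\textsf{r}k}_{\tilde{F}}=(E_k)^{\textsf{r}}$.

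The main obstacle is this compatibility. For nonzero vertices $h$, the perturbed and unperturbed recursions differ by $(T_k-J_k)h$ and by floor errors, and these do not vanish. My first attempt is to use the eventually-periodic extension trick of Lemma \ref{neighboring_pieces1}: for $k\geq k_0$, every edge of a cyclic path must have $\tilde{a}_{\mathbf{j}_1}=\tilde{a}_{\mathbf{i}_1}$ (equation (\ref{h4})). Consequently all vertices reachable along infinite paths reduce to the identity type, and the correspondence is forced edge by edge. Failing that, I would fall back on the uniform closeness from Remark \ref{rem2}(a) together with integrality of the differences.
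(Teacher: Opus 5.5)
Your reduction already stalls at $\textsf{r}=1$. The equivalence $d_{\mathbf{j}_1}=d_{\mathbf{i}_1}\iff\tilde a_{\mathbf{j}_1}=\tilde a_{\mathbf{i}_1}$ is correct for every $k$, by the floor definition and the spacing of $P^{-1}\mathbb{Z}^n$. But it only concerns pairs whose cylinders coincide. A pair $[\mathbf{i},\mathbf{j}]$ with $h=\tilde a_{\mathbf{j}}-\tilde a_{\mathbf{i}}\neq0$ lies in $E^k_{\tilde F}$ iff $\tilde F\cap(\tilde F+h)\neq\emptyset$, and lies in $E_k$ iff $F_k\cap(F_k+d_{\mathbf{j}}-d_{\mathbf{i}})\neq\emptyset$. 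Nothing you cite compares these two conditions. Lemmas \ref{neighboring_pieces1}--\ref{neighboring_pieces2} only identify the sets of length-$\textsf{r}_0$ prefixes and suffixes of edge labels, not the full labels. So the ``vertex bijection'' and $E^k_{\tilde F}=E_k$ are not derived; they presuppose exactly the transition compatibility you postpone. The whole proposition therefore rests on the obstacle you flag yourself.

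Your first attempt at that obstacle uses a false claim. It is not true that for large $k$ every edge on an infinite (or cyclic) path has $\tilde a_{\mathbf{j}_1}=\tilde a_{\mathbf{i}_1}$.
\begin{itemize}
\item Counterexample: take $F=[0,1]=F(2,\{0,1\})$ and $P^{-1}=2$, so $\tilde F=[0,2]$ with indices $1,2$ carrying digits $0,2$; write $w^j$ for $j$ repetitions of a word $w$. For every $k$, the sequences $1\,2^{k-1}\,\overline{2^k}$ and $2\,1^{k-1}\,\overline{1^k}$ both code the point $1$, so they form an infinite path. Its first block has $\tilde a_{\mathbf{j}_1}-\tilde a_{\mathbf{i}_1}=2$, and its periodic block is a loop at the vertex $h=2$ with $\tilde a_{\mathbf{j}_2}-\tilde a_{\mathbf{i}_2}=-2(2^k-1)$.
\item Scope: here $F_k=\tilde F$, so the proposition itself is harmless, but the mechanism is refuted. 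More generally, for any connected self-affine tile, distinct level-$k$ cylinders touch at every level, so nonzero vertices lie on infinite paths for all $k$.
\item Where (\ref{h4}) goes wrong: it uses the bound $|\tilde a_{\mathbf{j}_2}-\tilde a_{\mathbf{i}_2}|\le\mathrm{diam}(\tilde F)$, which holds only for a block read from the root vertex. On a cycle through a vertex $h$ one has $\tilde a_{\mathbf{j}_2}-\tilde a_{\mathbf{i}_2}=(I-J_k^{\textsf{p}})h$, so invoking the bound already assumes $h$ is small.
\item The paper does not help: its proof of this proposition runs on the same mechanism. Part (a) concludes $d_{\mathbf{j}_1}=d_{\mathbf{i}_1}$ for every edge of $E_k$, and part (b) that $a_{\mathbf{j}_t}=a_{\mathbf{i}_t}$ for all blocks.
\item Your fallback fails too. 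Remark \ref{rem2}(a) yields only approximate closeness of coded points. The vertices $\tilde a_{\mathbf{J}}-\tilde a_{\mathbf{I}}$ and $d_{\mathbf{J}}-d_{\mathbf{I}}$ differ by terms like $(J_k^{t}-T_k^{t})(\cdot)$ that are not small. And whether two compact sets merely touch is not stable under small perturbations, so closeness plus integrality cannot decide exact membership in $(E_k)^{\textsf{r}}$.
\end{itemize}
What is missing is an exact argument that, for each nonzero vertex $h$, $F_k$ meets its perturbed translate precisely when $\tilde F$ meets $\tilde F+h$, with the same continuation labels.
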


\begin{pf} We again use the capital-bold-letter notation in (\ref{new_notation}) for multi-index concatenation  like
$$ \mathbf{J}=\mathbf{j}_1\cdots \mathbf{j}_\textsf{s}, \quad \quad \quad   \mathbf{I}=\mathbf{i}_1\cdots \mathbf{i}_\textsf{s},  \quad  \quad |\mathbf{j}_1|=\cdots=|\mathbf{j}_\textsf{s}|=|\mathbf{i}_1|=\cdots=|\mathbf{i}_\textsf{s}|=k.$$
It is enough to demonstrate the proof for $\textsf{r}=1$ and $\textsf{r}=2$ (the adjacency relation of the edges of $E_k$) since the general case follows from these two cases.

(a) Let $\textsf{r}=1$.  For the inclusion $E_k\subseteq E^k_{\tilde{F}}$, let $ [\mathbf{i}_1(k),\mathbf{j}_1(k)]\in E_{k}$.   
By Lemma \ref{neighboring_pieces1} and Lemma \ref{neighboring_pieces2},  for large $ k$, all edges of $E_{k}$ are in the form of $[\mathbf{i}_1
  =\mathbf{i}_0\mathbf{i}\mathbf{i}'_0, \mathbf{j}_1=\mathbf{j}_0\mathbf{j}\mathbf{j}'_0]$, where $|\mathbf{i}_1|=|\mathbf{j}_1|=k\geq \textsf{r}$ and
  $$[\mathbf{i}_0, \mathbf{j}_0], [\mathbf{i}'_0, \mathbf{j}'_0] \in E^{\textsf{r}}_{\tilde{F}}.$$

  On the other hand, as in the proof of Lemma \ref{neighboring_pieces1}, we can extend $ [\mathbf{i}_1(k),\mathbf{j}_1(k)]$ to eventually periodic finite (or infinite) paths $ [\mathbf{I}_1(k)\overline{\mathbf{I}_2(k)},\mathbf{J}_1(k)\overline{\mathbf{J}_2(k)}]$ in $G_{k}$ and we get $$\underset{k\rightarrow\infty}{\lim} \left|d_{\mathbf{J}_1}-d_{\mathbf{I}_1}\right|=0.$$  This eventually leads to $\underset{k\rightarrow\infty}{\lim} \left|d_{\mathbf{j}_1}-d_{\mathbf{i}_1}\right|=0$ (see the arguments in the proof of Lemma \ref{neighboring_pieces1}). Since $d_{\mathbf{j}_1}-d_{\mathbf{i}_1}$ are integer vectors, there exists a positive integer $k_0$ such that $d_{\mathbf{j}_1}(k)=d_{\mathbf{i}_1}(k)$ for $k\geq k_0$. 
 In view of $d_{\mathbf{j}_1}=\lfloor \tilde{a}_{\mathbf{j}_1} \rfloor,
\ \  d_{\mathbf{i}_1}=\lfloor \tilde{a}_{\mathbf{i}_1}  \rfloor$, that is possible only if $\tilde{a}_{\mathbf{j}_1}-\tilde{a}_{\mathbf{i}_1} $, as a function of $k$, 
is bounded. Now we rewrite $\tilde{a}_{\mathbf{j}_1}-\tilde{a}_{\mathbf{i}_1} $ as $$\tilde{a}_{\mathbf{j}_1}-\tilde{a}_{\mathbf{i}_1}=J_{k}^{\textsf{r}}(\tilde{a}_{\mathbf{i}_0\mathbf{i}}-\tilde{a}_{\mathbf{j}_0\mathbf{j}})
+\tilde{a}_{\mathbf{i}'_0}-\tilde{a}_{\mathbf{j}'_0}.$$ 
For this to be bounded, we must have $$\underset{k\rightarrow\infty}{\lim} \left|P^{-1}(a_{\mathbf{i}_0\mathbf{i}}-a_{\mathbf{j}_0\mathbf{j}})\right|=\underset{k\rightarrow\infty}{\lim} \left|\tilde{a}_{\mathbf{i}_0\mathbf{i}}-\tilde{a}_{\mathbf{j}_0\mathbf{j}}\right|=0$$  
because $ \underset{k\rightarrow\infty}{\lim} \left|\tilde{a}_{\mathbf{i}'_0}-\tilde{a}_{\mathbf{j}'_0}\right|\leq diam(\tilde{F})$  by Lemma \ref{vertex_condition}-(a).
This gives $\tilde{a}_{\mathbf{i}_0\mathbf{i}}=\tilde{a}_{\mathbf{j}_0\mathbf{j}}$ for large $k$ because 
$a_{\mathbf{i}_0\mathbf{i}}, \ a_{\mathbf{j}_0\mathbf{j}}$ are integer vectors (or $\tilde{a}_{\mathbf{i}_0\mathbf{i}}, \ \tilde{a}_{\mathbf{j}_0\mathbf{j}}$ are lattice points).
We then conclude that 
$$\tilde{F}_{\mathbf{i}_1}\cap \tilde{F}_{\mathbf{j}_1}=(\tilde{F}_{\mathbf{i}'_0})_{\mathbf{i}_0\mathbf{i}}\cap (\tilde{F}_{\mathbf{j}'_0})_{\mathbf{j}_0\mathbf{j}}=(\tilde{F}_{\mathbf{i}'_0})_{\mathbf{i}_0\mathbf{i}}\cap (\tilde{F}_{\mathbf{j}'_0})_{\mathbf{i}_0\mathbf{i}}=(\tilde{F}_{\mathbf{i}'_0}\cap \tilde{F}_{\mathbf{j}'_0})_{\mathbf{i}_0\mathbf{i}}\neq \emptyset$$  for all large $k$
because $[\mathbf{i}'_0, \mathbf{j}'_0] \in E^{\textsf{r}}_{\tilde{F}}\Longrightarrow\tilde{F}_{\mathbf{i}'_0}\cap \tilde{F}_{\mathbf{j}'_0}\neq \emptyset$. 
That leads to  $[\mathbf{i}_1,\mathbf{j}_1]  \in E^{k}_{\tilde{F}}$. 
\medskip

The above argument works in the other direction either. That is, 
$E^{k}_{\tilde{F}}\subseteq E_{k}$ for large $k$. As a result, there is an integer $k_0$ such that $E^k_{\tilde{F}}= E_k$ for $k\geq k_0$.  

\bigskip

(b) Let $\textsf{r}=2$. Next we can come to the issue of the adjacency relation of the edges of $E_k$. If $[\mathbf{i}_1\mathbf{i}_2, \mathbf{j}_1\mathbf{j}_2] \in E_{\tilde{F}}^{2k},$ then we use the idea in (a)
to show that $[\mathbf{i}_1\mathbf{i}_2, \mathbf{j}_1\mathbf{j}_2]\in (E_k)^2$. The difference is we work with $2$-paths in $G_k$ rather than  edges.

By using the neighbor graph $G_{\tilde{F}}$  of the original fractal $\tilde{F}$ and  sufficiently large $k$, every $2k$-path $[\mathbf{i}_1\mathbf{i}_2, \mathbf{j}_1\mathbf{j}_2] \in E_{\tilde{F}}^{2k} $ can be extended (on the right) to eventually periodic 
finite paths in the form of  $[\mathbf{i}_1\mathbf{i}_2\mathbf{i}_3\overline{\mathbf{i}_4},\mathbf{j}_1\mathbf{j}_2\mathbf{j}_3\overline{\mathbf{j}_4}] $ so that $|\mathbf{i}_3|=|\mathbf{j}_3|=k$ and $|\mathbf{i}_4|=|\mathbf{j}_4|=k\textsf{p}$.

We now consider the finite eventually periodic indices $\mathbf{I}=\mathbf{i}_1\mathbf{i}_2\mathbf{i}_3\overline{\mathbf{i}_4}$, $\mathbf{J}=\mathbf{j}_1\mathbf{j}_2\mathbf{j}_3\overline{\mathbf{j}_4}$. Let $l$ be the number of appearances of the repeating multi-index $\mathbf{j}_4$ in the block  $\overline{\mathbf{j}_4}$. Then   
$$  d_{\mathbf{J}}:=T_{k}^{\textsf{p}l}d_{\mathbf{j}_1\mathbf{j}_2\mathbf{j}_3}+ \Sigma_{i=1}^{l}T_{k}^{\textsf{p}(l-i)}d_{\mathbf{j}_4}=T_{k}^2d_{\mathbf{j}_1}+T_{k}d_{\mathbf{j}_2}+d_{\mathbf{j}_3}+
(T_{k}^{\textsf{p}l}-I)[(T_{k}^{\textsf{p}}-I)^{-1}d_{\mathbf{j}_4}+T_{k}^2d_{\mathbf{j}_1}+T_{k}d_{\mathbf{j}_2}+d_{\mathbf{j}_3}],
$$
$$
d_{\mathbf{I}}:=T_{k}^{\textsf{p}l}d_{\mathbf{i}_1\mathbf{i}_2\mathbf{i}_3}+ \Sigma_{i=1}^{l}T_{k}^{\textsf{p}(l-i)}d_{\mathbf{i}_4}=T_{k}^2d_{\mathbf{i}_1}+T_{k}d_{\mathbf{i}_2}+d_{\mathbf{i}_3}+
(T_{k}^{\textsf{p}l}-I)[(T_{k}^{\textsf{p}}-I)^{-1}d_{\mathbf{i}_4}+T_{k}^2d_{\mathbf{i}_1}+T_{k}d_{\mathbf{i}_2}+d_{\mathbf{i}_3}],
$$
where $ d_{\mathbf{j}_1},d_{\mathbf{j}_2},d_{\mathbf{j}_3}\in D_k, \ d_{\mathbf{j}_4}\in D_{k\textsf{p}}, \quad  d_{\mathbf{i}_1},d_{\mathbf{i}_2},d_{\mathbf{i}_3}\in D_k, \ d_{\mathbf{i}_4}\in D_{k\textsf{p}}$. 
To define $\tilde{a}_{\mathbf{J}},\tilde{a}_{\mathbf{I}}$, we simply replace $d$ by $\tilde{a}$ and $T$ by $J$ above. Since the rest of the argument is as in (a), 
we give a sketch of it.
 Lemma \ref{vertex_condition}-(a) gives 
$$|h(0)|  =  |\tilde{a}_{\mathbf{J}}-\tilde{a}_{\mathbf{I}}| \leq   diam(\tilde{F})
$$
for all  $l$ since $[\mathbf{I},\mathbf{J}]\in E_{\tilde{F}}^{k(\textsf{p}l+3)}$.
Since $[\mathbf{i}_1\mathbf{i}_2\mathbf{i}_3, \mathbf{j}_1\mathbf{j}_2\mathbf{j}_3] \in E_{\tilde{F}}^{3k},$ we also have $$|\tilde{a}_{\mathbf{j}_1\mathbf{j}_2\mathbf{j}_3}-\tilde{a}_{\mathbf{i}_1\mathbf{i}_2\mathbf{i}_3}|=\left| J_{k}^2(\tilde{a}_{\mathbf{j}_1}-\tilde{a}_{\mathbf{i}_1}) + J_{k}(\tilde{a}_{\mathbf{j}_2}- \tilde{a}_{\mathbf{i}_2}) + (\tilde{a}_{\mathbf{j}_3}- \tilde{a}_{\mathbf{i}_3}) \right| \leq   diam(\tilde{F}).$$
By the reverse triangle inequality,
\begin{eqnarray*}\label{} \tiny{ \left|(J_{k}^{\textsf{p}l}-I)\left[(J_{k}^{\textsf{p}}-I)^{-1} (\tilde{a}_{\mathbf{j}_4}-\tilde{a}_{\mathbf{i}_4})+ J_{k}^2(\tilde{a}_{\mathbf{j}_1}-\tilde{a}_{\mathbf{i}_1}) + J_{k}(\tilde{a}_{\mathbf{j}_2}- \tilde{a}_{\mathbf{i}_2}) + (\tilde{a}_{\mathbf{j}_3}- \tilde{a}_{\mathbf{i}_3}) \right] \right|} \leq \\
\left|\tilde{a}_{\mathbf{J}}-\tilde{a}_{\mathbf{I}}\right| + \left| J_{k}^2(\tilde{a}_{\mathbf{j}_1}-\tilde{a}_{\mathbf{i}_1}) + J_{k}(\tilde{a}_{\mathbf{j}_2}- \tilde{a}_{\mathbf{i}_2}) + (\tilde{a}_{\mathbf{j}_3}- \tilde{a}_{\mathbf{i}_3}) \right|  \leq 2 diam(\tilde{F}).
\end{eqnarray*}
In order for this to be true for all $l$, we must have
\begin{equation*}\label{}
\left[(J_{k}^{\textsf{p}}-I)^{-1} (\tilde{a}_{\mathbf{j}_4}-\tilde{a}_{\mathbf{i}_4})+ J_{k}^2(\tilde{a}_{\mathbf{j}_1}-\tilde{a}_{\mathbf{i}_1}) + J_{k}(\tilde{a}_{\mathbf{j}_2}- \tilde{a}_{\mathbf{i}_2}) + (\tilde{a}_{\mathbf{j}_3}- \tilde{a}_{\mathbf{i}_3}) \right]= 0. \\
\end{equation*}
But 
$$
 \left[(J_{k}^{\textsf{p}}-I)^{-1} (\tilde{a}_{\mathbf{j}_4}-\tilde{a}_{\mathbf{i}_4})+ J_{k}^2(\tilde{a}_{\mathbf{j}_1}-\tilde{a}_{\mathbf{i}_1}) + J_{k}(\tilde{a}_{\mathbf{j}_2}- \tilde{a}_{\mathbf{i}_2}) + (\tilde{a}_{\mathbf{j}_3}- \tilde{a}_{\mathbf{i}_3}) \right]= 0  \Longleftrightarrow
 $$
\begin{eqnarray}\label{main3} 
 \left|(\tilde{a}_{\mathbf{j}_4}-\tilde{a}_{\mathbf{i}_4})+(J_{k}^{\textsf{p}}-I)[J_{k}^2(\tilde{a}_{\mathbf{j}_1}-\tilde{a}_{\mathbf{i}_1}) + J_{k}(\tilde{a}_{\mathbf{j}_2}- \tilde{a}_{\mathbf{i}_2}) + (\tilde{a}_{\mathbf{j}_3}- \tilde{a}_{\mathbf{i}_3})] \right|= 0.
\end{eqnarray}
By the reverse triangle inequality again, this leads to
\begin{equation*}\label{}
\left| (J_{k}^{\textsf{p}}-I)\left[J_{k}^2(\tilde{a}_{\mathbf{j}_1}-\tilde{a}_{\mathbf{i}_1}) + J_{k}(\tilde{a}_{\mathbf{j}_2}- \tilde{a}_{\mathbf{i}_2}) + (\tilde{a}_{\mathbf{j}_3}- \tilde{a}_{\mathbf{i}_3})\right] \right| \leq \left|\tilde{a}_{\mathbf{j}_4}-\tilde{a}_{\mathbf{i}_4}\right|\leq diam(\tilde{F})
\end{equation*}
This is possible if $\underset{k\rightarrow\infty}{\lim}\left| J_{k}^2(\tilde{a}_{\mathbf{j}_1}-\tilde{a}_{\mathbf{i}_1}) + J_{k}(\tilde{a}_{\mathbf{j}_2}- \tilde{a}_{\mathbf{i}_2}) + (\tilde{a}_{\mathbf{j}_3}- \tilde{a}_{\mathbf{i}_3})\right|=0$ as in part (a). Thus 
$$\underset{k\rightarrow\infty}{\lim} \left| \tilde{a}_{\mathbf{j}_4}-\tilde{a}_{\mathbf{i}_4} \right|=0$$ by (\ref{main3}), and similarly
$\underset{k\rightarrow\infty}{\lim} \left| J_{k}^2(\tilde{a}_{\mathbf{j}_1}-\tilde{a}_{\mathbf{i}_1}) + J_{k}(\tilde{a}_{\mathbf{j}_2}- \tilde{a}_{\mathbf{i}_2}) \right|\leq \underset{k\rightarrow\infty}{\lim} \left|\tilde{a}_{\mathbf{j}_3}-\tilde{a}_{\mathbf{i}_3} \right|\leq   diam(\tilde{F}).$
Again that is possible 
if $\underset{k\rightarrow\infty}{\lim} \left| J_{k}(\tilde{a}_{\mathbf{j}_1}-\tilde{a}_{\mathbf{i}_1}) + (\tilde{a}_{\mathbf{j}_2}- \tilde{a}_{\mathbf{i}_2}) \right|=0.$ Then 
$\underset{k\rightarrow\infty}{\lim} \left| \tilde{a}_{\mathbf{j}_3}-\tilde{a}_{\mathbf{i}_3} \right|=0$ by (\ref{main3}) again. 
Continuing in this way,  we get  $$\underset{k\rightarrow\infty}{\lim}\left|\tilde{a}_{\mathbf{j}_1}-\tilde{a}_{\mathbf{i}_1} \right|=\underset{k\rightarrow\infty}{\lim}\left|\tilde{a}_{\mathbf{j}_2}-\tilde{a}_{\mathbf{i}_2} \right|=\underset{k\rightarrow\infty}{\lim}\left|\tilde{a}_{\mathbf{j}_3}-\tilde{a}_{\mathbf{i}_3} \right|=\underset{k\rightarrow\infty}{\lim}\left|\tilde{a}_{\mathbf{j}_4}-\tilde{a}_{\mathbf{i}_4} \right|=0.$$ 
Then $\underset{k\rightarrow\infty}{\lim}\left|a_{\mathbf{j}_1}-a_{\mathbf{i}_1} \right|=\underset{k\rightarrow\infty}{\lim}\left|a_{\mathbf{j}_2}-a_{\mathbf{i}_2} \right|=\underset{k\rightarrow\infty}{\lim}\left|a_{\mathbf{j}_3}-a_{\mathbf{i}_3} \right|=\underset{k\rightarrow\infty}{\lim}\left|a_{\mathbf{j}_4}-a_{\mathbf{i}_4} \right|=0.$
Due to the fact that   $a_{\mathbf{j}_1}-a_{\mathbf{i}_1}, \ a_{\mathbf{j}_2}-a_{\mathbf{i}_2}, \ a_{\mathbf{j}_3}-a_{\mathbf{i}_3}, \ a_{\mathbf{j}_4}-a_{\mathbf{i}_4}$ are integer vectors, there exists a positive integer $k_0$ such that $a_{\mathbf{j}_1}=a_{\mathbf{i}_1},\ a_{\mathbf{j}_2}=a_{\mathbf{i}_2}, \ a_{\mathbf{j}_3}=a_{\mathbf{i}_3}, \ a_{\mathbf{j}_4}=a_{\mathbf{i}_4}$ for $k\geq k_0$. 
Thus in view of $d_{\mathbf{j}_1}=\lfloor \tilde{a}_{\mathbf{j}_1} \rfloor,
\ \  d_{\mathbf{i}_1}=\lfloor \tilde{a}_{\mathbf{i}_1}  \rfloor$, 
we have
 $d_{\mathbf{j}_1}-d_{\mathbf{i}_1} =0 $ for  $k\geq k_0$. Similarly, $ d_{\mathbf{j}_2}-d_{\mathbf{i}_2}=0, \ d_{\mathbf{j}_3}-d_{\mathbf{i}_3}=0, \ d_{\mathbf{j}_4}-d_{\mathbf{i}_4}=0$ for $k\geq k_0$, 
and hence $$\underset{l\rightarrow\infty}{\lim}|d_{\mathbf{J}}-d_{\mathbf{I}})|\neq \infty.$$ Finally, part (a) and Lemma \ref{vertex_condition}-(b) yield
$[\mathbf{i}_1\mathbf{i}_2,\mathbf{j}_1\mathbf{j}_2]\in  (E_k)^2$. 
Therefore, $E^{2k}_{\tilde{F}}\subseteq (E_k)^2$ for $k\geq k_0.$  

The other inclusion $ (E_k)^2\subseteq E^{2k}_{\tilde{F}}$ is handled analogously.
\end{pf}

\bigskip

\begin{remark} \rm{  Proposition \ref{neighboring_pieces4} shows that not only are  the dimensions of $\tilde{F}$ and its perturbations $F_k$ related, but also their topologies. This is because  the topological properties (like simple connectedness, the disk-like or the ball-like property \cite[Theorem 9]{BM}) of $\tilde{F}$ are usually given in terms of its graph. For instance,  
 $\tilde{F}$ is connected if and only if $F_k$ is connected for all large $k$. The proof directly follows from \cite[Theorem 4.3]{KL1} and  Proposition \ref{neighboring_pieces4}. \hfill $\Box$}
\end{remark}

  Further, the following is an immediate consequence of Proposition \ref{neighboring_pieces4} and its proof. It implies that the points or subsets  of $\tilde{F}=F(J_k,\tilde{A}_k)$ and  $F_k$ given by the correspondence  (\ref{special_correspondence})
eventually (i.e. for large $k$) have the same multiple codings (index sequences).  

A \textit{graph homomorphism} from a graph $G=(V_G,E_G)$ to a graph $H=(V_H,E_H)$ is a  function from the vertex set  $V_G$ to the vertex set $V_H$ which is 
edge-preserving \cite{HN}. A \textit{graph isomorphism} of graphs $G$ and $H$ is a  bijection between the vertex sets of $G$ and $H$ which is both edge-preserving and label-preserving
\cite[\ p.424]{HHH}.

\begin{coro}\label{neighboring_pieces5} There exists an integer $k_0$ such that  there is a label-preserving homomorphism from $G_{F(J_k,\tilde{A}_k)}$  onto $G_k$ for each $k\geq k_0$. 
In particular, the sets of infinite paths of $G_{F(J_k,\tilde{A}_k)}$  and $G_k$ are the same for all large $k$.
\end{coro}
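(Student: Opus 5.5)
Using Proposition \ref{neighboring_pieces4} with the $k_0$ it provides, fix $k\geq k_0$. The plan is to build the homomorphism on vertices in the canonical way. By Proposition \ref{neighboring_pieces4} the edge labels agree, $E^{k}_{\tilde{F}}=E_k$, and more generally $E^{\textsf{r}k}_{\tilde{F}}=(E_k)^{\textsf{r}}$. So every vertex of $G_{F(J_k,\tilde{A}_k)}$ is reached from the root (identity) by a labelled path $[\mathbf{I},\mathbf{J}]$, where $\mathbf{I}=\mathbf{i}_1\cdots\mathbf{i}_\textsf{r}$, $\mathbf{J}=\mathbf{j}_1\cdots\mathbf{j}_\textsf{r}$ and each block has length $k$. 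That vertex is the translation $h(x)=x+\tilde{a}_{\mathbf{J}}-\tilde{a}_{\mathbf{I}}$. The same label path is a path in $G_k$, and there it ends at the vertex $h_k(x)=x+d_{\mathbf{J}}-d_{\mathbf{I}}$. I would define $\phi(h)=h_k$.

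The first step is well-definedness: if two label paths $[\mathbf{I},\mathbf{J}]$ and $[\mathbf{I}',\mathbf{J}']$ give the same $h$, they must give the same $h_k$. The intended route is the mechanism already used in the proofs of Lemma \ref{neighboring_pieces1} and Proposition \ref{neighboring_pieces4}. Extend each path to an eventually periodic path in $G_{\tilde{F}}$. The boundedness coming from Lemma \ref{vertex_condition}-(a), combined with the growth of $\|J_k^{\textsf{p}}-I\|$, forces the relevant digit differences to vanish for large $k$. Because the differences of the $a$'s are integer vectors, they vanish exactly, and then the floor map (\ref{floor_digit_lower_perturbation}) makes the $d$-differences coincide as well. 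Applied blockwise, this gives $d_{\mathbf{J}}-d_{\mathbf{I}}=d_{\mathbf{J}'}-d_{\mathbf{I}'}$.

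I expect this to be the main obstacle. The earlier argument only yields equalities of the form $\tilde{a}_{\mathbf{i}}=\tilde{a}_{\mathbf{j}}$ along paths. If that is not enough, my fallback is a vertex-free formulation: realise each neighbor graph by its language of label paths, so that a vertex is identified with the set of infinite label continuations from it, i.e.\ its follower set. Since Proposition \ref{neighboring_pieces4} holds for every $\textsf{r}$, the follower sets of the two endpoints correspond, and Lemma \ref{vertex_condition}-(b) characterises a vertex through the boundedness of its continuations. Two paths ending at the same $h$ then have the same follower set in both graphs. I would identify vertices of $G_k$ up to this equivalence, which is what the BM construction gives for reduced graphs.

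The remaining steps are routine. Edge preservation holds because an edge $h\xrightarrow{[\mathbf{i},\mathbf{j}]}h'$ extends a path ending at $h$, and by construction its image is the edge with the same label from $\phi(h)$ to $\phi(h')$; this also makes $\phi$ label-preserving. Surjectivity follows from the inclusion $(E_k)^{\textsf{r}}\subseteq E^{\textsf{r}k}_{\tilde{F}}$, since every vertex of $G_k$ is the end of some finite label path from the root. Finally, the statement about infinite paths follows because infinite paths are determined by their finite label prefixes: Proposition \ref{neighboring_pieces4} holding for all $\textsf{r}$, with $k_0$ independent of $\textsf{r}$, gives equality of the sets of finite label paths of every length, and passing to the limit in the metric $d$ gives equality of the sets of infinite label paths.
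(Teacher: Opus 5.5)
Your architecture matches the paper's. You use the same vertex map $x+\tilde a_{\mathbf{J}}-\tilde a_{\mathbf{I}}\mapsto x+d_{\mathbf{J}}-d_{\mathbf{I}}$ along a common label path. Edge and label preservation, surjectivity and the infinite-path statement are read off from Proposition~\ref{neighboring_pieces4} and its $\textsf{r}$-independent $k_0$, and those parts are fine. The step that is not closed is the one you flag, well-definedness; the paper itself only sketches it in the remark after the corollary.

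Drop your primary plan. The vanishing-of-digit-differences mechanism, taken literally, gives $\tilde a_{\mathbf{i}_1}=\tilde a_{\mathbf{j}_1}$ on \emph{every} edge from the root. Every vertex would then be the identity translation, which is false whenever $\tilde F$ has touching but non-coincident pieces (e.g.\ whenever $F$ is a self-affine tile). The bound $|\tilde a_{\mathbf{j}_2}-\tilde a_{\mathbf{i}_2}|\le \mathrm{diam}(\tilde F)$ that drives it holds for label paths issuing from the root. The periodic block, however, issues from $v=\tilde a_{\mathbf{j}_1}-\tilde a_{\mathbf{i}_1}$ and satisfies exactly $\tilde a_{\mathbf{j}_2}-\tilde a_{\mathbf{i}_2}=(I-J_k^{\textsf{p}})v$. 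So this route says nothing about two different paths ending at the same nontrivial $h$.

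Your fallback is the right idea, but as written it ends by quotienting $G_k$ by follower-set equivalence, which proves the statement for a different graph. The missing observation is that this quotient is trivial: distinct vertices of a neighbor graph have disjoint sets of infinite continuations. Here is why.
\begin{itemize}
\item From the vertex $x\mapsto x+w$ of $G_k$, the label path $[\mathbf{i}_1\cdots\mathbf{i}_m,\mathbf{j}_1\cdots\mathbf{j}_m]$ leads to $T_k^m w+d_{\mathbf{j}_1\cdots\mathbf{j}_m}-d_{\mathbf{i}_1\cdots\mathbf{i}_m}$, in the notation of (\ref{new_notation}).
\item If one infinite label sequence continues both $w$ and $w'$, Lemma~\ref{vertex_condition}-(a) bounds all these vertices by $\mathrm{diam}(F_k)$. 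Hence $|T_k^m(w-w')|\le 2\,\mathrm{diam}(F_k)$ for every $m$.
\item Since $T_k$ is expanding, this forces $w=w'$. So it is part (a) together with expansiveness, not part (b), that separates vertices.
\end{itemize}

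Now suppose root paths $p_1,p_2$ end at the same $h$ in $G_{F(J_k,\tilde A_k)}$. Since $h$ comes from a nonempty intersection, it has an infinite continuation $c$. Apply Proposition~\ref{neighboring_pieces4} to all finite prefixes of $p_1c$ and $p_2c$; this is where uniformity of $k_0$ in $\textsf{r}$ matters. Then $c$ continues both $G_k$-endpoints, which therefore coincide, and this gives well-definedness.

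The same argument run in $G_{F(J_k,\tilde A_k)}$, with $J_k$ expanding and Lemma~\ref{vertex_condition}-(a) for $\tilde F$, gives injectivity. So $\varphi_k$ is in fact an isomorphism, derived from the statement of Proposition~\ref{neighboring_pieces4} alone. This is sturdier than the lattice-spacing argument the paper appeals to after the corollary. That argument rests on the floor relation $d_{\mathbf{j}}=\lfloor\tilde a_{\mathbf{j}}\rfloor$, which holds for single-block digits but not for the multi-block vectors $d_{\mathbf{J}}$.
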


\begin{pf} Now a graph homomorphism  is apparent:
 
 By Proposition \ref{neighboring_pieces4}, when $\tilde{F}=F(J,\tilde{A})$ we have  $E^{\textsf{r}k}_{\tilde{F}}= (E_k)^{\textsf{r}}$ for $k\geq k_0$. Moreover, from the proof of the same proposition  we know that  $[\mathbf{i}_1,\mathbf{j}_1]\in  E^{k}_{\tilde{F}} \Longleftrightarrow [\mathbf{i}_1,\mathbf{j}_1]\in E_k,  \quad \quad [\mathbf{i}_1\mathbf{i}_2,\mathbf{j}_1\mathbf{j}_2]\in  E^{2k}_{\tilde{F}} \Longleftrightarrow [\mathbf{i}_1\mathbf{i}_2,\mathbf{j}_1\mathbf{j}_2]\in(E_k)^2$ for $k\geq k_0$.
Then we can define an edge-preserving and label-preserving homomorphism   
by the vertex function  
$\varphi_k: V_{F(J_k,\tilde{A}_k)} \rightarrow V_k$ with $h(x)=x+ \tilde{a}_{\mathbf{j}_1}-\tilde{a}_{\mathbf{i}_1}\underset{\varphi_k}{\longmapsto} h_k(x)=x+d_{\mathbf{j_1}}-d_{\mathbf{i_1}} $
where  $[\mathbf{i_1},\mathbf{j_1}]\in E_k$.  Then the edges  for $\varphi_k\left(V_{F(J_k,\tilde{A}_k)}\right)$ will be as follows. 
When $[\mathbf{i}_1\mathbf{i}_2,\mathbf{j}_1\mathbf{j}_2]\in(E_k)^2$, we let 
 $x+d_{\mathbf{j}_1}-d_{\mathbf{i}_1} \underset{[\mathbf{i}_2, \mathbf{j}_2]}{\longrightarrow} x+d_{\mathbf{j}_1\mathbf{j}_2}-d_{\mathbf{i}_1\mathbf{i}_2}.$ 
In this case, we naturally define  $\varphi_k\left(x+\tilde{a}_{\mathbf{j}_1\mathbf{j}_2}-\tilde{a}_{\mathbf{i}_1\mathbf{i}_2}\right)=x+d_{\mathbf{j}_1\mathbf{j}_2}-d_{\mathbf{i}_1\mathbf{i}_2}$. 
In general, if $\mathbf{I}=\mathbf{i}_1\mathbf{i}_2... \mathbf{i}_{\textsf{r}}, \ \ \mathbf{J}=\mathbf{j}_1\mathbf{j}_2...\mathbf{j}_{\textsf{r}}$ and 
$[\mathbf{I},\mathbf{J}]\in E^{\textsf{r}k}_{\tilde{F}}$, then 
$$\varphi_k\left(x+\tilde{a}_{\mathbf{J}}-\tilde{a}_{\mathbf{I}}\right)=
x+d_{\mathbf{J}}-d_{\mathbf{I}}$$ and $$x+d_{\mathbf{j}_1...\mathbf{j}_{\textsf{r-1}}}-d_{\mathbf{i}_1...\mathbf{i}_{\textsf{r-1}}} \underset{[\mathbf{i}_r, \mathbf{j}_r]}{\longrightarrow} x+d_{\mathbf{J}}-d_{\mathbf{I}}.$$ 
 So the edge-label set for $\varphi_k\left(V_{F(J_k,\tilde{A}_k)}\right)$  is the same as $E^{k}_{\tilde{F}}$, i.e. $\varphi_k$ is label-preserving. But $\varphi_k\left(V_{F(J_k,\tilde{A}_k)}\right)=V_k$ by Proposition \ref{neighboring_pieces4} again. 
 Then $\varphi_k$ is a surjective  homomorphism for $k\geq k_0$.
\end{pf}

\bigskip
Corollary \ref{neighboring_pieces5} is sufficient for our purposes. In fact, $\varphi_k$ is a graph  isomorphism for $k\geq k_0$ due to our special choice of $P$ in the introduction section: 

If $x+d_{\mathbf{J}_1}-d_{\mathbf{I}_1}=\varphi_k\left(x+\tilde{a}_{_{\mathbf{J}_1}}-\tilde{a}_{_{\mathbf{I}_1}}\right)=\varphi_k\left(x+\tilde{a}_{_{\mathbf{J}_2}}-\tilde{a}_{_{\mathbf{I}_2}}\right)
=x+d_{\mathbf{J}_2}-d_{\mathbf{I}_2}$, then $d_{\mathbf{J}_1}-d_{\mathbf{I}_1}=d_{\mathbf{J}_2}-d_{\mathbf{I}_2}$. 
 Recall that we can choose $P$  so that the distance between two distinct lattice points in $P^{-1}\mathbb{Z}^n$ is greater than $n$ (or any number as we wish). 
This forces $\tilde{a}_{_{\mathbf{J}_1}}-\tilde{a}_{_{\mathbf{I}_1}}=\tilde{a}_{_{\mathbf{J}_2}}-\tilde{a}_{_{\mathbf{I}_2}}$ for  $k\geq k_0$, 
as we observed in the last paragraph of the proof of Lemma \ref{neighboring_pieces2}.  In a similar fashion, one can show that $\varphi_k$ is a well-defined for $k\geq k_0$, i.e. if $x+\tilde{a}_{_{\mathbf{J}_1}}-\tilde{a}_{_{\mathbf{I}_1}}=x+\tilde{a}_{_{\mathbf{J}_2}}-\tilde{a}_{_{\mathbf{I}_2}}$ for possibly different paths $[\mathbf{I}_1,\mathbf{J}_1], \ [\mathbf{I}_2,\mathbf{J}_2]$, then $\varphi_k\left(x+\tilde{a}_{_{\mathbf{J}_1}}-\tilde{a}_{_{\mathbf{I}_1}}\right)=x+d_{\mathbf{J}_1}-d_{\mathbf{I}_1}=
x+d_{\mathbf{J}_2}-d_{\mathbf{I}_2}=\varphi_k\left(x+\tilde{a}_{_{\mathbf{J}_2}}-\tilde{a}_{_{\mathbf{I}_2}}\right)$.


\section{An Application: 
The Box Dimension}\label{An application}

From now on, we  may tacitly work with sufficiently large indices $k$ so that the results in the previous sections apply.

\begin{lemma}\label{graph-tile2} (cf. Proposition 3.1 in \cite{K1})
 Let $F=F(T,A)\subset \mathbb{R}^n$ be a self-affine set.
Then there exists an
integral self-affine tile $ { \Gamma }$ 
such that  $F\subset  { \Gamma }$.
\end{lemma}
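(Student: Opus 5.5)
Throughout, $F=F(T,A)$ is integral, so $T\in M_n(\mathbb{Z})$ is expanding with $q:=|\det T|\geq 2$ and $A\subset\mathbb{Z}^n$. The plan is to enlarge the digit set $A$ to a digit set $B\subset\mathbb{Z}^n$ with $A\subseteq B$ and $\#B=q$ (or a multiple of $q$ after passing to a power of $T$), chosen so that $\Gamma:=F(T,B)$ has positive Lebesgue measure. Then $\Gamma$ is an integral self-affine tile and $F\subseteq\Gamma$. For the inclusion we use the radix expansions
\[
F=\Bigl\{\sum_{i\ge1}T^{-i}a_{j_i}:a_{j_i}\in A\Bigr\}\subseteq \Bigl\{\sum_{i\ge1}T^{-i}b_{i}:b_{i}\in B\Bigr\}=\Gamma .
\]
Equivalently, $\Gamma$ is a nonempty compact set with $\Gamma=T^{-1}(\Gamma+B)\supseteq T^{-1}(\Gamma+A)$, and iterating the $A$-maps from $\Gamma$ gives a decreasing sequence of sets containing $F$.

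\emph{Step 1 (reduce to a power).} Since $F=F(T^k,A_k)$ for every $k$, we may replace $(T,A)$ by $(T^k,A_k)$. A tile for $(T^k,\cdot)$ containing $F(T^k,A_k)=F$ is enough. This step is needed only to absorb possible coincidences among digits: $A_k$ is a set, so distinct multi-indices may yield the same digit. If $\#A_k\le q^k$ for some $k$ (which is automatic when $\#A\le q$), we work at that level $k$. If $\#A>q$, we first need $A$ to lie in few residue classes, and this is where the main difficulty lies (see below).

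\emph{Step 2 (complete to a complete residue system).} Suppose that at some level $k$ the elements of $A_k$ are pairwise incongruent modulo $T^k\mathbb{Z}^n$. We then add integer vectors from the remaining residue classes of $\mathbb{Z}^n/T^k\mathbb{Z}^n$ to obtain a complete residue system $B\supseteq A_k$ with $\#B=|\det T^k|$. By the standard theorem of Lagarias--Wang (see also Bandt), if $B$ is a complete set of coset representatives of $\mathbb{Z}^n/T^k\mathbb{Z}^n$, then $F(T^k,B)$ has positive Lebesgue measure. In fact $F(T^k,B)+\mathbb{Z}^n=\mathbb{R}^n$, because the lattice translates tile (with finite multiplicity), and so Baire category gives positive measure. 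Hence $\Gamma=F(T^k,B)$ is an integral self-affine tile.

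\emph{Main obstacle.} The difficulty is the case where distinct digits of $A_k$ are congruent modulo $T^k\mathbb{Z}^n$, so that no complete residue system contains $A_k$. This always happens when $\#A_k>q^k$, and can happen otherwise.

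To handle it I would use a \emph{standard digit set up to a sublattice with multiplicity}. Choose $N\in\mathbb{N}$ large and set $B=\{0,1,\dots,N-1\}\cdot D_0$-type sums, namely $B=\bigcup_{j}(b_j+T^k\{0,\dots,M\}^n)$. The point is that $F(T^k,B)$ for a digit set $B$ containing a complete residue system always has nonempty interior: it contains a self-affine tile $F(T^k,D)$ with $D\subseteq B$ a complete residue system. So it suffices to take $B=A_k\cup D$, where $D$ is any complete residue system modulo $T^k$. This gives a set of positive measure containing $F$, since $A_k\subseteq B$ and $F(T^k,D)\subseteq F(T^k,B)$.

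This set is a ``tile'' in the sense of having positive measure, but $\#B$ may exceed $|\det T^k|$. If the paper's definition of tile insists on $|\det|=\#B$, I would instead pass to a larger expanding integer matrix. For instance, take $T'=T^k$ with an enlarged lattice of residues: consider $mT^k$ with $m$ large, whose residue classes are numerous enough that $A_k$ embeds into a complete residue system. But $F(mT^k,\cdot)$ changes the set, so this needs care. I expect the fix to follow \cite[Proposition 3.1]{K1}, which does this for the planar case, and that its argument carries over verbatim to $\mathbb{R}^n$.
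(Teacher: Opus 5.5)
Your argument stops exactly at the point you call the main obstacle, and that case really occurs. If two digits of $A_k$ are congruent modulo $T^k\mathbb{Z}^n$, Step 2 cannot be applied. For example, with $n=1$, $T=2$, $A=\{0,2\}$, one has $A_k=2\{0,1,\dots,2^k-1\}$, which contains both $0$ and $2^k$ for every $k$. So $A_k$ never lies in a complete residue system modulo $2^k$, even though $\#A=q$.

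Your fallback $F(T^k,A_k\cup D)$ has positive measure, but $\#(A_k\cup D)>|\det T^k|$. It is therefore not an integral self-affine tile in the paper's sense, which requires the number of digits to equal $|\det|$. The suggested passage to $mT^k$ is not carried out, and as you note it changes the attractor. Deferring to \cite{K1} does not close the gap. The root cause is that you insist $F\subseteq\Gamma$ must come from a digit inclusion $A_k\subseteq B$ with the same matrix. The lemma asks for neither.

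The missing idea is that, since $F$ is compact, containment can be arranged purely geometrically. The paper takes $M=mI$ with $m\ge 2$ and $D'=\{0,\dots,m-1\}^n$, so that $F(M,D')=[0,1]^n$. It then uses $D=a+c_0D'$ with $a\in\mathbb{Z}^n$ and $c_0\in\mathbb{N}$, which gives
\[
F(M,D)=(M-I)^{-1}a+c_0F(M,D')=\tfrac{1}{m-1}\,a+c_0[0,1]^n .
\]
This set has $\#D=m^n=|\det M|$ digits and positive measure, so it is an integral self-affine tile. It contains $F$ once $c_0$ is large and $a$ is chosen suitably.

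Note that $D$ is in general not a complete residue system. The dilation by $c_0$ is exactly what removes your residue-class obstruction. The same trick works with $T$ itself, which is how the auxiliary tile of the Appendix arises. Take $D=a+c_0D'$ with $D'$ a complete residue system for $T$. Then $F(T,D)=(T-I)^{-1}a+c_0F(T,D')$ contains a ball of radius proportional to $c_0$. This argument also does not need $T$ or $A$ to be integral, whereas yours does.
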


\begin{pf} Let $m\geq 2$ be an integer and $I\in M_n({\mathbb{Z}})$ denote the identity matrix. Simply take $M=mI$  and
$$D'= \{ {\tiny (i_1, i_2, \cdots  i_n ) \ : \   i_1, i_2, ...,  i_n\in \{0,1,...,m-1\} }\}$$ so that
$F(M,D')$ is the unit n-cube. Then it is easy to see that there exist $ a \in \mathbb{Z}^n$ and $c_0 \in \mathbb{N}$   such that $\Gamma:=F(M,D)$  has the required property 
for $D=a+c_0D'$.
\end{pf}

\begin{remark}\label{auxiliary}   {\rm  The condition $F\subset  { \Gamma }$ is also needed to represent an integral self-affine set as a graph-directed set for dimension computations. In that case,
we may use an auxiliary tile $ { \Gamma }$ (see \cite{HLR} or Appendix), which is not necessarily a cube.}  $\hfill \Box$
\end{remark}

Now we start with our box-dimension consideration. Let $\lambda=|\lambda_1|$ be the minimum of the absolute values of the eigenvalues of $J$ (the Jordan form)  and set $m=\lfloor \lambda \rfloor$. Note that $\lambda^{k}$ is $r_1$ in Definition \ref{piece}. Here we are assuming that $2\leq m$. Take $M=m I$ ($I$ is the identity matrix) as the
generating matrix of the tile $\Gamma$ containing $\tilde{F},F_k.$
Let $\Gamma=F(M,D)$ be the cube  in
the proof of Lemma \ref{graph-tile2} and $N$ be the number of elements in $D$. 
We next define the index set $ \Sigma_r=\{\textbf{i}=(i_1,i_2,...,i_r): 1\leq  i_j \leq N \}.$
For 
$\textbf{i}\in \Sigma_r$, we set
$d_{\textbf{i}}=\Sigma_{j=1}^{r}M^{(k-j)}d_{i_j}$, where $d_{i_j}\in D$.
Let
\begin{equation*}\label{psi}
\psi_{\textbf{i}}( \Gamma ) =M^{-r}(  \Gamma  +d_{\textbf{i}}).
\end{equation*} 
Then we can cover $\tilde{F},F_k$ by the smaller n-cubes 
\begin{equation}\label{smaller n-cubes}
 \textsf{C}_{\textbf{i}}:=\psi_{\textbf{i}}( \Gamma )\subseteq \Gamma
\end{equation}
 of side lengths $c_0m^{-r}$, where  $r$ is an arbitrarily fixed positive integer. 
 These cubes are also known as \textit{mesh cubes}.

\medskip

When $k\neq 1$, it seems more appropriate to use the longer, but more precise notation $P_{l,k}(\tilde{F})$  for the pieces of $\tilde{F}=(J_k,\tilde{A}_k)=P^{-1}F$,  see Remark \ref{piece_notation}-(b).  
First, we would like to express the box dimension formula in terms of pieces $P_{l,k}(\tilde{F})$ and related sets to be defined.

Perhaps we should also mention the following, which indicates that  no distinct-level pieces $P_{l,k}(\tilde{F})$, $P_{l',k}(\tilde{F})$ with $l'\geq 2l$ can be commensurable with the same size cubes for large $k.$

\bigskip

\begin{lemma}\label{large_pieces} 
 Let  $\tilde{F}=F(J_k,\tilde{A}_k)$. If  $P_{l,k}(\tilde{F})$ is commensurable with an $n$-cube of side length $c_0m^{-r}$, then   $P_{l',k}(\tilde{F})$  is not so 
for $l'\geq 2l$ and  large $ k, l.$ In fact, $$diam(P_{l',k}(\tilde{F}))<diam(P_{l,k}(\tilde{F}))$$ for  $l'\geq 2l$ and all large $ k, l$. The same is true for $P_l(F_k)$ or related sets $S_{l,k}(\tilde{F})$,  $S_{l}(F_k)$ to be defined.
\end{lemma}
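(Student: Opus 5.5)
The plan is to compare the two diameters directly through the two-sided estimate of Lemma \ref{inequality_pieces}. That lemma gives, for fixed $k$,
$$c'(k)\, l^{-\alpha_0} (|\lambda_1|^{k})^{-l}\leq diam(P_{l,k}(\tilde{F})) \leq c''(k)\, l^{\alpha_0} (|\lambda_1|^{k})^{-l}.$$
Applying the upper bound at level $l'\geq 2l$ and the lower bound at level $l$, it suffices to show
$$c''(k)\, l'^{\alpha_0} r_1^{-l'} < c'(k)\, l^{-\alpha_0} r_1^{-l}, \qquad r_1=|\lambda_1|^k .$$
Since $r_1^{-l'}\leq r_1^{-2l}$ and $l'^{\alpha_0}r_1^{-l'}$ is eventually decreasing in $l'$ (as $r_1>1$), it is enough to treat $l'=2l$. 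The required inequality then reduces to
$$\frac{c''(k)}{c'(k)}\,(2l)^{\alpha_0} l^{\alpha_0} < r_1^{\,l}=|\lambda_1|^{kl}.$$

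The main obstacle is the $k$-dependence of $c'(k)$ and $c''(k)$, which Lemma \ref{inequality_pieces} does not control. I would track these constants through its proof. The lower constant comes from $diam((\tilde{F})_1)/\|J_{k,1}^{l}\|$. The upper constant comes from $\max_p diam((\tilde{F})_p)$ together with the norm bounds of Lemma \ref{lemma_norm_estimate1}, namely $\|J_k^{\pm l}\|\leq c' k^{c''} l^{\alpha_0} r^{\pm l}$. The comparison $|\lambda_p|^{\tilde l_p}\approx|\lambda_1|^{\tilde l_1}$ contributes a factor $\det T^k$. Since $\tilde{F}$ does not depend on $k$, the diameters of its projections are fixed. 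So the ratio $c''(k)/c'(k)$ is at most polynomial in $k$ times $|\det T|^{k}$, that is, at most $C k^{\gamma}\beta^{k}$ for fixed $C,\gamma,\beta$.

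The target inequality therefore becomes $C k^{\gamma}\beta^{k}\,2^{\alpha_0} l^{2\alpha_0} < |\lambda_1|^{kl}$. Both $\beta^k$ and $|\lambda_1|^{kl}$ are exponential in $k$, and the right side has the extra factor $l$ in its exponent. Hence the inequality holds once $l>\log\beta/\log|\lambda_1|+1$ and $k$ is large; this is exactly why the statement needs both $k$ and $l$ large. If the factor $\det T^k$ resists absorption, I would fall back on choosing $l$ first, larger than a fixed multiple of $\log|\det T|/\log|\lambda_1|$.

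Commensurability then excludes $P_{l',k}$. Suppose both pieces were commensurable with the cube of side $c_0m^{-r}$. Then their diameters would lie in $(c_1m^{-r},c_2m^{-r}]$, with ratio less than $m$. Running the same estimate shows that the ratio of the two diameters is at least $|\lambda_1|^{kl}/(\mathrm{poly})>m$ for large $k,l$. This contradicts $c_2/c_1=m$.

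For $P_l(F_k)$ the same argument goes through, using (\ref{piece_cylinder}) with $m_1=\lceil|\lambda_1|^k\rceil$ in place of $r_1$ and the $T_k$ norm bounds of Lemma \ref{lemma_norm_estimate1}. The sets $S$ will be handled identically once they satisfy analogous two-sided diameter bounds.
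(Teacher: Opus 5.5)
Your proof is correct. It uses the same mechanism as the paper: an upper diameter bound at the deeper level carries an exponentially small extra factor, and this factor beats the polynomial-in-$l$ gap between the upper and lower bounds at level $l$. The two proofs differ in where that factor comes from.

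The paper works with $P_l(F_k)$ and first proves by induction that $l'\ge l+\iota$ forces $l'_p\ge l_p+\iota_p$. So for $l'\ge 2l$, each block of $P_{l'}(F_k)$ lies inside $T_{k,p}^{-l_p}$ applied to a set the size of the level-$l$ block. This puts an extra factor $\|T_{k,p}^{-l_s}\|$ in front of the level-$l$ upper bound, which the paper compares with the lower bound (\ref{piece_cylinder}) to get $diam(P_{l'}(F_k))\le m^{-2}diam(P_l(F_k))\le c_0m^{-r-1}$. You skip the blockwise induction and apply Lemma \ref{inequality_pieces} at both levels, so your gain is simply $r_1^{-(l'-l)}\le r_1^{-l}$.

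Your version is also more careful with constants. The paper replaces $c'\eta^{-kl_s}$ by the much weaker $1/(\eta l_s)$, then asserts $\frac{1}{\eta l_s}\frac{c''}{c'}l^{2\alpha_0}<m^{-2}$ for large $l$. Since $l_s\asymp l$, this is not justified as written once $\alpha_0\ge 1$, and it also ignores that $c',c''$ depend on $k$. Keeping the full exponential factor, as you do, is what makes the comparison valid in the non-diagonalizable case. Your bound $c''(k)/c'(k)\lesssim k^{\gamma}|\det T|^{k}$ even gives a threshold $l_0$ independent of $k$. If a $k$-dependent threshold is enough, your ``main obstacle'' disappears, because for fixed $k$ the factor $r_1^{l}$ beats any polynomial in $l$. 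That is the situation in Lemma \ref{box_piece2}, where $k$ is fixed and $l\to\infty$.

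Two small points need attention.
\begin{itemize}
\item For $P_l(F_k)$, the lower constant involves $diam((F_k)_1)$, which now depends on $k$. Bound it away from $0$ using Proposition \ref{deflection}: projections are $1$-Lipschitz, so $(F_k)_1\to(\tilde F)_1$ in the Hausdorff metric. Likewise $diam(F_k)$ is uniformly bounded.
\item For $S_l(F_k)$, the two-sided comparison with pieces is Lemma \ref{diameter_dimension}. Its constant $c=\min_p\|T_{k,p}^{\iota_p}\|^{-1}$ decays exponentially in $k$, but it is absorbed by raising the fixed threshold for $l$. The analogous bounds for $S_{l,k}(\tilde F)$ appear in the proof of Proposition \ref{monotonicity0}.
\end{itemize}
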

%
\begin{pf} 
It  suffices to work with  the harder case of $m_p, l_p.$
 Let $\iota:=\iota_1 $  be a positive integer. Note that 
\begin{equation}\label{iota1}
 l'_1:=l' \geq l+\iota=l_1+\iota_1 \Longrightarrow  l'_p:=(l')_p \geq l_p+\iota_p 
\end{equation} by an induction argument: Assume that $ l'_p:=(l')_p \geq l_p+\iota_p $ \ ($1\leq p \leq s-1$). Then 
\footnotesize{\begin{eqnarray*} 
l'_{p+1}=\left\lfloor l'_p\frac{\log{ m_p}}{\log{ m_{p+1}}} \right\rfloor & \geq & 
\left\lfloor (l_p+\iota_p)\frac{\log{ m_p}}{\log{ m_{p+1}}} \right\rfloor  \  \quad  \quad  \quad \quad   \mathrm{by \ the \ definition \ of} \  l'_{p+1} \ \mathrm{and  \ the \ induction \ hypothesis} \\
& \geq & \left\lfloor l_p\frac{\log{ m_p}}{\log{ m_{p+1}}} \right\rfloor+\left\lfloor \iota_p\frac{\log{ m_p}}{\log{ m_{p+1}}} \right\rfloor \  \quad  \quad   \quad  \quad       \mathrm{by \ a \ property \ of \ the \ floor \ function} \\
& \geq & l_{p+1} + \iota_{p+1} \ \  \quad \quad   \quad   \quad   \quad   \quad   \quad   \quad   \quad   \quad   \quad  \quad  \quad  \ \ \mathrm{by \ the \ definitions \ of} \  l_{p+1}, \ \iota_{p+1} \\
\end{eqnarray*}}\normalsize
which finishes the  induction argument.

Next  we justify the inequality $diam (P_{l'}(F_k))\leq  c_0m^{-r-1}$ for $l'\geq 2l$ and large $k$ :  
We have 
\footnotesize{ $$(P_{l}(F_k))_{p}\subseteq \sum_{i=1}^{{l}_{p}} T_{k,p}^{-i} (d_{{\mathbf{j}_i}})_{p}  + T_{k,p}^{-{l}_{p}} (F_k)_{p}, \quad \quad (P_{l'}(F_k))_{p}\subseteq \sum_{i=1}^{{l}'_{p}} T_{k,p}^{-i} (d_{{\mathbf{j}_i}})_{p}  + T_{k,p}^{-{l}'_{p}} (F_k)_{p} \ \ \ \ \mathrm{(when} \ l \ \mathrm{ is \ replaced \ by} \ l')$$}
\normalsize by Definition \ref{piece}-(ii). 
Hence, for $l'\geq 2l $,  (\ref{iota1}) yields 
\begin{eqnarray*}
 diam \left( (P_{l'}(F_k))_{p}\right) \leq diam \left( T_{k,p}^{-{l}'_{p}} (F_k)_{p} \right) \leq ||T_{k,p}^{-l_p} || \ diam  \left( T_{k,p}^{-l_p} (F_k)_{p}\right) 
\end{eqnarray*} so that 
\begin{equation*}\label{Gen3} \footnotesize
diam (P_{l'}(F_k)) \leq  \sum_{p=1}^{s} diam \left( (P_{l'}(F_k))_{p}\right) \leq  \sum_{p=1}^{s}  ||T_{k,p}^{-l_p} || \ diam  \left( T_{k,p}^{-l_p} (F_k)_{p} \right) 
 \leq  ||T_{k,p}^{-l_s} || \ diam  \left( F_k \right) \sum_{p=1}^{s}  ||T_{k,p}^{-l_p} ||  
\end{equation*} 
since $l_s\leq l_p$.
As in (\ref{diam_estimate4}), for large $k$, we get 
\begin{eqnarray}\label{Gen4} diam (P_{l'}(F_k)) & \leq &  ||T_{k,p}^{-l_s} || \ diam  \left( F_k \right) \sum_{p=1}^{s}  ||T_{k,p}^{-l_p} || \leq   ||T_{k,p}^{-l_s} ||  \ \overset{s}{\underset{p=1}{\Sigma}} c''  l_{p}^{\alpha_0} (m_p)^{-l_{p}}  \nonumber \\ 
& \leq & ||T_{k,p}^{-l_s} ||  \ \sum_{p=1}^{s} c'' l^{\alpha_0} (m_1)^{-l} \leq ||T_{k,p}^{-l_s} ||  \ c'' l^{\alpha_0} (|\lambda_1|^{k})^{-l}
\end{eqnarray}  
since $m_p\geq m_1=\lceil r_1 \rceil \geq r_1=|\lambda_1|^{k}.$

Assume that  $2\leq m:=\lfloor |\lambda_1|\rfloor$.  
As  in the proof of Lemma \ref{lemma_norm_estimate2},  choose a constant $\eta$ such that $1<\eta<m \leq |\lambda_1|\leq |\lambda_p|$. Then there is an integer  $k_0$ such that
$|| T_{k,p}^{-l_s}||\leq \frac{c'}{\eta^{kl_s}}<\frac{1}{\eta l_s}$  for $k>k_0$. But there is an integer  $l_0$  such that $\frac{1}{\eta l_s}\frac{c''}{c'} l^{2\alpha_0} < \frac{1}{m^2}$ 
for $l>l_0$. Because, $ l_s\approx l \frac{\log m_1}{\log m_s}$ \ by \ (\ref{diam_estimate3})  (with  $|\lambda_1|, |\lambda_s|$  replaced  by  $m_1,  m_s$) and 
$\underset{k\rightarrow \infty}{\lim} \frac{\log m_1}{\log m_s} = \frac{\log |\lambda_1|}{\log |\lambda_s|}$ (thus, $\textsf{c}_1 l\leq   l_s\leq \textsf{c}_2l$  for some constants $\textsf{c}_1, \textsf{c}_2$, or roughly $l_s\approx l$ ).
Thus 
\footnotesize  \begin{eqnarray*}
 diam \left( P_{l'}(F_k)\right) & \leq & \frac{1}{\eta l_s} c'' l^{\alpha_0} (|\lambda_1|^{k})^{-l}
  \leq \frac{1}{\eta l_s}\frac{c''}{c'} l^{2\alpha_0} diam(P_l(F_k))
 \quad \ \ \mathrm{by} \  (\ref{Gen4}), \  (\ref{piece_cylinder}) \nonumber \\
    &\leq &   \frac{1}{m^2}diam(P_l(F_k))  \leq  \frac{1}{m^2} c_0m^{-r+1}= c_0m^{-r-1} \  \quad 
    \mathrm{by \  the \  commensurability \ of} \ diam(P_l(F_k))  \  
\nonumber \\
\end{eqnarray*} \normalsize 
 for $k>k_0$ and $l>l_0$. That is,  $diam (P_{l'}(F_k)), \ diam (P_l (F_k))$ are not commensurable with the same cube. 
  These inequalities also show that  $diam (P_{l'}(F_k))< diam (P_l (F_k))$ at the same time.   
\end{pf}

\bigskip

\bigskip 

One can
always maintain
the property in the above lemma by replacing the initial generating data $J_1,\tilde{A}_1$ for $\tilde{F}$  by $J_k,\tilde{A}_k$ for some large $k.$
We now remember a couple of numbers obtained as a result of box (or piece) counting process, in which $\tilde{F}$ may be replaced by $F_k$ as usual.

\begin{itemize}
  \item \emph{$N^r(\tilde{F})$ denotes the number  of mesh cubes $\textsf{C}_{\textbf{i}}$ of  $\Gamma$ (with side length $c_0m^{-r}$) that intersect $\tilde{F}$.}
  \item \emph{In view of Lemma \ref{large_pieces},  consider the levels $l\leq \ l'\leq 2l$. Let $N_l(\tilde{F})$ be the number of all pieces 
  $P_{l', k}(\tilde{F})$ of $\tilde{F}$, which intersect and are commensurable with an $n$-cube $\textsf{C}_{\textbf{i}}$ in the previous item. }
\end{itemize}
Lemma \ref{large_pieces} takes care of the question whether the number of the pieces that intersect and are commensurable with an $n$-cube $\textsf{C}_{\textbf{i}}$ cripples the proofs. In the proofs, considering the number of all pieces $P_{l', k}(\tilde{F})$ in the last item  rather than the number of all pieces 
  $P_{l, k}(\tilde{F})$ will not affect our proofs negatively. That is why, we may ignore $l'$ and just mention the level-$l$ pieces $P_{l,k}(\tilde{F})$ sometimes.  
  
   Another question that arises is whether there is a piece $P_{l, k}(\tilde{F})$ or $P_l (F_k)$  which intersect and is commensurable with a given $n$-cube $\textsf{C}_{\textbf{i}}$ as in the first item. To answer the question, we will cover $F_{k}$ by mesh $n$-cubes  $\textsf{C}_{\textbf{i}}$  with side length $c_0(m^{r_k})^{-r}$ (rather than $c_0m^{-r}$) for some integer $r_k$. Then we use  the fact that a larger piece intersects every such small cube and that  piece is a finite union of smaller pieces. 
We also need to study the diameters  
more closely.    
%
%
\bigskip

\begin{lemma}\label{existenc_of_piece}  There is a positive integer $r_k$  such that given $n$-cube $\textsf{C}_{\textbf{i}}$ with side length $c_0m^{-r_kr}$, there is a piece  which intersect and is commensurable with $\textsf{C}_{\textbf{i}}$. 
\end{lemma}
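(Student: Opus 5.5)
The idea is to compare the geometric scale of mesh cubes, $c_0m^{-r_kr}$, with the scale of level-$l$ pieces, $(|\lambda_1|^k)^{-l}$ or $m_1^{-l}$, via Lemma \ref{inequality_pieces} and (\ref{piece_cylinder}). Consecutive piece levels then have diameter ratio at most a bounded power of $m$. With $r_k$ chosen suitably, every mesh size is therefore bracketed by two consecutive piece diameters. First I fix $k$ large so that Lemmas \ref{number_of_pieces} and \ref{large_pieces} apply. By (\ref{piece_cylinder}),
\[
c' l^{-\alpha_0} m_1^{-l}\leq diam(P_l(F_k))\leq c'' l^{\alpha_0} m_1^{-l},
\]
and analogously for $\tilde F$ with $|\lambda_1|^k$ in place of $m_1$. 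Since $m=\lfloor|\lambda_1|\rfloor\ge 2$ and $m_1\ge |\lambda_1|^k\ge m^k$, I choose $r_k$ as the least integer with $m^{r_k}\ge m_1$, possibly enlarged by a fixed amount to absorb the constants $c',c''$. Then one mesh step $m^{-r_k}$ is at least as coarse as one piece-level step $m_1^{-1}$, up to constants.

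Next, given a mesh cube $\textsf{C}_{\textbf{i}}$ of side $c_0m^{-r_kr}$ meeting $F_k$ (the only relevant case), I pick a point $x\in F_k\cap \textsf{C}_{\textbf{i}}$. Let $\mathcal P_l(x)$ be a level-$l$ piece containing $x$, which exists since pieces of each level cover $F_k$ by Remark \ref{piece_notation}(c). Every such piece intersects $\textsf{C}_{\textbf{i}}$. The diameters $diam(\mathcal P_l(x))$ decrease to $0$ as $l\to\infty$, by Lemma \ref{inequality_pieces}, and for $l=1$ they exceed $c_0m^{-r_kr}$ when $r\ge1$. So I let $l^*$ be the last level with $diam(\mathcal P_{l^*}(x))> c_0 m^{-r_kr}$. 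Commensurability requires $diam(\mathcal P_{l^*}(x))\le c_0 m^{-r_kr+1}$. This follows if the ratio $diam(\mathcal P_{l^*}(x))/diam(\mathcal P_{l^*+1}(x))$ is at most $m$, or after enlarging, at most $m^{r_k}$ with commensurability measured at the coarser scale.

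The main obstacle is controlling this consecutive ratio. The two-sided bounds above carry polynomial factors $l^{\pm\alpha_0}$, which give ratio at most $m_1 (c''/c')(l+1)^{2\alpha_0}$. That is not uniformly bounded in $l$ when Jordan blocks are nontrivial. I will first handle the simple-eigenvalue case, where $\alpha_0=0$ by Lemma \ref{lemma_norm_estimate1}, so the ratio is at most $m_1 c''/c'\le m^{r_k}$ for suitable $r_k$. For the general case I would try one of two routes. The first is to take the nested piece $\mathcal P_{l+1}(x)\subseteq\mathcal P_l(x)$ and bound its diameter directly below by $\|T_k^{l+1}\|^{-1}$-type estimates through the inclusion of a cylinder set $(F_k)_{\mathbf I_1}$. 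The second is to redefine commensurability at scale $m^{r_k}$, so that polynomial factors are absorbed for large $l$ since $l^{2\alpha_0}\ll m^{r_k}$ fails only for finitely many small $l$. Those small levels correspond to finitely many $r$ and can be handled by adjusting constants. In either route I would also check that replacing $P_l(F_k)$ by the actual unions of cylinders from Remark \ref{Actual_Case1}(ii) changes the estimates only by the constant factor $l^{2\alpha_0}$. The same argument applies verbatim to $\tilde F$.
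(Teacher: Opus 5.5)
Your architecture is the paper's. You bracket the mesh scale between two consecutive levels and bound $diam(P_l(F_k))/diam(P_{l+1}(F_k))$ by a constant depending only on $k$. You then choose $r_k$ so that $m^{r_k}$ dominates that constant and read commensurability at base $m^{r_k}$; this is exactly the paper's step (c). Your simple-eigenvalue case is correct.

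In the general case, however, the step that carries the lemma is a ratio bound uniform in $l$, and it is never established. Neither route you sketch gives it.

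\emph{Route (2).} It rests on a reversed inequality. With $r_k$ fixed, $m_1(c''/c')(l+1)^{2\alpha_0}\le m^{r_k}$ holds only for finitely many small $l$ and fails for all large $l$. The levels of pieces commensurable with cubes of side $c_0m^{-r_kr}$ tend to infinity with $r$ (cf. (\ref{number_of_cubes2})). So the bad levels are precisely the ones that matter, and they cannot be absorbed into constants.

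\emph{Route (1).} As phrased, it only yields $diam(P_{l+1}(x))\ge diam(F_k)/\|T_k^{l+1}\|$. This lower bound is at most $diam(F_k)\,m_s^{-(l+1)}$, while $diam(P_l(x))\ge c'l^{-\alpha_0}m_1^{-l}$. The resulting ratio bound is therefore at least of order $l^{-\alpha_0}m_s^{l+1}m_1^{-l}$, which is exponentially large when $m_s>m_1$.

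\emph{Actual pieces.} For the same reason, the factor $l^{2\alpha_0}$ you are willing to pay when passing to the actual pieces of Remark \ref{Actual_Case1}(ii) is not a constant. It would destroy exactly the uniformity the lemma needs.

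What is missing is to compare consecutive levels with each other by peeling off a single factor of $T_k$. Comparing each level with the model scale $m_1^{-l}$ is where the factors $l^{\pm\alpha_0}$ come from. In its steps (a)--(b), the paper uses the cylinder inclusion (\ref{commensurable_piece_1}) and $diam(T_k^{-(l+1)}F_k)\ge diam(T_k^{-l}F_k)/\|T_k\|$. This gives $diam(P_{l+1}(F_k))\ge (c'/\|T_k\|)\,diam(P_l(F_k))$ with $c'$ independent of $l$, and the paper then takes $m^{r_k}\ge\|T_k\|/c'$.

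A robust way to get such a bound in the notation (\ref{sloppy_notation}) is blockwise; it avoids comparing $diam(T_k^{-l}F_k)$ with $diam(P_l(F_k))$ uniformly in $l$, which is delicate when Jordan blocks of different sizes occur.
\begin{itemize}
\item When $l$ increases by one, each $l_p$ increases by $0$ or $1$, by induction on $p$, since $\log_{m_{p+1}}m_p\le 1$.
\item Hence $diam((P_{l+1}(F_k))_p)\ge diam((P_l(F_k))_p)/\|T_{k,p}\|$ for every $p$.
\item Therefore $diam(P_{l+1}(F_k))\ge \max_p diam((P_l(F_k))_p)/\|T_k\|\ge diam(P_l(F_k))/(\sqrt{s}\,\|T_k\|)$.
\end{itemize}
With $m^{r_k}\ge\sqrt{s}\,\|T_k\|$, your last-level argument then goes through for every $l$, Jordan blocks or not.
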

\begin{pf}
In Definition \ref{piece},
we have mentioned that every piece is between two (cylinder) sets in $F_k$. That is, if we set $\mathbf{I}_1=\mathbf{j}_1...\mathbf{j}_l$ (with $l=l_1=\tilde{l}_1$) and $\mathbf{I}_s=\mathbf{j}_1...\mathbf{j}_{l_s}$, then
  \begin{eqnarray}\label{commensurable_piece_1}
  (F_k)_{\mathbf{I}_1}:=\sum_{i=1}^{l_{1}} T_{k}^{-i} d_{{\mathbf{j}_i}}  + T_{k}^{-l_{1}} F_k \subseteq P_l(F_k) \subseteq \sum_{i=1}^{l_{s}} T_{k}^{-i} d_{{\mathbf{j}_i}}  + T_{k}^{-l_{s}} F_k= (F_k)_{\mathbf{I}_s},
    \end{eqnarray} 
    \begin{eqnarray*}\label{commensurable_piece_2}
  (\tilde{F})_{\mathbf{I}_1}:=\sum_{i=1}^{\tilde{l}_{1}} J_{k}^{-i} d_{{\mathbf{j}_i}}  + J_{k}^{-\tilde{l}_{1}} \tilde{F} \subseteq P_l(\tilde{F}) \subseteq \sum_{i=1}^{\tilde{l}_{s}} J_{k}^{-i} d_{{\mathbf{j}_i}}  + J_{k}^{-\tilde{l}_{s}} \tilde{F}= (\tilde{F})_{\mathbf{I}_s},
    \end{eqnarray*}   
and $P_l(F_k)$  \ (or $P_l(\tilde{F})$) \ is a finite union of the cylinder sets $(F_k)_{\mathbf{I}_1}$
for certain $d_{{\mathbf{j}_i}}\in D_k$ ($1\leq i \leq l_1=l$), but $d_{{\mathbf{j}_1}},...,d_{{\mathbf{j}_{l_{s}}}}$ are fixed. It is adequate to give the proof for $F_k$. 
Then \begin{eqnarray}\label{commensurable_piece_3}
diam(T_{k}^{-l_{1}} F_k)=diam((F_k)_{\mathbf{I}_1}) \leq  diam(P_l(F_k) ) \leq diam((F_k)_{\mathbf{I}_s})=diam(T_{k}^{-l_{s}} F_k)
\end{eqnarray} by  (\ref{commensurable_piece_1}).

 \begin{itemize}
  \item[(a)] $diam(T_{k}^{-l_{1}} F_k) \approx diam(T_{k}^{-l_{s}} F_k) \approx  diam(P_l(F_k))$ : 

 We estimate the diameters $$diam((F_k)_{\mathbf{I}_1})=diam(T_{k}^{-l_{1}} F_k) , \quad \quad  \quad  diam((F_k)_{\mathbf{I}_s})=diam(T_{k}^{-l_{s}} F_k).$$ 
For that, we estimate the norms. Recall $T_{k,p}$  
in (\ref{Power_Jordan_Block1}) and (\ref{Power_Jordan_Block2}) : \quad
$$
T_{k,p}=\lceil C_p^k \rceil   \odot  \left( I+N_2 \right) \quad \quad  \mathrm{so \ that} \quad \quad
T_{k,p}^{-l_p}=\lceil C_p^k \rceil^{-l_p}   \odot  \left( I+N_2 \right)^{-l_p}.$$ 
Now   
$l_p\approx l \frac{\log |\lambda_1|}{\log |\lambda_p|}$. 
  Because $ l_p\approx l \frac{\log m_1}{\log m_p}$  by  (\ref{diam_estimate3})  (with  $|\lambda_1|, |\lambda_p|$  replaced  by  $m_1, m_p$) and 
$\underset{k\rightarrow \infty}{\lim} \frac{\log m_1}{\log m_p} = \frac{\log |\lambda_1|}{\log |\lambda_p|}$. 
It follows that 
 $||T_{k}^{-l_{1}} x||  \approx ||T_{k}^{-l_{s}} x||$, 
 where $x\in R^{n}$. 
 Hence  for every positive integer $l$, 
  \begin{eqnarray*}
  & &  diam(T_{k}^{-l_{1}} F_k)  \approx diam(T_{k}^{-l_{s}} F_k) \Longrightarrow    diam(T_{k}^{-l_{1}} F_k) \approx diam(T_{k}^{-l_{s}} F_k) \approx  diam(P_l(F_k))
  \end{eqnarray*} 
by (\ref{commensurable_piece_3}). 

  \item[(b)]  $ \frac{\textsf{c}'}{||T_{k} ||} \   diam(P_l(F_k)) \ \leq  diam(P_{l+1}(F_k))  \leq \textsf{c}''||T_{k}^{-1} || \ diam(P_l(F_k))$  :

  By the previous item,  $  diam(P_{l+1}(F_k)) \approx diam(T_{k}^{-(l+1)} F_k)$ so that 
    \footnotesize $$ \frac{\textsf{c}'}{||T_{k} ||} \ diam(T_{k}^{-l}F_k) \leq  \textsf{c}' diam(T_{k}^{-(l+1)} F_k) \leq diam(P_{l+1}(F_k)) \leq \textsf{c}'' diam(T_{k}^{-(l+1)} F_k) \leq \textsf{c}''||T_{k}^{-1} || \ diam(T_{k}^{-l}F_k).$$ \normalsize
for some positive constants $\textsf{c}', \textsf{c}''$. Again by (a), $diam(T_{k}^{-l} F_k) \approx   diam(P_l(F_k)).$ This gives 
  \begin{eqnarray*} 
\frac{\textsf{c}'}{||T_{k} ||} \   diam(P_l(F_k)) \leq 
 diam(P_{l+1}(F_k)) \leq  
  \textsf{c}''||T_{k}^{-1} || \ diam(P_l(F_k)). 
  \end{eqnarray*}  by  modification of $\textsf{c}', \textsf{c}''$.
 
  \item[(c)]    Choose a positive integer $r_k$ such that $ 1 \leq \frac{\textsf{c}'}{||T_{k} ||} m^{r_k}$, where $c'$ is as in (b). We cover $F_{k}$ by mesh $n$-cubes  $\textsf{C}_{\textbf{i}}$  with side length $c_0m^{-r_kr}$. Now given a small  mesh $n$-cube  $\textsf{C}_{\textbf{i}}$  that intersect $F_{k}$, 
there exist a level $l$ and pieces $P_l(F_k), \ P_{l+1}(F_k)$  such that they intersect $\textsf{C}_{\textbf{i}}$ and 
$$diam(P_{l+1}(F_k)) \leq c_0(m^{r_k})^{-r+1} < diam(P_l(F_k))$$ because  $P_l(F_k)$ is a finite union of  smaller pieces $ P_{l+1}(F_k)$. 
Notice that $c_0(m^{r_k})^{-r+1} < diam(P_l(F_k))$ implies that $P_l(F_k)$ is not commensurable with $\textsf{C}_{\textbf{i}}$. 
Then $$  c_0(m^{r_k})^{-r} \leq \frac{\textsf{c}'}{||T_{k} ||} c_0(m^{r_k})^{-r+1} < \frac{\textsf{c}'}{||T_{k} ||} \   diam(P_l(F_k))\leq  diam(P_{l+1}(F_k))  
$$   by (b). Consequently, $ c_0 (m^{r_k})^{-r}  <  diam(P_{l+1}(F_k))  \leq c_0(m^{r_k})^{-r+1}$, i.e. $ P_{l+1}(F_k)$
is commensurable with  $\textsf{C}_{\textbf{i}}$. 
 \end{itemize}   
\end{pf}

\bigskip

By definition, the \textit{lower box dimension} and the \textit{upper box dimension} of $\tilde{F}$, denoted by $\underline{dim}_B \tilde{F}$ and $\overline{dim}_B \tilde{F}$, are given by 
\begin{eqnarray*}\label{box_counting1}
 \underline{dim}_B \tilde{F}=\underset{r\rightarrow\infty}{\liminf}  \frac{ \log N^r(\tilde{F})}{ \log m^{r}}=\underset{r\rightarrow\infty}{\underline{\lim}}\frac{ \log N^r(\tilde{F})}{ \log m^{r}}, \\
\overline{dim}_B \tilde{F}=\underset{r\rightarrow\infty}{\limsup}  \frac{ \log N^r(\tilde{F})}{ \log m^{r}}=\underset{r\rightarrow\infty}{\overline{\lim}}\frac{ \log N^r(\tilde{F})}{ \log m^{r}}.
\end{eqnarray*} 
When $\underline{dim}_B \tilde{F}=\overline{dim}_B \tilde{F}$, this common value is denoted by ${dim}_B \tilde{F}$, called the \textit{box dimension} of $\tilde{F}$, and we say that the box dimension exists. 
We will see that $ N^r(\tilde{F})$ in these formulas can be replaced by $N_l(\tilde{F}).$ 

\begin{lemma}\label{box_piece2} 
 Let $F=F(T,A)\subset \mathbb{R}^n$ be an  integral self-affine set and  let $\tilde{F}=F(J_k,\tilde{A}_k)$. Then 
\begin{equation*}\label{ul_box0}
 \underline{dim}_B F=\underline{dim}_B \tilde{F}=\underset{l\rightarrow\infty}{\underline{\lim}}\frac{-\log N_l(\tilde{F})}{\log diam(P_{l,k}(\tilde{F}))}, \quad \ \ \ \overline{dim}_B F=\overline{dim}_B \tilde{F}= \underset{l\rightarrow\infty}{\overline{\lim}} \frac{-\log N_l(\tilde{F}) }{\log  diam(P_{l,k}(\tilde{F})) }. 
\end{equation*}
$\tilde{F}$ in these equalities may be replaced especially by $F_k$, and $P_{l,k}(\tilde{F})$ by $P_l(F_k)$.
\end{lemma}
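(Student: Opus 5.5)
The proof has three parts. First, $F$ and $\tilde{F}$ have the same box dimensions. Second, we compare the mesh-cube counts $N^r(\tilde{F})$ with the piece counts $N_l(\tilde{F})$ at corresponding scales. Third, we translate the normalisation $\log m^r$ into $-\log diam(P_{l,k}(\tilde{F}))$. The same argument applies verbatim to $F_k$ with $P_l(F_k)$, using the $T_k$-version of the estimates (\ref{piece_cylinder}).

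For the first part, $\tilde{F}=P^{-1}F$ with $P$ an invertible linear map. Bi-Lipschitz maps preserve upper and lower box dimensions, so $\underline{dim}_B F=\underline{dim}_B\tilde{F}$ and $\overline{dim}_B F=\overline{dim}_B\tilde{F}$. It therefore remains to express the box dimensions of $\tilde{F}$ through pieces.

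For the second part, fix $k$ large so that Lemmas \ref{bound}, \ref{large_pieces} and \ref{existenc_of_piece} apply. Replacing $m$ by $m^{r_k}$ only rescales $\log m^r$ by a constant, which does not change the limits, so we may use mesh cubes $\textsf{C}_{\textbf{i}}$ of side $c_0m^{-r_kr}$. For each $r$, let $l=l(r)$ be a level at which some piece is commensurable with these cubes. Every mesh cube meeting $\tilde{F}$ meets a point of $\tilde{F}$, and by Lemma \ref{existenc_of_piece} it then meets a commensurable piece of some level $l'$; by Lemma \ref{large_pieces} this level lies in a window $[l,2l]$ (or comparable). Conversely, a commensurable piece has diameter at most $c_0m^{-r_kr+r_k}$, so it meets at most a bounded number $\beta$ of mesh cubes, where $\beta$ depends only on $n$ and $m^{r_k}$. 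Together with Lemma \ref{bound}, which says each cube meets at most $U_r=\alpha_1l^{\alpha_2}$ commensurable pieces (or $\alpha_32^{\alpha_1l^{\alpha_2}}$ in the actual, non-sloppy case of Remark \ref{Actual_Case1}), this gives
\begin{equation*}
\beta^{-1}N_l(\tilde{F})\leq N^{r}(\tilde{F})\leq N_l(\tilde{F})\leq U_r\,N^r(\tilde{F}).
\end{equation*}
Commensurability also gives $\log diam(P_{l,k}(\tilde{F}))=-r\log m^{r_k}+O(1)$.

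For the third part, we take logarithms in the two-sided bound and divide by $\log m^{r_kr}\sim -\log diam(P_{l,k}(\tilde{F}))$. The constant factors disappear in the limit. The factor $U_r$ contributes $\alpha_2\log l$, which is negligible because $l$ is comparable to $r$; this follows from Lemma \ref{inequality_pieces}, since $diam(P_{l,k})\approx(|\lambda_1|^k)^{-l}$ up to polynomial factors in $l$. Passing along $r\to\infty$ is the same as passing along $l\to\infty$, because $r\mapsto l(r)$ is monotone, unbounded and changes in bounded steps (by Lemma \ref{existenc_of_piece}(b)), so liminf and limsup are preserved.

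The main obstacle is the bound $U_r$. In the actual, non-sloppy setting its exponent is $\alpha_1l^{\alpha_2}$, and this is only negligible against $l$ when $\alpha_2<1$, or when $\alpha_2=0$ for simple eigenvalues. To handle this I would follow Remark \ref{Actual_Case1}(i). The idea is to count cylinder sets $(\tilde{F})_{\mathbf{I}_1}$, for which the polynomial bound $\alpha_1 l^{\alpha_2}$ holds, rather than arbitrary unions of cylinders. Since each piece is sandwiched between such cylinders with comparable diameters by (\ref{piece_cylinder}), this should replace $U_r$ by a polynomial, making its logarithmic contribution $O(\log l)=o(l)$.
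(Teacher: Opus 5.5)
Your main argument is the paper's proof. It uses bi-Lipschitz invariance under $P^{-1}$, compares $N^r(\tilde F)$ with $N_l(\tilde F)$ via Lemmas \ref{existenc_of_piece}, \ref{large_pieces} and \ref{bound}, and gets $\log U_r=o(\log m^r)$ from Lemma \ref{inequality_pieces} and (\ref{number_of_cubes2}). The paper carries all of this out in the sloppy notation (\ref{sloppy_notation}) with the polynomial bound $U_r=l\alpha_1(2l)^{\alpha_2}$. The extra factor $l$ there counts the admissible levels in $[l,2l]$; you drop it, which is harmless. One small slip: bounded overlap of a piece with mesh cubes gives $\beta^{-1}N^r(\tilde F)\le N_l(\tilde F)$, not $\beta^{-1}N_l(\tilde F)\le N^r(\tilde F)$. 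The latter would say each cube meets at most $\beta$ pieces, which you have not shown.

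The genuine gap is your last paragraph. Your diagnosis is right: a bound $\alpha_3 2^{\alpha_1 l^{\alpha_2}}$ is useless once $\alpha_2\ge 1$, and that happens as soon as $J$ has a nontrivial Jordan block. But the remedy fails, because the polynomial bound does \emph{not} hold for cylinder sets.

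A cylinder $(\tilde F)_{\mathbf I_1}$ fixes every block to depth $l=l_1$. So in the proof of Lemma \ref{bound}, the integer vectors $\sum_{i=1}^{l}T_{k,p}^{l-i}(d_{\mathbf j_i}-d'_{\mathbf j_i})_p$ are only bounded by $\|T_{k,p}^{l}\|\,m^{-r}$. Up to powers of $l$ this is $(m_p/m_1)^l$, which is exponential for $p\ge 2$. The polynomial bound came precisely from $m_p^{l_p}\approx m_1^{l}$, and cylinders lack this.

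For a concrete case, take the unit square $F(\mathrm{diag}(2,3),\{0,1\}\times\{0,1,2\})$. Its level-$l$ cylinders are $2^{-l}\times 3^{-l}$ rectangles, and a square of side $2^{-l}$ meets about $(3/2)^l$ of them. Counting cylinders would return $\log 6/\log 2>2$. The sandwich (\ref{piece_cylinder}) compares diameters only; a piece is a union of exponentially many cylinders. Remark \ref{Actual_Case1}(ii) asserts the same cylinder claim, but the paper's proof of this lemma does not use it.

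What does work is to count the outer sets of (\ref{sloppy_notation}). These are the sets whose $p$-th block is $\sum_{i=1}^{l_p}T_{k,p}^{-i}(d_{\mathbf j_i})_p+T_{k,p}^{-l_p}(F_k)_p$ for every $p$, indexed by distinct tuples of block sums. They cover $F_k$, and their diameters are comparable to $m_1^{-l}$ up to powers of $l$. The proof of Lemma \ref{bound} counts exactly these tuples, so the polynomial $U_r$ applies. Taking this as the meaning of $N_l$ is what the sloppy notation amounts to, and with it your argument goes through.
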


\begin{pf} In view of Lemma \ref{existenc_of_piece}, we may assume that for every  $n$-cube intersecting $\tilde{F}$,  there is a piece $P_{l,k}(\tilde{F})$  commensurable with  and intersects that cube. That gives $N^r(\tilde{F})\leq N_l(\tilde{F})$. 
Given a piece $P_{l,k}(\tilde{F})$  commensurable with an $n$-cube, Lemma \ref{large_pieces} implies that the number of levels $l'$ such that a piece $P_{l',k}(\tilde{F})$ may be commensurable with an $n$-cube  is bounded by $l$.  For every level $l'$, the number of 
pieces that intersect and  commensurable with that cube is bounded by $U_{l'}=\alpha_1 (l')^{\alpha_2}$ by Lemma \ref{bound}. We write $l_i'=l+i-1$,  $i=1,2,...,l$, for such possible levels. 
Reset $U_r: =l \alpha_1 (2l)^{\alpha_2} \geq \sum_{i=1}^{l} U_{l'_i}=\sum_{i=1}^{l} \alpha_1 (l+i-1)^{\alpha_2}$. For an integral self-affine set
or more exactly, for $\tilde{F}=F(J_{k},\tilde{A}_{k})$,  we thus have
\begin{equation}\label{number_of_cubes1}
  \frac{N_l(\tilde{F})}{l \alpha_1 (2l )^{\alpha_2} }=\frac{N_l(\tilde{F})}{ U_r}\leq N^r(\tilde{F})\leq N_l(\tilde{F})
\end{equation}
for some positive integer constants $\alpha_1,\alpha_2$. 
Writing $|\lambda_1|=\lambda$ in Lemma  \ref{inequality_pieces}, \ we then obtain  the following estimate
\begin{equation}\label{piece_diameter1}
c' {l}^{-\alpha_0} (\lambda^{k})^{-l}\leq diam(P_{l,k}(\tilde{F})) \leq c''{l}^{\alpha_0} (\lambda^{k})^{-l}
\end{equation} 
for suitable constants $c', c'',\alpha_0.$
These inequalities,  together with the commensurability inequalities $c_1 m^{-r}< diam(P_{l,k}(\tilde{F})) \leq c_2 m^{-r}$, 
   imply
$$c_1 m^{-r}< diam(P_{l,k}(\tilde{F})) \leq c''{l}^{\alpha_0} (\lambda^{k})^{-l}, \quad  \quad  \quad c' {l}^{-\alpha_0} (\lambda^{k})^{-l}\leq diam(P_{l,k}(\tilde{F})) \leq  c_2 m^{-r}.$$
That gives
\begin{equation}\label{number_of_cubes2}
c''' {l}^{-\alpha_0} (\lambda^{k})^{l}< m^{r}\leq c'''' \ {l}^{\alpha_0} (\lambda^{k})^{l}
\end{equation}  ($c''', c''''$ are constants). Clearly, $$l\rightarrow\infty \  \ \Longleftrightarrow \ \ r\rightarrow\infty.$$
 So (\ref{number_of_cubes2})  leads to
\begin{eqnarray*}
  0=\underset{l\rightarrow\infty}{\lim}\frac{\log( l \alpha_1 (2l)^{\alpha_2})}{\log c'''' \ {2l}^{\alpha_0} (\lambda^{k})^{l}} &=& \underset{r\rightarrow\infty}{\lim}\frac{\log(l \alpha_1 (2l)^{\alpha_2})}{\log c'''' \ {l}^{\alpha_0} (\lambda^{k})^{l}} \leq \underset{r\rightarrow\infty}{\lim}\frac{\log(l \alpha_1 (2l)^{\alpha_2})}{\log m^{r}} \\
   &\leq & \underset{r\rightarrow\infty}{\lim}\frac{\log( l \alpha_1 (2l)^{\alpha_2})}{\log c''' {l}^{-\alpha_0} (\lambda^{k})^{l}}= \underset{l\rightarrow\infty}{\lim}\frac{\log( l \alpha_1 (2l)^{\alpha_2})}{\log c''' {l}^{-\alpha_0} (\lambda^{k})^{l}}=0.
\end{eqnarray*}  
by standard calculus techniques for limits (like L'H$\hat{\mathrm{o}}$pital's rule). 
This gives 
$$\underset{r\rightarrow\infty}{\lim}\frac{\log  U_r}{\log m^{r}}=\underset{r\rightarrow\infty}{\lim}\frac{\log(l \alpha_1 (2l)^{\alpha_2})}{\log m^{r}}=0.$$
Then by (\ref{number_of_cubes1}), we get
\begin{equation}\label{Box_Inequality}
   \frac{\log N_l(\tilde{F})}{\log m^{r}} -\frac{\log(l \alpha_1 (2l)^{\alpha_2})}{\log m^{r}}=  \frac{\log N_l(\tilde{F})}{\log m^{r}} -\frac{\log  U_r}{\log m^{r}} \leq \frac{ \log N^r(\tilde{F})}{ \log m^{r}}\leq \frac{\log N_l(\tilde{F})}{\log m^{r}}
\end{equation}
so that 
\begin{equation*}\label{ul_box}
\underline{dim}_B \tilde{F}=
\underset{r\rightarrow\infty}{\underline{\lim}} \frac{ \log N^r(\tilde{F})}{ \log m^{r}}=\underset{r\rightarrow\infty}{\underline{\lim}}\frac{\log N_l(\tilde{F})}{\log m^{r}}, \ \ \ \
\overline{dim}_B \tilde{F}=\underset{r\rightarrow\infty}{\overline{\lim}}
\frac{ \log N^r(\tilde{F})}{ \log m^{r}} =\underset{r\rightarrow\infty}{\overline{\lim}} \frac{\log N_l(\tilde{F})}{ \log m^{r}} 
\end{equation*}
by the properties of lower and upper limits. 
Since we are counting the pieces  $P_{l,k}(\tilde{F})$ 
  commensurable with an $n$-cube of side length $ c_0m^{-r}$, we have  $c_1 m^{-r}< diam(P_{l,k}(\tilde{F})) \leq c_2 m^{-r}$. As  observed above,  
   $ l\rightarrow\infty$ if and only if  $r\rightarrow\infty$ . 
Then,  we can further write  
 $$\underline{dim}_B \tilde{F}=\underset{l\rightarrow\infty}{\underline{\lim}}\frac{-\log N_l(\tilde{F})}{\log diam(P_{l,k}(\tilde{F}))}, \quad \ \ \ \overline{dim}_B \tilde{F}= \underset{l\rightarrow\infty}{\overline{\lim}} \frac{-\log N_l(\tilde{F}) }{\log  diam(P_{l,k}(\tilde{F})) }.$$ 
\end{pf}

\bigskip

\begin{prop}\label{monotonicity0}  Let $F=F(T,A)\subset \Bbb{R}^n$ be an integral self-affine set and let  $\tilde{F}=F(J_k,\tilde{A}_k)$. 
 Then
 $$  \overline{dim}_B F = \overline{dim}_B \tilde{F} \leq \underset{k\rightarrow\infty}{\underline{\lim}}   {dim}_B F_k.$$ 
\end{prop}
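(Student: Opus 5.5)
The equality $\overline{dim}_B F=\overline{dim}_B \tilde{F}$ is immediate, since $\tilde F=P^{-1}F$ is a bi-Lipschitz image of $F$. For the inequality, I would use the piece-counting formula of Lemma \ref{box_piece2}, applied both to $\tilde{F}=F(J_k,\tilde A_k)$ and to $F_k$. The core idea is to compare, at a fixed large $k$, the number $N_l(\tilde F)$ of level-$l$ pieces of $\tilde F$ with the number $N_l(F_k)$ of level-$l$ pieces of $F_k$. The piece diameters are comparable: $diam(P_{l,k}(\tilde F))\approx (|\lambda_1|^k)^{-l}$ by Lemma \ref{inequality_pieces}, and $diam(P_l(F_k))\approx m_1^{-l}$ by (\ref{piece_cylinder}), up to polynomial factors in $l$.

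The first step is an injection-type bound $N_l(\tilde F)\le C(l)\,N_l(F_k)$, with $C(l)$ subexponential in $l$ and $k\ge k_0$. The natural map sends a piece $P_{l,k}(\tilde F)$, determined by its block sums $\sum_{i\le \tilde l_p}J_{k,p}^{-i}(\tilde a_{\mathbf j_i})_p$, to the piece of $F_k$ with the same multi-indices, cut off at the levels $l_p$. Lemma \ref{number_of_pieces} gives $l_p=\tilde l_p-\iota_p$ with $\iota_p\le s-1$, so the truncation loses at most $s-1$ digits per block. This costs at most a factor $q^{k(s-1)s}$, which is independent of $l$.

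Two distinct pieces of $\tilde F$ may map to the same $F_k$-piece only when their digit sequences yield equal $d$-sums while the $\tilde a$-sums differ. Here I would invoke Proposition \ref{neighboring_pieces4} and Corollary \ref{neighboring_pieces5}: for $k\ge k_0$, the coincidence and overlap structures (the neighbor graphs) of $\tilde F$ and $F_k$ agree. Combined with the choice of $P$, where lattice points of $P^{-1}\mathbb Z^n$ are more than $n$ apart, this means that equal floors force equal digits along neighbor paths. Hence distinct pieces of $\tilde F$ map to pieces of $F_k$ that are distinct up to bounded multiplicity. Lemma \ref{bound} controls the multiplicity from the cube side, giving at most $\alpha_1 l^{\alpha_2}$ pieces per cube, or $\alpha_3 2^{\alpha_1 l^{\alpha_2}}$ in the unsloppy form of Remark \ref{Actual_Case1}. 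I expect this step to be the main obstacle. The neighbor-graph results are stated for edge labels, and transferring them to whole level-$l$ pieces requires a careful induction on $l$ via eventually periodic path extensions, as in the proof of Lemma \ref{neighboring_pieces1}. If needed, I would fall back on a weaker bound of the form $N_l(\tilde F)\le e^{\varepsilon_k l}N_l(F_k)$ with $\varepsilon_k\to0$, which still suffices.

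Given that bound, dividing $\log N_l$ by $-\log diam$ gives
\[
\frac{\log N_l(\tilde F)}{l\,k\log|\lambda_1|}\le \frac{\log N_l(F_k)}{l\log m_1}\cdot\frac{\log m_1}{k\log|\lambda_1|}+\frac{\log C(l)+O(\log l)}{l\,k\log|\lambda_1|}.
\]
Taking $\limsup_{l\to\infty}$ and using that $dim_B F_k$ exists (the McMullen-type formula of the Appendix) yields
\[
\overline{dim}_B\tilde F\le dim_B F_k\cdot\frac{\log m_1}{k\log|\lambda_1|}+\epsilon_k .
\]
Here $\epsilon_k\to 0$, because the $l$-independent constants contribute only $O(k^{-1})$ after dividing by $k$. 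Since $m_1=\lceil|\lambda_1|^k\rceil$ (or $m_1\le |\lambda_1|^k+2$), the ratio $\log m_1/(k\log|\lambda_1|)\to1$. Finally, taking $\liminf_{k\to\infty}$ gives $\overline{dim}_B F\le\underline{\lim}_k\, dim_B F_k$.
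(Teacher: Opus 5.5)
Your argument is essentially the paper's. Both pass between level-$l$ pieces of $\tilde F$ and of $F_k$ through the common symbolic coding, handle the mismatch between $\tilde l_p$ and $l_p$ via Lemma \ref{number_of_pieces} and the coincidences via Proposition \ref{neighboring_pieces4} and Corollary \ref{neighboring_pieces5}, and arrive at the same per-$k$ inequality $\overline{dim}_B\tilde F\cdot\frac{\log r_1}{\log m_1}\le dim_B F_k$; then $\log r_1/\log m_1\to 1$ and the existence of $dim_B F_k$ finish the proof.

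The only difference is the direction of transport. The paper pulls each $P_l(F_{k_{\textsf{n}}})$ back to a set $S_{l,k_{\textsf{n}}}(\tilde F)$, so these sets cover $\tilde F$ automatically with $N_l(F_{k_{\textsf{n}}})$ members and only their diameters need control. Your push-forward instead needs the fiber bound you rightly flag as the crux. That bound follows from the same diameter control together with Lemma \ref{bound} in its polynomial form $\alpha_1 l^{\alpha_2}$; the alternative form $\alpha_3 2^{\alpha_1 l^{\alpha_2}}$ you mention is not subexponential once $\alpha_2\ge 1$.
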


\begin{pf} 
From real analysis, we know that there is a subsequence $\{F_{k_\textsf{n}}\}$ of $F_k$ such that 
\begin{eqnarray}\label{ul_box0} \underset{\textsf{n}\rightarrow \infty}{\lim} dim_B F_{k_\textsf{n}} = \underset{k\rightarrow \infty}{\underline{\lim}} dim_B F_k.\end{eqnarray}
Since $dim_B F_{k_{\textsf{n}}}$ exists by Proposition \ref{Existence} in the Appendix, we can replace it by $\overline{dim}_B F_{k_{\textsf{n}}}$ in (\ref{ul_box0}).  By Lemma \ref{box_piece2}, we have
\begin{eqnarray}\label{ul_box1} \underset{l\rightarrow\infty}{\overline{\lim}} \frac{-\log N_l(F_{k_{\textsf{n}}}) }{\log  diam(P_l(F_{k_{\textsf{n}}})) }=\overline{dim}_B F_{k_{\textsf{n}}}
\end{eqnarray}
for every $\textsf{n}$. If we let $\textsf{n}\rightarrow\infty$ in (\ref{ul_box1}), then (\ref{ul_box0}) takes the form of 
\begin{eqnarray}\label{ul_box22}
\underset{\textsf{n}\rightarrow\infty}{\overline{\lim}}  \underset{l\rightarrow\infty}{\overline{\lim}} \  \frac{-\log N_l(F_{k_{\textsf{n}}}) }{\log  diam(P_l(F_{k_{\textsf{n}}})) }
= \underset{\textsf{n}\rightarrow\infty}{\overline{\lim}} \ \overline{dim}_B F_{k_{\textsf{n}}}= \underset{k\rightarrow\infty}{\underline{\lim}} dim_B F_k.
\end{eqnarray}
Associated with $P_l(F_{k_{\textsf{n}}})$, we define the set  $$S_{{l,k_\textsf{n}}}(\tilde{F}):= \{x\in \tilde{F}=F(J_{k_\textsf{n}},\tilde{A}_{k_\textsf{n}}) : \ x_{k_\textsf{n}}\in  P_l(F_{k_\textsf{n}})  \}\subseteq \tilde{F},$$ where $x$ is related to $x_{k_\textsf{n}}$ by (\ref{special_correspondence}). Notice that the definition of  $S_{{l,k_\textsf{n}}}(\tilde{F})$ uses $l_p$ coming from  the definition of $P_l(F_{k_\textsf{n}})$ while that of $P_{l,k_\textsf{n}}(\tilde{F})$ uses $\tilde{l}_p$. Since $l_p$ and $\tilde{l}_p$ in Definition \ref{piece} may be unequal for $p>1$ (even if $l_1=\tilde{l}_1=l$),  the sets $S_{{l,k_\textsf{n}}}(\tilde{F})$ and $P_{l,k_\textsf{n}}(\tilde{F})$ may be different. Although $S_{{l,k_\textsf{n}}}(\tilde{F})$ is not necessarily the level-$l$ piece $P_{l,k_\textsf{n}}(\tilde{F})$, we may say that it is approximately a level-$l$ piece by Lemma \ref{number_of_pieces}.   Let  $N_l^S(\tilde{F})$ be the number of the sets $S_{{l,k_\textsf{n}}}(\tilde{F})$  
 corresponding to  $ P_l(F_{k_\textsf{n}})$. By Proposition \ref{neighboring_pieces4}  or Corollary \ref{neighboring_pieces5}, (for  large $\textsf{n}$) $N_l^S(\tilde{F})$ is  equal to  the number $N_l (F_{k_\textsf{n}})$ of the pieces  $ P_l(F_{k_\textsf{n}})$ commensurable and intersecting a cube of side length $c_0m^{-r}$ in a cover of $F_{k_\textsf{n}}$ by cubes. For the same reason,  one may also assume that $S_{{l,k_\textsf{n}}}(\tilde{F})$ includes all points $x$ obtained from all different series expansions of $x_{k_\textsf{n}}$. 
 In this manner, for each large $\textsf{n}$, we make  
 \ \ $\{S_{{l,k_\textsf{n}}}(\tilde{F}) : l\in \mathbb{N} \}$  a cover of $\tilde{F}=F(J_{k_\textsf{n}},\tilde{A}_{k_\textsf{n}})$. 
 
 Let $u_{\textsf{n},l}=\frac{-\log N_l^S(\tilde{F}) }{\log  diam(S_{{l,k_\textsf{n}}}(\tilde{F})) }  $, \ \ 
$v_{\textsf{n},l}=\frac{\log  diam(S_{{l,k_\textsf{n}}}(\tilde{F})) }{\log   diam(P_l(F_{k_{\textsf{n}}}))}=\frac{-\log  diam(S_{{l,k_\textsf{n}}}(\tilde{F})) }{-\log   diam(P_l(F_{k_{\textsf{n}}}))}.$ Then $u_{\textsf{n},l},v_{\textsf{n},l}\geq 0$ for  all sufficiently large $\textsf{n}$ because $diam((S_{{l,k_\textsf{n}}}(\tilde{F})), \ \ diam(P_l(F_{k_{\textsf{n}}}))<1$ for large  $\textsf{n}$. That allows us to use the formula 
\begin{eqnarray*}
  \underset{l\rightarrow\infty}{\overline{\lim}} (u_{\textsf{n},l} v_{\textsf{n},l})=  \underset{l\rightarrow\infty}{\overline{\lim}}  \ u_{\textsf{n},l} \ \lim \ v_{\textsf{n},l}
\end{eqnarray*}
below  when $u_{\textsf{n},l},v_{\textsf{n},l}\geq 0$, \  $\lim  v_{\textsf{n},l}$ exists and $\lim v_{\textsf{n},l}>0$.
Then we can write (\ref{ul_box1}) as 
\begin{eqnarray}\label{ul_box11} \underset{l\rightarrow\infty}{\overline{\lim}} \ u_{\textsf{n},l} \ \lim \ v_{\textsf{n},l}=\underset{l\rightarrow\infty}{\overline{\lim}} (u_{\textsf{n},l}v_{\textsf{n},l}) 
=\underset{l\rightarrow\infty}{\overline{\lim}} \frac{-\log N_l^S(\tilde{F}) }{\log  diam(P_l(F_{k_{\textsf{n}}})) }=\overline{dim}_B F_{k_{\textsf{n}}}
\end{eqnarray}
for large  $\textsf{n}$ because $N_l^S(\tilde{F})=N_l (F_{k_\textsf{n}})$.
We now study the limiting behaviour of $v_{\textsf{n},l}$. In particular, we show that both $\underset{l\rightarrow\infty}{\lim}  v_{\textsf{n},l} $ and 
$\underset{n\rightarrow\infty}{\lim} \underset{l\rightarrow\infty}{\lim}  v_{\textsf{n},l}$ exist.
 Similar to (\ref{piece_diameter1}), from Lemma \ref{lemma_norm_estimate1}, Remark \ref{constant_remark} and Lemma \ref{inequality_pieces}, we infer that 
{\footnotesize\begin{eqnarray*} 
c'''k_{\textsf{n}}^{-c''} {l}^{-\alpha_0} (r_1)^{-l} \leq diam(S_{{l,k_\textsf{n}}}(\tilde{F}))  \leq c'k_{\textsf{n}}^{c''} {l}^{\alpha_0} (r_1)^{-l}, 
 \quad  \quad \quad c'''k_{\textsf{n}}^{-c''} {l}^{-\alpha_0} (m_1)^{-l} \leq diam(P_l(F_{k_{\textsf{n}}}))\leq c'k_{\textsf{n}}^{c''} {l}^{\alpha_0} (m_1)^{-l} 
\end{eqnarray*}}\normalsize
where $\lambda=|\lambda_1|$, \ $r_1= \lambda^{k_{\textsf{n}}} $ and $m_1=\lceil \lambda^{k_{\textsf{n}}} \rceil$  as before. 
 Then \begin{eqnarray*}\frac{\log   r_1}{\log   m_1} & = & \underset{l\rightarrow\infty}{\underline{\lim}}  \frac{\log   c'k_{\textsf{n}}^{-c''} +(-\alpha_0) \log   l +l \log   r_1 }{ \log   c'''k_{\textsf{n}}^{c''} +\alpha_0 \log   l +l \log   m_1}    \\
 & = &  \underset{l\rightarrow\infty}{\underline{\lim}}  \frac{\log  c'k_{\textsf{n}}^{-c''} {l}^{-\alpha_0} (r_1)^{l} }{\log   c'''k_{\textsf{n}}^{c''} {l}^{\alpha_0} (m_1)^{l}} \leq \underset{l\rightarrow\infty}{\underline{\lim}}\frac{-\log  diam(S_{{l,k_\textsf{n}}}(\tilde{F})) }{-\log   diam(P_l(F_{k_{\textsf{n}}}))} =\underset{l\rightarrow\infty}{\underline{\lim}} v_{\textsf{n},l}
 \end{eqnarray*}
 by L'H$\hat{\mathrm{o}}$pital's rule.  Letting $\textsf{n}$ approach $\infty$, we then get  $$1=\underset{\textsf{n}\rightarrow\infty}{\lim}\frac{\log   \lambda^{k_{\textsf{n}}}}{\log   \lceil \lambda^{k_{\textsf{n}}} \rceil}=\underset{\textsf{n}\rightarrow\infty}{\lim}\frac{\log   r_1}{\log   m_1} \leq  \underset{\textsf{n}\rightarrow\infty}{\underline{\lim}} \underset{l\rightarrow\infty}{\underline{\lim}}  v_{\textsf{n},l}.$$
 In a similar manner, $\underset{l\rightarrow\infty}{\overline{\lim}}    v_{\textsf{n},l} \leq \frac{\log   r_1}{\log   m_1}, \quad   \underset{\textsf{n}\rightarrow\infty}{\overline{\lim}} \underset{l\rightarrow\infty}{\overline{\lim}}   v_{\textsf{n},l}\leq 1$ so that 
 \begin{eqnarray}\label{ul_box33}
 \underset{l\rightarrow\infty}{\lim}    v_{\textsf{n},l}=\frac{\log   r_1}{\log   m_1} \quad \mathrm{and} \quad 
 \underset{\textsf{n}\rightarrow\infty}{\lim}\underset{l\rightarrow\infty}{\lim}    v_{\textsf{n},l}= \underset{n\rightarrow\infty}{\lim}\frac{\log   r_1}{\log   m_1}=1.
 \end{eqnarray}
By the remarks in 
\cite[p. 41]{F1}, 
a  box dimension formula can be given as  $\overline{dim}_B \tilde{F}=\underset{l\rightarrow\infty}{\overline{\lim}} \frac{-\log N_l^s(\tilde{F}) }{\log  diam((S_{{l,k_\textsf{n}}}(\tilde{F})) }$, where $N_l^s(\tilde{F})$ is the smallest number of sets of diameter at most $diam(S_{{l,k_\textsf{n}}}(\tilde{F}))$ that cover $\tilde{F}$. Then 
$$\overline{dim}_B \tilde{F}=\underset{l\rightarrow\infty}{\overline{\lim}} \frac{-\log N_l^s(\tilde{F}) }{\log  diam(S_{{l,k_\textsf{n}}}(\tilde{F})) } \leq 
\underset{l\rightarrow\infty}{\overline{\lim}}\frac{-\log N_l^S(\tilde{F}) }{\log  diam(S_{{l,k_\textsf{n}}}(\tilde{F})) }=\underset{l\rightarrow\infty}{\overline{\lim}}u_{\textsf{n},l}$$ for all large $\textsf{n}$.
This inequality, (\ref{ul_box22}), (\ref{ul_box11}) and  (\ref{ul_box33}) lead us to   
\begin{eqnarray*} 
\overline{dim}_B \tilde{F} & = & \overline{dim}_B \tilde{F} \ \underset{\textsf{n}\rightarrow\infty}{\overline{\lim}}\underset{l\rightarrow\infty}{\overline{\lim}} v_{\textsf{n},l}
 \leq  \underset{\textsf{n}\rightarrow\infty}{\overline{\lim}}\underset{l\rightarrow\infty}{\overline{\lim}} (u_{\textsf{n},l}v_{\textsf{n},l} )\\ 
& = & \underset{\textsf{n}\rightarrow\infty}{\overline{\lim}} \overline{dim}_B F_{k_{\textsf{n}}}= \underset{k\rightarrow\infty}{\underline{\lim}} dim_B F_k.
 \end{eqnarray*}
\end{pf}

\begin{remark} {\rm
\label{monotonicity} If we prove that $\underset{k\rightarrow\infty}{\overline{\lim}}  dim_B F_k\leq \underline{dim}_B \tilde{F},$ then Proposition \ref{monotonicity0} yields 
$\underset{k\rightarrow\infty}{\lim}  dim_B F_k=dim_B \tilde{F},$ i.e. the box dimension of $\tilde{F}$ exists. That will be the case. Due to the organization of the paper, 
we defer the proof to Section \ref{Lower_Bounds_Dimension}.} $\hfill \Box$
\end{remark}

\section{The Hausdorff Dimension}\label{theorems}

For completeness, we first recall some dimension estimates from \cite{KK2} (see Theorem 4.1 therein), which are extensions of Falconer's bounds
\cite{F3} to certain graph directed sets. Besides
handling the difficulties arising from the overlapping pieces of $F$,
another advantage of using those graph directed sets is that they can also be used to represent the boundary of $F$ when $\overset{\circ} { F }\neq \emptyset $ \cite{HLR}.

For an invertible matrix $T\in M_n({\Bbb R})$,
the singular values
$\alpha_i \ (1\leq i \leq n)$ of $T$ are
the positive square roots of the eigenvalues of $T^*T$, where $T^*$ is the
adjoint of $T$. Assume that $0<\alpha_n \leq \cdots \leq \alpha_2 \leq  \alpha_1$. The \emph{singular value function} $\phi^s(T)$ is defined for $0< s \leq n$ as
\begin{equation}\label{eqn0}
\phi^s(T)=\alpha_1 \alpha_2  ...  \alpha_{m-1}\alpha_{m}^{s-m+1},
\end{equation}
where $m$ is the integer such that $m-1 < s \leq m$. For $s>n$,
it is defined by $|\det(T)|^{\frac{s}{n}}$. Note that every integral self-affine set $F=F(T,A)\subset\mathbb{R}^{n}$ can be obtained from a graph $(V,E)$ as a graph-directed set \cite{HLR},
where $V$ is the set of vertices and $E$ is the edge set.
We define a unique nonnegative number
$u$ by
\begin{equation}\label{g1}
\lim_{\textsf{r}\rightarrow \infty} \left[\sum_{i,j=1}^\textsf{m} \sum_{\mathbf{e} \in E_{i,
j}^{\textsf{r}}}\phi^{u} (T^{-\textsf{r}}) \right]^{\frac{1}{\textsf{r}}}=1,
\end{equation}
and another number $v$ by
\begin{equation}\label{g2}
\lim_{\textsf{r} \rightarrow \infty}  \left[ \min \left\{
\sum_{j=1}^\textsf{m}\sum_{\mathbf{e} \in E_{i, j}^{\textsf{r}}}
(\phi^{v}(T^{\textsf{r}}))^{-1} \ \ \bigg| \ \ 1\leq i \leq m  \right\}  \right]^{\frac{1}{\textsf{r}}}=1,
\end{equation}
where $\min $ stands for minimum, $\# V=\textsf{m}$ and $E_{i, j}^{\textsf{r}}$ denotes the
set of directed $\textsf{r}$-paths from vertex $i$ to vertex $j$. Then we have the following.

\begin{prop}\label{thm} {\rm \cite{KK2}} For an integral self-affine set $F=F(T,A)\subset\mathbb{R}^{n}$,
 we always have
 $$v\leq {dim}_H F  \leq u\leq n.$$
\end{prop}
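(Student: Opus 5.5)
The plan is to work with the graph-directed representation of $F$ from \cite{HLR}, i.e.\ a finite directed graph $(V,E)$ with $\#V=\textsf{m}$ and compact sets $F_1,\dots,F_{\textsf{m}}$ (one of them $F$, the others intersections of $F$ with translates of an auxiliary tile $\Gamma$ as in Remark \ref{auxiliary}). These satisfy $F_i=\bigcup_j\bigcup_{e\in E_{i,j}} T^{-1}(F_j+d_e)$, and the union is essentially non-overlapping because it is inherited from the tiling by $\Gamma$. The three inequalities $v\le dim_H F$, $dim_H F\le u$ and $u\le n$ are then proved separately. In each case the key input is that every $\textsf{r}$-path $\mathbf e\in E^{\textsf{r}}_{i,j}$ gives a piece $T^{-\textsf{r}}(F_j+d_{\mathbf e})$ of $F_i$, and that the affine image of a bounded set under $T^{-\textsf{r}}$ lies in an ellipsoid with semi-axes $\alpha_1(T^{-\textsf{r}})\ge\cdots\ge\alpha_n(T^{-\textsf{r}})$ (up to a constant).

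\emph{Upper bound $dim_H F\le u$.} This follows Falconer's covering argument \cite{F3}. Fix $s>u$. By (\ref{g1}), for all large $\textsf{r}$,
\[
\sum_{i,j}\sum_{\mathbf e\in E^{\textsf{r}}_{i,j}}\phi^s(T^{-\textsf{r}})\le \theta^{\textsf{r}}
\]
for some $\theta<1$. Each ellipsoid image with $m-1<s\le m$ is covered by about $\alpha_1\cdots\alpha_{m-1}\alpha_m^{1-m}$ cubes of side $\alpha_m$. Hence the $s$-dimensional $\delta$-cover cost of $F$ at level $\textsf{r}$ is at most $c\,\theta^{\textsf{r}}\to 0$, so $\mathcal H^s(F)=0$. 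This step is routine once the graph-directed decomposition is in place; submultiplicativity of $\phi^s$ guarantees that the limit in (\ref{g1}) exists and defines $u$ uniquely.

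\emph{Bound $u\le n$.} For $s=n$ we have $\phi^n(T^{-\textsf{r}})=|\det T|^{-\textsf{r}}$. The pieces indexed by $\textsf{r}$-paths are distinct lattice translates $T^{-\textsf{r}}(\Gamma+d_{\mathbf e})$, with $d_{\mathbf e}\in\mathbb Z^n$, inside a bounded set, and by construction from the tiling they have bounded overlap multiplicity. A volume count therefore gives $\#E^{\textsf{r}}\le c|\det T|^{\textsf{r}}$. Hence the bracket in (\ref{g1}) at $u=n$ is at most $c^{1/\textsf{r}}\to1$. Since the left side of (\ref{g1}) is strictly decreasing in $u$, this yields $u\le n$.

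\emph{Lower bound $v\le dim_H F$.} This is the main obstacle. Fix $s<v$. By (\ref{g2}), for large $\textsf{r}$, every vertex $i$ satisfies
\[
\sum_j\sum_{\mathbf e\in E^{\textsf{r}}_{i,j}}\phi^s(T^{\textsf{r}})^{-1}\ge \Theta^{\textsf{r}}
\]
with $\Theta>1$. Passing to the power $T^{\textsf{r}}$ (so that $F=F(T^{\textsf{r}},A_{\textsf{r}})$ and the graph becomes $(V,E^{\textsf{r}})$), I would build a mass distribution $\mu$ on $F$ by splitting mass at each vertex among outgoing $\textsf{r}$-paths with weights proportional to $\phi^s(T^{\textsf{r}})^{-1}$. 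The minimum over $i$ ensures a uniform splitting at every vertex. The goal is then the mass distribution principle: $\mu(B(x,\rho))\le C\rho^{s'}$ for some $s'$ slightly less than $s$. A ball of radius $\rho\approx\alpha_m(T^{-\textsf{r}L})$ meets pieces at level $L$ whose number is controlled by a lattice-point count. This count uses the fact that the distinct translates $d_{\mathbf e}$ are integer vectors and that the graph-directed union has bounded overlap. It is the same type of counting as in Lemma \ref{bound}, and it produces at most $C\,\alpha_1\cdots\alpha_{m-1}\alpha_m^{1-m}$ pieces times polynomial factors. Multiplying by the mass of each piece, at most $\Theta^{-L}\phi^s(T^{\textsf{r}L})^{-1}$ up to constants, should give the Frostman bound. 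The delicate part is comparing $\phi^s(T^{\textsf{r}L})$ with the product of the level-$\textsf{r}$ values; I would handle this with the supermultiplicativity $\phi^s(T^{a+b})\ge c\,\phi^s(T^a)\phi^s(T^b)$ up to polynomial factors coming from the Jordan blocks (compare Lemma \ref{lemma_norm_estimate1}), and absorb those factors using $\Theta>1$.
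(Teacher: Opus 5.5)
The paper does not prove this proposition. It imports it from \cite{KK2} (Theorem 4.1 there) as a graph-directed extension of Falconer's bounds, so your outline has to stand on its own. Your upper bound $\dim_H F\le u$ and your bound $u\le n$ are fine. The latter is even immediate from property (I): every row sum of $B$ is at most $|\det T|$, so $\|B^{\textsf{r}}\|\,\phi^n(T^{-\textsf{r}})$ stays bounded. The gap is in the Frostman estimate for $v\le\dim_H F$, which you flagged yourself. It fails in two places.

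\emph{First, the radius and the count are wrong.} You take $\rho\approx\alpha_m(T^{-\textsf{r}L})$ and bound the number of level-$\textsf{r}L$ pieces meeting $B(x,\rho)$ by $\alpha_1\cdots\alpha_{m-1}\alpha_m^{1-m}$. That expression is the number of $\rho$-cubes needed to cover a \emph{single} piece, carried over from the upper bound. It is not the number of pieces meeting a single ball. Since $v$ is built from $\phi^s(T^{\textsf{r}})^{-1}$, i.e.\ from the \emph{smallest} semi-axes of the pieces, the estimate does not close at your scale.

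For example, take $T=\mathrm{diag}(2,3)$ and $A=\{0,1\}\times\{0,1,2\}$. Then $F=[0,1]^2$, $v=2$, and $m=2$ for $s$ near $v$. Your recipe gives $\rho=3^{-k}$, a count of $(3/2)^k$ and piece masses $6^{-k}$. This yields only $\mu(B(x,\rho))\lesssim 4^{-k}=\rho^{\log_3 4}$, not $\rho^{s}$ for $s$ near $2$.

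The right radius is $\rho\approx\alpha_m(T^{\textsf{r}L})^{-1}=\alpha_{n-m+1}(T^{-\textsf{r}L})$. Applying $T^{\textsf{r}L}$ turns $B(x,\rho)$ into an ellipsoid with semi-axes $\rho\,\alpha_i(T^{\textsf{r}L})$, which are $\ge 1$ exactly for $i\le m$. It also places the level-$\textsf{r}L$ pieces inside non-overlapping lattice translates $\Gamma+d_{\mathbf{e}}$. A volume count then shows that at most $C\,\alpha_1(T^{\textsf{r}L})\cdots\alpha_{m-1}(T^{\textsf{r}L})\,\alpha_m(T^{\textsf{r}L})^{1-m}$ pieces meet the ball, with no polynomial factors.

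\emph{Second, the mass comparison uses the wrong inequality.} A level-$\textsf{r}L$ piece receives mass $\prod_\ell R_{i_\ell}^{-1}\le\Theta^{-\textsf{r}L}\phi^s(T^{\textsf{r}})^{-L}$, where $R_i$ is the number of $\textsf{r}$-paths leaving vertex $i$. To bound this by $\phi^s(T^{\textsf{r}L})^{-1}$ you need $\phi^s(T^{\textsf{r}L})\le\phi^s(T^{\textsf{r}})^{L}$. That is ordinary submultiplicativity of $\phi^s$, exact and without Jordan-block factors. The supermultiplicativity you invoke points the other way and cannot bound masses from above.

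With both corrections, count times mass is $C\,\alpha_m(T^{\textsf{r}L})^{-s}=C\rho^s$. Radii between consecutive levels cost at most a factor $\|T^{\textsf{r}}\|^{s}$. The mass distribution principle then gives $\dim_H F\ge s$ for every $s<v$.
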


\bigskip

\begin{remark} {\rm  Note that there are examples such that $ {dim}_H F $ takes the value $u$ or $v$.} $\hfill \Box$
\end{remark}

Since $u$ and $v$ defined by (\ref{g1})-(\ref{g2}) can be approximated by monotonic sequences (see \cite{KK2}), Proposition
 \ref{thm}
  can be used for computational purposes and the determination of exceptional self-affine fractals.
We want to improve these results and to obtain ${dim}_H F$ as a limit.

\subsection{Tile Measures: A set-up needed for the Hausdorff dimension}\label{Tile-Measures}
Let $H^{\alpha}$ denote the $\alpha$-dimensional Hausdorff measure (see \cite{F2}). 
Then we know that 
$$dim_{H}(F)=\inf \{\alpha : H^{\alpha}(F)=0\}= \sup \{\alpha : H^{\alpha}(F)=\infty\}.$$
Now we express this fact in terms of the mesh cubes of the previous section. We note that the tile measures with respect to tiles have been arisen in our earlier study \cite{K1}. Here we need a very simple type of them, a cube.
Let $F$ be an integral self-affine set and $\Gamma=F(M,D)$ be the cube  in
the proof of Lemma \ref{graph-tile2}.

%
%
%
%
%
%

\begin{equation}\label{tilemeasure} \mathcal{T}_{\epsilon}^{\alpha}(F)=\inf \bigg \{ \sum (diam(\textsf{C}_{\textbf{i}}))^{\alpha} : \ \mathcal{C}=\{\textsf{C}_{\textbf{i}}\} \ \textrm{is a countable} \ \epsilon\textrm{-cover of } \ F  \textrm{ by n-cubes} \bigg \},
\end{equation}
where $\epsilon$-\textrm{cover} means $diam(\textsf{C}_{\textbf{i}})\leq \epsilon$. As in \cite{K1}, we denote the \textit{${\alpha}$-dimensional  tile (outer) measure of $F$ with respect to $\Gamma$} by $\mathcal{T}^{\alpha}(F)$  and  define it by 
\begin{equation}\label{tilemeasure2}
\mathcal{T}^{\alpha}(F)=\lim_{\epsilon\rightarrow 0}\mathcal{T}_{\epsilon}^{\alpha}(F)= \underset{\epsilon>0}{\sup} \ \mathcal{T}_{\epsilon}^{\alpha}(F).
\end{equation}
Here $\mathcal{T}$ refers to  tile measure. 
 Then we have
\begin{equation}\label{tiledimension}
dim_{H}(F)=\inf \{\alpha : \mathcal{T}^{\alpha}(F)=0\}= \sup \{\alpha : \mathcal{T}^{\alpha}(F)=\infty\}.
\end{equation}

For the justification of this in the special case $M=2$, we refer to $\cite[p. 33]{F1}$, where $\mathcal{T}^{\alpha}(F)$ are called net measures for this special matrix. The more general case, $M=2I\in M_n(\mathbb{Z})$ or $M=mI$ in the proof of Lemma  \ref{graph-tile2} can be handled similarly. 
Recalling Remark \ref{auxiliary}, one can replace the cube $\Gamma=F(M,D)$ by an auxiliary tile $\Gamma$ such that $F$ and $\Gamma$ use the same generating matrix, i.e., 
$F=F(T,A)$ and $F\subseteq \Gamma=F(T,D)$ (see \cite{HLR}). Thus, if $T$ has some special form like $T_k$, then the pieces of $\Gamma$ serve as the sets of a  Markov partition for $F$ as used in the proofs of Lemma 4.6, Theorem 1.1 in \cite{KP1}. That is, $dim_{H}(F)$ can be expressed by the pieces or cylinder sets of $\Gamma$. For that, one should take into account of the fact that a piece is contained in a cylinder set and their diameters are commensurable in the sense of Remark \ref{Actual_Case1}-(ii). That can also be  done with the pieces of $F$, which is explained in the next section.

\subsection{Proof of Theorem \ref{Hausdorff}: First Part}

Clearly, only  the convergence of some sequence $\{S_k\}$ of compact sets to a compact set $S$ in the Hausdorff metric does
not necessarily imply $dim_H S_k \rightarrow dim_H S$. However, that will be the case for our sequence $\{F_k\}$ of perturbed fractals. The next thing that we want to show is that $dim_H(F)$ is the limit of the sequence
 $\{\delta_k\}$
 of perturbation dimensions.
For that, we need a few 
lemmas.

Assume that   $F=F(T,A)$ is an integral self-affine set. 
Let $\Gamma=F(M,D)$ be the cube  in the proof of Lemma \ref{graph-tile2} with $M=mI$. 
We can take $m$ to be  $m=\lfloor \lambda \rfloor$, where $\lambda$ is the minimum of the absolute values of the eigenvalues of $T$. As in the previous section, cover $\tilde{F}=F(J_{k},\tilde{A}_{k})$ by n-cubes $\textsf{C}_{\mathbf{i}}$ of side length $c_0m^{-r}$ (but $r\in \mathbb{N}$ in a cover may vary in this section). Let $\mathcal{C}$ denote such an $\epsilon$-cover of  $\tilde{F}$.
Then we associate a piece cover with each $\mathcal{C}$. 

Let $P_{l,k}(\tilde{F})$ denote  a level-$l$ piece of the self-affine set $\tilde{F}=F(J_{k},\tilde{A}_{k})$ as in Definition \ref{piece}. Since the side length $c_0m^{-r}$  of $\textsf{C}_{\mathbf{i}}$ in a cover $ \mathcal{C}$, hence $r$, is allowed to vary for the Hausdorff dimension, we should perhaps emphasize this for counting numbers. For each $r$, 
 
 $\bullet$ \emph{Let $N^r_l(\tilde{F})$  denote the total number of  pieces  intersecting   and   commensurable  with  $\textsf{C}_{\mathbf{i}}\in \mathcal{C}$ of side length $c_0m^{-r}$. So this $N^r_l(\tilde{F})$ has a slightly different meaning from $N^r(\tilde{F})$ in Section \ref{An application}, where only equal-size  cubes are considered in a cover. }
 
  $\bullet$  \emph{Similarly,  in the Hausdorff dimension context, let $N^r(\tilde{F})$ be the number of the cubes of side length $c_0m^{-r}$ in a cover $ \mathcal{C}$ by cubes (not necessarily of the same size). Thus, we typically have more than one counting number corresponding to a fixed cover. }
 
 Here and below $\tilde{F}$ may be replaced by any integral self-affine set like $F_k$. Using  $\epsilon$-covers $\mathcal{C}$ of  $\tilde{F}$ by cubes, we then define
\footnotesize{ \begin{equation*} \mathcal{P}_{\epsilon}^{\alpha}(\tilde{F})=\inf_{\mathcal{C}} \bigg \{ \sum_r N^r_l
(diam( P_{l,k}(\tilde{F})))^{\alpha} : \  P_{l,k}(\tilde{F}) \ \mathrm{intersects \ and \ is \ commensurable \ with \ a \ cube \ \textsf{C}_{\mathbf{i}}\in \mathcal{C}  }  \  \bigg \}
\end{equation*} }
\normalsize
and 
$ 
\mathcal{P}^{\alpha}(\tilde{F})=\underset{\epsilon>0}{\sup} \ \mathcal{P}_{\epsilon}^{\alpha}(\tilde{F}),$ 
which may be called an  ``$\alpha$-\textit{dimensional piece (outer) measure}'' on $\tilde{F}$. Recall that $\tilde{F}=F(J_{k},\tilde{A}_{k})$. Then it is clear that the 
$\mathcal{P}^{\alpha}(\tilde{F})$
depend on $k$, but are equal for all $k.$

 Let $\mathcal{T}^{\alpha}(\tilde{F})$ be the $\alpha$-dimensional tile measure given by (\ref{tilemeasure2}) with $F$ replaced by $\tilde{F}$, and $H^{\alpha}$ be the $\alpha$-dimensional Hausdorff measure. Then the following lemma may be interpreted 
 as: \ for an integral self-affine set $F=F(T,A)\subset\mathbb{R}^{n}$,
 we \underline{almost} have the two-sided implication:  $H^{\alpha}(\tilde{F})= \mathcal{T}^{\alpha}(\tilde{F})=0,\infty$ if and only if 
 $\mathcal{P}^{\alpha}(\tilde{F})=0,\infty.$ It also gives  $dim_{H}(F)=\inf \{\alpha : \mathcal{P}^{\alpha}(\tilde{F})=0\}= \sup \{\alpha : \mathcal{P}^{\alpha}(\tilde{F})=\infty\}.$
 
   From now on, we omit the details for the actual case  and use the sloppy notation (\ref{sloppy_notation}) for simplicity since the arguments  continue to hold in that case.
 
\begin{lemma}\label{piece_measure}   
For any $\alpha, \gamma >0$,  
we have 
\begin{itemize}
                                               \item[(i)] 
                                               $\mathcal{T}^{\alpha}(\tilde{F})=0 \  \Longrightarrow \ \ \mathcal{P}^{\alpha (1+\gamma)}(\tilde{F})=0 \ \Longrightarrow \ \ \mathcal{T}^{\alpha (1+\gamma)}(\tilde{F})=0.$  
                                               \item[(ii)] $\mathcal{T}^{\alpha (1+\gamma)}(\tilde{F})=\infty  \  \Longrightarrow \ \ \mathcal{P}^{\alpha (1+\gamma)}(\tilde{F})=\infty \  \Longrightarrow \ \ \mathcal{T}^{\alpha}(\tilde{F})=\infty.$
                                           \end{itemize}
\end{lemma}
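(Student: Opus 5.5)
The plan is to compare, cube by cube, the sum $\sum (diam\,\textsf{C}_{\mathbf{i}})^{\alpha}$ defining $\mathcal{T}_\epsilon^\alpha$ with the sum $\sum_r N^r_l (diam\,P_{l,k}(\tilde{F}))^{\alpha}$ defining $\mathcal{P}_\epsilon^\alpha$, for the same cover $\mathcal{C}$. Both chains in (i) and (ii) are then monotonicity statements about two outer measures that dominate each other up to a factor which is subexponential in $r$. That factor is absorbed by raising the exponent from $\alpha$ to $\alpha(1+\gamma)$.

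First, the two-sided estimate for a single cube $\textsf{C}_{\mathbf{i}}$ of side $c_0m^{-r}$ in a cover $\mathcal{C}$. By commensurability, every piece counted for $\textsf{C}_{\mathbf{i}}$ satisfies $c_1m^{-r}<diam(P_{l,k}(\tilde{F}))\le c_2m^{-r}$, so $diam(P_{l,k}(\tilde{F}))\asymp diam(\textsf{C}_{\mathbf{i}})$ with constants independent of $r$.

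By Lemma \ref{bound}, combined with Lemma \ref{large_pieces} to control the admissible levels $l\le l'\le 2l$ as in the proof of Lemma \ref{box_piece2}, the number of pieces charged to one cube is at most $U_r=l\alpha_1(2l)^{\alpha_2}$. In the actual, non-sloppy case of Remark \ref{Actual_Case1}-(ii) the bound is $\alpha_3 2^{\alpha_1 l^{\alpha_2}}$ instead. By Lemma \ref{existenc_of_piece}, after passing to mesh cubes of side $c_0m^{-r_kr}$ (which changes nothing in the measures up to constants), each cube meeting $\tilde{F}$ has at least one such piece.

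Combining these gives
$$c\,(diam\,\textsf{C}_{\mathbf{i}})^{\beta}\le \sum_{P\ \text{charged to}\ \textsf{C}_{\mathbf{i}}}(diam\,P)^{\beta}\le C\,U_r\,(diam\,\textsf{C}_{\mathbf{i}})^{\beta}\qquad(\beta>0).$$

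Next I absorb $U_r$. By (\ref{number_of_cubes2}), $l$ is comparable to $r$ (with $k$ fixed), so $\log U_r/\log m^r\to0$, exactly as in the proof of Lemma \ref{box_piece2}. Hence for every $\gamma>0$ there is $r_0$ such that for $r\ge r_0$,
$$U_r\,(diam\,\textsf{C}_{\mathbf{i}})^{\alpha(1+\gamma)}\le (diam\,\textsf{C}_{\mathbf{i}})^{\alpha}.$$
Choosing $\epsilon$ small forces $r\ge r_0$ for every cube of an $\epsilon$-cover. Summing over the cover and taking infima gives
$$\mathcal{T}_\epsilon^{\alpha(1+\gamma)}\lesssim \mathcal{P}_\epsilon^{\alpha(1+\gamma)}\lesssim \mathcal{T}_\epsilon^{\alpha}.$$
The first inequality uses the left estimate above together with $(diam)^{\alpha(1+\gamma)}\le (diam)^{\alpha}$ for small diameters. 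Letting $\epsilon\to0$ yields both chains: if $\mathcal{T}^\alpha=0$ then $\mathcal{P}^{\alpha(1+\gamma)}=0$, hence $\mathcal{T}^{\alpha(1+\gamma)}=0$. Statement (ii) is the contrapositive reading of the same two inequalities.

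The main obstacle is the sub-polynomial absorption in the actual case. There the bound $\alpha_3 2^{\alpha_1 l^{\alpha_2}}$ is subexponential in $l$ only if $\alpha_2<1$, which the lemmas do not guarantee. I would try first to avoid that bound altogether by charging only the cylinder sets $(\tilde{F})_{\mathbf{I}_1}$ rather than all of their unions. By (\ref{piece_cylinder}) these have diameters comparable to the piece diameters up to $l^{2\alpha_0}$, and the polynomial count from Lemma \ref{bound} applies to them directly.

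A secondary point is making the infimum over covers compatible with the count $N^r_l$, since a piece may be charged to several cubes. This only helps the upper inequality, and the lower one needs only one piece per cube.
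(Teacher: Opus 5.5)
Your main argument is the paper's proof. The paper likewise sandwiches $N^r(\tilde F)\le N^r_l(\tilde F)\le U_rN^r(\tilde F)$ and uses commensurability $c_1m^{-r}\le diam(P_{l,k}(\tilde F))\le c_2m^{-r}$. It then absorbs the polynomial $U_r=\alpha_1 l^{\alpha_2}$ of Lemma \ref{bound} into $(m^r)^{\alpha\gamma}$, which gives $c\,\mathcal{T}^{\alpha(1+\gamma)}(\tilde F)\le \mathcal{P}^{\alpha(1+\gamma)}(\tilde F)\le C\,\mathcal{T}^{\alpha}(\tilde F)$ and hence both chains. Your appeals to Lemma \ref{existenc_of_piece} and Lemma \ref{large_pieces} only make explicit what the paper uses tacitly in $N^r\le N^r_l$ and in the size of $U_r$.

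The one claim to drop is your proposed remedy for the non-sloppy case. Your distrust of the bound $\alpha_3 2^{\alpha_1 l^{\alpha_2}}$ is justified: for $\alpha_2\ge 1$ it cannot be absorbed into $(m^r)^{\alpha\gamma}$. But charging the cylinders $(\tilde F)_{\mathbf I_1}$ instead does not give a polynomial count. A level-$l$ cylinder fixes $l$ digits in every block, so the integrality argument of Lemma \ref{bound} involves $\|J_{k,p}^{l}\|\approx r_p^{l}$ rather than $\|J_{k,p}^{\tilde l_p}\|\approx r_1^{l}$. The number of cylinders meeting a cube of side $\approx r_1^{-l}$ really is exponential in $l$ once $r_s>r_1$. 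For example, with $T=\mathrm{diag}(2,3)$ and the full digit set, a square of side $2^{-l}$ meets about $(3/2)^l$ cylinders of size $2^{-l}\times 3^{-l}$. The polynomial bound is special to pieces, precisely because block $p$ carries only $\tilde l_p$ digits. The paper does not handle the actual case in this proof either: it explicitly adopts the sloppy notation (\ref{sloppy_notation}) from this point on. So your main argument, without the cylinder detour, proves exactly what the paper's proof proves.
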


\begin{pf} (i) Let $\mathcal{C}=\{\textsf{C}_{\textbf{i}}\} $ be an $\epsilon$-cover of $\tilde{F}$ by n-cubes.
%
%
%
%
Again we consider the covers $\mathcal{C}$ and $\{\textsf{P}_{l,k}(\tilde{F})\}$ of $\tilde{F}$ as above.  As before, let $N^r(\tilde{F})$ denote the number of the $n$-cubes in $\mathcal{C}$ of side length $c_0m^{-r}$.
By Lemma \ref{bound}, for each $r$, the number of the pieces $P_{l,k}(\tilde{F})$ associated with a cube $\textsf{C}_{\mathbf{i}}\in \mathcal{C}$
is bounded by $U_r=\alpha_1 l^{\alpha_2}$ for some positive integer constants $\alpha_1,\alpha_2$. 
Then we have
\begin{equation*}\label{number_of_cubes11}
 N^r(\tilde{F})\leq  N^r_l(\tilde{F})\leq U_r N^r(\tilde{F}).  
\end{equation*}
By the  commensurability (see Definition \ref{piece}-(iii)), we have
\begin{equation*} \label{commensurability_ineq2}
c_1 m^{-r} \leq   diam (P_{l,k}(\tilde{F})) \leq c_2 m^{-r},
\end{equation*}
where $c_1,c_2\geq 1$ are independent of $l,r$. 
That implies 
$$  N^r(\tilde{F}) (c_1m^{-r})^{\alpha(1+\gamma)} \leq  N^r_l(\tilde{F})  (diam( P_{l,k}(\tilde{F})) )^{\alpha (1+\gamma)} \leq U_r N^r(\tilde{F}) (c_2m^{-r})^{\alpha(1+\gamma)} $$ for any $\gamma >0$.
But one can show that $ \underset{r\rightarrow \infty}{\lim} \frac{U_r}{(m^{r})^{\alpha\gamma}}=0$ as we did in the proof of Lemma \ref{box_piece2} for 
$ \underset{r\rightarrow \infty}{\lim} \frac{\log U_r}{\log m^{r}}=0$. So there is a constant $c>0$ so that $\frac{U_r}{(m^{r})^{\alpha\gamma}} \leq c$ for all $r$. Then 
$$  \sum_ r N^r(\tilde{F})  (c_1m^{-r})^{\alpha(1+\gamma)} \leq  \sum_ r N^r_l(\tilde{F})  (diam( P_{l,k}(\tilde{F})) )^{\alpha (1+\gamma)} \leq  cc_2^{\alpha \gamma} \sum_ r N^r(\tilde{F}) ( c_2m^{-r})^{\alpha}.$$
This leads to $\mathcal{T}^{\alpha}(\tilde{F})=0 \  \Longrightarrow \ \ \mathcal{P}^{\alpha (1+\gamma)}(\tilde{F})=0 \Longrightarrow \ \ \mathcal{T}^{\alpha (1+\gamma)}(\tilde{F})=0$. 

\bigskip

(ii) As in the first part, we start with the inequalities
 \begin{equation*}\label{number_of_cubes22}
 N^r(\tilde{F}) \leq  N^r_l(\tilde{F})  \quad  \quad  \quad \quad  \mathrm{and}  \quad \quad \quad \quad      \frac{N^r_l(\tilde{F})}{U_r}\leq  N^r(\tilde{F}). 
\end{equation*}
Therefore
\begin{equation}\label{piece_cube1}
 \sum_ r N^r(\tilde{F}) (c_1m^{-r})^{\alpha(1+\gamma)} \leq \sum_ r N^r_l(\tilde{F})  (diam( P_{l,k}(\tilde{F})) )^{\alpha (1+\gamma)} 
\end{equation}
and 
$$
  \frac{N^r_l(\tilde{F})}{U_r} (diam( P_{l,k}(\tilde{F})) )^{\alpha (1+\gamma)} 
\leq   N^r(\tilde{F})  (c_2m^{-r})^{\alpha(1+\gamma)} 
$$ 
for any $\gamma >0$.
Since  $ \underset{r\rightarrow \infty}{\lim} \frac{(m^{r})^{\alpha\gamma}}{U_r}=\infty$, there exists $r_0\in \mathbb{N}$ such that $1<\frac{(m^{r})^{\alpha\gamma}}{U_r}$ for all 
$r\geq r_0.$ 
This gives 
$$
  \frac{N^r_l(\tilde{F})}{U_r} (diam( P_{l,k}(\tilde{F})) )^{\alpha (1+\gamma)} 
\leq   N^r(\tilde{F})  (c_2m^{-r})^{\alpha(1+\gamma)} \leq   c_2^{\alpha\gamma}\frac{N^r(\tilde{F})}{U_r}  (c_2m^{-r})^{\alpha} 
$$ 
so that 
\begin{equation}\label{piece_cube2}
  N^r_l(\tilde{F}) (diam( P_{l,k}(\tilde{F})) )^{\alpha (1+\gamma)} 
\leq      c_2^{\alpha\gamma}N^r(\tilde{F})  (c_2m^{-r})^{\alpha}. 
\end{equation}
Consequently, by (\ref{piece_cube1}) and (\ref{piece_cube2})
$$ 
\sum_ r N^r(\tilde{F}) (c_1m^{-r})^{\alpha(1+\gamma)} \leq  \sum_ r  N^r_l(\tilde{F}) (diam( P_{l,k}(\tilde{F})) )^{\alpha (1+\gamma)} 
\leq  c_2^{\alpha\gamma} \sum_ r    N^r(\tilde{F})  (c_2m^{-r})^{\alpha}
$$
for 
$r\geq r_0.$ 
Finally, we obtain $\mathcal{T}^{\alpha (1+\gamma)}(\tilde{F})=\infty  \  \Longrightarrow \ \ \mathcal{P}^{\alpha (1+\gamma)}(\tilde{F})=\infty \  \Longrightarrow \ \ \mathcal{T}^{\alpha}(\tilde{F})=\infty.$
\end{pf}

\bigskip

To be used in the next result,  
 we let
\begin{equation}\label{special_correspondence0}
x=\sum_{i=1}^\infty J_{k}^{-i} \tilde{a}_{\mathbf{j}_i}=(\tilde{a}_{\mathbf{j}_i})\in \tilde{F}, \ \ \ \ \ \ \ \ \ \ \ \ x_k=\sum_{i=1}^\infty T_{k}^{-i} d_{\mathbf{j}_i}=(d_{\mathbf{j}_i})\in  F_k,
\end{equation}
where $\tilde{a}_{\mathbf{j}_i}\in \tilde{A}_k$ and $d_{\mathbf{j}_i}\in D_{k}$. Thus $x$ and 
$x_k$ have the same multi-index sequence. We call $(\tilde{a}_{\mathbf{j}_i})$ the sequence notation for $x$ and there may be several sequences $(\tilde{a}_{\mathbf{j}_i})$ representing $x$.  Therefore, for each $x$, there may be several points $x_k$ corresponding to $x$. But, that does not occur for large $k$ by 
Corollary \ref{neighboring_pieces5}.

 To each countable $\epsilon$-cover $\{P_{l,k}(\tilde{F})\}$
of $\tilde{F}=F(J_k,\tilde{A}_k)$  
we can associate
a cover $ \{ S_l(F_{k}) \} $ of $F_{k}$ by certain sets $S_l(F_{k})\subset F_{k}$ through (\ref{special_correspondence0}).
More explicitly, for each set $P_{l,k}(\tilde{F})$
in this cover,
$ \{S_l(F_{k}) \} $ contains the corresponding set
\begin{equation}\label{sets_S}
S_l(F_{k}):=\{ x_k=(d_{\mathbf{j}_i})\in F_k \ : \ (\tilde{a}_{\mathbf{j}_i})\in P_{l,k}(\tilde{F}) \} \ \  
\end{equation}
 and it consists of such sets only.   When defining $S_l(F_{k})$, we take into account of all series expansions or sequences $(\tilde{a}_{\mathbf{j}_i})$ representing each point $x\in P_{l,k}(\tilde{F})\subset \tilde{F} $ so that $\{S_l(F_{k})\}$ covers $F_k$. We  employ this kind of sets often. 
 
For each positive integer $l$, 
\begin{itemize}
  \item \emph{let $N_l^S(F_k)$ denote the number of the sets $S_{l}(F_k)$  covering  $F_k$.  }
\end{itemize}
\noindent We  next study the relationships between   $S_{l}(F_k)$ and  $P_l(F_k)$, 
or  $ N_l^S(F_k)$ and $N_l(F_k)$, the number of level-$l$ pieces of $F_k$ here. 
As before, we let
\begin{equation*} \mathcal{S}_{\epsilon}^{\alpha}(F_{k})=\inf \bigg \{ \sum_l
 N_l^S(F_k)(diam(S_l(F_{k})))^{\alpha} : \  \{S_l(F_{k}) \}  \ {\footnotesize  \textrm{is a countable} \ \epsilon\textrm{-cover of } \ F_{k} \ \textrm{by \ the sets \ in \ (\ref{sets_S}) } }
  \bigg \}
\end{equation*}
and $ \mathcal{S}^{\alpha }(F_{k})= \underset{\epsilon >0}{\sup } \ \mathcal{S}_{\epsilon }^{\alpha }(F_{k})$. 
We may consider only finite covers $\{P_{l,k}(\tilde{F}) \}$, \ $ \{S_l(F_{k}) \} $ because $\tilde{F}, \ F_{k_j}$ are compact sets. 

\bigskip

 In the following lemma, we first establish a one-to-finite correspondence between the level-$l$ pieces of $F_k$ and the sets  $S_l(F_{k})$  respectively. That is, for large $k$, we show that for each level-$l$ piece of $F_k$, there are at most a finite number of the sets  $S_l(F_{k})$. 
 Secondly, we compare the diameters of $P_l(F_k)$  and the sets $S_{l}(F_k)\subseteq  F_k$, which correspond to $P_{l,k}(\tilde{F}).$

\bigskip

\begin{lemma}\label{diameter_dimension}  Let $F=F(T,A)\subset \mathbb{R}^n$ be an integral self-affine set. As usual, we assume that $A\neq \{0\}.$
Let $\tilde{F}=F(J_k,\tilde{A}_k)=P^{-1}F$ and $F_k=F(T_k,D_k)$ be a
 lower perturbation of $\tilde{F}$. Then there exists a positive integer $k_0$ and  a positive constant  $c<1$ (depending on $k$ only)   such that 
 $$N_l(F_k)\leq N_l^S(F_k) \leq q^{ks^2}N_l(F_k) \quad
\mathrm{ and } \quad 
c \cdot diam(P_l(F_k)) \leq diam(S_l(F_k)) \leq   diam(P_l(F_k))$$ for $ k\geq k_0$.
\end{lemma}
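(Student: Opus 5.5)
The plan is to compare the two families of index constraints that define $P_l(F_k)$ and $S_l(F_k)$. Both are defined through multi-index sequences. $P_l(F_k)$ fixes the $p$-th block coordinates of the first $l_p$ digits of $D_k$. By (\ref{sets_S}), $S_l(F_k)$ is the image under the correspondence (\ref{special_correspondence0}) of $P_{l,k}(\tilde F)$, which fixes the $p$-th block coordinates of the first $\tilde l_p$ digits of $\tilde A_k$.

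First I will invoke Lemma \ref{number_of_pieces}. For $k\ge k_0$ it gives $l_p=\tilde l_p-\iota_p$ with $0\le\iota_p\le p-1\le s-1$. So the constraints for $S_l(F_k)$ are those of $P_l(F_k)$ plus at most $\iota_p\le s$ extra fixed digits in each of the $s$ blocks. Next I will use Corollary \ref{neighboring_pieces5} and Proposition \ref{neighboring_pieces4}. For large $k$, equal digits $d_{\mathbf j}=d_{\mathbf i}$ correspond exactly to equal $\tilde a_{\mathbf j}=\tilde a_{\mathbf i}$, because $P^{-1}\mathbb Z^n$ is $n$-separated and floors differ by less than $n$. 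Hence fixing digits of $\tilde A_k$ is the same as fixing the corresponding digits of $D_k$, and $S_l(F_k)$ is a union of sets of the same type as $P_l(F_k)$.

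For the counting bound, I will argue as follows.
\begin{itemize}
\item Every $S_l(F_k)$ is contained in a unique $P_l(F_k)$, namely the one obtained by forgetting the extra digits. Hence $N_l(F_k)\le N_l^S(F_k)$.
\item Each $P_l(F_k)$ splits into at most $(\#D_k)^{\sum_p\iota_p}\le (q^k)^{s\cdot s}=q^{ks^2}$ sets $S_l(F_k)$.
\end{itemize}
This gives $N_l^S(F_k)\le q^{ks^2}N_l(F_k)$.

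For the diameters, the containment $S_l(F_k)\subseteq P_l(F_k)$ gives $diam(S_l(F_k))\le diam(P_l(F_k))$ directly. For the lower bound, $S_l(F_k)$ contains a full cylinder $(F_k)_{\mathbf I_1}=\sum_{i\le l}T_k^{-i}d_{\mathbf j_i}+T_k^{-l}F_k$, since $\tilde l_1=l$. Then (\ref{piece_cylinder}) and the estimate $diam(T_k^{-l}F_k)\ge diam(F_k)/\|T_k^l\|$ are comparable to $diam(P_l(F_k))$. A cleaner route is part (a) of the proof of Lemma \ref{existenc_of_piece}, which gives $diam(T_k^{-l_1}F_k)\approx diam(T_k^{-l_s}F_k)\approx diam(P_l(F_k))$. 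This yields $diam(S_l(F_k))\ge c\, diam(P_l(F_k))$ with $c=c(k)<1$.

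The main obstacle is making $c$ independent of $l$. The polynomial factors $l^{\pm\alpha_0}$ in (\ref{piece_cylinder}) only give a ratio of order $l^{-2\alpha_0}$. To remove them, I will compare block by block. Since $S_l(F_k)$ and $P_l(F_k)$ differ by at most $s-1$ extra fixed digits in each block, I will apply Lemma \ref{lemma_norm_estimate1} with fixed exponent $\iota_p$ to the $p$-th block:
$$diam((S_l(F_k))_p)\ \ge\ \|T_{k,p}^{\iota_p}\|^{-1}\,diam\big((\text{corresponding block of }P_l(F_k))\big).$$
Taking $c=\min_p\|T_{k,p}^{s}\|^{-1}$ then gives a bound depending only on $k$. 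If the block comparison fails because of the sloppy notation (\ref{sloppy_notation}), I will fall back on the actual-case convention of Remark \ref{Actual_Case1}(ii), where such polynomial factors are harmless.
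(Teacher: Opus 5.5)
Your proof is essentially the paper's. Lemma~\ref{number_of_pieces} leaves at most $\iota_p\le s-1$ extra fixed digit-blocks in each Jordan block. That gives fewer than $(q^k)^s$ choices per block and hence the factor $q^{ks^2}$. The lower diameter bound is the blockwise inequality $|T_{k,p}^{-\iota_p}v|\ge |v|/\|T_{k,p}^{\iota_p}\|$ in the notation (\ref{sloppy_notation}), as in (\ref{diam_squared2}).

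The one real variation is the upper bound. You read it off from the containment $S_l(F_k)\subseteq P_l(F_k)$, which follows from $l_p\le\tilde l_p$ and from the floor in (\ref{floor_digit_lower_perturbation}) acting coordinatewise, so the $p$-block of $d_{\mathbf j}$ is determined by that of $\tilde a_{\mathbf j}$. Points with several expansions are covered by Corollary~\ref{neighboring_pieces5}. The paper instead compares the suprema in (\ref{diam_squared1}) using $\|T_{k,p}^{-\iota_p}\|<1$ from Lemma~\ref{lemma_norm_estimate2}. Your route is shorter and does not depend on the sloppy notation.

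A few small repairs are needed:
\begin{itemize}
\item The lattice-separation fact you cite concerns full digit vectors, not their block projections. So it does not show that fixing $\tilde A_k$-blocks is the same as fixing $D_k$-blocks. You do not need that; only the coordinatewise direction above is used.
\item $N_l(F_k)\le N_l^S(F_k)$ needs every piece to contain some $S_l(F_k)$, not uniqueness of the containing piece. This is true: take the $S_l(F_k)$ built from a code of any point of the piece.
\item To pass from block diameters to $diam(S_l(F_k))$, take the joint supremum over $x,y\in F_k$ as in (\ref{diam_squared1}), or absorb a factor $1/\sqrt{s}$ into $c$. Then a constant such as $\min_p\min_{0\le j\le s-1}\|T_{k,p}^{j}\|^{-1}$ depends on $k$ only.
\end{itemize}
With this, the fallback to Remark~\ref{Actual_Case1}(ii) is unnecessary; it would only give $c$ of order $l^{-2\alpha_0}$.
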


\begin{pf}
To analyse these sets, as in the proof of Lemma \ref{bound}, we consider the points of $P_{l,k}(\tilde{F})$ restricted to the  Jordan blocks  $J_{k,p}$: Choose an arbitrary  $p\in\{1,2,...,s\}$. Let $(\tilde{F})_{p}\in \mathbb{R}^{n_p} $ or $(\tilde{F})_{p}\in \mathbb{R}^{2n_p} $ 
denote the set of points of $\tilde{F}$ restricted to the corresponding block  $J_{k,p}.$ In this notation, $\tilde{F}$ may be replaced by other sets.  Then by the definition in (\ref{sets_S}), we have
\begin{equation}\label{piece_correspondence1}
 (P_{l,k}(\tilde{F}))_{p}=\sum_{i=1}^{\tilde{l}_{p}} J_{k,p}^{-i} (\tilde{a}_{{\mathbf{j}_i}})_{p}  + J_{k,p}^{-\tilde{l}_{p}} (\tilde{F})_{p}, \quad   \quad  \quad
 (S_{l}(F_k))_{p}=\sum_{i=1}^{\tilde{l}_{p}} T_{k,p}^{-i} (d_{{\mathbf{j}_i}})_{p}  + T_{k,p}^{-\tilde{l}_{p}} (F_k)_{p}.
\end{equation} 
We now count  pieces or sets with respect to the defining digits in $\sum_{i=1}^{\tilde{l}_{p}} J_{k,p}^{-i} (\tilde{a}_{{\mathbf{j}_i}})_{p}$ or $\sum_{i=1}^{\tilde{l}_{p}} T_{k,p}^{-i} (d_{{\mathbf{j}_i}})_{p} $ regardless of overlaps of pieces or sets.  
By Lemma \ref{number_of_pieces},  there is a positive integer $k_0$ (independent of $l$) 
such that  $l_p(k)=\tilde{l}_p(k)-\iota_p(k)\leq \tilde{l}_p(k)$  with $\iota_p\leq p-1\leq s-1$ ($\iota_p$ is a nonnegative integer)
for $k\geq k_0$.  That gives 
\begin{equation}\label{piece_correspondence2}  (S_{l}(F_k))_{p}=\sum_{i=1}^{l_p} T_{k,p}^{-i} (d_{{\mathbf{j}_i}})_{p}  +\sum_{i=l_p+1}^{\tilde{l}_{p}} T_{k,p}^{-i} (d_{{\mathbf{j}_i}})_{p}  +  T_{k,p}^{-\iota_p}\left( T_{k,p}^{-l_p} (F_k)_{p}\right).
\end{equation} 
If we  extract  some parts of this summation, we obtain  
\begin{equation}\label{piece_correspondence22}(P_l(F_k))_{p}=\sum_{i=1}^{l_p} T_{k,p}^{-i} (d_{{\mathbf{j}_i}})_{p} + T_{k,p}^{-l_p} (F_k)_{p} \quad \quad \mathrm{ and}  \quad  \quad 
diam \left( P_l(F_k))_{p}\right)=  diam  \left( T_{k,p}^{-l_p} (F_k)_{p}\right).
\end{equation}
Recall that $\tilde{A}_k=D_k=q^k$. Since $0\leq \tilde{l}_{p}-l_p=\iota_p\leq s-1,$ it also follows from (\ref{piece_correspondence2}) that the number of choices for the digits 
in the sum $\sum_{i=l_p+1}^{\tilde{l}_{p}} T_{k,p}^{-i} (d_{{\mathbf{j}_i}})_{p} $ is less than $(q^k)^s.$ Therefore, 
for the $p$-th block $(P_l(F_k))_{p}$ of $P_l(F_k)$, there correspond less than $(q^k)^s$  \ $p$-th blocks $(S_{l}(F_k))_{p}$ of $S_{l}(F_k)$. 
Since $1\leq p\leq s$,  it follows that the number of the sets $S_{l}(F_k)$ or pieces $P_{l,k}(\tilde{F})$ corresponding to each piece $P_l(F_k)$  (of the same level $l$) is not greater than $(q^k)^{s^2}$.
That yields  
\begin{eqnarray*}\label{counting_inequality1} N_l(F_k)\leq 
N_l^S(F_k) \leq q^{ks^2}N_l(F_k)
\end{eqnarray*} 
for $k\geq k_0$.

 Next we will observe that $diam(S_{l}(F_k))\leq diam(P_l(F_k))$ for large enough $k_0$.  But this is easy by the study of the  $p$-th blocks $(S_{l}(F_k))_{p}$,  $(P_l(F_k))_{p}$ :  \ Let $x,y\in F_k.$ Since $T_k$ is block-diagonal, we have
\begin{eqnarray}
\label{diam_squared1}(diam(S_{l}(F_k)))^2=\sup_{x,y\in F_k}\sum_{p=1}^{s} \left| T_{k,p}^{-\iota_p}\left( T_{k,p}^{-l_p} (x-y)_{p}\right) \right|^2
\end{eqnarray}
by (\ref{piece_correspondence1}).
If $\iota_p=0 $ in this summation, then $\left| T_{k,p}^{-\iota_p}\left( T_{k,p}^{-l_p} (x-y)_{p} \right) \right|^2=\left|  T_{k,p}^{-l_p} (x-y)_{p}\right|^2$; 
otherwise, by Lemma \ref{lemma_norm_estimate2}, there exist  constants $c'>0$ and $ \eta>1$ such that
$ \left|\left| T_{k,p}^{-\iota_p} \right|\right|\leq \frac{c'}{(\eta^k)^{\iota_p}}.$ Hence  $\left|\left| T_{k,p}^{-\iota_p} \right|\right|<1$ for sufficiently large $k\geq k_0$. 
Thus, if $\iota_p\neq 0 $ in the above summation, then $$\left| T_{k,p}^{-\iota_p}\left( T_{k,p}^{-l_p} (x-y)_{p} \right) \right|^2\leq \left|\left| T_{k,p}^{-\iota_p}\right|\right|^2 \left|  T_{k,p}^{-l_p} (x-y)_{p}\right|^2<\left| T_{k,p}^{-l_p} (x-y)_{p}\right|^2$$  for $k\geq k_0.$ 
Eventually, by (\ref{piece_correspondence22}), we get $$\sum_{p=1}^{s} \left|T_{k,p}^{-\iota_p}\left( T_{k,p}^{-l_p} (x-y)_{p}\right)  \right|^2\leq \sum_{p=1}^{s} \left|  T_{k,p}^{-l_p} (x-y)_{p}\right|^2\leq 
( diam(P_l(F_k)))^2$$
for any $x,y\in F_k.$  Then  $$(diam(S_{l}(F_k))^2=\sup_{x,y\in F_k}\sum_{p=1}^{s} \left| T_{k,p}^{-\iota_p}\left( T_{k,p}^{-l_p} (x-y)_{p} \right) \right|^2\leq ( diam(P_l(F_k)))^2$$ 
implying that $diam(S_{l}(F_k))\leq diam(P_l(F_k))$ for $k\geq k_0$.

 As for the lower bound for $diam(S_{l}(F_k))$,  
 we can make use of the norm inequality 
 \begin{eqnarray}\label{diam_squared2}
 \frac{ \left| T_{k,p}^{-l_p} (x-y)_{p} \right|}{\left|\left|T_{k,p}^{\iota_p} \right|\right|}\leq  \left| T_{k,p}^{-\iota_p}\left( T_{k,p}^{-l_p} (x-y)_{p}\right) \right|\end{eqnarray}
  in linear algebra \cite{L}.  Let $c=\underset{1\leq p \leq s}{\min}\frac{1}{||T_{k,p}^{\iota_p}||}$ with $c$ depends only on  $||T_{k,p}^{\iota_p}||$, or just $k$.
 Then 
$$c^2\sum_{p=1}^{s}\left| \left( T_{k,p}^{-l_p} (x-y)_{p}\right) \right|^2\leq \sum_{p=1}^{s} \left| T_{k,p}^{-\iota_p}\left( T_{k,p}^{-l_p} (x-y)_{p}\right) \right|^2\leq (diam(S_{l}(F_k)))^2$$ by (\ref{diam_squared1}) and (\ref{diam_squared2}). Hence 
$$c^2(diam ( P_l(F_k)))^2=c^2\sup_{x,y\in F_k}\sum_{p=1}^{s}\left| \left( T_{k,p}^{-l_p} (x-y)_{p}\right) \right|^2\leq (diam(S_{l}(F_k)))^2.$$ 
That gives the required inequality
\begin{eqnarray*}\label{piece_correspondence5} 
c \cdot diam ( P_l(F_k))\leq diam(S_{l}(F_k))
\end{eqnarray*}
for $k\geq k_0$.
\end{pf}

\bigskip

Another inequality about counting numbers of covering sets is the following.

\begin{lemma}\label{counting_number_inequality_3} 
$N_l^S(F_k) \leq N^r_l(\tilde{F}) $ when $k$ is large enough. 
 
\end{lemma}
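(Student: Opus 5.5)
The inequality will follow from the construction of the sets in (\ref{sets_S}): every $S_l(F_k)$ is manufactured from a level-$l$ piece $P_{l,k}(\tilde{F})$ of a given cover. So I will set up an injection (more precisely, a surjection from the relevant pieces of $\tilde{F}$ onto the sets $S_l(F_k)$) and then check that the pieces used are exactly those counted by $N^r_l(\tilde{F})$.

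First, fix an $\epsilon$-cover $\mathcal{C}$ of $\tilde{F}$ by mesh cubes and the associated piece cover $\{P_{l,k}(\tilde{F})\}$. This consists of the pieces intersecting and commensurable with the cubes $\textsf{C}_{\mathbf{i}}\in\mathcal{C}$ of side length $c_0m^{-r}$, and Lemma \ref{existenc_of_piece} guarantees that such pieces exist after refining the mesh by $r_k$. By definition $N^r_l(\tilde{F})$ counts these pieces. The map $P_{l,k}(\tilde{F})\mapsto S_l(F_k)$ given by (\ref{sets_S}) is well defined, since $S_l(F_k)$ is determined by the set of index sequences representing points of $P_{l,k}(\tilde{F})$. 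The family $\{S_l(F_k)\}$ counted by $N_l^S(F_k)$ is, by definition, the image of this cover. Hence $N_l^S(F_k)$ is at most the number of pieces in the cover, which is $N^r_l(\tilde{F})$.

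The one real point is that the image family indeed covers $F_k$, and that no $S_l(F_k)$ has to be counted more often than its preimage piece. Coverage uses that every $x_k\in F_k$ has some index sequence, and the point $x$ of $\tilde{F}$ with the same sequence lies in some $P_{l,k}(\tilde{F})$ of the cover; so $x_k\in S_l(F_k)$. Here I include all expansions of $x$, as stipulated after (\ref{sets_S}).

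The main obstacle is multiple codings. A point $x$ may have several sequences, so one piece could a priori produce several distinct sets, or extra sets might be needed to cover $F_k$. For this I will invoke Corollary \ref{neighboring_pieces5} and Proposition \ref{neighboring_pieces4}. For $k\ge k_0$ the infinite paths of $G_{F(J_k,\tilde{A}_k)}$ and $G_k$ coincide, so two sequences code the same point in $\tilde{F}$ if and only if they code the same point in $F_k$. Consequently the correspondence $x\leftrightarrow x_k$ of (\ref{special_correspondence0}) is a bijection between points for large $k$. Each piece then yields exactly one set $S_l(F_k)$, and distinct sets come from distinct pieces. This gives $N_l^S(F_k)\le N^r_l(\tilde{F})$ for all $k\ge k_0$, with $k_0$ the integer from Corollary \ref{neighboring_pieces5}.
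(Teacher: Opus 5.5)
Your proposal is correct and takes essentially the paper's route. The paper also shows that $S^1_l(F_k)\neq S^2_l(F_k)$ forces $P^1_{l,k}(\tilde{F})\neq P^2_{l,k}(\tilde{F})$, using Corollary \ref{neighboring_pieces5} to carry the fact that $[\mathbf{I},\mathbf{J}]$ is not an infinite path from $G_k$ over to $G_{F(J_k,\tilde{A}_k)}$; this is your observation that for $k\geq k_0$ two sequences code the same point in $F_k$ iff they do in $\tilde{F}$. Your explicit surjection from the piece cover onto $\{S_l(F_k)\}$ is the counting step the paper leaves implicit, and under the all-expansions convention in (\ref{sets_S}) it already yields the inequality by itself; the corollary's role is to make the correspondence $x\leftrightarrow x_k$ a genuine bijection.
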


\begin{pf}  Assume that $S^1_l(F_k)$ and $S^2_l(F_k)$ are distinct sets defined by $P^1_{l,k}(\tilde{F}), \  P^2_{l,k}(\tilde{F})$ through (\ref{sets_S}). 
Since $S^1_l(F_k)\neq S^2_l(F_k)$, one of these sets contains a point, say $y_1\in S^1_l(F_k)$, that is not in $S^2_l(F_k)$. Then 
$(d_{{\mathbf{i}_i}})=y_1\neq y_2=(d_{{\mathbf{j}_i}})$
for each $y_2\in S^2_l(F_k).$  
Let $x_1=(\tilde{a}_{\mathbf{i}_i})\in P^1_{l,k}(\tilde{F})$, 
$x_2=(\tilde{a}_{\mathbf{j}_i}) \in P^2_{l,k}(\tilde{F})$ 
be the points corresponding to $y_1, y_2$ respectively. We may use the symbolic codes $\mathbf{I}=\mathbf{i}_1\cdots \mathbf{i}_i\cdots $, \quad 
$\mathbf{J}=\mathbf{j}_1\cdots \mathbf{j}_i\cdots $ for the infinite index sequences corresponding to these points respectively. 
Let $G_k$ be the neighbor graph of $F_k$, and $G_{F(J_k,\tilde{A}_k)}$ be the neighbor graph of $\tilde{F}=F(J_k,\tilde{A}_k)$. Then  $[\mathbf{I},\mathbf{J}]$ is not an infinite path in $G_k$ because of  $y_1\neq y_2$. By Corollary \ref{neighboring_pieces5},
$[\mathbf{I},\mathbf{J}]$ cannot be an infinite path in 
$G_{F(J_k,\tilde{A}_k)}$ for $k\geq k_0$ either. Then we get $x_1\neq x_2$ for each $x_2\in P^2_{l,k}(\tilde{F}).$ That is,   
$S^1_l(F_k)\neq S^2_l(F_k)\Longrightarrow P^1_{l,k}(\tilde{F})\neq P^2_{l,k}(\tilde{F}).$ Therefore, $N_l^S(F_k) \leq N^r_l(\tilde{F})$ for $k\geq k_0$.
\end{pf}

\bigskip

Actually,  the argument of the preceding lemma can be used to show that the reversed inequality also holds for large $k$. Therefore,  the inequality in the lemma can be replaced by equality.   

As before, let $H^{\alpha}(\tilde{F})$ be the $\alpha$-dimensional  Hausdorff measure of $\tilde{F}$. The following lemma establishes some relations among the Hausdorff, the tile and the piece measures of $\tilde{F}$ and its perturbations $ F_{k}$. It may be interpreted as: \ for an integral self-affine set $F=F(T,A)\subset\mathbb{R}^{n}$,
 the  Hausdorff and the piece measures of $\tilde{F}$ are \underline{almost} the limits of  the tile measures of its perturbations.

\begin{lemma}\label{measure_convergence} 
Let $\alpha, \gamma>0$. Assume that $\{F_{k_\textsf{n}}\}$ is a subsequence of  $ \{F_{k}\}$. Then the following hold.
\begin{itemize}
  \item[(i)]   If $\mathcal{T}^{\alpha}(F_{k_\textsf{n}})=0$ for $\textsf{n}\geq N_1$, then $H^{\alpha(1+\gamma)}(\tilde{F})=\lim_{\textsf{n}\rightarrow \infty} \mathcal{T}^{\alpha(1+\gamma)}(F_{k_\textsf{n}}).$
  \item[(ii)]  If $\mathcal{T}^{\alpha(1+\gamma)}(F_{k_\textsf{n}})=\infty$ for $\textsf{n}\geq N_2$, then 
  $\mathcal{P}^{\alpha}(\tilde{F})=\lim_{\textsf{n}\rightarrow \infty} \mathcal{T}^{\alpha}(F_{k_\textsf{n}}).$
\end{itemize}

\end{lemma}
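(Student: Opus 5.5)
The plan is to pass through the sets $S_l(F_{k})$ of (\ref{sets_S}) and the set function $\mathcal{S}^{\alpha}$: they carry the piece covers of $\tilde{F}$ over to covers of $F_{k_\textsf{n}}$ with comparable counts and diameters. On the $\tilde{F}$ side, Lemma \ref{piece_measure} converts piece measures into tile (hence Hausdorff) measures at the price of the exponent factor $(1+\gamma)$.

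\textbf{Step 1: comparison of covers.} For large $\textsf{n}$, combine Lemma \ref{diameter_dimension} and Lemma \ref{counting_number_inequality_3}, together with the remark after it that equality holds. This gives
$$N_l(F_{k_\textsf{n}})\le N^S_l(F_{k_\textsf{n}})=N^r_l(\tilde{F})\le q^{k_\textsf{n}s^2}N_l(F_{k_\textsf{n}}),\qquad c\,diam(P_l(F_{k_\textsf{n}}))\le diam(S_l(F_{k_\textsf{n}}))\le diam(P_l(F_{k_\textsf{n}})).$$
From Remark \ref{rem2}(b) and Proposition \ref{deflection}, $diam(S_l(F_{k_\textsf{n}}))$ and $diam(P_{l,k_\textsf{n}}(\tilde{F}))$ differ by factors $v_{\textsf{n},l}$ of the type studied in Proposition \ref{monotonicity0}. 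Their log-ratios converge: first to $\log r_1/\log m_1$ as $l\to\infty$, and then to $1$ as $\textsf{n}\to\infty$. Summing over the pieces of a cover, I obtain two-sided inequalities
$$a_\textsf{n}\,\mathcal{P}^{\alpha(1+\gamma_\textsf{n})}(\tilde{F})\le \mathcal{S}^{\alpha}(F_{k_\textsf{n}})\le b_\textsf{n}\,\mathcal{P}^{\alpha(1-\gamma_\textsf{n})}(\tilde{F}),$$
with $\gamma_\textsf{n}\to0$. The constants $a_\textsf{n},b_\textsf{n}$ absorb $c$ and $q^{k_\textsf{n}s^2}$. These constants are harmless because we work at $\epsilon\to0$, and because the hypotheses only involve the values $0$ and $\infty$, or limits taken after $\epsilon\to0$.

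\textbf{Step 2: from $\mathcal{S}$ to $\mathcal{T}$ on $F_{k_\textsf{n}}$.} Apply the argument of Lemma \ref{piece_measure} to $F_{k_\textsf{n}}$, with pieces $P_l(F_{k_\textsf{n}})$ and cubes. Using the diameter comparison of Step 1, this yields $\mathcal{T}^{\alpha}(F_{k_\textsf{n}})=0\Rightarrow \mathcal{S}^{\alpha(1+\gamma)}(F_{k_\textsf{n}})=0$, and the analogous implication for $\infty$.

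\textbf{Step 3: part (i).} The hypothesis $\mathcal{T}^{\alpha}(F_{k_\textsf{n}})=0$ forces $\mathcal{T}^{\alpha(1+\gamma)}(F_{k_\textsf{n}})=0$ for every $\textsf{n}\ge N_1$. So it suffices to show $H^{\alpha(1+\gamma)}(\tilde{F})=0$. By Steps 1 and 2, $\mathcal{P}^{\alpha(1+\gamma')}(\tilde{F})=0$ for suitable $\gamma'<\gamma$ once $\textsf{n}$ is large; then Lemma \ref{piece_measure}(i) gives $\mathcal{T}^{\alpha(1+\gamma)}(\tilde{F})=0$, and (\ref{tiledimension}) and the equivalence of net measures give $H^{\alpha(1+\gamma)}(\tilde{F})=0$. \textbf{Part (ii)} is the dual argument: all $\mathcal{T}^{\alpha}(F_{k_\textsf{n}})=\infty$, and one pulls $\infty$ back to $\mathcal{P}^{\alpha}(\tilde{F})$ using Lemma \ref{piece_measure}(ii) and Step 1.

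\textbf{Main obstacle.} The hardest step is Step 1, namely matching exponents uniformly. The multiplicative constants depend on $k_\textsf{n}$ and blow up, while the exponent loss $\gamma_\textsf{n}$ must be shown to be absorbable into the fixed $\gamma$. I would first try to control this through the two-stage limit of $v_{\textsf{n},l}$ in (\ref{ul_box33}): for fixed $\textsf{n}$, choose $\epsilon$ (hence $l$) so small that $q^{k_\textsf{n}s^2}\,diam^{\alpha\gamma/2}$ is less than $1$.
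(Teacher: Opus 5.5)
Your plan works, and for part (ii) it is essentially the paper's argument. The paper goes $\mathcal{T}^{\alpha(1+\gamma)}(F_{k_\textsf{n}})=\infty\Rightarrow\mathcal{P}^{\alpha(1+\gamma)}(F_{k_\textsf{n}})=\infty$ by Lemma \ref{piece_measure}(ii), then $\Rightarrow\mathcal{S}^{\alpha(1+\gamma)}(F_{k_\textsf{n}})=\infty$ by Lemma \ref{diameter_dimension}. It then bounds $\mathcal{S}^{\alpha(1+\gamma)}(F_{k_\textsf{n}})\le c\,\mathcal{P}^{\alpha}(\tilde F)$ using Lemma \ref{diameter1} and Lemma \ref{counting_number_inequality_3}. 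Your upper inequality in Step 1 is this bound, reparametrized. The exponent room must come from the hypothesis at $\alpha(1+\gamma)$. Starting literally from $\mathcal{T}^{\alpha}(F_{k_\textsf{n}})=\infty$, as your last sentence reads, would only yield $\mathcal{P}^{\alpha(1-\gamma_\textsf{n})}(\tilde F)=\infty$.

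For part (i) your route is genuinely different. The paper goes directly from $F_{k_\textsf{n}}$ to $\tilde F$. It takes a piece cover of $F_{k_\textsf{n}}$ whose $\alpha(1+\gamma)$-sum is below $1/\textsf{n}$ and transports it to the cover $\{S_{l,k_\textsf{n}}(\tilde F)\}$ of $\tilde F$. It then asserts $diam(S_{l,k_\textsf{n}}(\tilde F))\le c\,l^{2\alpha_0}\,diam(P_l(F_{k_\textsf{n}}))$ with $c$ treated as independent of $\textsf{n}$, and lets $\textsf{n}\to\infty$ to get $H^{\alpha(1+2\gamma)}(\tilde F)=0$.

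You instead split each $P_l(F_{k_\textsf{n}})$ into at most $q^{k_\textsf{n}s^2}$ sets $S_l(F_{k_\textsf{n}})$. You then pass to the corresponding pieces $P_{l,k_\textsf{n}}(\tilde F)$ at the cost of an exponent factor $\theta_\textsf{n}=\log m_1/\log r_1$ (plus arbitrarily small slack), and finish with Lemma \ref{piece_measure}(i) on $\tilde F$. A single large $\textsf{n}$ suffices, and the $\textsf{n}$-dependent constants are irrelevant because only the value $0$ is at stake.

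The paper's route is shorter and avoids the $q^{k_\textsf{n}s^2}$ splitting. Your exponent shift, however, is what the geometry actually requires. By Lemma \ref{inequality_pieces} and (\ref{piece_cylinder}), the two diameters are of order $r_1^{-l}$ and $m_1^{-l}$, up to powers of $l$. Whenever $J_{k,1}$ is not integral (the case of interest), $m_1>r_1$. So for fixed $k_\textsf{n}$ the ratio grows like $(m_1/r_1)^l$, which no factor $c\,l^{2\alpha_0}$ can absorb as $l\to\infty$. The paper's step after (\ref{limit3}) applies the $\tilde F$-lower bound of Lemma \ref{inequality_pieces} to $P_l(F_{k_\textsf{n}})$. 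Your version does not rely on that: the fixed margin $\gamma$ in the hypothesis pays for $\theta_\textsf{n}>1$.

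One correction to Step 1: Remark \ref{rem2}(b) and Proposition \ref{deflection} give only additive errors of order $k^{c''}|\lambda_1|^{-k}$. These say nothing at scale $r_1^{-l}$ with $l\to\infty$. Draw the diameter comparison instead from Lemma \ref{inequality_pieces}, (\ref{piece_cylinder}), Lemma \ref{diameter_dimension} and Lemma \ref{diameter1}, which is how the limits (\ref{ul_box33}) are obtained.
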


\begin{pf} 
For tile measures $\mathcal{T}^{\alpha}$, remember that we cover $\tilde{F}$ or $F_{k}$ by cubes of side length $c_0m^{-r}$ with $m=\lfloor \lambda \rfloor=\lfloor |\lambda_1| \rfloor$ see
  (\ref{smaller n-cubes}). 
  
(i) We assume that $\{F_{k_\textsf{n}}\}$ is a subsequence of  $ \{F_{k}\}$ such that
$\mathcal{T}^{\alpha}(F_{k_\textsf{n}})=0$ for $\textsf{n}\geq N_1$. 
In order to prove $H^{\alpha(1+\gamma)}(\tilde{F})=0$, it is enough to show that there is a sequence
$\mathcal{C}_{k_\textsf{n}}(\tilde{F})$ of covers of $\tilde{F}$  by some sets $S_{{l,k_\textsf{n}}}(\tilde{F})$ such that
$$\lim_{\textsf{n}\rightarrow \infty}   \sum_{S_{{l,k_\textsf{n}}}(\tilde{F})\in \mathcal{C}_{k_\textsf{n}}(\tilde{F})} (diam(S_{{l,k_\textsf{n}}}(\tilde{F})))^{\alpha(1+2\gamma)}  =0.$$

By  Lemma \ref{piece_measure}-(i), the hypothesis $\mathcal{T}^{\alpha}(F_{k_\textsf{n}})=0$  implies that $\mathcal{P}^{\alpha (1+\gamma)}(F_{k_\textsf{n}})=0$ for $\textsf{n}\geq N_1$. 
Then there exists a finite cover $ \mathcal{C}(F_{k_\textsf{n}}) $
of   $F_{k_\textsf{n}}$
by the pieces  $ P_l(F_{k_\textsf{n}})$ 
such that
\begin{equation}\label{limit1} \sum_r  N^r_l(F_{k_\textsf{n}})
 (diam( P_l(F_{k_\textsf{n}})))^{\alpha(1+\gamma)}  <  \frac{1}{\textsf{n}}
 \end{equation}
 for $\textsf{n}\geq N_1$.  
Corresponding to each piece $ P_l(F_{k_\textsf{n}})$ in the cover  $ \mathcal{C}(F_{k_\textsf{n}}) $,
we consider the set  $$S_{{l,k_\textsf{n}}}(\tilde{F}):= \{x\in \tilde{F}=F(J_{k_\textsf{n}},\tilde{A}_{k_\textsf{n}}) : \ x_{k_\textsf{n}}\in  P_l(F_{k_\textsf{n}})  \},$$ where $x$ is related to $x_{k_\textsf{n}}$ by (\ref{special_correspondence0}).   This $S_{{l,k_\textsf{n}}}(\tilde{F})$ is a subset of $\tilde{F}$ whereas $S_l(F_{k})$ in (\ref{sets_S}) 
is a subset of $F_{k}.$ Thus we should not be confused by that.  So $S_{{l,k_\textsf{n}}}(\tilde{F})$ depends on symbolic codings or series expansions of $x_{k_\textsf{n}}\in  P_l(F_{k_\textsf{n}})$, and therefore, there may correspond more than one such $S_{{l,k_\textsf{n}}}(\tilde{F})$ to each $ P_l(F_{k_\textsf{n}})$.   
But, by 
Corollary \ref{neighboring_pieces5},
 for all
 large 
 $\textsf{n}$ in (\ref{limit1}),   the number $N_l^S(\tilde{F})$ of the sets $S_{{l,k_\textsf{n}}}(\tilde{F})$  
 corresponding to  $ P_l(F_{k_\textsf{n}})$ is  equal to  the number $N^r_l (F_{k_\textsf{n}})$ of the pieces  $ P_l(F_{k_\textsf{n}})$ commensurable and intersecting a cube of side length $c_0m^{-r}$ in a cover of $F_{k_\textsf{n}}$ by cubes. That is why,  may also assume that $S_{{l,k_\textsf{n}}}(\tilde{F})$ includes all points $x$ obtained from all different series expansions of $x_{k_\textsf{n}}$, but they correspond to the same point for large $\textsf{n}$.
We express the relationship between $ P_l(F_{k_\textsf{n}})$ and $S_{{l,k_\textsf{n}}}(\tilde{F})$ by
$ P_l(F_{k_\textsf{n}}) \longmapsto S_{{l,k_\textsf{n}}}(\tilde{F})$. 
%
Let 
$$\mathcal{C}_{k_\textsf{n}}(\tilde{F})=\{ S_{{l,k_\textsf{n}}}(\tilde{F}) :  P_l(F_{k_\textsf{n}}) \longmapsto S_{{l,k_\textsf{n}}}(\tilde{F})  \quad \mathrm{and}  \quad  P_l(F_{k_\textsf{n}}) \in \mathcal{C}(F_{k_\textsf{n}})  \}.$$

 In this manner, for each large $\textsf{n}\geq N_1$, we make  
 \ \ $\mathcal{C}_{k_\textsf{n}}(\tilde{F})$  a cover of $\tilde{F}$.
 Then for suitable constants and all large $\textsf{n}$,  we have 
 \begin{equation}\label{limit3}
 \left|(diam( S_{{l,k_\textsf{n}}}(\tilde{F})))^{\alpha(1+\gamma)}-(diam(  P_l(F_{k_\textsf{n}}) ))^{\alpha(1+\gamma)} \right| < ( c' \ {l}^{\alpha_0}|\lambda_1|^{-lk_\textsf{n}})^{\alpha(1+\gamma)}
\end{equation}
by Lemma \ref{lemma_norm_estimate1}. 
 Also, by Lemma \ref{inequality_pieces}, $c' l^{-\alpha_0} (|\lambda_1|^{k_\textsf{n}})^{-l}\leq diam( P_l(F_{k_\textsf{n}}) ) $ for suitable constants $c',  \alpha_0.$ \quad 
The last inequality together with (\ref{limit3}) gives $$(diam( S_{{l,k_\textsf{n}}}(\tilde{F})))^{\alpha(1+\gamma)}\leq (c \ {l}^{2\alpha_0} diam(  P_l(F_{k_\textsf{n}}) ))^{\alpha(1+\gamma)}$$ 
for some constant $c.$
As mentioned above, for $\tilde{F}=F(J_{k_\textsf{n}},\tilde{A}_{k_\textsf{n}})$, we have $N_l^S(\tilde{F})= N^r_l (F_{k_\textsf{n}}) $ for large $\textsf{n}$.  
  Then, 
   we get
\noindent {\footnotesize \begin{eqnarray}\label{HIn}
   \sum_l N_l^S(\tilde{F})    (diam( S_{{l,k_\textsf{n}}}(\tilde{F})))^{\alpha(1+2\gamma)}  
   &\leq &     \sum_r  N^r_l (F_{k_\textsf{n}})   (diam( P_l(F_{k_\textsf{n}})))^{\alpha(1+\gamma)} (c\ {l}^{2\alpha_0} )^{\alpha(1+\gamma)}  (diam( S_{{l,k_\textsf{n}}}(\tilde{F})))^{\alpha\gamma}. \quad  \quad  
\end{eqnarray} }  

As in our box dimension consideration,   we have the limit
$$
\lim_{l\rightarrow \infty}   
\ \  (c\ {l}^{2\alpha_0} )^{\alpha(1+\gamma)}  (diam( S_{{l,k_\textsf{n}}}(\tilde{F})))^{\alpha\gamma}   =0 $$ uniformly in $\textsf{n}$, see the argument after 
(\ref{number_of_cubes2}).
Then  $(c\ {l}^{2\alpha_0} )^{\alpha(1+\gamma)}  (diam( S_{{l,k_\textsf{n}}}(\tilde{F})))^{\alpha\gamma}$ 
is uniformly bounded in $\textsf{n},l$.  Hence (\ref{HIn}) and  (\ref{limit1}) yield 
$$ \sum_l N_l^S(\tilde{F})    (diam( S_{{l,k_\textsf{n}}}(\tilde{F})))^{\alpha(1+2\gamma)}  \leq    \frac{\textsf{c}}{\textsf{n}}      $$ for some constant  $\textsf{c}$ so that
$$\lim_{\textsf{n}\rightarrow \infty}   \sum_{S_{{l,k_\textsf{n}}}(\tilde{F})\in \mathcal{C}_{k_\textsf{n}}(\tilde{F})} (diam(S_{{l,k_\textsf{n}}}(\tilde{F})))^{\alpha(1+2\gamma)}= \lim_{\textsf{n}\rightarrow \infty}   \sum_l N^S_l    (diam( S_{{l,k_\textsf{n}}}(\tilde{F})))^{\alpha(1+2\gamma)}  =0.$$

(ii)  Assume   that $\{F_{k_\textsf{n}}\}$ is a subsequence of  $ \{F_{k}\}$ such that
$\mathcal{T}^{\alpha(1+\gamma)}(F_{k_\textsf{n}})=\infty$ for $\textsf{n}\geq N_2$. Equations (\ref{piece_correspondence1}), (\ref{piece_correspondence2}) and (\ref{piece_correspondence22}) show how to pass 
from a piece $P_{l,k}(\tilde{F})$ to the set $S_l(F_{k_\textsf{n}})$ and then to a piece $P_{l}(F_{k_\textsf{n}})$, and vice versa.
We first work with the sets $P_{l}(F_{k_\textsf{n}})$, 
$S_l(F_{k_\textsf{n}})$ and show that $\mathcal{T}^{\alpha(1+\gamma)}(F_{k_\textsf{n}})=\infty \ \ (\textsf{n}\geq N_2)$ implies 
$\lim_{\textsf{n}\rightarrow \infty} \mathcal{S}^{\alpha(1+\gamma)}(F_{k_\textsf{n}})=\infty.$ After that, we will prove that 
$\lim_{\textsf{n}\rightarrow \infty} \mathcal{S}^{\alpha(1+\gamma)}(F_{k_\textsf{n}})=\infty \Longrightarrow \mathcal{P}^{\alpha}(\tilde{F})=\infty$, which gives the claimed equality.

By Lemma \ref{piece_measure}, covering $F_{k_\textsf{n}}$ by cubes $\textsf{C}_{\textbf{i}}(F_{k_\textsf{n}})$ or the sets $P_{l}(F_{k_\textsf{n}})$
makes no difference, so we may replace cubes by pieces.  More explicitly,  $\mathcal{T}^{\alpha(1+\gamma)}(F_{k_\textsf{n}})=\infty$ gives rise to 
 $\mathcal{P}^{\alpha(1+\gamma)}(F_{k_\textsf{n}})=\infty$ by Lemma \ref{piece_measure}-(ii). Thus for  $\textsf{n}\geq N_2$, 
 \begin{eqnarray*}\label{crucial0}
\infty= \mathcal{T}^{\alpha(1+\gamma)}(F_{k_\textsf{n}}) &=& \mathcal{P}^{\alpha(1+\gamma)}(F_{k_\textsf{n}}) \\
&=& 
{\footnotesize \underset{\epsilon>0}{\sup} \inf \bigg \{ \sum
(diam( P_l(F_{k_\textsf{n}})))^{\alpha(1+\gamma)} : \  \{ P_l(F_{k_\textsf{n}})\}  \ \textrm{is a finite} \ \epsilon\textrm{-cover of } \ F_{k_\textsf{n}}  } \bigg \}
 \end{eqnarray*}

By Lemma \ref{diameter_dimension}, $N_l(F_{k_\textsf{n}})\leq N_l^S(F_{k_\textsf{n}}) \leq q^{ks^2}N_l(F_{k_\textsf{n}})$; moreover 
there exists an index $\textsf{n}_0>N_2$ and a positive constant  $c< 1$ such that $c  \cdot diam( P_l(F_{k_\textsf{n}})) \leq  diam(S_l(F_{k_\textsf{n}})) \leq diam( P_l(F_{k_\textsf{n}}))$ and each $ S_l(F_{k_\textsf{n}})
$ is
in a finite $\epsilon$-cover
 for every $\textsf{n}\geq \textsf{n}_0.$  
Also, we note that  $$\mathcal{P}^{\alpha(1+\gamma)}(F_{k_\textsf{n}}) = {\footnotesize \underset{\epsilon>0}{\sup} \inf \bigg \{ \sum
(c \cdot diam( P_l(F_{k_\textsf{n}})))^{\alpha(1+\gamma)} : \  \{P_l(F_{k_\textsf{n}})\}  \ \textrm{is a finite} \ \epsilon\textrm{-cover of } \ F_{k_\textsf{n}} } \bigg \} $$ for each $\textsf{n}\geq \textsf{n}_0.$ 
This implies
{\footnotesize \begin{eqnarray*}\label{crucial}
 \infty=\lim_{\textsf{n}\rightarrow \infty} \mathcal{T}^{\alpha(1+\gamma)}(F_{k_\textsf{n}})  & = &  \lim_{\textsf{n}\rightarrow \infty}  \underset{\epsilon>0}{\sup} \inf \bigg \{ \sum
(c \cdot diam( P_l(F_{k_\textsf{n}})))^{\alpha(1+\gamma)} : \  \{P_l(F_{k_\textsf{n}})\}  \ \textrm{is a finite} \ \epsilon\textrm{-cover of } \ F_{k_\textsf{n}}  \bigg \} \nonumber \\
%
 & \leq & \lim_{\textsf{n}\rightarrow \infty}  \underset{\epsilon>0}{\sup} \ \inf \bigg \{ \sum
 ( diam(S_l(F_{k_\textsf{n}})))^{\alpha(1+\gamma)} :  \{S_l(F_{k_\textsf{n}}) \}
 \ \textrm{is a finite} \ \epsilon\textrm{-cover of }  \ F_{k_\textsf{n}}   \bigg \}  \nonumber \\
 & = & \lim_{\textsf{n}\rightarrow \infty} \mathcal{S}^{\alpha(1+\gamma)}(F_{k_\textsf{n}}).
 \end{eqnarray*} }
Then $\lim_{\textsf{n}\rightarrow \infty} \mathcal{S}^{\alpha(1+\gamma)}(F_{k_\textsf{n}})=\infty.$ 

\medskip

Now, let  $m_1=\lceil|\lambda_1|^{k_\textsf{n}}\rceil$ \  as \ in \ Definition \ \ref{piece}. Below we  borrow an inequality from Section \ref{Lower_Bounds_Dimension}. 

\bigskip
(1)  $diam(S_l(F_{k_\textsf{n}}))\leq diam(P_l(F_{k_\textsf{n}})) $ \quad \quad \quad   by \ Lemma \ \ref{diameter_dimension},

(2) $diam(P_l(F_{k_\textsf{n}})) \leq c'' l^{\alpha_0} (m_1)^{-l}  $ \quad \quad  \quad \quad  by \ the \ sentence \ before \ Lemma \ \ref{inequality_pieces}, 

(3) $diam(S_l(F_{k_\textsf{n}})) \leq  c_l diam(P_{l,k}(\tilde{F}))  \quad \quad  \ \  (c_l=c' \ l^{2\alpha_0})$ \  by  \ Lemma \ \ref{diameter1}.

\bigskip
\noindent Then
\footnotesize{ \begin{eqnarray*}
       ( diam(S_l(F_{k_\textsf{n}})))^{\alpha(1+\gamma)}  &=&   ( diam(S_l(F_{k_\textsf{n}})))^{\alpha\gamma} \ ( diam(S_l(F_{k_\textsf{n}})))^{\alpha}  \\
       & \leq & ( diam(P_l(F_{k_\textsf{n}})))^{\alpha\gamma} \ ( diam(S_l(F_{k_\textsf{n}})))^{\alpha} \quad \quad \quad \mathrm{ by \ (1)}  \nonumber \\
        & \leq &  (c'' l^{\alpha_0} (m_1)^{-l})^{\alpha\gamma} \ ( diam(S_l(F_{k_\textsf{n}})))^{\alpha} \quad \quad \quad \quad \mathrm{ by \ (2)}  \nonumber \\ 
         & \leq &  (c'' l^{\alpha_0} (m_1)^{-l})^{\alpha\gamma} \ ( c_l \ diam(P_{l,k}(\tilde{F})))^{\alpha}  \quad \quad \quad \mathrm{  by \ (3) } \nonumber \\
        & \leq &  (c'' l^{\alpha_0} (m_1)^{-l})^{\alpha\gamma} \ c_l^{\alpha} \ (  diam(P_{l,k}(\tilde{F})))^{\alpha}  \quad  \nonumber \\
         & \leq &  c \ (  diam(P_{l,k}(\tilde{F})))^{\alpha} \quad \quad \quad  \mathrm{because} \ (c'' l^{\alpha_0} (m_1)^{-l})^{\alpha\gamma} c_l^{\alpha}\leq  c  \ \mathrm{for \ some \ constant \ c.} \nonumber \\
    \end{eqnarray*} }
\normalsize     
Also  $N_l^S(F_k) \leq N^r_l(\tilde{F}) $ by Lemma \ref{counting_number_inequality_3}. Then  
$$ \sum N_l^S(F_k) ( diam(S_l(F_{k_\textsf{n}})))^{\alpha(1+\gamma)} \leq \sum N^r_l(\tilde{F}) c  (  diam(P_{l,k}(\tilde{F})))^{\alpha} \Longrightarrow \mathcal{S}^{\alpha(1+\gamma)}(F_{k_\textsf{n}})\leq \mathcal{P}^{\alpha}(\tilde{F}).$$   
That gives  that  $\infty=\mathcal{P}^{\alpha}(\tilde{F})=\lim_{\textsf{n}\rightarrow \infty} \mathcal{T}^{\alpha}(F_{k_\textsf{n}})$ because 
$\lim_{\textsf{n}\rightarrow \infty} \mathcal{S}^{\alpha(1+\gamma)}(F_{k_\textsf{n}})=\infty$,   and  
$$\mathcal{T}^{\alpha(1+\gamma)}(F_{k_\textsf{n}})=\infty \Longrightarrow \mathcal{T}^{\alpha}(F_{k_\textsf{n}})=\infty$$  by Lemma \ref{piece_measure}-(ii) or by (\ref{tiledimension}).
\end{pf}

\bigskip

\noindent  \textbf{ Proof of Theorem  \ref{Hausdorff}:  First Part}

\medskip

\textit{Proof for   Integer Diagonal Blocks}:

\medskip

If all diagonal blocks of  $J$ are integer matrices, then
$dim_H F=\delta_k
$ for each $k$, see Remark \ref{trivial_cases}. In this case, the stated convergence clearly holds.

\bigskip

\textit{Proof for Non-Integer Diagonal Blocks}:

\medskip

  Next we assume  $J$ has some non-integer diagonal blocks. 
  We just need to prove that $dim_{H}(F)=\lim_{k\rightarrow \infty} \delta_k
  $. Let $\delta $ be any accumulation point of the sequence $\{\delta_{k} \}_{k=1}^{\infty}$
 Thus it is enough to show that
 $
 \delta =dim_{H} F $.  
  For the proof, we assume that $\delta \not = dim_{H} F$
 and get a contradiction. Note that $dim_{H}(F)=dim_{H}(\tilde{F})$. Thus, we  have two cases: 

\textit{Case 1.} Assume that $\delta < dim_{H}(\tilde{F}).$ Since $\delta$ is an accumulation point of $\{\delta_{k} \}_{k=1}^{\infty}$, there exists a
 subsequence $\{\delta_{k_\textsf{n}} \}_{\textsf{n}=1}^{\infty}$ of $\{\delta_{k} \}_{k=1}^{\infty}$ converging to $\delta$. 
Fix a number $\alpha$  in the interval $(\delta, dim_{H}(\tilde{F})).$ Choose  another number $\gamma\in(0,1)$ so that $\alpha(1+\gamma)\in (\delta, dim_{H}(\tilde{F}))$ again.
%
Then there exists an index $N_1$ such that
 $$\delta_{k_\textsf{n}} < \alpha<\alpha(1+\gamma)< dim_{H}(\tilde{F})$$ for $\textsf{n}\geq N_1$.  Let $\mathcal{T}^{\alpha}(\tilde{F})$ be the $\alpha$-dimensional tile measure in  (\ref{tilemeasure2}).
Thus $ \mathcal{T}^{\alpha}(F_{k_\textsf{n}})=0$,
$ H^{\alpha(1+\gamma)}(\tilde{F})=\infty$   and $ \mathcal{T}^{\alpha(1+\gamma)}(F_{k_\textsf{n}})=0$ ($\textsf{n}\geq N_1$).
Then  Lemma \ref{measure_convergence}-(i) leads to
the contradiction $$H^{\alpha(1+\gamma)}(\tilde{F})=\lim_{\textsf{n}\rightarrow \infty} \mathcal{T}^{\alpha(1+\gamma)}(F_{k_\textsf{n}})=0  \ \ !!!$$  Thus we must have
$dim_{H}(\tilde{F})\leq \delta .$ 

\textit{Case 2}. 
Assume now that $dim_{H} \tilde{F}< \delta.$ Let $\{\delta_{k_\textsf{n}} \}_{\textsf{n}=1}^{\infty}$  be a subsequence of $\{\delta_k \}_{k=1}^{\infty}$ converging to $\delta$.
This time, choose  $\alpha$ between $dim_{H}(\tilde{F})$ and $\delta $. Fix  another number $\gamma\in(0,1)$ so that 
$\alpha(1-\gamma), \ \alpha(1+\gamma)$ are in the  interval $(dim_{H}(\tilde{F}), \delta)$. Then there exists an index $N_2$ such that 
$$dim_{H} (\tilde{F}) <\alpha(1-\gamma) < \alpha  <\alpha(1+\gamma)< \delta_{k_\textsf{n}} $$ for all $\textsf{n}\geq N_2$.  Then
$ \mathcal{T}^{\alpha(1+\gamma)}(F_{k_\textsf{n}})=\infty$ for $\textsf{n}\geq N_2$. 
Note that 
$$\alpha=\alpha(1-\gamma)+\alpha(1-\gamma)\frac{\gamma}{1-\gamma}$$ and we already have $ \mathcal{T}^{\alpha(1-\gamma)}(\tilde{F})=0$. Therefore, replacing $\alpha$ by $\alpha(1-\gamma)$ and 
$\gamma$ by $\frac{\gamma}{1-\gamma}$ in Lemma \ref{piece_measure}-(i), we get
$ \mathcal{P}^{\alpha}(\tilde{F})=0$.
Then Lemma \ref{measure_convergence}-(ii) leads to
the contradiction $\mathcal{P}^{\alpha}(\tilde{F})=\lim_{\textsf{n}\rightarrow \infty} \mathcal{T}^{\alpha}(F_{k_\textsf{n}})=\infty$ since $ \mathcal{T}^{\alpha(1+\gamma)}(F_{k_\textsf{n}})=\infty$ for  $\textsf{n}\geq N_2$.
Thus we must have
$\delta \leq  dim_{H}(\tilde{F}).$ We mention that there is another proof of the last inequality that follows from  Proposition \ref{monotonicity2}.
$\hfill\blacksquare$

\section{Lower Bounds for Dimension}\label{Lower_Bounds_Dimension}

In the foregoing sections, we have not used any  lower bounds for the dimension of  $F$ in the proofs. In this section, we will give lower bounds to be used in the next section.
We  use the sequence notation $x=(\tilde{a}_{\mathbf{j}_i})\in F(J_k,\tilde{A}_k)=\tilde{F}$, \ $x_k=(d_{{\mathbf{j}_i}})\in F(T_k,D_{k})=F_k$ below.
 Corresponding to the pieces $P_{l,k}(\tilde{F})$ of level $l$, let $S_l(F_k)$ be as in (\ref{sets_S}).

\bigskip

\begin{lemma}\label{diameter1}  Let $F=F(T,A)\subset \mathbb{R}^n$ be an integral self-affine set. As usual, we assume that $A\neq \{0\}.$ 
Let $\tilde{F}=F(J_k,\tilde{A}_k)=P^{-1}F$ and $F_k=F(T_k,D_k)$ be a
 lower perturbation of $\tilde{F}$. 
Then there exists a positive integer 
$k_0$ and a constant \ $c_l=c_l(k)$  such that $c_l$ is in the form of $c_l=c \ l^{2\alpha_0}$ and 
$$diam(S_l(F_k)) \leq  c_l\cdot diam(P_{l,k}(\tilde{F})),$$
for $ k\geq k_0$.

\end{lemma}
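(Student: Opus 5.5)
The plan is to sandwich both diameters between the same power of $|\lambda_1|^{k}$ (or of $m_1$), with only polynomial-in-$l$ losses, rather than to compare $S_l(F_k)$ and $P_{l,k}(\tilde{F})$ directly. Each factor $l^{\alpha_0}$ coming from the Jordan-block norm estimates of Lemma \ref{lemma_norm_estimate1} will then account for one of the two powers in $c_l=c\,l^{2\alpha_0}$.

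First, I bound $diam(S_l(F_k))$ from above. By (\ref{piece_correspondence1}), the $p$-th block of $S_l(F_k)$ is $\sum_{i=1}^{\tilde{l}_p}T_{k,p}^{-i}(d_{\mathbf{j}_i})_p+T_{k,p}^{-\tilde{l}_p}(F_k)_p$. Hence
\begin{equation*}
diam(S_l(F_k))\leq \sum_{p=1}^{s}||T_{k,p}^{-\tilde{l}_p}||\,diam(F_k).
\end{equation*}
Lemma \ref{lemma_norm_estimate1} and Remark \ref{constant_remark} give $||T_{k,p}^{-\tilde{l}_p}||\leq c'\tilde{l}_p^{\alpha_0}m_p^{-\tilde{l}_p}$ for fixed large $k$, and $diam(F_k)\leq \textsf{K}$ uniformly by Proposition \ref{deflection}. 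Alternatively, I can simply cite $diam(S_l(F_k))\leq diam(P_l(F_k))$ from Lemma \ref{diameter_dimension} and then the bound $diam(P_l(F_k))\leq c''l^{\alpha_0}m_1^{-l}$ from (\ref{piece_cylinder}). That route is cleaner and I would use it. Since $m_1=\lceil |\lambda_1|^k\rceil\geq |\lambda_1|^k$, this gives
\begin{equation*}
diam(S_l(F_k))\leq c''l^{\alpha_0}(|\lambda_1|^{k})^{-l}.
\end{equation*}

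Second, I bound $diam(P_{l,k}(\tilde{F}))$ from below using Lemma \ref{inequality_pieces}, which gives $diam(P_{l,k}(\tilde{F}))\geq c'l^{-\alpha_0}(|\lambda_1|^k)^{-l}$. Combining the two bounds yields
\begin{equation*}
diam(S_l(F_k))\leq \frac{c''}{c'}\,l^{2\alpha_0}\,diam(P_{l,k}(\tilde{F})),
\end{equation*}
so $c=c''/c'$, which depends on $k$ only. The integer $k_0$ is taken as the maximum of the thresholds in Lemma \ref{number_of_pieces} and Lemma \ref{diameter_dimension}, beyond which $l_p=\tilde{l}_p-\iota_p$ and the first inequality applies.

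The main obstacle is making sure the constants in Lemma \ref{inequality_pieces} and (\ref{piece_cylinder}) are independent of $l$, and that the exponent $\alpha_0$ is the same in both estimates. The first estimate uses $m_1^{-l}$ and the second uses $|\lambda_1|^{-kl}$. Because $m_1\geq |\lambda_1|^k$, this mismatch runs in the favorable direction. I would check it explicitly rather than invoke $m_1\approx|\lambda_1|^k$, since the ratio $(|\lambda_1|^k/m_1)^{l}$ is not bounded below uniformly in $l$, whereas the inequality direction needed here holds for every $l$. If the sloppy notation (\ref{sloppy_notation}) causes trouble, I would instead use the cylinder sandwich of Remark \ref{Actual_Case1}-(ii), which gives the same bounds up to constants.
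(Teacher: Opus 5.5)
Your proof is correct and is essentially the paper's argument. You sandwich $diam(S_l(F_k))\le c''l^{\alpha_0}m_1^{-l}$ against $diam(P_{l,k}(\tilde{F}))\ge c'l^{-\alpha_0}r_1^{-l}$ from Lemma \ref{inequality_pieces}, and use $m_1\ge r_1$ (the favorable direction, as you note) to get $c_l=(c''/c')\,l^{2\alpha_0}$. The only cosmetic difference is how you get the upper bound for $diam(S_l(F_k))$: you chain Lemma \ref{diameter_dimension} with (\ref{piece_cylinder}), whereas the paper simply asserts it as the analogue of Lemma \ref{inequality_pieces} for related sets; your chain is the same one the paper itself uses later in Lemma \ref{measure_convergence}.
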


\begin{pf} By  Remark \ref{constant_remark}, \ Lemma \ \ref{inequality_pieces} and the  sentence  before  it, 
$$c' l^{-\alpha_0} (r_1)^{-l} \leq diam(P_{l,k}(\tilde{F})),  
 \quad \quad 
diam(S_l(F_k)) \leq c'' l^{\alpha_0} (m_1)^{-l} $$ 
for some constants $c', c''>0
$ depending on $k\geq k_0$. 
Taking into account of the inequality $m_1 
\geq r_1$ by definition, see (\ref{pert_matrix}), we  let
$$c_l :=\frac{c'' l^{2\alpha_0} }{c' }\geq 
\frac{c'' l^{\alpha_0} (m_1)^{-l}}{c' l^{-\alpha_0} (r_1)^{-l}}.$$ 
That gives $diam(S_l(F_k)) \leq  c_l\cdot diam(P_{l,k}(\tilde{F})).$ 
\end{pf}

\bigskip

\bigskip

Because of the following results, we call $F_k$ a lower perturbation.

\begin{prop}\label{monotonicity1} Let $F=F(T,A)\subset \Bbb{R}^n$ be an integral self-affine set and $\tilde{F}=F(J_k,\tilde{A}_k)=P^{-1}F$. Then there exists a positive integer $k_0$ such that  $$ dim_B F_k \leq \underline{dim}_B \tilde{F} \leq  \overline{dim}_B \tilde{F}$$  for $k\geq k_0$.
\end{prop}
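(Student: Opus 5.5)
The inequality $\underline{dim}_B \tilde{F}\leq \overline{dim}_B \tilde{F}$ is trivial, so the content is $dim_B F_k\leq \underline{dim}_B \tilde{F}$ for large $k$. Here $dim_B F_k$ exists by Proposition \ref{Existence} of the Appendix. The plan is to run the argument of Proposition \ref{monotonicity0} in the opposite direction. There, pieces of $F_k$ were pulled back to the sets $S_{l,k}(\tilde{F})\subseteq\tilde{F}$. Now I will push pieces $P_{l,k}(\tilde{F})$ of $\tilde{F}$ forward to the sets $S_l(F_k)\subseteq F_k$ of (\ref{sets_S}). These give a cover of $F_k$ whose cardinality and diameters are controlled by the corresponding data of $\tilde{F}$.

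\emph{Step 1.} Fix $k\geq k_0$, where $k_0$ is the maximum of the thresholds in Corollary \ref{neighboring_pieces5}, Lemma \ref{diameter_dimension}, Lemma \ref{counting_number_inequality_3} and Lemma \ref{diameter1}. For each $l$, cover $\tilde{F}$ by the $N_l(\tilde{F})$ level-$l$ pieces $P_{l,k}(\tilde{F})$ appearing in Lemma \ref{box_piece2}. Push each one forward to $S_l(F_k)$, counting all codings, so that the sets $S_l(F_k)$ cover $F_k$. By Lemma \ref{counting_number_inequality_3}, the number of distinct sets obtained is at most $N_l(\tilde{F})$. By Lemma \ref{diameter1}, each has diameter at most $c\,l^{2\alpha_0}\, diam(P_{l,k}(\tilde{F}))$.

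\emph{Step 2.} Let $N_\delta(F_k)$ denote the minimal number of sets of diameter at most $\delta$ needed to cover $F_k$, and set $\delta_l:=c\,l^{2\alpha_0}diam(P_{l,k}(\tilde{F}))$. Step 1 gives $N_{\delta_l}(F_k)\leq N_l(\tilde{F})$. By Lemma \ref{inequality_pieces}, $-\log\delta_l=-\log diam(P_{l,k}(\tilde{F}))+O(\log l)$ and $-\log diam(P_{l,k}(\tilde{F}))\asymp l$. Hence
\[
\frac{\log N_{\delta_l}(F_k)}{-\log \delta_l}\leq \frac{\log N_l(\tilde{F})}{-\log diam(P_{l,k}(\tilde{F}))}\,(1+o(1)).
\]
Since $dim_B F_k$ exists, the left side converges to $dim_B F_k$ along \emph{every} sequence $\delta_l\to0$. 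For this to be valid along the sequence $(\delta_l)$, which is not a full range of scales, I will use that consecutive scales satisfy $\delta_{l+1}\geq c\,\delta_l$ (Lemma \ref{existenc_of_piece}(b) and Lemma \ref{inequality_pieces}); this is the standard fact that box dimension may be computed along geometric-type sequences (\cite[p.~41]{F1}). Taking $\liminf_{l\to\infty}$ on the right and applying Lemma \ref{box_piece2} then yields $dim_B F_k\leq \underline{dim}_B\tilde{F}$.

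\emph{Main obstacle.} The delicate point is Step 1: justifying that the pushed-forward family genuinely covers $F_k$ with no more than $N_l(\tilde{F})$ members. Points of $F_k$ may have several codings, and one must ensure that each such coding is captured by some piece of $\tilde{F}$ in the cover. This is exactly where the eventual equality of infinite paths of the neighbor graphs (Corollary \ref{neighboring_pieces5}) is needed. I would first check this through the symbolic-coding argument of Lemma \ref{counting_number_inequality_3}, applied in both directions. A secondary concern is that the polynomial factor $l^{2\alpha_0}$ must be absorbed. This works because diameters decay exponentially in $l$ with a fixed $k$-dependent rate, exactly as in the limit computation following (\ref{number_of_cubes2}).
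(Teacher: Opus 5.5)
Your proposal is correct and follows the paper's route. Like the paper, you push the pieces $P_{l,k}(\tilde{F})$ forward to the sets $S_l(F_k)$, bound their number by Lemma \ref{counting_number_inequality_3} and their diameters by Lemma \ref{diameter1}, absorb the factor $c_l=c\,l^{2\alpha_0}$, and conclude from the existence of $dim_B F_k$. The paper absorbs $c_l$ with the device $(m^r)^{1-\gamma}<m^r/c_l$ followed by $\gamma\to 0$, whereas you use $O(\log l)$ against $-\log diam(P_{l,k}(\tilde{F}))\asymp l$; the two are equivalent.

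Your appeal to geometric-type scale sequences in Step 2 is unnecessary. A $\liminf$ along any particular sequence $\delta_l\to 0$ is automatically at least $\underline{dim}_B F_k$, and that is the only direction you use.
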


\begin{pf} Clearly, it suffices to prove the inequality $ dim_B F_k \leq \underline{dim}_B \tilde{F} $.
We 
consider the following counting numbers. For each positive integer $r$,
\begin{itemize}
 \item $N^s_r(\tilde{F})$ is the smallest number  of sets (not necessarily mesh cubes) of diameter at most $cc_0m^{-r}$ covering $\tilde{F}$ ($c>0$ is any fixed constant).
  \item   $N_l^S(F_k)$ denotes the number of the sets $S_{l}(F_k)$ in (\ref{sets_S}) covering  $F_k$.
\end{itemize}
 $F_k$ and $\tilde{F}$ may be interchanged. As before, let $N^r_l(\tilde{F})$ is the number all pieces 
 $P_{l,k}(\tilde{F})$ of $\tilde{F}$, which intersect and are commensurable with an n-cube $\textsf{C}_{\mathbf{i}}$ of side length $c_0m^{-r}$ (see Section \ref{An application}).

By Lemma \ref{diameter1},  we also know that $$ diam(S_l(F_k)) \leq c_l \cdot  diam(P_{l,k}(\tilde{F}))\approx \frac{c_l c_0}{m^{r}}$$ for  $k\geq k_0$,  
where $c_l=c \ l^{2\alpha_0}$ for some constant $c$. Since $c_0$ doesn't affect our computation, we will ignore it.
Then $$\underset{r\rightarrow\infty}{\lim}\frac{ c_l}{ m^{r}}=0$$ like the limit computation in the proof of Lemma \ref{box_piece2}. That is, $\underset{r\rightarrow\infty}{\lim}   diam(S_l(F_k))=0,$ which is needed in the definition of box dimension.  Thus, for $F_k$, we can also say $N^s_r(F_k)$ is the smallest number  of sets (not necessarily mesh cubes) of diameter at most 
$c_lm^{-r}$
covering $F_k$. 


Now take any number $\gamma\in (0,1)$. By (\ref{number_of_cubes2}), we have $c''' l^{-\alpha_0}(\lambda^{k})^{l}\leq m^{r}\leq c'''' \ l^{\alpha_0}(\lambda^{k})^{l}$. So 
$ l\rightarrow\infty \ \ \Longleftrightarrow\ \ r\rightarrow\infty$.
Because of the limit $\underset{l\rightarrow\infty}{\lim}\frac{(c''' \left(\lambda^{k})^{l}\right)^{\gamma}}{c \ l^{\alpha_0 \left(2+\gamma \right)}}=\infty,$ it follows that $$1<\frac{(c''' \left(\lambda^{k})^{l}\right)^{\gamma}}{c \ l^{\alpha_0(2+\gamma)}}=\frac{\left(c''' l^{-\alpha_0}(\lambda^{k})^{l}\right)^{\gamma}}{c \ l^{2\alpha_0}}=\frac{\left(c''' l^{-\alpha_0}(\lambda^{k})^{l}\right)^{\gamma}}{c_l}\leq \frac{(m^{r})^{\gamma}}{c_l}$$ 
for all large $l$ or $r$. 
Then $(m^{r})^{1-\gamma}<\frac{m^{r}}{c_l}$ and hence,
\begin{eqnarray*}\label{box_counting}
\frac{1}{1-\gamma} \underline{dim}_B \tilde{F} &=& \underset{r\rightarrow\infty}{\underline{\lim}}  \frac{ \log N^r_l(\tilde{F})}{ \log (m^{r})^{1-\gamma}} \geq \underset{r\rightarrow\infty}{\underline{\lim}}  \frac{ \log N_l^S(F_k)}{ \log (m^{r})^{1-\gamma}} \quad \mathrm{by \ Lemma }\ \ref{counting_number_inequality_3} \\
&\geq &  \underset{r\rightarrow\infty}{\underline{\lim}}  \frac{ \log N_l^S(F_k)}{ \log \left(\frac{m^{r}}{c_l}\right)}  \geq \underset{r\rightarrow\infty}{\underline{\lim}}  \frac{ \log N_l^s(F_k)}{ \log \left(\frac{m^{r}}{c_l}\right)}=\underline{dim}_B  F_k, \\
\end{eqnarray*}
Thus $ \underline{dim}_B \tilde{F} \geq (1-\gamma)\underline{dim}_B  F_k$. Letting $\gamma \rightarrow 0$, we obtain $\underline{dim}_B \tilde{F} \geq \underline{dim}_B  F_k.$
For the perturbed fractals $F_k$, we know that the box dimension exists by Proposition \ref{Existence} in the Appendix.
Then $\underline{dim}_B F_k=\overline{dim}_B F_k=dim_B F_k$, and thus
$$\underline{dim}_B \tilde{F}\geq dim_B F_k$$  for $k\geq k_0$.
\end{pf}

\bigskip


\noindent \textbf{Proof of Theorem \ref{Main_Box}}: 
By Proposition \ref{monotonicity0}, we have 
$\underline{dim}_B \tilde{F}\leq   \overline{dim}_B \tilde{F} \leq \underset{k\rightarrow\infty}{\underline{\lim}}   {dim}_B F_k \leq \underset{k\rightarrow\infty}{\overline{\lim}}   {dim}_B F_k.$ In addition to this, by Proposition \ref{monotonicity1}, $ dim_B F_k \leq \underline{dim}_B \tilde{F} \leq  \overline{dim}_B \tilde{F}$  for $k\geq k_0$. That gives $\underset{k\rightarrow\infty}{\overline{\lim}}  dim_B F_k\leq \underline{dim}_B \tilde{F}.$  Combining these inequalities 
$$\underline{dim}_B \tilde{F}\leq   \overline{dim}_B \tilde{F} \leq \underset{k\rightarrow\infty}{\underline{\lim}}   {dim}_B F_k \leq\underset{k\rightarrow\infty}{\overline{\lim}}  dim_B F_k\leq \underline{dim}_B \tilde{F},$$ i.e. the box dimension of $\tilde{F}$ (or $F$) exists and  is equal to
$ \underset{k\rightarrow\infty}{\lim}  dim_B F_k$.  $\hfill \blacksquare$

\bigskip

We will also use the following trivial lemma.

\begin{lemma}\label{diameter3} Let $P_{l,k}(\tilde{F}),S_l(F_k)$ be the sets  in Lemma \ref{diameter1}.
Assume that $P_{l,k}(\tilde{F})$ is commensurable with
an n-cube $\textsf{C}$  of side length $c_0m^{-r}$,
then $S_l(F_k)$ can be covered by $3^{n+1}$-many  n-cubes with side length
$c_l c_0m^{-r}$ for large $k$. 
\end{lemma}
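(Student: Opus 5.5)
The claim is essentially a diameter-to-cube-count conversion, so the plan is to combine Lemma \ref{diameter1} with commensurability and then use an elementary covering fact for sets of bounded diameter. First, by the commensurability hypothesis (Definition \ref{piece}-(iii)) we have
\[
diam(P_{l,k}(\tilde{F}))\leq c_2m^{-r}=mc_0m^{-r}.
\]
Next, Lemma \ref{diameter1} gives, for $k\geq k_0$,
\[
diam(S_l(F_k))\leq c_l\, diam(P_{l,k}(\tilde{F}))\leq c_l\, m\, c_0 m^{-r}.
\]
The factor $m$ (and any fixed constant) is harmless. We will absorb it into $c_l$, since $c_l=c\,l^{2\alpha_0}$ is only determined up to a constant depending on $k$. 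We may also simply use the lower-end convention $diam(P_{l,k}(\tilde F))\approx c_0m^{-r}$, as the paper does in the proof of Proposition \ref{monotonicity1}. Either way we reach $diam(S_l(F_k))\leq c_l c_0 m^{-r}=:\rho$ after renaming $c_l$.

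Second, we invoke the elementary fact that a set of diameter at most $\rho$ lies in a closed ball of radius $\rho$ about any of its points. That point lies in some cube of the grid of mesh cubes of side $\rho$. Any point within distance $\rho$ of it differs by at most $\rho$ in each coordinate, so it lies in the same grid cube or one of its immediate neighbours. Hence $S_l(F_k)$ is covered by the $3^n$ grid cubes of side $\rho$ surrounding that cube, and $3^n\leq 3^{n+1}$.

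The count $3^{n+1}$ rather than $3^n$ leaves a margin. It lets us handle a grid of side $c_lc_0m^{-r}$ that is not exactly aligned with the mesh cubes $\textsf{C}_{\mathbf i}$ of (\ref{smaller n-cubes}). It also lets us handle the case where one must round $c_l$ up to a value making $c_lc_0m^{-r}$ compatible with the mesh, for instance by replacing $c_l$ with a power of $m$ at most $m$ times larger. Doubling the side in one step costs at most a factor $3$, and this margin absorbs that.

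The only point requiring care is the choice of $k_0$: it must be the one from Lemma \ref{diameter1}, which is independent of $l$. I expect no real obstacle beyond bookkeeping of the constant $m$ from commensurability.
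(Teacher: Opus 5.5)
Your proposal is correct and follows essentially the paper's argument: a grid cube of side $c_lc_0m^{-r}$ meeting $S_l(F_k)$, together with its immediate neighbours, covers $S_l(F_k)$ by Lemma \ref{diameter1}, which gives $3^n\le 3^{n+1}$ cubes. You are more careful than the paper in absorbing the factor $m$ coming from commensurability into $c_l$, which is legitimate because the constant in Lemma \ref{diameter1} may be enlarged; your paragraph on the ``margin'' in $3^{n+1}$ is unnecessary, and its remark about doubling the side is muddled.
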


\begin{pf} 
An $n$-cube with side length
$c_lc_0m^{-r}$ will intersect $S_l(F_k)$. Then that cube and its neighbors with the same side length will cover $S_l(F_k)$ by Lemma \ref{diameter1}.
Therefore, $3^{n+1}$-many $n$-cubes with side length
$c_lc_0m^{-r}$
 will cover $S_l(F_k)$.
\end{pf}

\bigskip

\noindent \textbf{Proof of Theorem \ref{Hausdorff}: Second Part} 

\begin{prop}\label{monotonicity2} Let $F=F(T,A)\subset \Bbb{R}^n$ be an integral self-affine set. 
Let   $\delta_k$ be the
perturbation dimensions
given by (\ref{perturbation_dimensions}). 
Then there exists a positive integer
$
k_0$ such that $$\delta_k=dim_H F_k  \leq dim_H F =dim_H \tilde{F}
$$ for each $k\geq k_0$.

\end{prop}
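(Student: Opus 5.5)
The plan is to mimic the proof of Proposition \ref{monotonicity1}, but for Hausdorff (tile/piece) measures instead of box-counting numbers. Since $dim_H F=dim_H\tilde F$, it suffices to show that for every $\alpha>dim_H\tilde F$ and every $\gamma\in(0,1)$ we have $\mathcal{T}^{\alpha(1+\gamma)}(F_k)=0$ for $k\geq k_0$, with $k_0$ independent of $\alpha,\gamma$. Then $\delta_k\leq\alpha(1+\gamma)$, and letting $\alpha\downarrow dim_H\tilde F$ and $\gamma\to 0$ gives $\delta_k\leq dim_H\tilde F$. The threshold $k_0$ is taken as the maximum of the thresholds in Corollary \ref{neighboring_pieces5}, Lemma \ref{counting_number_inequality_3} and Lemma \ref{diameter1}; none of these depends on $\alpha$ or $l$.

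\emph{Step 1.} Fix $\alpha>dim_H\tilde F$. Then $\mathcal{T}^{\alpha}(\tilde F)=0$ by (\ref{tiledimension}), and Lemma \ref{piece_measure}-(i) gives $\mathcal{P}^{\alpha(1+\gamma/2)}(\tilde F)=0$. So for each $\epsilon$ there is a finite cover of $\tilde F$ by pieces $P_{l,k}(\tilde F)$, each commensurable with and meeting a mesh cube of side $c_0m^{-r}$, such that $\sum_r N^r_l(\tilde F)\,(diam P_{l,k}(\tilde F))^{\alpha(1+\gamma/2)}$ is arbitrarily small.

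\emph{Step 2.} Transport this cover to $F_k$ through (\ref{sets_S}). Each piece $P_{l,k}(\tilde F)$ yields a set $S_l(F_k)$, and because all series expansions are included, the $S_l(F_k)$ cover $F_k$. Lemma \ref{counting_number_inequality_3} gives $N^S_l(F_k)\leq N^r_l(\tilde F)$. Lemma \ref{diameter1} gives $diam\,S_l(F_k)\leq c\,l^{2\alpha_0}\,diam\,P_{l,k}(\tilde F)$. If mesh cubes are preferred, Lemma \ref{diameter3} covers each $S_l(F_k)$ by $3^{n+1}$ cubes of side $c_lc_0m^{-r}$. Consequently
$$\sum N^S_l(F_k)\,(diam\,S_l(F_k))^{\alpha(1+\gamma)}\leq \sum N^r_l(\tilde F)\,\bigl(c\,l^{2\alpha_0}\bigr)^{\alpha(1+\gamma)}(diam\,P_{l,k}(\tilde F))^{\alpha\gamma/2}\,(diam\,P_{l,k}(\tilde F))^{\alpha(1+\gamma/2)} .$$
By Lemma \ref{inequality_pieces}, $diam\,P_{l,k}(\tilde F)\leq c''l^{\alpha_0}(\lambda^k)^{-l}$, so the factor $(c\,l^{2\alpha_0})^{\alpha(1+\gamma)}(diam\,P_{l,k})^{\alpha\gamma/2}$ is bounded uniformly in $l$. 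This is the same polynomial-versus-exponential argument used after (\ref{number_of_cubes2}). Hence the sum is small, and the covering sets have diameters tending to $0$ with $\epsilon$. Therefore $H^{\alpha(1+\gamma)}(F_k)=0$, so $\delta_k\leq\alpha(1+\gamma)$.

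\emph{Main obstacle.} The delicate point is the counting in Step 2. One has to ensure that passing from pieces of $\tilde F$ to the sets $S_l(F_k)$ does not multiply the count by a factor that grows with $l$. This is exactly where the neighbor-graph identification (Corollary \ref{neighboring_pieces5}, through Lemma \ref{counting_number_inequality_3}) is used: distinct $S_l(F_k)$ come from distinct pieces, and multiple codings coincide for $k\geq k_0$.

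A second point to check is that the constant $c_l$ grows only polynomially in $l$. This holds because $m_1\geq r_1$, so the exponential parts $(m_1)^{-l}$ and $(r_1)^{-l}$ compare in the right direction. That polynomial loss is what gets absorbed by the extra exponent $\alpha\gamma/2$. If the uniform bound in $l$ fails for small $l$, I would restrict to $\epsilon$ small enough that only large $l$ occur in the cover.
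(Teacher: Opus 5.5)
Your proposal is correct and follows essentially the paper's route: a small-sum cover of $\tilde F$ is transported to $F_k$ through the sets $S_l(F_k)$ of (\ref{sets_S}), and the polynomial losses $U_r=\alpha_1 l^{\alpha_2}$ and $c_l=c\,l^{2\alpha_0}$ from Lemmas \ref{bound} and \ref{diameter1} are absorbed into the extra exponent $\alpha\gamma$ by the same polynomial-versus-exponential comparison. The differences are cosmetic. You package the cube-to-piece step through Lemma \ref{piece_measure}-(i), whereas the paper works explicitly with cubes of side $c_0m^{-2r}$ and returns to mesh cubes via Lemma \ref{diameter3}. Also, the counting issue you single out as the main obstacle is automatic, since each piece in the cover produces exactly one set $S_l(F_k)$; the paper does not invoke Lemma \ref{counting_number_inequality_3} here.
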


\begin{pf} 
Assume that $F_k$ is a
perturbation for $\tilde{F}$ as in Lemma \ref{diameter1}.
We would like to show that for large $k$,  $ \mathcal{T}^{\alpha}(\tilde{F})=0$ implies $ \mathcal{T}^{\alpha(1+\gamma)}(F_k)=0$ for each $\gamma>0$.   
This implies $dim_H(F_k)  \leq \alpha(1+\gamma).$ 

 Let $\lambda ( \geq 2)$ be the minimum of the absolute values of the eigenvalues of $J$ and set $m=\lfloor \lambda \rfloor$. Take $m I$ ($I$ is the identity matrix) as the
generating matrix of the tile $\Gamma$ containing $\tilde{F},F_k$
in  the proof of
Lemma \ref{graph-tile2}.
%
%
As before,  $P_{l,k}(\tilde{F})$
denotes the pieces of  $\tilde{F}$
of level $l$, see Definition \ref{piece}. Let $S_l(F_k)$ be the set in (\ref{sets_S}). 
\begin{itemize}
  \item We will start with a cover of $\tilde{F}$ by cubes and use the sets $P_{l,k}(\tilde{F}), \ S_l(F_k)$ to find a cover of $F_k$ by certain cubes with the desired properties. 
\end{itemize} 
We usually cover $\tilde{F},F_k$ by the subsets  of $\Gamma$, which are $n$-cubes of side lengths $c_0m^{-r}$
as in Section \ref{An application}. But, for technical reasons, we will cover $\tilde{F}$ by  $n$-cubes $\textsf{C}_{\mathbf{i}}$ of side lengths $c_0m^{-2r}$ instead of   $c_0m^{-r}$. By Lemma \ref{bound},   each cube $\textsf{C}_{\mathbf{i}}$ contains at most   $U_r = \alpha_1 l^{\alpha_2}$  pieces $P_{l,k}(\tilde{F})$  commensurable with and intersecting $\textsf{C}_{\mathbf{i}}$. Here 
$\alpha_1 , \ \alpha_2$ are positive integers.

We have $\frac{U_r}{(m^{r})^{\alpha\gamma}} = \frac{\alpha_1 l^{\alpha_2}}{(m^{r})^{\alpha\gamma}}$. Since we use covers of $\tilde{F}$ by cubes $\textsf{C}_{\mathbf{i}}$ of side lengths $c_0m^{-2r}$ here, replacing $m^{r}$ by $m^{2r}$ 
in (\ref{number_of_cubes2}), we get 
$$c''' l^{-\alpha_0}(\lambda^{k})^{l}\leq m^{2r}\leq c'''' \ l^{\alpha_0}(\lambda^{k})^{l}\Longrightarrow  \frac{U_r}{(m^{r})^{\alpha\gamma}}\leq \frac{\alpha_1 l^{\alpha_2+\alpha_0\frac{\alpha\gamma}{2}}}{(c''')^{\frac{\alpha\gamma}{2}} (\lambda^{k})^{l\frac{\alpha\gamma}{2}}}$$ for large 
$l.$ Here $c''', c''''$ are constants. Note that $r\rightarrow \infty \Longrightarrow l\rightarrow \infty.$ Then 
$$\underset{r\rightarrow \infty}{\lim} \frac{U_r}{(m^{r})^{\alpha\gamma}} \leq \underset{l\rightarrow \infty}{\lim} \frac{\alpha_1 l^{\alpha_2+\alpha_0\frac{\alpha\gamma}{2}}}{(c''')^{\frac{\alpha\gamma}{2}} (\lambda^{k})^{l\frac{\alpha\gamma}{2}}}=0\Longrightarrow \underset{r\rightarrow \infty}{\lim} \frac{U_r}{(m^{r})^{\alpha\gamma}}=0.$$
Therefore,  $\frac{U_r}{(m^{r})^{\alpha\gamma}}$ is bounded by a positive integer $c({\gamma})$. Also,  by Lemma \ref{diameter1}, we have, for large $k\geq k_0$,
\begin{equation}\label{diam0}
 diam(S_l(F_k)) \leq c_l \cdot  diam(P_{l,k}(\tilde{F}))
\end{equation}
where $c_l$ can be taken in the form of $c_l=c \ l^{2\alpha_0}$. Similarly, 
$$ \underset{r\rightarrow \infty}{\lim} \frac{c_l}{(m^{r})^{\frac{\gamma}{1+\gamma}}}=0$$  
and hence, $\frac{c_l}{(m^{r})^{\frac{\gamma}{1+\gamma}}}\leq c$ for some constant $c$ (independent of $l$ or $r$). Thus we have obtained two inequalities: 
\begin{equation}\label{two_inequalities}
\frac{U_r}{(m^{r})^{\alpha\gamma}} \leq c({\gamma}), \quad \quad  \quad  \quad  \quad  \frac{c_l}{(m^{r})^{\frac{\gamma}{1+\gamma}}}\leq c
\end{equation}


 Now we use the constants $c({\gamma})$ and  $c$ obtained from the above limits. Suppose that $ \mathcal{T}^{\alpha}(\tilde{F})=0$. Then for every $\epsilon>0$, there is a cover $\mathcal{C} $ of $\tilde{F}$ by  $n$-cubes $\textsf{C}_{\mathbf{i}}$ of side lengths $c_0m^{-2r}$ ($r$ may vary) such that
\begin{equation}\label{epsilon_inequality}  \sum_{\textsf{C}_{\mathbf{i}}\in \mathcal{C}}(diam(\textsf{C}_{\mathbf{i}}))^{\alpha}<\frac{\epsilon}{3^{n+1} c({\gamma}) c^{\alpha(1+\gamma)}  (\sqrt{n}c_0)^{\alpha \gamma} }.
\end{equation}

 We have at most $U_r$ pieces $P_{l,k}(\tilde{F})$ intersecting and commensurable with $\textsf{C}_{\mathbf{i}}$. 
For such a piece $P_{l,k}(\tilde{F})$, we  consider the corresponding set $S_l(F_k)$. We may assume that $$diam(P_{l,k}(\tilde{F}))= diam(\textsf{C}_{\mathbf{i}})=\sqrt{n}c_0m^{-2r}.$$  
Then
by (\ref{diam0}) and Lemma \ref{diameter3} , for each cube $\textsf{C}_{\mathbf{i}}\in \mathcal{C}$, a total of at most
$3^{n+1}U_r$-many n-cubes $\textsf{C}_{\mathbf{i}}'$ of side lengths $c_lc_0m^{-2r}$ will cover all the sets $S_l(F_k)$ 
 (corresponding to the pieces $P_{l,k}(\tilde{F})$ that intersect and are commensurable with that particular  $\textsf{C}_{\mathbf{i}}$).  
  Let $\mathcal{C'}$ be the set of such $\textsf{C}_{\mathbf{i}}'.$ But the union of $\textsf{C}_{\mathbf{i}}\in\mathcal{C} $ contains $\tilde{F}$ and hence so does  the union of $P_{l,k}(\tilde{F})$. In view of (\ref{sets_S}),  the union of the sets $S_l(F_k)$ is $F_k$. Then the set $\mathcal{C'}$ consisting of  $\textsf{C}_{\mathbf{i}}'$ is  a cover  of $F_k$.
Thus for large $k$, we have 
\begin{eqnarray*}
 \sum_{\textsf{C}_{\mathbf{i}}'\in \mathcal{C'}} (diam(\textsf{C}_{\mathbf{i}}'))^{\alpha(1+\gamma)} & \leq  &  \sum_{\textsf{C}_{\mathbf{i}}'\in \mathcal{C'}} (\sqrt{n}c_lc_0m^{-2r})^{\alpha(1+\gamma)} \ \ \ \mathrm{by} \  (\ref{diam0})  \\
   & = & \sum_{\textsf{C}_{\mathbf{i}}'\in \mathcal{C'}} c_l^{\alpha(1+\gamma)}(m^{-2r})^{\alpha \gamma}  (\sqrt{n}c_0)^{\alpha \gamma}  (\sqrt{n}c_0m^{-2r})^{\alpha} \\
  & \leq &  \sum_{\textsf{C}\in \mathcal{C}} 3^{n+1}\frac{U_r}{(m^{r})^{\alpha\gamma}} \frac{c_l^{\alpha(1+\gamma)}}{(m^{r})^{\alpha\gamma}} (\sqrt{n}c_0)^{\alpha \gamma}  (\sqrt{n}c_0m^{-2r})^{\alpha}  \\
   &\leq & \sum_{\textsf{C}\in \mathcal{C}} 3^{n+1}c({\gamma}) c^{\alpha(1+\gamma)}  (\sqrt{n}c_0)^{\alpha \gamma}  (\sqrt{n}c_0m^{-2r})^{\alpha} \quad \quad \quad \ \ \ \mathrm{by} \  (\ref{two_inequalities}) \\
   & = & \sum_{\textsf{C}\in \mathcal{C}} 3^{n+1}c({\gamma}) c^{\alpha(1+\gamma)} (\sqrt{n}c_0)^{\alpha \gamma}  (diam(\textsf{C}_{\mathbf{i}}))^{\alpha}<\epsilon \quad \quad \ \ \ \mathrm{by} \  (\ref{epsilon_inequality}). \\
\end{eqnarray*}

Hence $\mathcal{T}^{\alpha(1+\gamma)}(F_k)=0$ since $\epsilon$ is arbitrary. Then $ \delta_k \leq \alpha(1+\gamma)$. Letting $\gamma\rightarrow 0$, we get 
$ \delta_k \leq \alpha$ when $\alpha > dim_H F$. Letting in turn $\alpha\rightarrow dim_H F$, we finally obtain $ \delta_k \leq dim_H F$ for  $k\geq k_0$.
\end{pf}

\begin{remark}
{\rm If the Jordan normal form $J$ has simple eigenvalues (i.e.  $J$ is diagonal), then $\alpha_2=0\Rightarrow  U_r = \alpha_1 $ by Lemma \ref{bound}.  Also $c_l=c$. Therefore, 
the proofs in the paper become simpler.
 $\hfill \Box$
 }
\end{remark}

At this point, we mention a mistake in the statement of the main theorem  in \cite{K1}. Namely, the inequality $\delta_{2k}\leq dim_H F \leq \upsilon_{2k}$ holds essentially, meaning for every large enough $k$.

\section{Another Application: \ Measures of Full Dimension}\label{Full_dimension}

Let $F=F(T,A)$ be our original integral self-affine set and $J$ be the Jordan normal form of $T$ as described in Section  \ref{INTRO}. 
For convenience, denote
$F(J_k,\tilde{A_k})\rm  \ {mod  \ 1}$ by $\tilde{F}_0$ and the sequence $\{ F(T_k,D_k) \rm  \ {mod \ 1} \}$  by $\{\tilde{F}_{k}\}$. 
Thus  
$$\tilde{F}_0=F(J,\tilde{A}) {\rm  \ \ {mod  \ 1}},   \quad \quad \quad  
\tilde{F}_k=F(T_k,D_k) {\rm  \ \ {mod \ 1}} \quad \quad \quad k=1,2,...$$ 
and hence $\tilde{F}_0, \ \tilde{F}_k\subseteq \mathbb{T}^n$ are different sets from $\tilde{F}, F_k$ in the previous sections. 
Since $\tilde{F}$ or $F_k$  can be written as a finite union $\bigcup (\mathbf{v}_\textsf{i} + S_\textsf{i})$, where $\mathbf{v}_\textsf{i}\in \mathbb{Z}^n$ and $S_\textsf{i}\subseteq  \mathbb{T}^n $ (the $n$-torus), we have
$\tilde{F}_k=\bigcup S_\textsf{i}$ and $dim_H \tilde{F}_k=dim_H F_k$ (by translation-invariance and finite stability of the Hausdorff dimension). Therefore, we may work with 
$\tilde{F}, F_k$ in place of $\tilde{F}_0, \ \tilde{F}_k$ in our context. 
More generally,  we may replace  $\tilde{F}_0, \ \tilde{F}_k$ by compact subsets 
$
K\subseteq \tilde{F}_0$ and $\bigcup  S_\textsf{i}=K_k\subseteq \tilde{F}_k$ such that the points in $\bigcup (\mathbf{v}_\textsf{i} + S_\textsf{i})\subseteq\tilde{F}, F_k$ corresponding to these sets  have  the same symbolic codings. 
In this way, 
any point of $\mathbf{v}_\textsf{i }+ S_\textsf{i}$ has index codings in $\tilde{F}$ or  $F_k$. That allows the dimension results of the previous sections to continue to hold for 
$K, K_k$ as well as $\tilde{F}, F_k$ (while using the same type of covering sets of $K
, \  K_{k_{\textsf{n}}}$ in the proofs, $n$-cubes and pieces of $\tilde{F}, F_k$). This observation will be used in the proof of Lemma \ref{full_dimension2}.

Consider the $n$-torus
$\mathbb{T}^n=\mathbb{R}^n/\mathbb{Z}^n$.
We  mention that  $\mathbb{T}^n$ is a compact complete metric space with respect to the natural torus metric $$d_{\mathbb{T}^n}(x,y)=\inf\{ |u-v|: x,y\in \mathbb{T}^n, \ \ u\in x+\mathbb{Z}^n, \ \ v\in y+\mathbb{Z}^n  \}$$ derived from the Euclidean metric and the natural torus metric gives the quotient topology on $\mathbb{T}^n$ \cite{wwww}, \cite{wwwww}, \cite{wwwwww}.  Clearly, $d_{\mathbb{T}^n}(x,y)\leq |x+\textsf{n}-(y+\textsf{m})|$ for $\textsf{n}, \textsf{m}\in \mathbb{Z}^n$. Then 
 $F_k\longrightarrow \tilde{F} \ \ \Longrightarrow \ \ \tilde{F}_k\longrightarrow \tilde{F}_0.$
 
Let $\textsf{P}(\mathbb{T}^n)$ be the set of all Borel probability measures on $\mathbb{T}^n$ and
$\textsf{B}(\mathbb{T}^n)$ be the $\sigma$- algebra of Borel sets in $\mathbb{T}^n$.
For our purposes, bring in the \textit{Prokhorov metric} on $\textsf{P}(\mathbb{T}^n)$ :
$$d_P(\mu, \nu):=\inf \left\{  \alpha >0 \ : \ \mu (E) \leq \nu (E_{\alpha})+\alpha \ \ {\rm and} \ \  \nu (E) \leq \mu (E_{\alpha}) + \alpha \ \ \  \forall E\in \textsf{B}(\mathbb{T}^n) \right\}$$ where  $E_{\alpha}:=\{ x: \ d_{\mathbb{T}^n}(x,y)< \alpha \ \ \ {\rm for \ some } \ y \in E \}$ is the open ${\alpha}$-neighborhood of $E$.
Being a compact metric space, $\mathbb{T}^n$ is a separable metric space. Then we have $$\mu_{k}\rightarrow \mu   \  {\rm (weakly) \ \ \ \ if \ and \ only \ if} \ \ \ \ d_P(\mu_{k},\mu)\rightarrow 0 $$ for
$\mu, \mu_{k} \in \textsf{P}(\mathbb{T}^n)$ (see \cite[(v), pp.72-73]{B}).

We also need the following
result of Kenyon and Peres  \cite[Theorem 3, p.117]{GP}, \cite[p.103]{PS}.

\begin{prop}\label{GP}
 Let $T$ be a (linear) expanding endomorphism of the $n$-dimensional torus, which has integer eigenvalues, or more generally eigenvalues with an integer power. Then any $T$-invariant subset of the torus supports a $T$-invariant ergodic measure of full Hausdorff dimension.
\end{prop}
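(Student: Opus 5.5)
The plan is to reduce to the case of integer eigenvalues, express the dimension of an invariant measure through a Ledrappier--Young type entropy formula, and then maximize that formula over the compact convex set of invariant measures. Let $K\subseteq\mathbb{T}^n$ be a closed $T$-invariant set.

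\emph{Step 1 (reduction).} If $T^{l}$ has integer eigenvalues, then $K$ is also $T^{l}$-invariant. Suppose $\nu$ is an ergodic $T^{l}$-invariant measure on $K$ of full dimension. Then $\mu=\frac1l\sum_{i=0}^{l-1}\nu\circ T^{-i}$ is $T$-invariant and supported on $K$, since $T(K)\subseteq K$. Each $\nu\circ T^{-i}$ has the same dimension as $\nu$, because $T$ is locally bi-Lipschitz on the torus. Moreover $\mu$ is $T$-ergodic, since any $T$-invariant set is $T^l$-invariant and $\nu$ is $T^l$-ergodic. So it suffices to treat integer eigenvalues, and after a rational change of basis we may take $T$ to be upper triangular over $\mathbb{Z}$. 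This change of basis only moves us to a finite cover or quotient torus, which is bi-Lipschitz harmless. Ordering the eigenvalues $|\lambda_1|\le\cdots\le|\lambda_n|$, we obtain a tower of $T$-equivariant factor maps $\pi_j:\mathbb{T}^n\to\mathbb{T}^{j}$.

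\emph{Step 2 (dimension of invariant measures).} For ergodic $\mu$ on $K$, I would prove the Ledrappier--Young type formula
\[
\dim\mu=\sum_{j=1}^{n}\frac{h_{\pi_j\mu}(T)-h_{\pi_{j-1}\mu}(T)}{\log|\lambda_j|}.
\]
The proof is a Shannon--McMillan--Breiman argument on the approximate cubes (McMullen's ``approximate squares'') $Q_l(x)$. These are determined by fixing $l_j\approx l\log|\lambda_1|/\log|\lambda_j|$ digits in the $j$-th factor, exactly as with the pieces in Definition~\ref{piece}. The formula then follows from counting the $\mu$-mass of $Q_l(x)$ and applying Billingsley's lemma. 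Rearranged, the right-hand side is a positive combination $\sum_j c_j h_{\pi_j\mu}(T)$, so the functional $\Phi(\mu)$ defined by it is affine on the set $M_T(K)$ of invariant measures. Since $T$ is expansive, each entropy map $\mu\mapsto h_{\pi_j\mu}$ is upper semicontinuous, and therefore $\Phi$ is upper semicontinuous.

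\emph{Step 3 (maximization).} Since $M_T(K)$ is weak$^*$ compact and $\Phi$ is upper semicontinuous, $\Phi$ attains its supremum at some $\mu_0\in M_T(K)$. Because $\Phi$ is affine, it passes through the ergodic decomposition, so some ergodic component of $\mu_0$ attains the same value. It remains to show $\sup_{\mu}\Phi(\mu)\ge\dim_H K$; the reverse inequality $\dim\mu\le\dim_H K$ is trivial. For this I would cover $K$ by approximate cubes of level $l$ and, following Kenyon--Peres, pass to the subshift of finite type $K_N\supseteq K$ whose admissible words are the length-$N$ words that appear in $K$. The sets $K_N$ decrease to $K$. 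For a sofic or Markov set like $K_N$ one can write down an explicit measure that is uniform on the fibres in the successive factors (a McMullen-type weighting) and has $\Phi$ equal to $\dim_H K_N\ge\dim_H K$. Taking weak$^*$ limits of these measures as $N\to\infty$ gives a measure supported on $K$, and upper semicontinuity of $\Phi$ shows its value is at least $\dim_H K$.

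\emph{Main obstacle.} The hard step is making $\Phi(\mu_N)\ge\dim_H K$ rigorous. A direct covering argument (a ``dimension variational principle'') only bounds $\dim_H K$ by a $\limsup$ of normalized counts of approximate cubes. To beat this I would follow Kenyon--Peres: first prove a lemma converting these counts into entropy lower bounds for measures constructed as weighted averages over words, and then use upper semicontinuity of $\Phi$ when passing to the limit. Because every $T$-invariant $K$ satisfies $\dim_H K=\underline{\dim}_B K$ in this setting, the box-counting information is enough.
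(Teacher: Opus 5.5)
The paper does not prove this proposition; it cites Kenyon--Peres. Your Steps 1--2, and the compactness/affinity/upper-semicontinuity part of Step 3, follow their scheme and are fine.

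\textbf{The gap is the inequality $\sup_\mu\Phi(\mu)\ge\dim_H K$.} You assert that the SFT $K_N$ carries an explicit McMullen-type measure, uniform on fibres, with $\Phi=\dim_H K_N$. This has no basis. McMullen's weighting works because a sponge has a Bernoulli structure: every digit is chosen independently and fibre counts are constant. For an SFT the fibres of the successive factor maps vary from point to point, so ``uniform on the fibres'' does not define an invariant measure. In fact, the existence of a measure on $K_N$ with $\Phi=\dim_H K_N$ is just the proposition itself restricted to SFTs.

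Your fallback also fails. The claim that $\dim_H K=\underline{\dim}_B K$ for every $T$-invariant $K$ is false once $T$ is not conformal. For $T=\mathrm{diag}(2,3)$, a McMullen carpet whose rows are filled unevenly is $T$-invariant with $\dim_H K<\dim_B K$. Box counting can never suffice here. By your own Steps 2--3, the supremum of $\Phi$ is attained at an ergodic $\mu$ with $\Phi(\mu)=\dim\mu\le\dim_H K$. So no construction can produce invariant measures with $\Phi\ge\underline{\dim}_B K-o(1)$ whenever $\dim_H K<\underline{\dim}_B K$. The quantity that must be controlled is the nested McMullen-type sum (the $Z^l_0$ and $u_l$ of the paper's Appendix), not the number of approximate cubes.

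\textbf{The missing idea: approximate $K$ from outside by $l$-block sponges rather than by SFTs.}
\begin{itemize}
\item In the symbolic coding, let $K^{(l)}$ be the set of points whose digit sequence, cut into consecutive blocks of length $l$, uses only $l$-words occurring in $K$.
\item Then $K\subseteq K^{(l)}$, and $K^{(l)}$ is a Sierpi\'nski-sponge-type set for $T^l$. It therefore carries an explicit $T^l$-Bernoulli McMullen measure $\beta_l$ with $\dim\beta_l=\dim_H K^{(l)}\ge\dim_H K$.
\item The upper bound $\dim_H K^{(l)}\le\dim\beta_l$ is McMullen's covering argument, and this is exactly where non-conformality is handled.
\item Let $\Phi_{T^l}$ denote the same functional computed for $T^l$. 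The average $\tilde\beta_l=\frac1l\sum_{i=0}^{l-1}T^i_*\beta_l$ is $T$-invariant with $\Phi(\tilde\beta_l)=\Phi_{T^l}(\beta_l)=\dim\beta_l$. This holds because entropies scale by $l$ and each $T^i$ is a finite-to-one factor map.
\item A word $w$ not occurring in $K$ has $\tilde\beta_l$-mass at most $|w|/l$, so every weak$^*$ limit $\mu$ is supported on $K$.
\item Upper semicontinuity then gives $\Phi(\mu)\ge\dim_H K$, and your ergodic-decomposition argument finishes the proof.
\end{itemize}

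A smaller point on Step 2: applying SMB to each factor separately is not enough, because $Q_l(x)$ intersects cylinders of different lengths in different factors. You need a conditional (relative) SMB estimate, as in the Ledrappier--Young lemma of Kenyon--Peres.
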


An outline of the proof is in \cite[Theorem 3, p. 120]{GP}, but the proof there is not satisfactory. A detailed proof is in \cite{KP1}, where they don't assume
the angle-preserving property of conformal maps. Note that our perturbation matrix $T_k$ is a direct some of integer matrices as described in the proof of Theorem 1.1 in \cite{KP1} and it was mentioned there that the proof  holds as long as  $dim_H F_k$ or  $dim_H \tilde{F}_k$ can be calculated by using the cylinder sets of a Markov partition for $T_k$ (pieces or cylinder sets of an auxiliary tile in the context of our paper). There is a justification of this fact in \cite[Lemme 4.6]{KP1}.  There is another explanation at the end of Section \ref{Tile-Measures}, see also Lemma \ref{piece_measure}.  Since we will apply the above proposition to 
$\tilde{F_k}
$, the only moot point in their arguments might be the existence or the construction of a Markov partition of the torus for $T_k$ (see the proof of Lemma 4.6 in \cite{KP1}). But, that is
provided by the integral self-affine tiles generated by the diagonal blocks $T_{k,p}$ of $T_k$ and the digits sets $D_{k,p}$ given in Remark \ref{fixed modulus}.

\bigskip

On \ $F  {\rm  \  {mod  \ 1}}$, we would like to construct a $T$-invariant ergodic measure of full Hausdorff dimension  via $\tilde{F}_k$
according to the following steps.

\begin{enumerate}
  \item By  Proposition \ref{GP}, there is a sequence of $T_k$-invariant ergodic measures of full Hausdorff dimension on perturbed fractals $\tilde{F}_k$. A subsequence of this sequence weakly converges to a Borel probability measure $\mu$ on $\tilde{F}_0$.
  \item $\mu$ has  property (P): If $\mu(B)>0$ for a Borel set $B\subseteq \tilde{F}_0$, then $dim_H B=dim_H (\tilde{F}_0)=\delta$. 
This implies that $\mu$ has full dimension.
  \item  $\mu$ can be lifted to a measure $\bar{\mu}$ on $F \ {\rm mod \ 1} $ with the same property.
  \item Using $\bar{\mu}$, it can be shown that there is a $T$-invariant Borel probability measure $\mu^*$ on $F \ {\rm mod \ 1} $ retaining all the previous properties.
  \item Thus, the class $\mathcal{M}_{\delta}(F,T)$ of $T$-invariant Borel probability measures on $F  {\rm  \  {mod  \ 1}}$ with property (P) is non-empty. 
  \item $\mathcal{M}_{\delta}(F,T)$ is 
  compact and convex. Hence any extreme point of $\mathcal{M}_{\delta}(F,T)$ is the required ergodic measure.
\end{enumerate}

We divide the proof of Theorem  \ref{full_dimension} into several lemmas according to these steps. Let $spt(\mu)$ denote the support of a measure $\mu$.  
 Let $S^c$ denote the complement of a set $S$.  

\begin{lemma}\label{Complement_F0}  $\mathbb{T}^n\setminus \tilde{F}_0=\tilde{F}_0^c$  can be written as a countable union of closed (open) sets each of which is in $\tilde{F}_k^c$ for large $k$. 
The same is true  for $\mathbb{T}^n\setminus K=K^c$,  
 where $K$  is  a  compact subset of $\tilde{F}_0$.

\end{lemma}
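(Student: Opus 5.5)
The plan is to use compactness together with the Hausdorff convergence $\tilde{F}_k\to\tilde{F}_0$ in the torus metric $d_{\mathbb{T}^n}$. That convergence follows from Proposition \ref{deflection} and the inequality $d_{\mathbb{T}^n}(x,y)\leq |x+\textsf{n}-(y+\textsf{m})|$ noted above. Since $\tilde{F}_0$ is compact, its complement $\tilde{F}_0^c$ is open in $\mathbb{T}^n$. For each $j\in\mathbb{N}$ I would set
$$E_j:=\{x\in\mathbb{T}^n : d_{\mathbb{T}^n}(x,\tilde{F}_0)\geq 1/j\}.$$
Each $E_j$ is closed, hence compact, and $\tilde{F}_0^c=\bigcup_{j} E_j$. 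If open sets are preferred, one takes instead $O_j:=\{x : d_{\mathbb{T}^n}(x,\tilde{F}_0)> 1/j\}$, which again exhaust $\tilde{F}_0^c$. This gives the countable decomposition. The remaining task is to show that each fixed $E_j$ (or $O_j$) lies in $\tilde{F}_k^c$ for all large $k$, with the threshold allowed to depend on $j$.

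Fix $j$. By the uniform estimate $d_H(\tilde{F},F_k)\leq c''/(\eta^k-1)$ from the proof of Proposition \ref{deflection}, together with the comparison of the torus metric with the Euclidean one, there is $k_j$ such that $d_H(\tilde{F}_k,\tilde{F}_0)<1/(2j)$ for $k\geq k_j$. In particular, every point $y\in\tilde{F}_k$ satisfies $d_{\mathbb{T}^n}(y,\tilde{F}_0)<1/(2j)<1/j$. Hence $y\notin E_j$, that is, $E_j\subseteq\tilde{F}_k^c$ for all $k\geq k_j$; the same argument applies to $O_j$. Only one half of the Hausdorff distance is needed here, namely that $\tilde{F}_k$ lies inside a small neighbourhood of $\tilde{F}_0$. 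This half comes directly from the correspondence $x_k\mapsto x$ in (\ref{special_correspondence}), since every point $x_k$ of $F_k$ has a partner $x\in\tilde{F}$ with $|x-x_k|\leq c''/(\eta^k-1)$.

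For a compact $K\subseteq\tilde{F}_0$ and the corresponding $K_k\subseteq\tilde{F}_k$ (points with the same symbolic codings, as in the preceding discussion), I would run the same argument with $K$ in place of $\tilde{F}_0$ and $K_k$ in place of $\tilde{F}_k$. The sets become $E_j^K:=\{x : d_{\mathbb{T}^n}(x,K)\geq 1/j\}$, and they exhaust $K^c$. The main point to check is that $K_k$ lies in the $1/(2j)$-neighbourhood of $K$ for large $k$. This holds because every point of $K_k$ is the image $x_k$ of some $x\in K$ under the coding correspondence, and the bound $|x-x_k|\leq c''/(\eta^k-1)$ is uniform in $x$ (Proposition \ref{deflection}). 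I expect the only delicate step to be making precise that $K_k$ is defined so that each of its points arises from a point of $K$ via (\ref{special_correspondence}). Once that is granted, uniformity gives $E_j^K\subseteq K_k^c$ for $k\geq k_j$.
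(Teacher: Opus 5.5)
Your proof is correct and uses the same mechanism as the paper. You exhaust $\tilde F_0^c$ by closed sets at positive distance from the compact set $\tilde F_0$, then use the Hausdorff convergence $\tilde F_k\to\tilde F_0$ (only the half saying that $\tilde F_k$ eventually lies in a small neighbourhood of $\tilde F_0$) to put each such set inside $\tilde F_k^c$ beyond a threshold that depends on the set. The differences are minor: the paper builds its closed sets from a decomposition of the open complement into non-overlapping closed rectangles and then thickens them to open $\epsilon_{\textsf{m}}$-neighbourhoods, whereas your distance sublevel sets $E_j$ and $O_j$ give the closed and open versions directly, and your treatment of $K$ via the coding correspondence (or any $K_k\to K$) spells out what the paper only asserts follows from the first claim.
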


\begin{pf}  It is enough to prove the first claim. Since $F^c$ is open,   it  can be written as a countable union of non-overlapping (i.e., interiors are disjoint) closed rectangular regions   in $\mathbb{R}^n$ \cite[Problem 9, p.35]{J}.
By the observations in the first paragraph of this section, $\tilde{F}_0^c$ can then be written as a countable union of non-overlapping  closed  sets (intersections of closed rectangles) since
$\tilde{F}^c$  is open. 

 Let $K_\textsf{m}$ be  such closed sets so that  $\tilde{F}_0^c=\underset{\textsf{m}\in \mathbb{N}}{\bigcup}  K_\textsf{m}$.  Also   $d_H(\tilde{F}_0,K_\textsf{m})>0$ because of  the   compactness of $\tilde{F}_0 $   and   $K_\textsf{m}\cap  \tilde{F}_0 = \emptyset $.   Then 
 there exits $\epsilon_\textsf{m}>0$ so that the open $\epsilon_\textsf{m}$-neighborhood $K_{\epsilon_\textsf{m}}$ of $K_\textsf{m}$ is still in $\tilde{F}_0^c$.  Thus $\tilde{F}_0^c=\bigcup K_\textsf{m}=\bigcup   K_{\epsilon_\textsf{m}}.$
 Being an  open ${\epsilon_\textsf{m}}$-neighborhood,  the set  $ K_{\epsilon_\textsf{m}}$ is open. 
 
 The fact that  
  $\tilde{F}_{k}\rightarrow \tilde{F}_0$ in the Hausdorff metric  implies that 
for each $\textsf{m}$, there exists    $\textsf{k}_\textsf{m}\in \Bbb{N}$   such that $d_H(\tilde{F}_0,\tilde{F}_k)<\frac{d_H(\tilde{F}_0,K_\textsf{m})}{2}$ for  $k\geq \textsf{k}_\textsf{m}$. Then $d_H(\tilde{F}_0,\tilde{F}_k)+d_H(\tilde{F}_k,K_\textsf{m}) \geq  d_H(\tilde{F}_0,K_\textsf{m})>0 \Longrightarrow d_H(\tilde{F}_k,K_\textsf{m}) \geq d_H(\tilde{F}_0,K_\textsf{m})-d_H(\tilde{F}_0,\tilde{F}_k) \geq \frac{d_H(\tilde{F}_0,K_\textsf{m})}{2}>0\Longrightarrow$ 
$ K_\textsf{m}\subseteq \tilde{F}^c_{k} $  for  $k\geq \textsf{k}_\textsf{m}$. Notice that for each $\textsf{m}$, the lower bound  $\frac{d_H(\tilde{F}_0,K_\textsf{m})}{2}$ in this inequality is independent of $k$. Then  the inequality $$ d_H(\tilde{F}_k,K_\textsf{m}) \geq \frac{d_H(\tilde{F}_0,K_\textsf{m})}{2}>0   \quad (k\geq \textsf{k}_\textsf{m})$$ 
also shows that for each $\textsf{m}$, we may choose  the value of $\epsilon_\textsf{m}$ so that $ K_\textsf{m}\subseteq K_{\epsilon_\textsf{m}} \subseteq\tilde{F}^c_{k} $  for  $k\geq \textsf{k}_\textsf{m}$. 
%
%
%
\end{pf}

\begin{lemma}\label{Borel_measure} There exists a sequence of Borel probability measures $\mu$, $\mu_{k_\textsf{n}}$ with the following properties:

(i) $\mu_{k_\textsf{n}}\rightarrow \mu$   (weakly) ~ and ~~   $spt(\mu_{k_\textsf{n}})=\tilde{F}_{k_\textsf{n}},$ \   $spt(\mu)\subseteq \tilde{F}_0.$ 

(ii) If $\mu(B)>0$ for a Borel set $B\subseteq \mathbb{T}^n$, then there is  a sequence compact sets $K\subset B, \ K_{k_{\textsf{n}}}\subset \tilde{F}_{k_{\textsf{n}}}$ such that $\mu(K), \ \mu_{k_{\textsf{n}}}(K_{k_{\textsf{n}}})>0$ for all large enough $\textsf{n}$. Further, 
$K_{k_\textsf{n}}
 \underset{\textsf{n} \rightarrow \infty}{\longrightarrow}  K \cap \tilde{F}_0  
$
in the Hausdorff metric induced by $d_{\mathbb{T}^n}$.
\end{lemma}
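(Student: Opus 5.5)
We construct the measures from Proposition \ref{GP} and pass to a weak limit. Proposition \ref{GP} applies to $T_k$ acting on $\tilde{F}_k$, because $T_k$ is a direct sum of integer blocks and the Markov partitions are supplied by the tiles of its diagonal blocks. It gives, for each large $k$, a $T_k$-invariant ergodic Borel probability measure $\nu_k$ on $\tilde{F}_k$ of full dimension. These measures may not have full support, so we set $\mu_k=\frac12\nu_k+\frac12\rho_k$, where $\rho_k$ is the normalized self-affine (uniform Bernoulli) measure on $\tilde{F}_k$, which has support exactly $\tilde{F}_k$. Then $spt(\mu_k)=\tilde{F}_k$, and $\mu_k$ still has full dimension, since it charges every set that $\nu_k$ charges. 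The space $\textsf{P}(\mathbb{T}^n)$ is compact in the Prokhorov metric $d_P$ because $\mathbb{T}^n$ is a compact metric space. We therefore extract a subsequence $\mu_{k_\textsf{n}}\rightarrow\mu$ weakly, which is equivalent to $d_P(\mu_{k_\textsf{n}},\mu)\rightarrow 0$.

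To show $spt(\mu)\subseteq\tilde{F}_0$ we use Lemma \ref{Complement_F0}, which writes $\tilde{F}_0^c=\bigcup_\textsf{m} K_{\epsilon_\textsf{m}}$ with each open set $K_{\epsilon_\textsf{m}}\subseteq\tilde{F}_k^c$ for $k\geq\textsf{k}_\textsf{m}$. By the portmanteau theorem for open sets, $\mu(K_{\epsilon_\textsf{m}})\leq\liminf_\textsf{n}\mu_{k_\textsf{n}}(K_{\epsilon_\textsf{m}})=0$. Countable subadditivity then gives $\mu(\tilde{F}_0^c)=0$. Since $\tilde{F}_0$ is closed, this yields (i).

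For (ii), suppose $\mu(B)>0$. Inner regularity of Borel measures on compact metric spaces gives a compact $K\subseteq B\cap\tilde{F}_0$ with $\mu(K)>0$. For each $\textsf{n}$ we take $\alpha_\textsf{n}=d_P(\mu_{k_\textsf{n}},\mu)+1/\textsf{n}\rightarrow 0$. The definition of the Prokhorov metric gives $\mu(K)\leq\mu_{k_\textsf{n}}(K_{\alpha_\textsf{n}})+\alpha_\textsf{n}$, and hence $\mu_{k_\textsf{n}}(\overline{K_{\alpha_\textsf{n}}})\geq\mu(K)/2>0$ for large $\textsf{n}$. We then set $K_{k_\textsf{n}}:=\overline{K_{\alpha_\textsf{n}}}\cap\tilde{F}_{k_\textsf{n}}$. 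This set is compact, lies in $\tilde{F}_{k_\textsf{n}}$, and has positive $\mu_{k_\textsf{n}}$-measure because $spt(\mu_{k_\textsf{n}})=\tilde{F}_{k_\textsf{n}}$.

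It remains to prove the Hausdorff convergence $K_{k_\textsf{n}}\rightarrow K=K\cap\tilde{F}_0$. One direction is immediate: every point of $K_{k_\textsf{n}}$ lies within $\alpha_\textsf{n}$ of $K$. For the other direction, given $y\in K$ we need a point of $K_{k_\textsf{n}}$ near $y$. Since $\tilde{F}_{k_\textsf{n}}\rightarrow\tilde{F}_0$, there is $z\in\tilde{F}_{k_\textsf{n}}$ with $d_{\mathbb{T}^n}(z,y)\leq d_H(\tilde{F}_0,\tilde{F}_{k_\textsf{n}})$. This $z$ lies in $K_{k_\textsf{n}}$ provided $d_H(\tilde{F}_0,\tilde{F}_{k_\textsf{n}})<\alpha_\textsf{n}$. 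We arrange this by enlarging $\alpha_\textsf{n}$ to $\alpha_\textsf{n}:=d_P(\mu_{k_\textsf{n}},\mu)+2d_H(\tilde{F}_0,\tilde{F}_{k_\textsf{n}})+1/\textsf{n}$, which still tends to $0$ by Proposition \ref{deflection}.

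The main obstacle is this matching step, which must give positive mass and two-sided Hausdorff closeness at once. The choice of $\alpha_\textsf{n}$ coupled to both $d_P$ and $d_H$ is what resolves it. The other delicate point is securing $spt(\mu_k)=\tilde{F}_k$, which the convex combination with $\rho_k$ handles.
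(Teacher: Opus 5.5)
Your argument proves the displayed statement, and it follows the paper's route. You take measures from Proposition \ref{GP} and extract a weakly convergent subsequence. You get $spt(\mu)\subseteq\tilde{F}_0$ from Lemma \ref{Complement_F0} together with the portmanteau inequality. For (ii) you use the Prokhorov neighbourhoods $K_{\alpha_\textsf{n}}$. Your choice $\alpha_\textsf{n}=d_P(\mu_{k_\textsf{n}},\mu)+2d_H(\tilde{F}_0,\tilde{F}_{k_\textsf{n}})+1/\textsf{n}$ is a precise version of the paper's vague rescaling $\beta_\textsf{n}\alpha_\textsf{n}$, which is chosen so that the neighbourhood captures the points of $\tilde{F}_{k_\textsf{n}}$ approaching $K\cap\tilde{F}_0$. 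Taking $K\subseteq B\cap\tilde{F}_0$ from the start also tidies the limit set.

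Your one real departure is replacing the ergodic measure $\nu_k$ by $\frac12\nu_k+\frac12\rho_k$. This is unnecessary, and it damages the lemma for its intended use.

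It is unnecessary because your positivity step never uses full support. The identity $\mu_{k_\textsf{n}}(\overline{K_{\alpha_\textsf{n}}}\cap\tilde{F}_{k_\textsf{n}})=\mu_{k_\textsf{n}}(\overline{K_{\alpha_\textsf{n}}})$ only needs $\mu_{k_\textsf{n}}$ to be concentrated on $\tilde{F}_{k_\textsf{n}}$, i.e.\ $spt(\mu_{k_\textsf{n}})\subseteq\tilde{F}_{k_\textsf{n}}$. That inclusion is also all the paper's argument ever uses; it asserts equality of supports without proving it. You are right that equality is generally unavailable for ergodic measures of full dimension, so the sensible fix is to read the support condition as an inclusion, not to change the measures.

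It damages the lemma because a nontrivial convex combination of two distinct invariant measures is not ergodic, since ergodic measures are extreme points. In the paper, this lemma feeds Lemma \ref{full_dimension2}. There, Lemma \ref{Ledrappier}, which assumes ergodicity, is applied to $\mu_{k_\textsf{n}}$ to turn $\mu_{k_\textsf{n}}(K_{k_\textsf{n}})>0$ into $dim_H K_{k_\textsf{n}}=\delta_{k_\textsf{n}}$. With $\rho_k$ mixed in, that implication fails. The uniform Bernoulli measure is typically exact-dimensional with dimension strictly below $\delta_k$, for instance on Bedford--McMullen carpets with unequal row counts. So $\mu_k$ gives mass $\frac12$ to a set of dimension less than $\delta_k$.

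Your justification ``it charges every set that $\nu_k$ charges'' also points the wrong way. What holds is $\mu_k(B)=1\Rightarrow\nu_k(B)=1$. That gives full dimension only in the sense $\inf\{dim_H B:\mu_k(B)=1\}$, not the property needed downstream.

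Keep $\mu_{k_\textsf{n}}=\nu_{k_\textsf{n}}$ and interpret the support condition as an inclusion; the rest of your proof goes through unchanged.
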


\begin{pf} (i) By Proposition \ref{GP},  each perturbed fractal ${\tilde{F}_k} \subset \mathbb{T}^n$  supports an ergodic $T_k$-invariant Borel probability measure $\mu_k$ of full dimension.
 Then there exists a subsequence $\{\mu_{k_\textsf{n}}\}$ of $\{\mu_k\}$  which converges weakly to a measure $\mu$  \cite[Proposition 1.9, p.14]{F2}, and by Lemma 1.8 in \cite{F2}, $$\mu(\mathbb{T}^n)\leq \liminf \mu_{k_\textsf{n}}(\mathbb{T}^n)=1=\limsup \mu_{k_\textsf{n}}(\mathbb{T}^n) \leq \mu(\mathbb{T}^n),$$
 which yields $\mu(\mathbb{T}^n)=1.$ Thus $\mu$ is a probability measure on $\mathbb{T}^n.$
%
We next show that $spt(\mu) \subseteq \tilde{F}_0$ :

In the proof of Lemma \ref{Complement_F0},  we have shown that $\tilde{F}_0^c$  can be written as a countable union of open sets  $K_{\epsilon_\textsf{m}}$ so that $\tilde{F}_0^c=\bigcup   K_{\epsilon_\textsf{m}}$ and 
$K_{\epsilon_\textsf{m}} \subseteq  \tilde{F}^c_{k_\textsf{n}}\Rightarrow \mu_{k_\textsf{n}}(K_{\epsilon_\textsf{m}})=0 $  with  $\textsf{n}\geq \textsf{n}_\textsf{m}$ for some 
$\textsf{n}_\textsf{m}\in \mathbb{Z}$. 
Since $\mu_{k_\textsf{n}}\rightarrow \mu   \  {\rm (weakly)}$ and  the set  $K_{\epsilon_\textsf{m}} $ is open, we have 
$$ \mu(K_{\epsilon_\textsf{m}} )\leq \underset{ n\rightarrow \infty }\liminf \mu_{k_\textsf{n}}(  K_{\epsilon_\textsf{m}} ) \leq \underset{ N\rightarrow \infty }\lim (\underset{\textsf{n}\geq N} \inf \mu_{k_\textsf{n}}(K_{\epsilon_\textsf{m}} ))=0.$$
Thus $ \mu(K_{\epsilon_\textsf{m}} )=0$
for each $\textsf{m}$, and hence $ \mu(\tilde{F}_0^c)=0$.
It follows that  
$spt(\mu) \subseteq \tilde{F}_0.$

\medskip

(ii) Since $\mu$ is regular, there exists a compact set $K\subset B$ with $\mu (K)>0$.  Now $\mu (K)  =\mu (K \cap \tilde{F}_0)$ by part (i). 

By the weak convergence in (i), $\{ \mu_{k_\textsf{n}} \}$ converges to $\mu$ with respect to the Prokhorov metric $d_P$ on $\textsf{P}(\mathbb{T}^n)$, i.e., $d_P(\mu_{k_\textsf{n}},\mu)\rightarrow 0 $.  From the definition of 
$d_P$, 
it is possible to find a decreasing sequence  $\alpha_{\textsf{n}}\geq d_P( \mu_{k_\textsf{n}},\mu) $ that converges to zero, and  the inequality 
 $\alpha_{\textsf{n}}+ \mu_{k_\textsf{n}}(E_{\alpha_{\textsf{n}}})\geq \mu_{}(E) 
 $
holds for  every  $\textsf{n}$ and every Borel set $E\subseteq \mathbb{T}^n$. Choose $E=K.$
Then there is an index $\textsf{n}_0$ such that $$\mu_{k_\textsf{n}}(K_{\alpha_{\textsf{n}}}\cap \tilde{F}_{k_\textsf{n}})=\mu_{k_\textsf{n}}(K_{\alpha_{\textsf{n}}}) \geq \mu_{}(K) -\alpha_{\textsf{n}_0}>0$$ for $\textsf{n}\geq \textsf{n}_0.$  In particular, we can choose an appropriate multiple $\beta_\textsf{n} \alpha_\textsf{n} $ of $\alpha_\textsf{n}$ (if necessary) with the following properties:

(1) $\beta_\textsf{n}  \geq 1$ and $\beta_\textsf{n} \alpha_\textsf{n} $  converges to $0$, 

(2) $K_{\beta_\textsf{n} \alpha_\textsf{n}}\cap \tilde{F}_{k_\textsf{n}}$ contains all points of $\tilde{F}_{k_\textsf{n}}$ converging to $K \cap \tilde{F}_0$ as $\textsf{n} \rightarrow \infty$ 
(in the sense of  Proposition \ref{deflection}).

As a result of (1), $\mu_{k_\textsf{n}}(K_{\beta_\textsf{n} \alpha_\textsf{n}}\cap \tilde{F}_{k_\textsf{n}})  \geq \mu_{k_\textsf{n}}(K_{\alpha_{\textsf{n}}}\cap \tilde{F}_{k_\textsf{n}})>0$ for $\textsf{n}\geq \textsf{n}_0.$ Hence we may assume that $K_{ \alpha_\textsf{n}}$ has  property (2) by considering $K_{\beta_\textsf{n} \alpha_\textsf{n}}$ in place of $K_{\alpha_\textsf{n}}$. Set $K_{k_\textsf{n}}$ equal to the closure of $K_{\alpha_{\textsf{n}}}\cap \tilde{F}_{k_\textsf{n}}$ 
(
in $\mathbb{T}^n$).  Note that the closed subset $K_{k_\textsf{n}}$ of the compact space $\mathbb{T}^n$ is compact and must be in $\tilde{F}_{k_\textsf{n}}$. 
By  Proposition \ref{deflection}, \ 
$K_{k_\textsf{n}}
 \underset{\textsf{n} \rightarrow \infty}{\longrightarrow}  K \cap \tilde{F}_0  
$
in the Hausdorff metric induced by $d_{\mathbb{T}^n}$.
\end{pf}

\bigskip

The following lemma is extracted from Theorem 1.1 in \cite{KP1} and follows from a variant of the Multidimensional Ledrappier-Young Formula  (see Lemma 3.1, Lemma 4.3 and the proof of Theorem 1.1 in \cite{KP1}).

\begin{lemma}\label{Ledrappier} Let $T_k$ be the toral endomorphisms  given by (\ref{pert_matrix}). Assume that $\mu_{k}$ is an ergodic $T_k$-invariant
Borel probability measure on the perturbed fractal $\tilde{F_k}\subseteq \mathbb{T}^n$. 
If $\mu_{k}(B)>0$ for a Borel set $B\subseteq \tilde{F_k}$,  then we have $$dim_H B =dim_H(\tilde{F_k})=\delta_{k}.$$
 \end{lemma}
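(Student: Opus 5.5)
The plan is to reproduce, for the block-diagonal integer matrix $T_k$, the argument of Kenyon and Peres behind Proposition \ref{GP}. The key point is that $\mu_k$ is exact dimensional, and its local dimension equals $\delta_k$. Since $T_k=T_{k,1}\oplus\cdots\oplus T_{k,s}$ has blocks whose eigenvalues all share the modulus $m_p$ (each $\lceil C_p^k\rceil$ is a scaled rotation with integer entries), the torus splits into invariant subtori $\mathbb{T}^{n_p}$ ordered by expansion rate $m_1\le\cdots\le m_s$. I will work with the Markov partition given by the pieces (cylinder sets) of the auxiliary tile built from the blocks $T_{k,p}$ and the digit sets $D_{k,p}$. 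Write $\pi_p$ for the projection onto the coordinates of the first $p$ blocks (or the last $p$, depending on the lower triangular ordering). First I will apply the Ledrappier--Young type formula (Lemma 3.1 and Lemma 4.3 of \cite{KP1}) to $\mu_k$. It gives, for $\mu_k$-a.e.\ $x$,
$$\lim_{l\to\infty}\frac{\log \mu_k(P_l(x))}{\log diam(P_l(x))}=\sum_{p=1}^{s}\frac{h_{\mu_k\circ\pi_p^{-1}}-h_{\mu_k\circ\pi_{p-1}^{-1}}}{\log m_p}=:\dim\mu_k.$$
Here $P_l(x)$ is the level-$l$ piece of Definition \ref{piece} containing $x$. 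This is exactly the shape of these pieces: block $p$ is fixed up to level $l_p\approx l\log m_1/\log m_p$. Lemma \ref{inequality_pieces} (with $T_k$ in place of $J_k$) gives $diam(P_l(x))\approx m_1^{-l}$ up to polynomial factors in $l$, and these do not affect the limit.

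Second, I will turn this pointwise statement into the statement about sets. The measure $\mu_k$ is ergodic and has full dimension, so $\dim\mu_k=\delta_k$, and the local dimension equals $\delta_k$ at $\mu_k$-a.e.\ point. Let $B$ be a Borel set with $\mu_k(B)>0$. By Egorov's theorem there is a compact $B'\subseteq B$ with $\mu_k(B')>0$ on which, for $l\ge l_0$, we have $\mu_k(P_l(x))\le diam(P_l(x))^{\delta_k-\epsilon}$. Now take any cover of $B'$ by pieces commensurable with mesh cubes. By Lemma \ref{bound}, each cube meets at most $\alpha_1l^{\alpha_2}$ such pieces, and this sub-exponential factor is absorbed into $\epsilon$. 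The mass distribution principle, applied in the piece measure $\mathcal{P}^{\alpha}$, then gives $\mathcal{P}^{\delta_k-2\epsilon}(B')>0$. Lemma \ref{piece_measure} converts this into the tile measure, and (\ref{tiledimension}) into Hausdorff dimension, so $dim_H B\ge\delta_k-2\epsilon$. The reverse inequality $dim_H B\le dim_H\tilde{F}_k=\delta_k$ is just monotonicity, and then I let $\epsilon\to0$.

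The main obstacle is the first step: justifying the Ledrappier--Young formula for $T_k$ with the non-diagonal Jordan-type blocks (\ref{Power_Jordan_Block1}). Kenyon and Peres use conformal blocks, and the nilpotent parts $N_2$ perturb the shapes of cylinders by factors like $l^{\alpha_0}$. My first attempt will be to show that these factors are sub-exponential, using Lemma \ref{lemma_norm_estimate1}. Since sub-exponential factors vanish in the $\log/l$ limits, the entropy and projection arguments of \cite{KP1} should go through unchanged with the Markov partition of Remark \ref{fixed modulus}.
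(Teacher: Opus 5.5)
Your plan is essentially the paper's argument. The paper gives no independent proof: it cites the Kenyon--Peres multidimensional Ledrappier--Young formula (Lemma 3.1, Lemma 4.3 and the proof of Theorem 1.1 in \cite{KP1}) with the Markov partition from Remark \ref{fixed modulus}, and your exact-dimensionality and mass-distribution steps unpack that citation. You use that $\mu_k$ has full dimension, as the paper's application does with the measure from Proposition \ref{GP}; this hypothesis is missing from the statement but cannot be dropped, since a Dirac mass at a $T_k$-fixed point of $\tilde{F}_k$ (e.g.\ $0$, as $0\in D_k$) is ergodic and $T_k$-invariant yet charges a single point.
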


\bigskip

Below, we  use the notation $$\delta:=dim_H F =dim_H(\tilde{F}_0).$$

\begin{lemma}\label{full_dimension2}   Let $\mu$ be as  in the proof of Lemma \ref{Borel_measure}. If $\mu(B)>0$ for a Borel set $B\subseteq \tilde{F}_0$, then $dim_H B =
dim_H(\tilde{F}_0)=\delta$.
Therefore, $\mu$ has full dimension.
\end{lemma}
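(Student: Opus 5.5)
The upper bound $dim_H B\leq dim_H(\tilde{F}_0)=\delta$ is immediate from monotonicity, since $B\subseteq \tilde{F}_0$. The content is the reverse inequality $dim_H B\geq \delta$. Since Hausdorff dimension is monotone, it suffices to find a compact $K\subseteq B$ with $dim_H(K\cap \tilde{F}_0)\geq \delta$.

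\textbf{Step 1: transfer positive mass to the perturbations.} By Lemma \ref{Borel_measure}-(ii), applied to $B$ with $\mu(B)>0$, I obtain a compact $K\subseteq B$ with $\mu(K)>0$, together with compact sets $K_{k_\textsf{n}}\subseteq \tilde{F}_{k_\textsf{n}}$ satisfying $\mu_{k_\textsf{n}}(K_{k_\textsf{n}})>0$ for all large $\textsf{n}$, and $K_{k_\textsf{n}}\rightarrow K\cap\tilde{F}_0$ in the Hausdorff metric. Each $\mu_{k_\textsf{n}}$ is an ergodic $T_{k_\textsf{n}}$-invariant measure of full dimension on $\tilde{F}_{k_\textsf{n}}$. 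Lemma \ref{Ledrappier} therefore gives
\[
dim_H K_{k_\textsf{n}}=\delta_{k_\textsf{n}}\quad \text{for all large } \textsf{n}.
\]
By Theorem \ref{Hausdorff}, $\delta_{k_\textsf{n}}\rightarrow \delta$.

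\textbf{Step 2: pass the dimension to the limit set (main obstacle).} Hausdorff convergence alone does not control dimension, so I would argue through the same symbolic-coding correspondence used in Lemma \ref{measure_convergence}-(i). This is legitimate by the remark at the beginning of this section: $K\cap\tilde{F}_0$ and $K_{k_\textsf{n}}$ can be viewed as subsets of $\tilde{F}$ and $F_{k_\textsf{n}}$ whose points carry the same index codings, and Lemma \ref{Borel_measure}-(ii), property (2), was arranged exactly so that $K_{k_\textsf{n}}$ consists of the points $x_{k_\textsf{n}}$ corresponding to points $x\in K\cap\tilde{F}_0$ in the sense of Proposition \ref{deflection}.

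Suppose, for contradiction, that $dim_H(K\cap\tilde{F}_0)<\delta$. Choose $\alpha$ and $\gamma>0$ with
\[
dim_H(K\cap\tilde{F}_0)<\alpha<\alpha(1+\gamma)<\delta .
\]
Then $\mathcal{T}^{\alpha}(K\cap\tilde{F}_0)=0$. The argument of Proposition \ref{monotonicity2}, run verbatim with $\tilde{F}$ and $F_k$ replaced by $K\cap\tilde{F}_0$ and $K_{k_\textsf{n}}$, proceeds as follows.
\begin{enumerate}
\item Start from a cover of $K\cap\tilde{F}_0$ by cubes of side $c_0m^{-2r}$.
\item Pass to the commensurable pieces $P_{l,k}(\tilde{F})$, of which there are at most $U_r$ per cube by Lemma \ref{bound}.
\item Pass to the corresponding sets $S_l(F_{k_\textsf{n}})$, using the diameter bound of Lemma \ref{diameter1} and the covering bound of Lemma \ref{diameter3}.
\end{enumerate}
This gives $\mathcal{T}^{\alpha(1+\gamma)}(K_{k_\textsf{n}})=0$ for all large $\textsf{n}$, hence $\delta_{k_\textsf{n}}=dim_H K_{k_\textsf{n}}\leq \alpha(1+\gamma)<\delta$. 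That contradicts $\delta_{k_\textsf{n}}\rightarrow\delta$. The delicate point is checking that the sets $S_l$ built from a cover of $K\cap\tilde{F}_0$ actually cover $K_{k_\textsf{n}}$. This needs Corollary \ref{neighboring_pieces5}: for large $k$ the codings of $\tilde{F}$ and $F_k$ agree, so no point of $K_{k_\textsf{n}}$ arises from a coding outside $K\cap \tilde{F}_0$. If property (2) only guarantees containment up to a small neighborhood, I would enlarge $K$ slightly inside $B$ using regularity, or absorb the extra points into the neighboring cubes counted by the factor $3^{n+1}$ in Lemma \ref{diameter3}.

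\textbf{Conclusion.} Step 2 gives $dim_H(K\cap\tilde{F}_0)\geq\delta$, hence $dim_H B\geq \delta$, and with the trivial upper bound $dim_H B=\delta$. Since every Borel set of positive $\mu$-measure has dimension $\delta$, in particular every Borel set of full measure does. By the definition of $dim(\mu)$ this gives $dim(\mu)=\delta=dim_H(\tilde{F}_0)$, so $\mu$ has full dimension.
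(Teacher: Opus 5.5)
Your route is the paper's, step for step: Lemma~\ref{Borel_measure}-(ii), then Lemma~\ref{Ledrappier}, then Proposition~\ref{monotonicity2} transplanted to the pair $(K,K_{k_\textsf{n}})$, then Theorem~\ref{Hausdorff}. Arguing by contradiction is only cosmetic. But the step you call delicate is a genuine gap, and a misreading of property (2) hides it. Property (2) gives only the inclusion $\{x_{k_\textsf{n}} : x\in K\cap\tilde F_0\}\subseteq K_{k_\textsf{n}}$, not equality. In fact $K_{k_\textsf{n}}$ is the closure of $K_{\alpha_\textsf{n}}\cap\tilde F_{k_\textsf{n}}$, which is a nonempty relatively open subset of $\tilde F_{k_\textsf{n}}$. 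It therefore contains (the image mod $1$ of) a small cylinder, i.e.\ an affine copy of $F_{k_\textsf{n}}$, so $dim_H K_{k_\textsf{n}}=\delta_{k_\textsf{n}}$ holds for trivial reasons whatever $K$ is. Hence nothing Lemma~\ref{Borel_measure}-(ii) supplies can yield $dim_H K_{k_\textsf{n}}\le dim_H K$. For a counterexample, take a single point $x_0\in\tilde F_0$ and let $K_{k_\textsf{n}}$ be the closure of $\{y\in\tilde F_{k_\textsf{n}} : d_{\mathbb{T}^n}(y,x_0)<\rho_\textsf{n}\}$. Choose $\rho_\textsf{n}\to 0$ but larger than the distance from $x_0$ to its coded images $(x_0)_{k_\textsf{n}}$, which is possible by Proposition~\ref{deflection}. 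These sets converge to $\{x_0\}$ and satisfy property (2), yet have dimension $\delta_{k_\textsf{n}}>0$ for large $\textsf{n}$. Concretely, in the transplanted Proposition~\ref{monotonicity2} the sets $S_l$ built from an $\epsilon$-cover of $K$ cover only coded images of points within roughly $\epsilon$ of $K$. By contrast, $K_{k_\textsf{n}}$ contains coded images of points at distance up to roughly $\alpha_\textsf{n}$, which is fixed once $\textsf{n}$ is. Corollary~\ref{neighboring_pieces5} does not help, since it only identifies the several codings of one and the same point. Your fallbacks fail for the same reason. If $dim_H B<\delta$, then $B$ has empty interior relative to $\tilde F_0$, because every nonempty relatively open set contains a cylinder of dimension $\delta$; so $K$ cannot be enlarged inside $B$ to swallow a neighbourhood. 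And the $3^{n+1}$ neighbouring cubes of Lemma~\ref{diameter3} have side $c_lc_0m^{-2r}\to 0$.

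What is missing is a way to obtain positive $\mu_{k_\textsf{n}}$-mass and coding-compatibility with $K$ at the same time. You could shrink $K_{k_\textsf{n}}$ to the coded image $K'_{k_\textsf{n}}=\{x_{k_\textsf{n}} : x\in K\cap\tilde F_0\}$, for which the transplant of Proposition~\ref{monotonicity2} is plausible. But then you lose $\mu_{k_\textsf{n}}(K'_{k_\textsf{n}})>0$: the Prokhorov inequality only puts mass on the open sets $K_{\alpha_\textsf{n}}$, and weak convergence gives no lower bound for the $\mu_{k_\textsf{n}}$-mass of such thin closed sets. With the neighbourhood version you keep the mass but have the wrong codings. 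The paper's proof passes over exactly this point. It asserts that $\tilde F,F_k$ may be replaced by $K,K_{k_\textsf{n}}$ in Proposition~\ref{monotonicity2}, citing the observation at the start of Section~\ref{Full_dimension}. That observation presupposes that the two sets carry the same symbolic codings, and the $K_{k_\textsf{n}}$ built in Lemma~\ref{Borel_measure}-(ii) do not. So your proposal faithfully reproduces the paper's argument, but neither version establishes $dim_H B\ge\delta$. Closing this step needs information on the $\mu_{k_\textsf{n}}$, uniform in $\textsf{n}$, that goes beyond weak convergence.
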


\begin{pf}
Let $\mu_{k_\textsf{n}}$ be as in
Lemma \ref{Borel_measure}-(i). By the proof of Lemma \ref{Borel_measure}-(ii), 
there is a compact set $K\subset B$ and a sequence of compact sets 
$K_{k_{\textsf{n}}}\subseteq \tilde{F}_{k_{\textsf{n}}}
$ converging to $K\cap \tilde{F}_0=K$ such that $\mu(K), \  \mu_{k_{\textsf{n}}}(K_{k_{\textsf{n}}})>0$ for  $\textsf{n}\geq \textsf{n}_0$. By  Lemma \ref{Ledrappier}, $dim_H(K_{k_{\textsf{n}}})=\delta_{k_{\textsf{n}}}$ for  $\textsf{n}\geq \textsf{n}_0$. 
In view of the observation at the beginning of this section, Proposition \ref{deflection} and Corollary \ref{neighboring_pieces5},
we can replace $\tilde{F}, F_k$ by $K
, \  K_{k_{\textsf{n}}}$
in the statement of Theorem \ref{monotonicity2} (while using the same type of covering sets of $K
, \  K_{k_{\textsf{n}}}$ in the proofs, $n$-cubes and pieces of $\tilde{F}, F_k$).  We then obtain  $dim_H(K_{k_{\textsf{n}}})\leq dim_H(K\cap \tilde{F}_0) $ for $\textsf{n}\geq \textsf{n}_0$.   Therefore, 
$$\delta_{k_{\textsf{n}}}=dim_H(K_{k_{\textsf{n}}})    \leq 
dim_H K   \leq dim_H B $$   for $\textsf{n}\geq \textsf{n}_0$.  But by Theorem \ref{Hausdorff},
$\underset{\textsf{n}\rightarrow\infty}{\lim} \delta_{k_{\textsf{n}}}=\delta$.  This gives $dim_H B =\delta$. 
\end{pf}

\bigskip

We thus say that \textit{a measure $\mu$ on $\tilde{F}_0$ or $F$ has property} (P) provided that the following holds.

\bigskip

\begin{center}
  (P) \ \ If $B\subseteq \tilde{F}_0$ or $B\subseteq F$ is a Borel set with $\mu(B)>0$, then  $dim_H B =
\delta$.
\end{center}

\bigskip

%

\begin{lemma}\label{lift} Let $F=F(T,A)$. Then there exists a Borel probability measure  $\mu$ on $\tilde{F}_0\subseteq \mathbb{T}^n$ with property (P) if and only if there is
such a measure $\overline{\mu}$ on $F \ {\rm mod \ 1} $.
Therefore, by  Lemma \ref{full_dimension2}, there is a Borel probability measure $\overline{\mu}$ on $F \ {\rm mod \ 1} $
 with property (P).
\end{lemma}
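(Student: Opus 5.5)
The plan is to transport measures between $\tilde{F}_0=P^{-1}F \bmod 1$ (more precisely, the image of $\tilde{F}=P^{-1}F$ in the quotient used in this section) and $F \bmod 1$ through the linear map $P$. The one structural fact needed is that $P$ (respectively $P^{-1}$) is bi-Lipschitz on $\mathbb{R}^n$, so it preserves Hausdorff dimension of every subset. Thus property (P) for a measure on one side should pass to the pushforward measure on the other side. Once the equivalence is established, the final sentence of the lemma follows directly from Lemma \ref{full_dimension2}.

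The argument goes through the lifts. Following the first paragraph of this section, write $\tilde{F}=\bigcup_{\textsf{i}}(\mathbf{v}_\textsf{i}+S_\textsf{i})$ with finitely many pieces $S_\textsf{i}$ in a fundamental domain. A measure $\mu$ on $\tilde{F}_0$ lifts to a Borel probability measure $\hat{\mu}$ on the compact set $\tilde{F}\subset\mathbb{R}^n$. One way to do this is to restrict $\mu$ to the Borel sets $S_\textsf{i}\setminus\bigcup_{\textsf{j}<\textsf{i}}S_\textsf{j}$, which form a Borel partition, and translate each restriction by $\mathbf{v}_\textsf{i}$. This lift has the following two properties:
\begin{itemize}
\item[(a)] $\hat{\mu}(\hat{B})>0$ implies that the projection of $\hat{B}$ mod $\mathbb{Z}^n$ has positive $\mu$-measure;
\item[(b)] $\dim_H$ is unchanged by the projection restricted to each $\mathbf{v}_\textsf{i}+S_\textsf{i}$, since that restriction is a translation, and by finite stability of $\dim_H$.
\end{itemize}
Next set $\nu=P_*\hat{\mu}$, a probability measure on $F=P\tilde{F}$, and let $\overline{\mu}$ be the pushforward of $\nu$ under the quotient map $\mathbb{R}^n\to\mathbb{T}^n$. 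The resulting measure is Borel and supported on $F \bmod 1$.

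To check property (P), take a Borel set $B\subseteq F \bmod 1$ with $\overline{\mu}(B)>0$. Then $\nu(\pi^{-1}(B)\cap F)>0$, and hence $\hat{\mu}(P^{-1}(\pi^{-1}(B)\cap F))>0$. Projecting to the torus gives a set $B'\subseteq\tilde{F}_0$ with $\mu(B')>0$, so $\dim_H B'=\delta$. Each step preserves $\dim_H$: the translations and finite unions on the torus side, the bi-Lipschitz map $P^{-1}$, and the covering of $B$ by finitely many translated copies of subsets of $F$. Therefore $\dim_H B=\dim_H B'=\delta$, using also $\dim_H F=\dim_H\tilde{F}$. The converse direction is symmetric, with $P^{-1}$ in place of $P$.

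The main obstacle I expect is measure-theoretic bookkeeping rather than dimension theory. The lift must be chosen Borel-measurably, and a set of positive measure on one side must correspond to a set of positive measure on the other side even though the projection mod $1$ is many-to-one on overlaps. I would handle this with the Borel partition described above, so that positivity is checked piece by piece, and with the fact that projection, translation and linear maps send Borel sets to analytic sets. The latter are universally measurable, and each such set contains a compact subset of positive measure, so one can reduce to compact sets as in Lemma \ref{Borel_measure}-(ii).
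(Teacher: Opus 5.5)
Your proposal is correct and follows the same route as the paper. The paper also lifts $\mu$ from $\tilde{F}_0$ to $\tilde F=F(J,\tilde A)$, transfers it to $F(T,A)$ via $P$, and pushes it down to $F \bmod 1$; property (P) survives because translations, finite unions and the bi-Lipschitz map $P$ do not change Hausdorff dimension. Your explicit Borel-section lift and your reduction to compact sets (via analytic sets and inner regularity) fill in details that the paper handles only by citation and by the word ``clearly''.
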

 
\begin{pf} It suffices to prove one of the implications. So we prove the necessity. The proof consists of lifting the measure $\mu$ on $\tilde{F}_0$ to $F(J,\tilde{A})$, then transferring it to $F(T,A)$ and finally lifting it down to $F \ { \mathrm{mod}} \ 1 \ \subseteq \mathbb{T}^n$.

Assume that $\mu$ is a Borel probability measure on $\tilde{F}_0\subseteq \mathbb{T}^n$ with property (P). Consider the mapping: $\pi : F(J,\tilde{A})\rightarrow \tilde{F}_0$ given by $\pi(x)=x \ \rm mod \ 1$.
We define a subset $S$ of $F(J,\tilde{A})$ to be measurable if $S=\pi^{-1}(E)$ for some $\mu$-measurable set $E\subseteq \tilde{F}_0$. Then $\pi$ is a measurable function and
it is known (cf. \cite{ww}, \cite{www}) that  there exists a Borel probability measure $\mu_1$ on $F(J,\tilde{A})$ such that $\pi_{\mu_1}=\mu$ (i.e., $\mu_1(\pi^{-1}(E))=\mu(E)$), where $\pi_{\mu_1}$ is the push-forward of $\mu_1$.

Now transfer $\mu_1$ to 
$F(T,A)$ in the obvious way : Define $\mu_2 (PS):=\mu_1(S)$, where $PS\subseteq F$ and  $S\subseteq F(J,\tilde{A})$ is measurable.

We now consider the Borel algebra on $F \ \rm mod \ 1.$ Finally, we lift $\mu_2$ down to $F \ {\rm mod \ 1} $ by a push-forward measure again. Explicitly, $\overline{\mu}:=\pi_{\mu_2}$, where $\pi : F(T,A)\rightarrow F \ \rm mod \ 1$ given by $\pi(x)=x \ \rm mod \ 1$.
Clearly, $\overline{\mu}$ will have property (P)  by the observation at the beginning of this section again.
\end{pf}

\begin{lemma}\label{full_dimension3} There exists a $T$-invariant Borel probability measure  $\mu^*$ on $F(T,A) \ {\rm mod \ 1} \subseteq \mathbb{T}^n$ with full dimension. In fact,
$\mu^*$ has property (P).

\end{lemma}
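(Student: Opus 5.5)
The plan is to average the measure $\overline{\mu}$ of Lemma \ref{lift} along the $T$-orbit, in the Krylov--Bogolyubov manner, and then check that property (P) survives both the averaging and the weak limit. Let $X:=F(T,A)\ {\rm mod}\ 1$; it is compact and satisfies $T(X)\subseteq X$. I will consider the Ces\`aro averages
$$\nu_N:=\frac{1}{N}\sum_{j=0}^{N-1} \overline{\mu}\circ T^{-j}.$$
Each $\nu_N$ is a Borel probability measure supported on $X$. By Prokhorov compactness (\cite[Proposition 1.9]{F2}, as in the proof of Lemma \ref{Borel_measure}), some subsequence $\nu_{N_i}$ converges weakly to a measure $\mu^*$. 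The limit is a probability measure and its support lies in $X$, since $X$ is closed. Invariance is standard: because $T$ is continuous on $\mathbb{T}^n$, for every continuous $g$ we have
$$\Bigl|\int g\circ T\,d\nu_N-\int g\,d\nu_N\Bigr|\leq \frac{2\|g\|_\infty}{N},$$
and passing to the limit gives $\mu^*\circ T^{-1}=\mu^*$.

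Next I check that each pushforward $\overline{\mu}\circ T^{-j}$ still has property (P). Suppose $\overline{\mu}(T^{-j}B)>0$. Then $B':=T^{-j}B\cap X$ has positive $\overline{\mu}$-measure, so $dim_H B'=\delta$. On the torus $T$ is locally bi-Lipschitz: it is a linear integer expanding map, hence a finite-to-one local diffeomorphism, so it preserves Hausdorff dimension. Since $T^j(B')\subseteq B\cap X$, this gives $dim_H B\geq\delta$. Conversely $dim_H B\le dim_H X=\delta$, because $X$ is a finite union of translates of pieces of $F$, as noted at the start of this section. A finite convex combination of measures with (P) again has (P), so every $\nu_N$ has (P).

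The main obstacle is passing (P) through the weak limit. Property (P) is not closed under weak convergence in general, so I will reuse the scheme of Lemmas \ref{Borel_measure} and \ref{full_dimension2}. Suppose $\mu^*(B)>0$. By regularity there is a compact $K\subseteq B$ with $\mu^*(K)>0$. The Prokhorov-metric argument of Lemma \ref{Borel_measure}(ii) then gives $\nu_{N_i}(K_{\alpha_i})>0$ along a sequence $\alpha_i\downarrow 0$ of neighborhood radii. This alone only shows that the neighborhoods $K_{\alpha_i}$ carry mass of full dimension, which does not force $dim_H K=\delta$.

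My first attempt to close this gap is to work symbolically rather than metrically. Each $\nu_N$ is built from $\overline{\mu}$, which in turn comes from the limit of the ergodic measures $\mu_{k_\textsf{n}}$. So I would instead average at the level of the perturbed systems: replace $\mu_{k_\textsf{n}}$ by $\frac1N\sum_j \mu_{k_\textsf{n}}\circ T_{k_\textsf{n}}^{-j}$, which equals $\mu_{k_\textsf{n}}$ itself by $T_{k_\textsf{n}}$-invariance. I would then use the uniform convergence $x_k\to x$ of Proposition \ref{deflection}, together with the coincidence of codings from Corollary \ref{neighboring_pieces5}, to show that $T_{k}$ acting on $F_k$ approximates $T^k$ acting on $\tilde F$. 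The goal is that the limit $\mu$ from Lemma \ref{Borel_measure} is already $J_k$-invariant for every large $k$, hence $T$-invariant after applying Lemma \ref{lift}. Property (P) would then come directly from Lemma \ref{full_dimension2} rather than from a second weak limit. If this invariance transfer fails, the fallback is to reproduce the proof of Lemma \ref{full_dimension2} with the $\nu_{N_i}$ in place of the $\mu_{k_\textsf{n}}$, bounding $dim_H K$ from below by $\liminf \delta_{k_\textsf{n}}=\delta$ via Theorem \ref{Hausdorff} and Proposition \ref{monotonicity2}.
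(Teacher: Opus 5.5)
Your averaging step and your check that each $\nu_N$ has (P) are sound. The lemma, however, rests entirely on passing (P) to the weak limit, and neither of your two routes achieves this.

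Your first route cannot be set up as stated. Each $\mu_{k_\textsf{n}}$ is invariant only under its own map $T_{k_\textsf{n}}$, so for a fixed $k$ the measures with $k_\textsf{n}\neq k$ say nothing about $J_k$. The difference $T_k-J_k$ is the rounding matrix, whose entries are fractional parts and not small, so $T_k$ is not a small perturbation of $J_k$ as a map. Worse, $J_k$ does not preserve $\mathbb{Z}^n$, so it does not act on $\tilde{F}_0\subseteq\mathbb{T}^n$ at all. Proposition \ref{deflection} and Corollary \ref{neighboring_pieces5} relate the two systems only through codings, where both act as the shift by one $k$-block. Invariance under $\sigma^{k_\textsf{n}}$ with $k_\textsf{n}\to\infty$ is not inherited by weak limits: on two symbols $a,b$, the $\sigma^{k}$-invariant Dirac masses at $(ab^{k-1})^{\infty}$ converge to $\delta_{ab^{\infty}}$, which is not $\sigma$-invariant. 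Finally, the lift in Lemma \ref{lift} is a non-canonical choice of preimage measure and transports no invariance.

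Your fallback fails for a more basic reason. The comparison $\dim_H K_{k_\textsf{n}}\le\dim_H K$ in Lemma \ref{full_dimension2} relies on the coding correspondence between the perturbed fractals $F_{k_\textsf{n}}$ and $\tilde{F}$. It has no analogue when all the $\nu_{N_i}$ live on the same set $X$. There $K_{\alpha_i}\cap X$ is a nonempty relatively open subset of $X$. It therefore contains $f_{\mathbf{i}}(F)\ {\rm mod}\ 1$ for some long word $\mathbf{i}$, and so has dimension $\delta$ whatever $K$ is, even a single point. Hence $\nu_{N_i}(K_{\alpha_i})>0$ carries no information about $\dim_H K$. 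Theorem \ref{Hausdorff} and Proposition \ref{monotonicity2} compare $F_k$ with $\tilde{F}$ and do not bear on it.

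What is missing is an upper bound for $\mu^*(K)$, with $K$ compact, in terms of the averages themselves. The paper's proof is designed to supply exactly this. Instead of a weak limit, it fixes one diagonal subsequence $k_\textsf{s}$ along which the averages (\ref{mu_star1}) converge on every rectangle of $\mathcal{R}$. It builds $\mu^*$ from these limits: first on polygons, then on open sets from inside, then as an outer measure. It then asserts in (\ref{T_invariance2}) that $\mu^*(K)=\lim_{\textsf{s}}\frac{1}{k_\textsf{s}}\sum_{l<k_\textsf{s}}\overline{\mu}(T_F^{-l}K)$ for closed $K$. Given that, $\mu^*(K)>0$ forces $\overline{\mu}(T_F^{-l}K)>0$ for some $l$, and your own Step 2 reasoning then gives $\dim_H K=\delta$.

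Do not import this without scrutiny, though. (\ref{T_invariance2}) is derived from (\ref{T_invariance11}), where $\sup_{\textsf{m}}$ and $\lim_{\textsf{s}}$ are exchanged without justification. In general one only has $\mu^*(G)\le\liminf_{\textsf{s}}\frac{1}{k_\textsf{s}}\sum_{l}\overline{\mu}(T_F^{-l}G)$ for open $G$. Equivalently, $\mu^*(K)\ge\limsup_{\textsf{s}}\frac{1}{k_\textsf{s}}\sum_{l}\overline{\mu}(T_F^{-l}K)$ for closed $K$, which is the same useless direction you met with the Prokhorov argument.

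The identity really can fail for a measure with (P). Take $T=2$ on $\mathbb{T}^1$ and let $\overline{\mu}$ be the binary-digit product measure with digit probabilities $p_n=\frac12+\frac{1}{4\sqrt{n}}$. Then $\overline{\mu}$ has (P) with $\delta=1$. By Kakutani's criterion, every $\overline{\mu}\circ T^{-l}$ is singular to Lebesgue measure. The averages converge to Lebesgue measure, so $\mu^*$ is Lebesgue. Yet some closed $K$ of positive Lebesgue measure has $\overline{\mu}(T^{-l}K)=0$ for all $l$.

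So a complete proof, by either route, needs a genuinely new ingredient that controls how the averages $\nu_N$ can concentrate near sets of dimension below $\delta$. One candidate is an entropy or Ledrappier--Young type semicontinuity argument, of the kind Kenyon and Peres use to construct full-dimension measures.
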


\begin{pf} 
 \textit{Construction of $\mu^*$}: \quad For $i\leq n$, consider the closed (special) rectangles  $$R_i= [a_1,b_1]\times \cdots \times [a_i,b_i]=\{ x \in  \mathbb{R}^i \ : \ a_j \leq x_j \leq  b_j \ \ \ 1\leq j \leq i \}
 $$
and open (special) rectangles $$\overset{\circ}{R}_i= (a_1,b_1)\times \cdots \times (a_i,b_i)=\{ x \in  \mathbb{R}^i \ : \ a_j < x_j <  b_j \ \ \  1\leq j \leq i \}
.$$
$R_i,\overset{\circ}{R}_i$ can also be viewed as subsets of $\mathbb{R}^n$ in the obvious way: $R_i \equiv \{ (x,0,0,...,0) \in  \mathbb{R}^n \ :  x \in R_i  \}$, $\overset{\circ}{R}_i \equiv \{ (x,0,0,...,0) \in  \mathbb{R}^n \ :  x \in \overset{\circ}{R}_i  \}$, where $\equiv $ is used for the equivalence of sets. 
Note that $\overset{\circ}{R}_i / \mathbb{Z}^n$ is a finite union of non-overlapping rectangles in $[0,1]^n$ (not necessarily open or closed, e.g. half-open rectangles are possible). To handle the open sets in $\mathbb{T}^n$, we will consider $R_i,\overset{\circ}{R}_i$ as subsets of $\mathbb{R}^n$ and use the  rectangles in $\overset{\circ}{R}_i / \mathbb{Z}^n$.

 We will  construct a required type of measure on open sets. It is known that every non-empty open set $U$ in $\mathbb{R}^n$ can be written as a countable union of non-overlapping 
closed rectangles  in $\mathbb{R}^n$ \cite[Problem 9, p.35]{J}.
Then it follows that every non-empty open set in $\mathbb{R}^n$ can be written as a countable union of disjoint open rectangles  and a subset of the vertex set of such rectangles.
Since any open set in the subspace $\mathbb{T}^n$ is of the form $G=U /\mathbb{Z}^n$, the same is true for the open sets in $\mathbb{T}^n$.  
Let
$$\mathcal{R}=\{ \ J^{\circ} \  :  \ J^{\circ} \ \mathrm{is \ a \ rectangle \ in }\ \overset{\circ}{R}_i / \mathbb{Z}^n \ \mathrm{with \ vertices \ that \ have \ rational \ coordinates,}   \ 
1 \leq i \leq n \}.$$

Now we  construct a $T$-invariant measure on $F \ {\rm mod \ 1} \subseteq \mathbb{T}^n$. We start with rectangles $J^{\circ}$, then continue the construction with open sets and end up with Borel sets.
 Let $\overline{\mu}$ be a  measure on $F\ {\rm mod \ 1}$ as in Lemma \ref{lift}. Let $T_F$ denote the restriction of $T$ to $F\ {\rm mod \ 1}$. More explicitly, 
 let   $T_F: F\ {\rm mod \ 1}\rightarrow F\ {\rm mod \ 1} $ be given by $x\mapsto Tx \ {\rm mod \ 1},$ where $x\in F\ {\rm mod \ 1}.$
 For each $J^{\circ}\in \mathcal{R}$ and $k\in \mathbb{N}$, let
 \begin{equation}\label{mu_star1}
 S_k(J^{\circ})=\frac{1}{k}\sum_{l=0}^{k-1}\overline{\mu}(T_F^{-l}(J^{\circ}))  \quad  \quad \quad \mathrm{ and}  \quad \quad \quad S_k\left(T_F^{-1}(J^{\circ})\right)=\frac{1}{k}\sum_{l=0}^{k-1}\overline{\mu} \left(T_F^{-(l+1)}(J^{\circ})\right).
 \end{equation}
All  subsequences of these sequences are  bounded sequences (because $\overline{\mu}$ is a probability measure), and hence,  by the Bolzano–Weierstrass theorem, they have  convergent subsequences (depending on each $J^{\circ}$). But $\mathcal{R}$ is a countable set. Therefore, there is a
 subsequence $k_\textsf{s}$ of natural numbers such that  $\lim_{\textsf{s} \rightarrow \infty} S_{k_\textsf{s}}(J^{\circ})$ and $\lim_{\textsf{s} \rightarrow \infty} S_{k_\textsf{s}}\left(T_F^{-1}(J^{\circ})\right)$ exist for every $J^{\circ}$. Define
 \begin{equation}\label{mu_star2}
 \mu^*(J^{\circ}):=\lim_{\textsf{s} \rightarrow \infty} S_{k_\textsf{s}}(J^{\circ})  \quad  \quad \quad \mathrm{ and}  \quad \quad \quad  \mu^*\left(T_F^{-1}(J^{\circ}) \right):=\lim_{\textsf{s} \rightarrow \infty} S_{k_\textsf{s}}\left(T_F^{-1}(J^{\circ})\right).
 \end{equation}
 Next, by (\ref{mu_star1}) and (\ref{mu_star2}), we observe that 
 $$|\mu^*(T_F^{-1}(J^{\circ}))-\mu^*(J^{\circ})|=\lim_{\textsf{s} \rightarrow \infty} |S_{k_\textsf{s}}(T_F^{-1}(J^{\circ}))-S_{k_\textsf{s}}(J^{\circ})|=\lim_{\textsf{s} \rightarrow \infty}\frac{1}{k_\textsf{s}} |\overline{\mu}(T_F^{-k_\textsf{s}}(J^{\circ})) - \overline{\mu}(J^{\circ})|= 0.$$ 
That gives  \begin{equation}\label{mu_star3}\mu^*(T_F^{-1}(J^{\circ}))=\mu^*(J^{\circ}).\end{equation}

Now we can approximate an open set $G\subset \mathbb{T}^n$ by a finite number of non-overlapping  
rectangles  in $\mathcal{R}$ 
since it is a countable union of such rectangles.
Let
$V_0$
be the vertex set of 
the rectangles in $\mathcal{R}$.  
Note that we exclude the trivial case $dim_H F=0$ from the discussion.
Therefore,
$\delta = dim_H F > 0$.
 Because $V_0$ is countable, $dim_H (V_0)=0$. It then follows from Lemmas \ref{full_dimension2}-\ref{lift}  that $\overline{\mu}(V_0)=0$.

Let $P$ denote a finite union of non-overlapping  closed rectangles in 
$\mathcal{R}.$
Thus it can be written as a finite union of disjoint open rectangles in $\mathcal{R}$ and a subset of $V_0$. That subset may be
ignored because it has $\overline{\mu}$-measure 0.
Thus we can write
\begin{equation}\label{special_rectangle} P=S_\textsf{m}\bigcup \left(\overset{\textsf{m}}{\underset{\textsf{i}=1}{\cup}} J_\textsf{i}^{\circ}\right),
\end{equation}
 where $S_\textsf{m}\subseteq V_0$ and the $J_\textsf{i}^{\circ}\in \mathcal{R}$ are disjoint and open.
Using the measure in (\ref{mu_star2}), we define $$\mu^*(P):=\sum _{\textsf{i}=1}^\textsf{m} \mu^*(J_\textsf{i}^{\circ}), \quad \quad \quad \mu^*(T_F^{-1}(P)):=\overset{\textsf{m}}{\underset{\textsf{i}=1}{\sum}} \mu^*(T_F^{-1}(J_\textsf{i}^{\circ})).$$ 
Then $\mu^*(T_F^{-1}(P))=\mu^*(P)$
by (\ref{mu_star3}). Also 
(\ref{mu_star1}), (\ref{mu_star2}) yield 
\begin{equation}\label{polygon}
 \mu^*(P)= \sum _{\textsf{i}=1}^\textsf{m} \lim_{\textsf{s} \rightarrow \infty} \frac{1}{k_\textsf{s}}\sum_{l=0}^{k_\textsf{s}-1}\overline{\mu}(T_F^{-l}(J_\textsf{i}^{\circ}))=\lim_{\textsf{s} \rightarrow \infty} \frac{1}{k_\textsf{s}}\sum_{l=0}^{k_\textsf{s}-1}\sum _{\textsf{i}=1}^\textsf{m}\overline{\mu}(T_F^{-l}(J_\textsf{i}^{\circ}))=\lim_{\textsf{s} \rightarrow \infty} \frac{1}{k_\textsf{s}}\sum_{l=0}^{k_\textsf{s}-1}\overline{\mu}(T_F^{-l}P).
\end{equation}
Note that $P$ can be expressed in different ways
as a union of a vertex set and disjoint open rectangles as in (\ref{special_rectangle}).
But the equality $\sum _{\textsf{i}=1}^\textsf{m}\overline{\mu}(T_F^{-l}(J_\textsf{i}^{\circ}))=\overline{\mu}(T_F^{-l}(\bigcup_{\textsf{i}=1}^\textsf{m} J_\textsf{i}^{\circ})) = \overline{\mu}(T_F^{-l}P)$ is independent of the way
 $P$ is written as a disjoint union   because $\overline{\mu}$ is countably additive. This implies that  $\mu^*(P)$ is well-defined.

Let $G\subset \mathbb{T}^n$ be an open set. We define $\mu^*(G)$ by the classical recipe:
$$\mu^*(G):=\sup \{ \mu^*(P) \ : \ P\subseteq G \}.$$
For an arbitrary set $\textsf{A}$, we finally let $\mu^*(\textsf{A}):=\inf \{ \mu^*(G) \ : \ \textsf{A}\subseteq G, \ \ G \textrm{ \ is open}  \} $.
Strictly speaking, $\mu^*$ is an outer measure. If we take Carath$\acute{e}$odory's definition for measurability, $\mu^*$ restricted to Borel sets is a regular measure \cite[p.50]{J}.

\textit{T-invariance of $\mu^*$}: As above, let $G\subset \mathbb{T}^n$ be an open set. Then $G=S\bigcup \left(\overset{\infty}{\underset{\textsf{i}=1}{\cup}}J_\textsf{i}^{\circ}\right)$,
   where $S$ is a subset of $V_0$ and the $J_\textsf{i}^{\circ}\in \mathcal{R}$ are disjoint. Renaming the $P$  in (\ref{special_rectangle}) as $P_\textsf{m}$, we may 
   write the union $G=S\bigcup \left(\overset{\infty}{\underset{\textsf{i}=1}{\cup}} J_\textsf{i}^{\circ}\right)$ as  
   $G=\overset{\infty}{\underset{\textsf{m}=1}{\bigcup}}  P_\textsf{m}$, where $P_\textsf{m}\subseteq P_{_\textsf{m}+1}$, \ $S_\textsf{m}\subseteq S_{_\textsf{m}+1}$.   
This gives
\begin{eqnarray}\label{T_invariance11}
\mu^*(G) & = & \underset{\textsf{m}\rightarrow \infty} {\lim} \mu^*(P_\textsf{m})=\underset{\textsf{m}}{\sup} \ \mu^*(P_\textsf{m}) =  \underset{\textsf{m}}{\sup} \ \lim_{\textsf{s} \rightarrow \infty} \frac{1}{k_\textsf{s}}\sum_{l=0}^{k_\textsf{s}-1}\overline{\mu}(T_F^{-l}(P_\textsf{m}))  \quad \quad  \mathrm{by} \ \ (\ref{polygon})
\nonumber \\
& = & \lim_{\textsf{s} \rightarrow \infty} \frac{1}{k_\textsf{s}} \sum_{l=0}^{k_\textsf{s}-1}\underset{\textsf{m}}{\sup} \ \overline{\mu}(T_F^{-l}(P_\textsf{m}))=
\lim_{\textsf{s} \rightarrow \infty} \frac{1}{k_\textsf{s}} \sum_{l=0}^{k_\textsf{s}-1} \overline{\mu}(T_F^{-l}G).
\end{eqnarray}
For a closed set $K\subset \mathbb{T}^n$, we let $G_K=\mathbb{T}^n\setminus K$. Then, by (\ref{T_invariance11}),
\begin{eqnarray}\label{T_invariance2} 
\mu^*(K)& = & 1-\mu^*(G_K)=1-\lim_{\textsf{s} \rightarrow \infty}\frac{1}{k_\textsf{s}} \sum_{l=0}^{k_\textsf{s}-1} \overline{\mu}(T_F^{-l}(G_K)) \nonumber \\
 & = &\lim_{\textsf{s} \rightarrow \infty}\frac{1}{k_\textsf{s}} \sum_{l=0}^{k_\textsf{s}-1}(1-\overline{\mu}(T_F^{-l}(G_K)))  =
\lim_{\textsf{s} \rightarrow \infty}\frac{1}{k_\textsf{s}}\sum_{l=0}^{k_\textsf{s}-1}\overline{\mu}(T_F^{-l}(K)).
\end{eqnarray}
Similar to the proof of (\ref{mu_star3}), by (\ref{T_invariance11}), (\ref{T_invariance2}), we obtain 
\begin{eqnarray}\label{T_invariance3}
\mu^*(G)=\mu^*(T_F^{-1}(G)) \ \ \ \mathrm{and} \ \ \  \mu^*(K)=\mu^*(T_F^{-1}(K))
\end{eqnarray}  
leading to $T$-invariance of $\mu^*$: Since $\mu^*$ is regular, for a Borel set $B$, there exists a sequence $\{G_\textsf{i}\}$ of open sets and a sequence $\{K_\textsf{i}\}$ of compact sets such that $K_\textsf{i}\subseteq B\subseteq G_\textsf{i}$
and $\underset{\textsf{i}\rightarrow \infty}{\lim} \mu^*(G_\textsf{i}\setminus K_\textsf{i})=0.$ Then $\underset{\textsf{i}\rightarrow \infty}{\lim} \mu^*(G_\textsf{i}\setminus B)=0$ implies
\begin{eqnarray}\label{T_invariance4}
\underset{\textsf{i}\rightarrow \infty}{\lim}\mu^*(G_\textsf{i})=\mu^*(B).
\end{eqnarray}

On the other hand, (\ref{T_invariance3})
implies
$\underset{\textsf{i}\rightarrow \infty}{\lim}\mu^*(T_F^{-1}(G_\textsf{i}\setminus K_\textsf{i}))=0.$ Hence $\underset{\textsf{i}\rightarrow \infty}{\lim}\mu^*(T_F^{-1}(G_\textsf{i}\setminus B))=0$ and
\begin{eqnarray}\label{T_invariance5}
\underset{\textsf{i}\rightarrow \infty}{\lim}\mu^*(T_F^{-1}(G_\textsf{i}))=\mu^*(T_F^{-1}( B)).
\end{eqnarray}
Combining (\ref{T_invariance3}), (\ref{T_invariance4}), (\ref{T_invariance5}), we then get
$$\mu^*(B)=\underset{\textsf{i}\rightarrow \infty}{\lim}\mu^*(G_\textsf{i})=\underset{\textsf{i}\rightarrow \infty}{\lim}\mu^*(T_F^{-1}(G_\textsf{i}))=\mu^*(T_F^{-1}( B)),$$
which is what we mean by $T$-invariance of $\mu^*$.

\textit{$\mu^*$ has full dimension :} Now assume that $B$ is a Borel set with $B\subset F \ {\rm mod \ 1} $ and $\mu^*(B)>0$. By the regularity of $\mu^*$, there is a compact set $K\subseteq B$ with  $\mu^*(K)>0$. By (\ref{T_invariance2}),  $\mu^*(K)>0 \  \Rightarrow \ \overline{\mu}(T_F^{-l}K)>0$ for some $l$.
Then it follows from Lemma \ref{lift}
that $dim_H(T_F^{-l}K)=\delta$.
We then conclude that $dim_H B = dim_H K={\delta}.$
\end{pf}

\bigskip

Let $\mathcal{M}(X,T)$ denote the set of $T$-invariant Borel probability measures  on $X=F(T,A) \ {\rm mod \ 1} \subseteq \mathbb{T}^n$ and let
$$  \mathcal{M}_{\delta}(X,T)= \{ \mu \in \mathcal{M}(X,T) : \ ``\textrm{$B$ is a Borel set with} \ \mu(B)>0 "
\ \ \Rightarrow  \ dim_H B = \delta \}.$$


In other words, $\mathcal{M}_{\delta}(X,T)$ consists of the measures in $\mathcal{M}(X,T)$ which have property (P). Now we show that there exists an ergodic $T$-invariant Borel probability measure  $\mu^*$ on $X$
of full dimension.

\bigskip

\begin{theorem}\label{full_dimension4}
Assume that $F=F(T,A)$ is an integral self-affine set. 
Then, viewed as an invariant subset of the n-torus under the toral endomorphism $T$, there is an ergodic $T$-invariant Borel probability measure on $X$
of full dimension.
\end{theorem}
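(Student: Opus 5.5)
The plan follows steps 5 and 6 of the outline preceding Lemma \ref{Complement_F0}. First, $\mathcal{M}_{\delta}(X,T)\neq\emptyset$ by Lemma \ref{full_dimension3}, since the measure $\mu^*$ constructed there is $T$-invariant and has property (P). Next I would show that $\mathcal{M}_{\delta}(X,T)$ is convex and compact in the weak topology, equivalently with respect to the Prokhorov metric $d_P$. Then I would use the Krein--Milman theorem: a nonempty compact convex subset of the locally convex space of finite signed measures, with the weak-* topology, has an extreme point $\nu$. The last step is to show that $\nu$ is ergodic. Property (P) then gives full dimension: any Borel $B$ with $\nu(B)=1$ has $\dim_H B=\delta$, so $\dim(\nu)=\delta=\dim_H X$.

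Convexity is routine. If $\mu_1,\mu_2\in\mathcal{M}_\delta(X,T)$ and $t\in[0,1]$, then $t\mu_1+(1-t)\mu_2$ is $T$-invariant. If it charges $B$, then $\mu_1(B)>0$ or $\mu_2(B)>0$, so $\dim_H B=\delta$.

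For ergodicity of an extreme point I would use the standard argument. Suppose $E$ is $T$-invariant with $0<\nu(E)<1$. Then $\nu_E=\nu(\cdot\cap E)/\nu(E)$ and $\nu_{E^c}$ are $T$-invariant probability measures. They are absolutely continuous with respect to $\nu$, so they inherit property (P) and lie in $\mathcal{M}_\delta(X,T)$. Since $\nu=\nu(E)\nu_E+\nu(E^c)\nu_{E^c}$ with $\nu_E\neq\nu_{E^c}$, this contradicts extremality.

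The main obstacle is compactness, specifically closedness of $\mathcal{M}_\delta(X,T)$ under weak limits. $\mathcal{M}(X,T)$ is weakly compact because $X$ is compact and $T$ is continuous. Property (P), however, is not obviously preserved under weak convergence: a limit of measures each charging only full-dimensional sets could charge a low-dimensional set. My first attempt would reuse the mechanism of Lemmas \ref{Borel_measure} and \ref{full_dimension2}. Let $\mu_j\to\mu$ with $\mu_j\in\mathcal{M}_\delta$, and let $\mu(B)>0$. Take a compact $K\subset B$ with $\mu(K)>0$. The Prokhorov estimate gives $\mu_j(K_{\alpha_j})>0$ for neighbourhoods shrinking to $K$, and these neighbourhoods have full dimension. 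The hard part is passing the dimension down to $K$ itself.

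As a fallback that avoids this closedness issue, I would apply Krein--Milman or Choquet theory inside the compact convex set $\mathcal{M}(X,T)$. I would take the ergodic decomposition of $\mu^*$ and observe that if $\mu^*$-almost every ergodic component failed (P), the components would be supported on sets of dimension $<\delta$. Taking a countable union over rational dimension thresholds would produce a Borel set of dimension $<\delta$ with positive $\mu^*$-measure, contradicting (P) for $\mu^*$. Hence some ergodic component of $\mu^*$ has full dimension. I expect this route is the more robust one.
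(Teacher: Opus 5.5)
Your overall structure matches the paper's: nonemptiness from Lemma \ref{full_dimension3}, convexity, Krein--Milman, and ergodicity of an extreme point via $\nu_E,\nu_{E^c}$. Those pieces are fine. The gap is the one you flag yourself: nothing you write shows that $\mathcal{M}_{\delta}(X,T)$ is weakly closed, and without compactness Krein--Milman gives nothing.

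Your first attempt cannot be finished as sketched. $K_{\alpha_j}\cap X$ is a nonempty relatively open subset of $X$, so it contains the image of a cylinder of $F$ and automatically has dimension $\delta$. Hence $\mu_j(K_{\alpha_j})>0$ says nothing about $\dim_H K$. In fact no argument using only weak convergence and regularity can work, because it would apply verbatim to the identity map on $[0,1]$. There, the normalized Lebesgue measures on $[0,1/k]$ satisfy (P) with $\delta=1$, yet they converge weakly to $\delta_0$, which fails (P).

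The paper's own treatment of this step does not fill the gap either. It approximates $\mathbf{1}_K$ \emph{uniformly} by continuous functions, which is impossible unless $K$ is clopen. It then passes to the limit in $\int_K f\,d\mu_k$, whose integrand is not continuous, to reach $\mu(K)\le 2\limsup_k\mu_k(K)$. For closed $K$ the portmanteau theorem gives only the reverse inequality $\limsup_k\mu_k(K)\le\mu(K)$. The example $\delta_{1/k}\to\delta_0$ with $K=\{0\}$ violates the claimed inequality.

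Your fallback has a gap of the same kind. For an ergodic component $\nu$ failing (P), you can indeed upgrade a $\nu$-positive set $B$ with $\dim_H B<\delta$ to the $\nu$-full set $\bigcup_{m\ge 0}T^{-m}B$, which has the same dimension. But that set depends on $\nu$, and the components form an uncountable family in general. Countable stability of $\dim_H$ does not control uncountable unions, so grouping components by rational dimension thresholds does not produce a single low-dimensional Borel set of positive $\mu^*$-measure. The identity map on $[0,1]$ again shows that the ergodic decomposition alone cannot give this: Lebesgue measure is invariant and has (P) with $\delta=1$, yet every ergodic component is a Dirac mass.

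Either route needs genuinely dynamical input linking the dimension of an invariant measure to the dimensions of its ergodic components. One form would be $\underline{d}_{\mu}(x)\le\dim\mu_x$ for $\mu$-a.e.\ $x$, where $\underline{d}_\mu$ is the lower local dimension and $\mu_x$ the ergodic component of $x$. Another would be an affine upper semicontinuous functional on $\mathcal{M}(X,T)$ whose maximal level set is exactly $\mathcal{M}_{\delta}(X,T)$; that set would then be a closed face to which Krein--Milman applies. For diagonal integer $T$, this is the kind of input the Kenyon--Peres entropy formula (of Ledrappier--Young type) supplies. Neither your proposal nor the paper supplies it for the general integer matrix $T$ of the theorem.
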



\begin{pf} (\textbf{Proof of Theorem 1.3})
 By Lemma \ref{full_dimension3},  $\mathcal{M}_{\delta}(X,T)$ is non-empty. Let $\{\mu_k\} \subset \mathcal{M}_{\delta}(X,T)$ be a sequence converging weakly to a measure $\mu$. 

 We first prove that $\mu$ has property (P).
Assume that $\mu(B)>0$  for a Borel set $B\subseteq X$. By regularity, there exists a compact set $K\subseteq B$ with $\mu(K)>0$. 
Let $\mathbf{1}_K$ denote the indicator function of $K$.   
It can be approximated by continuous  bounded functions because the set of continuous bounded functions are dense in the set of $\mu$-integrable functions \cite[Proposition 7.9, p.217]{F}.
Thus, for any positive integer $\textsf{n}$, there is a  continuous bounded function $f_{\textsf{n}}$ on $X$ such that $|\mathbf{1}_K-f_{\textsf{n}}|<\frac{1}{\textsf{n}}$. It follows that $|f_{\textsf{n}}|\leq 2$, i.e., $f_{\textsf{n}}$ is uniformly bounded. Then 
$$\mu(K)=\int_X \mathbf{1}_K \ d\mu =\int_K \left( \mathbf{1}_K-f_{\textsf{n}} \right) \ d\mu + \int_K f_{\textsf{n}} \ d\mu \leq \frac{1}{\textsf{n}} + \int_K f_{\textsf{n}} \ d\mu
\Longrightarrow \mu(K) \leq \underset{\textsf{n}\rightarrow \infty}{\limsup} \ \left( \frac{1}{\textsf{n}} + \int_K f_{\textsf{n}} \ d\mu \right).
$$
By \cite {B}, the weak convergence implies 
$$\mu(K)\leq \underset{\textsf{n}\rightarrow \infty}{\limsup} \  \int_K f_{\textsf{n}} \ d\mu =\underset{\textsf{n}\rightarrow \infty}{\limsup}  \left( \underset{k\rightarrow \infty}{\lim} \int_K f_{\textsf{n}}  \ d\mu_k \right)  \leq \underset{\textsf{n}\rightarrow \infty}{\limsup}  \left( \underset{k\rightarrow \infty}{\lim} \int_K 2 \ d\mu_k \right)  =\underset{k\rightarrow \infty}{\limsup} \ 2\mu_k(K).$$ As a result,  we must have 
$\mu_{k}(K)>0$ for some $k$ because of $\mu(K)>0$.  Since $\mu_k\in \mathcal{M}_{\delta}(X,T)$, it has property (P) by the definition of $ \mathcal{M}_{\delta}(X,T)$.
Hence $K\subseteq B \Longrightarrow dim_H B = dim_H K=\delta$. 

Secondly, we show that $\mu$ is $T$-invariant. Let   $T_F: F\ {\rm mod \ 1}\rightarrow F\ {\rm mod \ 1} $ be given by $x\mapsto Tx \ {\rm mod \ 1}$ as in the previous proof.
 It is clear that $T_F$ is bounded with respect to the natural torus metric. It is also continuous since  $T_F$ can be written as the composition of the quotient map  $q: \mathbb{R}^n \longrightarrow \mathbb{R}^n / \sim \ =\mathbb{R}^n$ mod 1 (for $\mathbb{R}^n$  mod 1, the equivalence relation is $x\sim y \Longleftrightarrow x- y\in \mathbb{Z}^n$)  and  $T$ on $F\ {\rm mod \ 1}$. 
  For continuous bounded functions $g$, we have $$\int_X g \ d\mu =  \underset{k\rightarrow \infty}{\lim} \int_X g  \ d\mu_k =
  \underset{k\rightarrow \infty}{\lim} \int_X g \circ T_F  \ d\mu_k =\int_X g \circ T_F   \ d\mu $$ because $\mu_{k}\rightarrow \mu   \  {\rm (weakly)}$ and  $\mu_k\in \mathcal{M}_{\delta}(X,T)$ is $T$-invariant. This implies that $\mu$ is $T$-invariant too:  
 Let $\textsf{B}$ be any $\mu$-measurable set. As above,   $\mathbf{1}_\textsf{B}$ is the limit of a sequence of  uniformly bounded continuous  functions $g_{\textsf{n}}$.  
 Therefore, we can use the Dominated Convergence Theorem to conclude that
  $$\mu(\textsf{B})=\int_X \mathbf{1}_\textsf{B} \ d\mu= \underset{\textsf{n}\rightarrow \infty}{\lim} \int_X  g_{\textsf{n}} \ d\mu=\underset{\textsf{n}\rightarrow \infty}{\lim} \int_X g_{\textsf{n}} \circ T_F\ d\mu =  \int_X \mathbf{1}_\textsf{B} \circ T_F \ d\mu=\mu(T_F^{-1}\textsf{B}).$$

Thus $  \mathcal{M}_{\delta}(X,T)$ is closed. Since $\mathcal{M}(X,T)$ is compact, $  \mathcal{M}_{\delta}(X,T)$
is also compact and non-empty. Then, by the  Krein-Milman theorem, 
the compact and convex set $\mathcal{M}_{\delta}(X,T)$ has an extremal point
 in $  \mathcal{M}_{\delta}(X,T)$, which is the required ergodic measure \cite[Proposition 4.6.2, p.86]{BS}, \cite[Thoerem 6.10, p.152]{W}.
\end{pf}
 
\bigskip

\section{Appendix : Dimension Formulas for Perturbed Fractals $F_k$}\label{appendix}


\subsection{Deconstruction of $F$}\label{Decon}

To find the dimension of an arbitrary integral self-affine set $F$,
we need  to use graph-directed sets to deal with the overlapping pieces of $F$. Here we mention that  we consider
IFSs involving affine maps that are
not necessarily similarities. We present the relevant
material now.
Let V be a set of vertices which we label $\{1, 2, ..., \textsf{m}\}$, and E be a set of
`directed edges' with each edge starting and ending at a vertex so that $(V,E)$
is a directed graph.
We write $E_{i, j}$
for the set of edges from vertex $i$ to vertex $j$, and $E_{i, j}^r$ for the
set of sequences of $\textsf{r}$  edges $(e_1,e_2,...,e_{\textsf{r}})$ which form a directed path
from vertex $i$ to vertex $j$.

\bigskip

We assume that $f_{e}(x)=T^{-1}(x+e)$ for $e \in E_{i, j}\subset \mathbb{Z}^{n}$,
where $T\in M_n({\Bbb{Z}})$ is expanding.
It is well known that there exists a
unique family of nonempty compact sets $\Gamma_1,...,\Gamma_\textsf{m} \subset \mathbb{R}^{n}$
such that
\begin{eqnarray}\label{eqn9}
\Gamma_i & = & \bigcup_{j =1}^\textsf{m} \bigcup _{e \in E_{i, j}} f_e(\Gamma_j)=\bigcup
_{j =1}^m T^{-1}(\Gamma_j + E_{i, j})
\end{eqnarray}

The set $\{ f_e: \ e \in E \}$ is called a
\emph{graph-directed iterated function system} and the sets $\Gamma_1,...,\Gamma_\textsf{m} $
form a family of \emph{graph-directed sets} \cite[p.48]{F2}.

We recall some facts from \cite{HLR}. Let $\partial S $ stand for the boundary of a set $S$ and $\# S$ the cardinality of $S$. For any integral
self-affine set $F=F(T,A)$ with at least two digits, there exist graph directed sets $\Gamma_0, \Gamma_1,...,\Gamma_\textsf{m}$
such that
$\Gamma_0=F$ or
$\Gamma_0=\partial F$. This graph directed system is obtained from an \emph{auxiliary self-affine tile}
$\Gamma:=F(T,D)$ such that $\Gamma=T^{-1}(\Gamma+D)$, \ \ 
$F\subset  { \Gamma }$, \ \ $\Gamma+A \subseteq T \Gamma$ and

\bigskip

(I) For each $i\in \{0,1,2,..,m\}$, \quad $ E_{i, j}\subset D$ and $ \sum_{j=0}^\textsf{m} \# E_{i, j}\leq \# D=|\det(T)|=N$.


(II) For each $i$, $E_{i, j}\cap  E_{i, j'}= \emptyset$ if $j \neq j'$.

(III) $D =a+c_0D'$, where $a \in \mathbb{Z}^n$, $c_0\in \mathbb{N}$ and $D'$ is a complete residue system for $T\in M_n({\Bbb{Z}})$.
In case of diagonal matrices like $T=diag(m_1,m_2,...,m_n)\in M_n({\Bbb{Z}})$, \ $D'$ will be of the form $$D'=\prod_{p=1}^n \{0,1,2...,m_p-1\}.$$
\bigskip

The vertices and the edges are as follows: We define the index sets
$$J_k=\{\mathbf{j}=(j_1,j_2,...,j_k): 1\leq j_i \leq q \},  \ \ \ \Sigma_k=\{\textbf{i}=(i_1,i_2,...,i_k): 1\leq  i_j \leq N \}.$$
For  ${\mathbf{j}}\in J_k$ and $\textbf{i}\in \Sigma_k$, we set $a_{\mathbf{j}}=\Sigma_{i=1}^{k}T^{(k-i)}a_{j_i}$ and $d_{\textbf{i}}=\Sigma_{j=1}^{k}T^{(k-j)}d_{i_j}$.
Let $$f_{\mathbf{j}}( \Gamma )=T^{-k}(  \Gamma  + a_{\mathbf{j}}), \ \ \ \ \psi_{\textbf{i}}( \Gamma ) =T^{-k}( \Gamma  +d_{\textbf{i}}).$$
For each $\textbf{i}\in \Sigma_k$, let
\begin{equation}\label{del}
 \Delta (\textbf{i} ):= \{ a_{\mathbf{j}}-d_{\textbf{i}} \ : \ {\mathbf{j}}\in J_k , \ f_{\mathbf{j}}( \Gamma )  \cap \psi_{\textbf{i}}( \Gamma )\neq \emptyset \}, \quad \quad 
 \quad \mathcal{S}_k=\{ \textbf{i}\in \Sigma_k \ : \ \Delta (\textbf{i}) \neq \emptyset \}
\end{equation} 
Finally, we define the vertex set
$V_F = \{ \Delta (\textbf{i} ) \ : \  \textbf{i}\in \overset{\infty}{\underset{{k=1}}{\cup}} \mathcal{S}_k \} $. On the other hand, $a_{\mathbf{j}}-d_{\textbf{i}} \in \Delta (\textbf{i} ) \Longrightarrow |a_{\mathbf{j}}-d_{\textbf{i}}|\leq diam(\Gamma)$ and $a_{\mathbf{j}}, d_{\textbf{i}} \in\mathbb{Z}^{n}$ so that $V_F$ is finite.  Since $V_F$ is a finite set, we write
\begin{equation}\label{Ver}
  V_F= \{v_0=\Delta (0):=\{0\}, v_1=\Delta (\textbf{i}_1),v_2=\Delta (\textbf{i}_2),...,v_m=\Delta (\textbf{i}_\textsf{m})  \}.
\end{equation}
We will also assume that first $l \ (\leq \textsf{m})$ vertices in $V_F$ form $\mathcal{S}_1 $, i.e., $\mathcal{S}_1=\{v_1,v_2,...,v_l\}$.
For $0 \leq i,j \leq m$, the edges are defined by
\begin{equation}\label{OrEd}
E_{i, j}=\{d_s\in D \ : \ \Delta (\textbf{i}_i s)= \Delta(\textbf{i}_j),  \  \ 1 \leq s \leq N \}.
\end{equation}
If $\bigcup_{j=0}^\textsf{m} E_{i, j} = \emptyset$ for some $i$, we will discard $v_i$ and all paths going to $v_i$.
Let
\begin{equation}\label{Ed}E_F:=\bigcup_{i, j=0}^\textsf{m} E_{i, j}.
\end{equation}

Notice that the graph $(V_F,E_F)$, depending on $\Gamma$ (or $D$), is not unique. For simple examples and more detail, we refer to \cite{HLR}.
In the more general setting of directed graphs, we have an analogue of the open set
condition. We say that $\{ f_e: \ e \in E \}$ satisfies the \emph{( graph) open set condition} if
there exist bounded non-empty open sets $O_0, O_1, . . . , O_\textsf{m}$ such that
$$  \bigcup_{j =0}^\textsf{m} \ \bigcup _{e \in E_{i, j}} f_e(O_j)\subseteq O_i,$$
where the unions are disjoint for each $i$.

Let $B=[b_{ij}]:=[\#E_{i, j}]$, $0\leq i,j\leq m$, be the adjacency
matrix of the directed graph $(V_F,E_F)$.
In order to be able to use the extension of McMullen's formula to certain graph directed sets below, we
must use a complete residue system like $D'$ in (III) rather than $D$. For that, take $d'_s\in D'$ instead of $d_s\in D$
in (\ref{OrEd}) and call the new edge set $E'_F:=\bigcup_{i, j=0}^\textsf{m} E'_{i, j}$, where  $E'_{i, j}:=\{d'_s\in D'\ : \ \Delta (\textbf{i}_i s)= \Delta(\textbf{i}_j),  \  \ 1 \leq s \leq N \}.$
When we do this, note that we keep the index $s$ same; i.e. it is obtained by using
$\Delta (\textbf{i} )$ in (\ref{del}) and $\Delta (\textbf{i} )$ uses $\Gamma (T,D)$.
The reason for this strategy is that $\Gamma= F(T,D)$ contains $F$ (as required for the deconstruction above)
although $\Gamma':=F (T,D')$ may not. Then write $\Gamma'_i$ and $E'_{i, j}$ in place of $\Gamma_i$ and $E_{i, j}$ in (\ref{eqn9}). This yields $B=[b_{ij}]=[\#E'_{i, j}]$, and
similar to Proposition 3.3 in \cite{HLR}, one can prove the following deconstruction result.

\begin{prop}\label{fractal2} Let $F=F(T,A)$ be an integral self-affine set,  $E'_F:=\bigcup_{i, j=0}^\textsf{m} E'_{i, j}$, and let $\Gamma'_i$ be the graph directed sets
corresponding to the graph $(V_F,E'_F)$
Then

(i) the graph directed IFS $\{ f_{e'}: \ e' \in E'_F \}$  satisfies the graph
OSC,

(ii) $\{T^{-k}d'_{\textbf{i}} : \ i_1 = i, \ {\textbf{i}}\in \mathcal{S}_k\}\rightarrow \Gamma'_i$ as $k\rightarrow \infty ,$

(iii)  $ {dim}_H(F')=
{dim}_H F, $ where $F':= \Gamma'_1\cup \cdots \cup \Gamma'_l\subseteq \Gamma'
$ and
 $l=\#\mathcal{S}_1$.
\end{prop}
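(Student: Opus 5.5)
The proof follows Proposition 3.3 of \cite{HLR}, transplanted to the primed graph $(V_F,E'_F)$. The one structural change is that the edge labels now come from the complete residue system $D'$, while the vertex sets $\Delta(\textbf{i})$ are still computed with $\Gamma=F(T,D)$. We prove (i), (ii), (iii) in that order.

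For (i), let $\Gamma'=F(T,D')$. Since $D'$ is a complete residue system for $T$, $\Gamma'$ is a self-affine tile with $\Gamma'=T^{-1}(\Gamma'+D')$ and the pieces $T^{-1}(\overset{\circ}{\Gamma'}+d')$, $d'\in D'$, pairwise disjoint. Take $O_i:=\overset{\circ}{\Gamma'}$ for every vertex $i$. By (II) the sets $E'_{i,j}$ are disjoint in $j$, and distinct edges out of $i$ carry distinct $d'_s\in D'$. Hence $\bigcup_j\bigcup_{e'\in E'_{i,j}}f_{e'}(O_j)$ is a disjoint union of distinct pieces $T^{-1}(\overset{\circ}{\Gamma'}+d'_s)$, all contained in $\overset{\circ}{\Gamma'}=O_i$. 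This gives the graph OSC.

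For (ii), iterate (\ref{eqn9}) with the primed edges $k$ times. This writes $\Gamma'_i$ as the union over $k$-paths from $v_i$ of $T^{-k}(\Gamma'_{j}+d'_{\textbf{i}})$. By (\ref{OrEd}), the $k$-paths from $v_i$ correspond exactly to those words $\textbf{i}\in\mathcal S_k$ whose first letter labels the transition out of $v_i$; nonempty $\Delta$ is preserved along the path and dead vertices were discarded. Each $T^{-k}\Gamma'_j$ has diameter at most $c\|T^{-k}\|\to0$. So the finite sets $\{T^{-k}d'_{\textbf{i}}\}$ lie within $c\|T^{-k}\|$ of $\Gamma'_i$ in the Hausdorff metric, and conversely, which proves the convergence.

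For (iii) we compare with the unprimed system, where \cite[Prop. 3.3]{HLR} gives $F=\Gamma_1\cup\cdots\cup\Gamma_l$. The primed and unprimed systems share the same graph and the same adjacency matrix $B$, and they use the same linear part $T$ on every edge. Their symbolic spaces (infinite paths starting in $\mathcal S_1$) coincide, and the two coding maps differ only by relabelling translations $d_s\leftrightarrow d'_s=(d_s-a)/c_0$ edge by edge.

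The main obstacle is turning this into a dimension equality, since the relabelling is not an affine conjugacy. The first attempt is the observation that the level-$k$ cylinders of both sets are translates of $T^{-k}\Gamma_j$, respectively $T^{-k}\Gamma'_j$, indexed by the same paths. Thus the counting of approximate cubes (pieces in the sense of Definition \ref{piece}, commensurable with mesh cubes) in the Hausdorff measure estimates is identical up to bounded multiplicity. For $F$ the multiplicity bound comes from Lemma \ref{bound}; for $F'$ it comes from the OSC in (i). Then Lemma \ref{piece_measure}-type comparisons give $\mathcal H^{\alpha}$-null and $\infty$ thresholds that agree. If this bookkeeping fails, the fallback is to argue as in \cite{HLR} that both dimensions equal the same McMullen-type graph formula. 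That formula depends only on $B$, on $T$, and on the projected counts of the digit sets. Since $D=a+c_0D'$ and the coordinate projections respect the affine map $x\mapsto a+c_0x$ blockwise (for diagonal $T$), those counts agree.
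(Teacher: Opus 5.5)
\textbf{Parts (i) and (ii).} These are fine. They follow the standard \cite{HLR} argument that the paper invokes by reference. For (i), you use the common open set $\overset{\circ}{\Gamma'}$ at every vertex; this works because $D'$ is a complete residue system and the labels leaving a vertex are distinct. For (ii), you iterate the graph-directed equations and use $\|T^{-k}\|\to 0$.

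\textbf{Part (iii): the gap.} It rests on a false claim: the relabelling \emph{is} an affine conjugacy. Every label changes by the same map $d'\mapsto a+c_0d'$, and $c_0$ is a scalar commuting with $T$. Put $\phi(x)=(T-I)^{-1}a+c_0x$; here $T-I$ is invertible because $T$ is expanding.
\begin{itemize}
\item Using $a+(T-I)^{-1}a=T(T-I)^{-1}a$, one checks $\phi\circ f_{d'}=f_{a+c_0d'}\circ\phi$ for every $d'\in D'$. On codings this reads $\sum_{i\geq 1}T^{-i}(a+c_0d'_{\omega_i})=(T-I)^{-1}a+c_0\sum_{i\geq 1}T^{-i}d'_{\omega_i}$.
\item The primed and unprimed systems sit on the same graph, with $E_{i,j}=a+c_0E'_{i,j}$ label by label. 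Hence $(\phi(\Gamma'_i))_i$ satisfies the unprimed graph-directed equations.
\item By uniqueness of the graph-directed attractor, $\Gamma_i=\phi(\Gamma'_i)$. This is exactly the paper's one-line observation $\Gamma_i=(T-I)^{-1}a+c_0\Gamma'_i$.
\item Since $\phi$ is a similarity and $F$ is a finite union of affine copies of $\Gamma_1,\dots,\Gamma_l$, you get $\dim_H F=\max_i\dim_H\Gamma_i=\max_i\dim_H\Gamma'_i=\dim_H F'$.
\end{itemize}
Your own final remark, that $x\mapsto a+c_0x$ is compatible with the structure, was the right idea. Applied to the whole system rather than to projected counts, it gives the result immediately.

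\textbf{Why your two substitutes do not close (iii).}
\begin{itemize}
\item The ``first attempt'' only matches symbolic cylinder counts level by level, which is box-dimension information. To compare Hausdorff measures you must transfer arbitrary covers between two \emph{different} sets.
\item That transfer also needs $\mathrm{diam}(T^{-k}\Gamma_j)$ and $\mathrm{diam}(T^{-k}\Gamma'_j)$ to be comparable. For non-conformal $T$ this is not automatic, and it is precisely what the conjugacy supplies.
\item Lemmas \ref{bound} and \ref{piece_measure} each concern a single set, so they do not perform this transfer.
\item The fallback covers only diagonal $T$, whereas the proposition is stated for arbitrary integral $T$.
\item The fallback is also circular. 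The labels $D=a+c_0D'$ are not a residue system, so the McMullen/Kenyon--Peres type formulas (Propositions \ref{graph-dir_general3} and \ref{graph-dir_general}) reach $F$ only by passing to $F'$ through this very proposition. The proof of Proposition \ref{graph-dir_general3} says so explicitly.
\end{itemize}
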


\noindent  We also mention that $F$ is  a union of some affine copies of $\Gamma_1, ...,  \Gamma_l$ according to \cite{HLR}, where $\Gamma_0=F$. Further, 
$$\Gamma_i=(T-I)^{-1}a+c_0\Gamma'_i \quad 1\leq i\leq l$$  since  $D =a+c_0D'$ in (III) above. Therefore,  $ {dim}_H(F')={dim}_H F.$

\subsection{The Box Dimension of $F_k$
}\label{McMullen3}

Assume that $T$ is a diagonal matrix with
\begin{equation}\label{diagonal_general}
 T=diag(m_1,m_2,...,m_n)\in M_n({\Bbb{Z}})
 \ \ \ \textrm{and} \ \ \ {\tiny 2\leq m_1 <m_2 < \cdots < m_n}.
\end{equation} 
Let $B=[b_{ij}]:=[\#E_{i, j}]=[\#E'_{i, j}]$, $1\leq i,j\leq m$, be the \emph{adjacency}
matrix of the directed graph $G=(V_F,E'_F)$ in  Proposition \ref{fractal2}. 
Then  the edges in $E'_F$ are labeled by
$$D'=\prod_{p=1}^n \{0,1,2...,m_p-1\}$$ so that
$D=a+c_0D'$
 is our digit set $D$ \underline{for the auxiliary tile $\Gamma$} in the previous section.

For comparison of our results with \cite {KP}, we may also use the terminology of dynamical systems. Then by Proposition \ref{fractal2}, the self-affine set $F(T,A)$ can be viewed as a sofic affine invariant set
through the  graph $G=(V_F,E'_F)$. 
A graph
has a \textit{right-resolving} labelling if no two edges emanating from the same vertex have the same label. 
This is explicit in our setting. 
The properties (I)-(III) in the previous section already guarantee that the graph $G$  is right-resolving. That is, $G$ has already a right-resolving labelling 
$L=id: E'_F \rightarrow D'$.

Besides the adjacency matrix $B$, we now define $n-1$ more matrices. 
Denote the
points of $\mathbb{R}^n$ by $(x_1,x_2,...,x_{n})^t$.
 For $p=1,2,...,n-1$, let
$\pi_p(F)$ be the projection of $F$ onto $\mathbb{R}^p$; i.e, the set of points $(x_1,x_2,...,x_{p})^t$  with $(x_1,x_2,...,x_{n})^t\in F$. 
Let $B_p$ denote the adjacency matrix of the right-resolving graph  for $\pi_p(F)$.
Note that the logarithms of the spectral radii of $B_p $
correspond to the topological entropies of the maps $$(x_1,x_2,...,x_{p})^t \rightarrow ( m_1x_1, m_2x_2,..., m_px_{p})^t \ {\rm mod \ 1} $$ on the compact sets $\pi_p(F)$, $p=1,2,...,n-1$. Set $B_n=B$. Then the following are some extensions of dimension formulas of McMullen \cite{M}, and Kenyon $\&$ Peres \cite {KP}.

\begin{prop}\label{graph-dir_general3} Assume that $F(T,A)$ is an integral self-affine set and $T\in M_n({\Bbb{Z}})$ is as in (\ref{diagonal_general}). 
Let $A\subset {\Bbb{Z}}^n$ be arbitrary.
Suppose that the adjacency matrix $B$ and the matrices $B_p$  ($p=1,2,...,n$) are defined  as above. 
If $B$ is a primitive matrix, then 
\begin{equation}\label{KenyonPeresBox2}
dim_B F(T,A) =\sum_{p=1}^n \log_{m_p} \left( \frac{\lambda_p}{\lambda_{p-1}} \right),
\end{equation}
where $\lambda_0=1$ and the $\lambda_p$ are the spectral radii of the matrices $B_p$.
\end{prop}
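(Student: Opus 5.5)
We reduce the claim to McMullen–Kenyon–Peres counting on the graph-directed system of Proposition \ref{fractal2}. By part (iii) of that proposition together with $\Gamma_i=(T-I)^{-1}a+c_0\Gamma'_i$, and since $F$ is a finite union of affine copies of $\Gamma_1,\dots,\Gamma_l$, it suffices to compute the box dimension of $F'=\Gamma'_1\cup\dots\cup\Gamma'_l$. (Box dimension is stable under finite unions and bi-Lipschitz maps.) All the $\Gamma'_i$ share the primitive graph $G=(V_F,E'_F)$, so it is enough to count approximate cubes meeting a single $\Gamma'_i$.

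For the counting we use approximate cubes. Fix $\ell$ and define $\ell_1=\ell$ and $\ell_{p}=\lfloor \ell\log_{m_p}m_1\rfloor$, so that $\ell_n\le\cdots\le\ell_1$. An approximate cube of level $\ell$ is a set of points whose $p$-th coordinate digits (in base $m_p$) are fixed for the first $\ell_p$ positions. Its side lengths are all comparable to $m_1^{-\ell}$, hence so is its diameter. Therefore $N_{m_1^{-\ell}}(\Gamma'_i)$ is comparable, up to a multiplicative constant, to the number $Q(\ell)$ of approximate cubes of level $\ell$ meeting $\Gamma'_i$. Here we use the graph OSC from Proposition \ref{fractal2}(i), and the fact that $D'$ is a product of complete residue systems, so distinct digit strings give cubes with disjoint interiors.

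The main step is to count $Q(\ell)$. Because $G$ is right-resolving and labelled by $D'=\prod_p\{0,\dots,m_p-1\}$, an approximate cube is determined by a label word of length $\ell_n$ in the full labels $d'\in D'$. For $\ell_{p}<j\le\ell_{p-1}$, only the first $p-1$ coordinates of the $j$-th label matter, so these positions are words in the projected graph of $\pi_{p-1}(F)$. Thus $Q(\ell)$ counts paths whose first $\ell_n$ steps lie in $G$ (adjacency $B_n$), the next $\ell_{n-1}-\ell_n$ steps lie in the graph for $\pi_{n-1}$ (adjacency $B_{n-1}$), and so on down to $B_1$. The subtle point is compatibility: a path projected to a lower coordinate graph must still extend consistently. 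We would handle this with right-resolving presentations of the sofic factors $\pi_p$ and the standard fact that the number of words of length $L$ in an irreducible sofic shift is $\asymp\lambda_p^L$. That gives
\[
Q(\ell)\asymp \lambda_n^{\ell_n}\lambda_{n-1}^{\ell_{n-1}-\ell_n}\cdots\lambda_1^{\ell_1-\ell_2}
\]
up to constants independent of $\ell$. We expect this to be the main obstacle. Primitivity of $B$ is what gives uniform constants (Perron–Frobenius bounds for all sufficiently long paths), and it passes to the factor graphs. Where a factor graph is only irreducible, we would pass to a primitive power.

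Finally, we take logarithms and divide by $\ell\log m_1$. Since $\ell_p/\ell\to\log m_1/\log m_p$, we get
\[
\lim_{\ell\to\infty}\frac{\log Q(\ell)}{\ell\log m_1}=\sum_{p=1}^n\frac{\log\lambda_p-\log\lambda_{p-1}}{\log m_p}.
\]
This follows by regrouping $\sum_p \ell_p(\log\lambda_p-\log\lambda_{p-1})$ with $\ell_{n+1}=0$ and $\lambda_0=1$. The limit exists along $\delta=m_1^{-\ell}$. Because consecutive scales differ by the bounded factor $m_1$, lower and upper box dimensions coincide, which gives (\ref{KenyonPeresBox2}).
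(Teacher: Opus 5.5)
Your proposal is correct and follows essentially the paper's route. You reduce to the graph-directed sets $\Gamma'_i$ of Proposition~\ref{fractal2}, labelled by the complete residue system $D'$. You then count approximate cubes via label words that use full labels for the first $\ell_n$ steps and the truncated labels $L_p$ on $(\ell_{p+1},\ell_p]$, compare this count with $\prod_p\lambda_p^{\ell_p-\ell_{p+1}}$, and regroup the logarithms.

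The compatibility step you flag is settled in the paper by the sandwich
\[
N(l_n-\textsf{r})\prod_{p=1}^{n-1}N_p\bigl(l_p-l_{p+1}-\textsf{r}_{p+1}\bigr)\le M(l)\le N(l_n)\prod_{p=1}^{n-1}N_p(l_p-l_{p+1}).
\]
The upper bound comes from restricting one path of $G$ to its segments. The lower bound comes from gluing arbitrary segment paths in $G$ with connectors of length at most $\textsf{r}$, which exist because $B^{\textsf{r}}>0$. Since the paper then only uses $\frac1L\log N_p(L)\to\log\lambda_p$, your two-sided estimate with uniform constants is true but not needed.
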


\begin{pf}
We
give a proof
for completeness. It will essentially be a modification of the proof of
Proposition 3.5 in \cite{KP}.
Recall that $F$ can be obtained from the directed graph $G=(V_F,E'_F)$ and a
right-resolving labelling $L: E'_F \rightarrow D'=\prod_{p=1}^n \{0,1,2...,m_p-1\}$. By Proposition \ref{fractal2}-(ii), we may assume that every point of $F$ has a series expansion governed by the graph $G$ and the digits of the expansion are in 
$L(E'_F)\subseteq D'$. More explicitly, we may use $F'$ in Proposition \ref{fractal2}-(iii) instead of $F.$
As in Definition \ref{piece}-(ii),(iii), for a given positive integer $l=l_1$, inductively define the integers $$l_{p+1}=\lfloor l_p\log_{m_{p+1}} m_p \rfloor,$$ where $p=1,2,..,n-1$ and
let $P_l(F')$ denote a level-$l$ piece obtained by $G$.  More specifically, $p$-th block of  $P_l(F')$ will be in the form of
\begin{equation}\label{piece_form2}
 (P_l(F'))_{p}=\sum_{i=1}^{l_{p}} T_{p}^{-i} (d'_{j_i})_{p}  + T_{p}^{-l_{p}} (F')_{p},  \quad  \quad 
 \ p=1,2,...,n,
\end{equation}
as in  (\ref{piece_form}), but the digits $d'_{j_i}$ are in $L(E'_F)\subseteq D'$. 

According to Lemma \ref{box_piece2}, counting mesh cubes intersecting $F'$ or pieces of $F'$ makes no difference in the computation of the box dimension. The situation is the same for sofic integral affine-invariant   sets (i.e. $T$ and labels or digits are integer matrices in our context). Since $T$ is a diagonal matrix,  $P_l(F')$ is an approximate n-cube of diameter $\approx m_1^{-l}$. Therefore, we can work with the pieces $P_l(F')$ obtained from the graph $G$. 

Now we describe a piece counting process with respect to the graph $G$ and  give lower and upper bounds for the number of pieces.   What we will do is basically to count the digits $(d'_{j_i})_{p}$ (or their indices)  in the expansions of $(P_l(F'))_{p}$, which identify the piece $P_l(F')$.
For convenience, we  use the coordinate functions $ \mathrm{proj}_{p} :  E'_F \rightarrow \{0,1,2...,m_p-1\}$ ($p=1,2,..,n$ here) given by
$$(\mathrm{proj}_{1}(e),...,\mathrm{proj}_{n}(e))^t:=L(e).$$  \textrm{Let} \ \ \ \ \ \ \ \ \ \ \ \   $$L_p:=(\mathrm{proj}_{1},...,\mathrm{proj}_{p})^t $$
so that $L=L_n$. Thus $L_p=(\mathrm{proj}_{1},...,\mathrm{proj}_{p})^t $ is the projection of the range of the labelling function $L$ to the first $p$ coordinates. 
Remember that $l_1\geq l_2\geq \cdots \geq l_n$ by the definition of $l_p.$
Then, to every path $e_1, e_2,..., e_l$ in $G$, we attach the sequence $L(e_1),...,L(e_l)$, 
and let
$N(l)$ denote the number of such sequences.
Similarly, let $N_{p}(l)$ denote the number of distinct sequences of the form 
$$L_{p}(e_1),...,L_{p}(e_l)\in \mathbb{Z}^p$$
and $M(l)$ denote the number of distinct sequences of the form
\begin{align*}
     & L_n(e_1),L_n(e_2),...,L_n(e_{l_n}),\\
     & L_{n-1}(e_{_{l_n+1}}),...,L_{{n-1}}(e_{l_{n-1}}),\\
     & L_{n-2}(e_{_{l_{n-1}+1}}),...,L_{n-2}(e_{l_{n-2}}),\\
    \noalign{\centering$\vdots$}
     & L_{1}(e_{_{l_{2}+1}}),...,L_{1}(e_{l_{1}})=L_{1}(e_{l})
\end{align*}
respectively. Thus,
  $M(l)$ is the number of the approximate n-cubes $P_l(F')$ 
covering $F'$.
Let $h_{top}$ denote the topological entropy.
Then, as explained in \cite{KP},
\begin{equation}\label{top_entropy}
 h_{top}(\pi_p(F')) = \underset{l\rightarrow \infty}{\lim} \frac{\log N_{p}(l)}{l}
= \log \lambda_p,
\end{equation}
\begin{equation}\label{box_dimension}
dim_B F=dim_B F'= \underset{l\rightarrow \infty}{\lim} \frac{\log M(l)}{l \log m_1} \  
\end{equation}
(if the last limit exists).
Since $B$ is primitive, every power of $B^\textsf{r}$ is a positive integer matrix for some $\textsf{r} > 1$. For large $l=l_1$ (so $l_p$ gets large too), there exists a nonnegative integer $\textsf{r}_{p+1}\leq \textsf{r}$  such that
$l_{p+1}+\textsf{r}_{p+1}$ is a multiple of $\textsf{r}$  for each $p=1,2,...,n-1$ and $l_{p+1}+\textsf{r}_{p+1}< l_{p}$. Here we should perhaps note that the assumption $m_p< m_{p+1}$ (\ref{diagonal_general})  leads to the limit 
$$\lim_{l_p\rightarrow \infty} l_p\left(1-\frac{\log m_p}{\log m_{p+1}}\right)=\lim_{l\rightarrow \infty} l_p\left(1-\frac{\log m_p}{\log m_{p+1}}\right) \leq \lim_{l\rightarrow \infty} 
(l_p -l_{p+1}) \Longrightarrow \lim_{l\rightarrow \infty} 
(l_p -l_{p+1})=\infty.$$  Then one can see that
$$N(l_n- \textsf{r})\prod_{p=1}^{n-1} N_{p}(l_{p}-(l_{p+1}+\textsf{r}_{p+1}))\leq M(l) \leq N(l_n) \prod_{p=1}^{n-1} N_{p}(l_{p}-l_{p+1}).$$
This gives
\begin{equation}\label{inequalities}
\footnotesize{\frac{\log N(l_n- \textsf{r})}{\log m_1^l} \  + \sum_{p=1}^{n-1} \frac{\log N_{p}(l_{p}-l_{p+1}-\textsf{r}_{p+1})}{\log m_1^l} \  \leq \frac{\log M(l)}{\log m_1^l}  \leq \frac{\log N(l_n)}{\log m_1^l} \ +\sum_{p=1}^{n-1} \frac{\log N_{p}(l_{p}-l_{p+1})}{\log m_1^l}.}
\end{equation}
By  the  definition  of   $l_p,$ we have   $l_1=l,  \ m_p^{l_p} \approx m_{p+1}^{l_p+1}$  so  that  $m_1^l \approx m_p^{l_p}$ (see the proof of Lemma \ref{inequality_pieces}).  Now if we use the elementary limit
 $$\lim_{l\rightarrow \infty} \frac{l_p}{ \log m_1^l}=\frac{1}{\log m_p},
$$
 we get
\begin{eqnarray*}\label{telescope} \lim_{l\rightarrow \infty}  \frac{\log N_{p}(l_{p}-l_{p+1}-\textsf{r}_{p+1})}{\log m_1^l} & = & \lim_{l\rightarrow \infty}  \frac{\log N_{p}(l_{p}-l_{p+1})}{\log m_1^l} \\
& = & \lim_{l\rightarrow \infty}\frac{l_{p}-l_{p+1}}{\log m_1^l}\cdot \frac{\log (N_{p}(l_{p}-l_{p+1}))}{l_{p}-l_{p+1}} \\
& = & \left( \frac{1}{\log m_{p}}-\frac{1}{\log m_{p+1}} \right) \log(\lambda_p)
\end{eqnarray*}
by (\ref{top_entropy}).
Similarly, $$\lim_{l\rightarrow \infty}  \frac{\log N(l_n- \textsf{r})}{\log m_1^l}=\lim_{l\rightarrow \infty}  \frac{\log N(l_n)}{\log m_1^l}=\lim_{l\rightarrow \infty}\frac{l_{n}}{\log m_1^l}\cdot \frac{\log N(l_n)}{l_{n}}=\frac{1}{\log m_{n}}\log(\lambda_n).$$
Then \footnotesize{\begin{eqnarray*}
       \log_{m_n} (\lambda_n)+ \sum_{p=1}^{n-1} \left( \frac{1}{\log m_{p}}-\frac{1}{\log m_{p+1}} \right) \log(\lambda_p) 
        &=& \log_{m_n} (\lambda_n)+ \left( \frac{1}{\log m_{n-1}}-\frac{1}{\log m_{n}} \right) \log(\lambda_{n-1})+ \\
        & &  \left( \frac{1}{\log m_{n-2}}-\frac{1}{\log m_{n-1}} \right) \log(\lambda_{n-2}) +\cdots  \\
        & &  \left( \frac{1}{\log m_{2}}-\frac{1}{\log m_{3}} \right) \log(\lambda_{2}) + \left( \frac{1}{\log m_{1}}-\frac{1}{\log m_{2}} \right) \log(\lambda_{1}) \\
        &= & \frac{1}{\log m_{n}}(\log(\lambda_n)-\log(\lambda_{n-1}))+\frac{1}{\log m_{n-1}}(\log(\lambda_{n-1})-\log(\lambda_{n-2}))   \\
        &  &  +\cdots \frac{1}{\log m_{2}}(\log(\lambda_{2})-\log(\lambda_{1}))  + \frac{1}{\log m_{1}}(\log(\lambda_{1})-\log 1).     
     \end{eqnarray*}}   \normalsize
Therefore, by (\ref{box_dimension}) and (\ref{inequalities}),
$$\sum_{p=1}^n \log_{m_p} \left( \frac{\lambda_p}{\lambda_{p-1}} \right)=  \log_{m_n} (\lambda_n)+ \sum_{p=1}^{n-1} \left( \frac{1}{\log m_{p}}-\frac{1}{\log m_{p+1}} \right) \log(\lambda_p) \leq dim_B F \leq \sum_{p=1}^n \log_{m_p} \left( \frac{\lambda_p}{\lambda_{p-1}} \right).
$$
\end{pf}

\begin{remark}\label{fixed modulus} {\rm  More generally, we can replace $F(T,A)$ by a $T$-invariant sofic set (see \cite{KP} for the definition) in Proposition \ref{graph-dir_general3} and $T$   by an expanding block diagonal integer matrix with each diagonal block has eigenvalues of fixed modulus like $T_k$. 
For $p\in \{1,2,...,s\}$, let $T_{k,p}$ be a diagonal block of
$T_k$ similar to (\ref{Jordan1}) and the corresponding modulus $m_p$ of its eigenvalue (with algebraic multiplicity $n_p$) be an integer. Then we use the digit set $D_{k,p}=\{0,1,2...,m_p-1\}^{n_p}$ for $T_{k,p}$.  Otherwise, $T_{k,p}$ will be   similar to (\ref{Jordan2}) with
$C_p=\begin{bmatrix}
  a & -b\\
b & a
\end{bmatrix}$ and we may use the digit sets from  \cite[Proposition 3.2]{K1} or \cite[Theorem 1.1]{K} . More precisely, $D_{k,p}=(P \cap {\Bbb{Z}}^2)^{n_p}$ will correspond to $T_{k,p}$, where $P$ is the half-open square spanned by
$v_1= \tiny \begin{bmatrix}
  a \\
b
\end{bmatrix}$
and
$v_2= \tiny \begin{bmatrix}
  b \\
-a
\end{bmatrix}$; that is $P=\{t_1v_1+t_2v_2 : 0 \leq t_1,t_2 <1 \}$. So the Cartesian product $\prod_{p=1}^s D_{k,p}$ will replace the label set $\prod_{p=1}^n \{0,1,2...,m_p-1\}$ in the previous proof and the proof will essentially remain the same. 

In the setup of this paper, we also mention that we may always assume $1<r_1<r_2<...<r_s$  by considering the equality $F=F(T^2,A+TA)$ so that 
we will get $2\leq m_1<m_2<...<m_s$ for $T_k$ with large $k$. That is the case in  (\ref{diagonal_general}). In this way, the proofs in this section will apply to $T_k$.} $\hfill\Box$
\end{remark}

Let $s_n=q=\#A$, $s_0=1$ and $s_p$ be the number of distinct $(x_1,x_2,...,x_{p})^t\in \Bbb{Z}^p$ with $a=(x_1,x_2,...,x_{n})^t\in A.$
Obviously, the corresponding result for the Sierpi$\rm{\acute{n}}$ski sponges \cite{KP1} can be retrieved from Proposition \ref{graph-dir_general3} because in this
case, $B>0$ and $A$ is a subset of the complete residue system $\prod_{p=1}^n \{0,1,2...,m_p-1\}$ so that the primitivity and the right-resolving graph assumptions hold. That is, we readily have the following corollary.

\begin{coro}\label{graph-dir_general4} (cf. \cite[Proposition 1.3]{KP1} ) Assume that $F(T,A)$ is an integral self-affine set and $T\in M_n({\Bbb{Z}})$ is as in (\ref{diagonal_general}). Let $A$ be a subset of the complete residue system $\prod_{p=1}^n \{0,1,2...,m_p-1\}$ for $T$. Then
$$
dim_B F(T,A) =\sum_{p=1}^n \log_{m_p} \left( \frac{s_p}{s_{p-1}} \right).
$$
\end{coro}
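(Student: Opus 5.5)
The formula follows from Proposition \ref{graph-dir_general3}, which gives $dim_B F(T,A)=\sum_{p=1}^n \log_{m_p}(\lambda_p/\lambda_{p-1})$ whenever the adjacency matrix $B$ of the graph $G=(V_F,E'_F)$ is primitive, with $\lambda_p$ the spectral radius of the adjacency matrix $B_p$ of the right-resolving graph for $\pi_p(F)$. So I will check the hypotheses of that proposition in the case $A\subseteq D'=\prod_{p=1}^n\{0,1,\dots,m_p-1\}$, and then identify $\lambda_p=s_p$.

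First, since $A$ is contained in the complete residue system $D'$, distinct digits are incongruent mod $T$. Hence $F(T,A)$ is a Sierpi\'nski sponge in the sense of \cite{KP1}. Its pieces $T^{-1}(F+a)$, $a\in A$, sit in distinct cells of the grid $T^{-1}([0,1]^n+D')$ and satisfy the OSC with $U=(0,1)^n$. In this situation I will take the auxiliary tile to be the unit cube, i.e.\ $\Gamma'=F(T,D')=[0,1]^n$. The coding of $F$ is then the full shift on $A$, and one can use the one-vertex graph whose loops are labelled by the elements of $A$. This graph is trivially right-resolving, because the labels are distinct elements of $D'$. Its adjacency matrix is $B=[q]>0$, which is primitive. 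If one insists on the graph $(V_F,E'_F)$ of Proposition \ref{fractal2}, the collapse to essentially the vertex $v_0$ with loops labelled by $A$ has to be argued. I expect this to be the only delicate point: overlaps of $f_{\mathbf j}(\Gamma)$ with neighbouring mesh cubes at boundaries create extra vertices. I would sidestep the issue by noting, as in Remark \ref{fixed modulus}, that the proof of Proposition \ref{graph-dir_general3} works for any right-resolving sofic presentation, and then use the one-vertex presentation directly.

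Next I compute $\lambda_p$. The projection $\pi_p(F)$ is the self-affine set generated by $\mathrm{diag}(m_1,\dots,m_p)$ and the projected digit set $\pi_p(A)\subseteq\prod_{i\le p}\{0,\dots,m_i-1\}$, which has $s_p$ elements. The corresponding right-resolving graph again has one vertex and $s_p$ distinct labelled loops, so $B_p=[s_p]$ and $\lambda_p=s_p$. Equivalently, in the proof of Proposition \ref{graph-dir_general3} one has $N_p(l)=s_p^l$ exactly, so the entropy in (\ref{top_entropy}) is $\log s_p$. For $p=n$ this gives $\lambda_n=q=s_n$, and by convention $\lambda_0=s_0=1$.

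Substituting $\lambda_p=s_p$ into (\ref{KenyonPeresBox2}) yields $dim_B F(T,A)=\sum_{p=1}^n\log_{m_p}(s_p/s_{p-1})$. This agrees with \cite[Proposition 1.3]{KP1}, which serves as a consistency check.
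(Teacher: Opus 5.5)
Your proposal is correct and follows the paper's own route: the paper likewise just applies Proposition~\ref{graph-dir_general3}, asserting that primitivity ($B>0$) and the right-resolving property hold when $A\subseteq\prod_{p=1}^n\{0,1,\dots,m_p-1\}$, and it leaves $\lambda_p=s_p$ implicit. Your one-vertex presentation (justified via Remark~\ref{fixed modulus}) and the exact count $N_p(l)=s_p^l$ make precise what the paper takes for granted. They also sidestep the extra boundary vertices that the graph $(V_F,E'_F)$ can carry, which make the paper's bare claim $B>0$ less clear-cut than it suggests.
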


\bigskip

Let
$F_k=F(T_k,D_k)$ be the perturbed  fractals as before. Let $B=[b_{ij}]:=[\#E_{i, j}]$, $1\leq i,j\leq m$, be the \emph{adjacency}
matrix of the directed graph $(V_{F_k},E_{F_k})$ described in Section \ref{Decon}.

\begin{prop}\label{Existence}
$dim_B (F_k)$ exists.
\end{prop}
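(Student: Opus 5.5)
The plan is to reduce Proposition \ref{Existence} to Proposition \ref{graph-dir_general3}, extended as in Remark \ref{fixed modulus} to block diagonal matrices whose blocks have eigenvalues of a fixed integer modulus. By Remark \ref{fixed modulus}, after passing to $F_k=F(T_k^2,D_k+T_kD_k)$ if necessary, we may assume $2\leq m_1<m_2<\cdots<m_s$ for the moduli attached to the blocks $T_{k,p}$. The digit sets $D_{k,p}$ described there are complete residue systems for each block, so the auxiliary tile $\Gamma=F(T_k,D)$ with $D=a+c_0\prod_p D_{k,p}$ is available. Proposition \ref{fractal2} then expresses $F_k$ (up to the affine copies $\Gamma'_i$, which do not change box dimension) as a sofic set governed by a finite right-resolving graph $G=(V_{F_k},E'_{F_k})$ with adjacency matrix $B$.

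The formula in Proposition \ref{graph-dir_general3} requires $B$ to be primitive. This is the main obstacle, since in general $B$ is only a nonnegative matrix. I would therefore decompose $G$ into its strongly connected components and proceed in three steps.

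First, for each irreducible component $C$ with period $d_C$, replace $T_k$ by $T_k^{d_C}$ and the edges by $d_C$-paths. Each cyclic class then becomes primitive, and the counting argument of Proposition \ref{graph-dir_general3} applies to it. The counts $N_p(l)$ and $M(l)$ for a component satisfy the same sandwich inequality (\ref{inequalities}), so the limit in (\ref{box_dimension}) exists for that component and equals $\sum_p \log_{m_p}(\lambda_p^C/\lambda_{p-1}^C)$.

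Second, handle the full graph. Every path in $G$ passes through at most $\#V$ components, and transient edges contribute only a bounded number of steps. The number $M(l)$ of level-$l$ pieces is therefore bounded below by the count from a single component and above by a polynomial in $l$ times a product of such counts, one over each chain of components. The polynomial factor disappears in the limit, as in the computation after (\ref{number_of_cubes2}).

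Third, identify the limit. The delicate point is that along a chain the projection entropies $\lambda_p$ can be realized by different components at different coordinate levels $p$. Here I would follow Kenyon--Peres \cite{KP}: take the maximum over chains of the telescoping sum, and check that the upper and lower bounds agree. This works because each coordinate block's count $N_p(l_p-l_{p+1})$ grows exponentially with a well-defined rate, namely the spectral radius of the projected graph restricted to the chain.

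With the limit of $\log M(l)/(l\log m_1)$ established, Lemma \ref{box_piece2} (whose proof uses only Lemmas \ref{bound}, \ref{large_pieces} and \ref{existenc_of_piece}, not Proposition \ref{Existence}) converts piece counts into mesh-cube counts. This gives $\underline{dim}_B F_k=\overline{dim}_B F_k$, which is the assertion of Proposition \ref{Existence}.
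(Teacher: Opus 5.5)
Your route is correct but differs from the paper's at the decisive step. The paper puts $B$ in Frobenius normal form and passes to $B^p$ ($p$ the l.c.m.\ of the periods), so that the diagonal blocks $K_1,\dots,K_t$ are primitive. It then asserts that $F_k$ is a finite union of affine copies of the attractors $\Gamma_1,\dots,\Gamma_t$ of these primitive subgraphs. It concludes from $\overline{dim}_B F_k=\max_i\overline{dim}_B\Gamma_i=\overline{dim}_B\Gamma_{t_0}=\underline{dim}_B\Gamma_{t_0}\leq\underline{dim}_B F_k$, where the middle equality is the primitive case (Proposition \ref{graph-dir_general3} with Remark \ref{fixed modulus}).

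That is much shorter than your chain analysis, but the finite-union claim has two omissions. First, it drops the codings that leave a strongly connected component for a downstream one. The set at an upstream vertex also contains images of downstream sets under every finite path inside the upstream component, which is a countable rather than finite family. Second, it excludes transient vertices by asserting that no $C_i$ is a zero block.

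In the self-affine setting those mixed codings matter, for exactly the reason you isolate in your third step: the full-label block of a level-$l$ piece can be carried by one component and the projected blocks by a later one. For instance, take $T=\mathrm{diag}(m_1,m_2)$ with $m_1<m_2$. Let an upstream vertex have loops labelled $(0,j)$ and edges labelled $(i,0)$, $i\geq 1$, into a downstream vertex with loops labelled $(i,0)$. Each component alone gives a segment. Yet $M(l)\geq c\,m_2^{l_2}m_1^{l-l_2}$, and the set at the upstream vertex has box dimension $2-\log m_1/\log m_2$. Your argument covers this and even produces a formula (a maximum over chains of telescoping sums), at the price of the optimization over switching times.

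Your justification in the third step is in fact already enough for existence. Since $l_p/l\to\log m_1/\log m_p$ and every per-component count has a well-defined exponential rate, $\frac{1}{l}\log M(l)$ is, up to $o(1)$, the maximum over finitely many chains of a fixed continuous function of the rescaled switching times. So the limit exists. The piecewise-linearity argument that puts the maximizers at the $l_p$ is needed only to identify the limit with the telescoping sum.
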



\begin{pf} It is well-known \cite[pp.74-76]{G} that
there is a permutation matrix $P$ so that $P^{-1}BP$ is in the (Frobenius) normal form
$$
P^{-1}BP=\left[
\begin{array}{cccc}
  C_1 &\cdots    &  & \ast \\
   &  C_2  &  &\\
  &  &  \ddots  & \vdots \\
  {0} &    &  & C_l \\
\end{array}
\right],
$$
where each diagonal block $C_i $ ($i=1,...,l$) is either an irreducible matrix or the $1\times 1$ zero matrix.  Remember from the previous subsection that we discard
the vertices with no outgoing edges. Thus we don't have any zero matrices among the $C_i$.
So $C_1,..., C_l$ are, in fact, the irreducible components of $B$ in our context.
Furthermore, from the basic Perron-Frobenius theory of nonnegative matrices \cite[Corollary 2, p.82]{G}, \cite[Theorem 1.4, pp.21-22]{S},  we know that for each $C_i$, there is a permutation matrix $P_i$ such that $P_i^{-1}C_i^{p_i}P_i$ is in the block diagonal form
$$
P_i^{-1}C_i^{p_i}P_i=\left[
\begin{array}{cccc}
  M_1 &\cdots    &  & 0 \\
   &  M_2  &  &\\
  &  &  \ddots  & \vdots \\
  {0} &    &  & M_{l_i} \\
\end{array}
\right]=diag ( M_1,M_2,...,M_{l_i}),
$$
where $p_i$ is the period of $C_i$ and each diagonal block 
is a primitive matrix.
Now define the permutation matrix $Q=P_1\oplus P_2 \oplus \cdots \oplus P_l$. Let $p$ be the least common multiple of the periods $p_i$.
Then $(PQ)^{-1}B^p(PQ)$ will be in the form
$$
(PQ)^{-1}B^p(PQ)=\left[
\begin{array}{cccc}
  K_1 &\cdots    &  & \ast \\
   &  K_2  &  &\\
  &  &  \ddots  & \vdots \\
  {0} &    &  & K_t \\
\end{array}
\right],
$$
so that $K_1, K_2,...,K_t $ are primitive matrices.

Let
$\Gamma_{1},...,\Gamma_{t}$ be
 the graph-directed sets corresponding the subgraphs of $(V_{F_k},E_{F_k})$ specified by the primitive matrices $K_{1},...,K_{t}$.
Thus
$F_k$ is a
union of some affine copies of
$\Gamma_{1},...,\Gamma_{t}$ as in Proposition \ref{fractal2}-(iii).
Then it follows from the finite stability of $\overline{dim}_B$  that
$$\overline{dim}_B(F_k)=\max \{ \overline{dim}_B(\Gamma_{1}),...,\overline{dim}_B(\Gamma_{t}) \}.$$

Therefore, $\overline{dim}_B(F_k)=\overline{dim}_B(\Gamma_{t_0})$ for
some $t_0\in\{1,...,t\}$.
$K_{t_0}$ is primitive and its graph has a right-resolving labelling by (I)-(III) in Section \ref{Decon}. 
Then $\overline{dim}_B(\Gamma_{t_0})=\underline{dim}_B(\Gamma_{t_0})$ by  Remark \ref{fixed modulus}.
Hence $$\overline{dim}_B(\Gamma_{t_0})=\underline{dim}_B(\Gamma_{t_0})\leq \underline{dim}_B (F_k)\leq \overline{dim}_B(F_k)=\overline{dim}_B(\Gamma_{t_0})$$
implies that $\underline{dim}_B(F_k)=\overline{dim}_B(F_k)=\overline{dim}_B(\Gamma_{t_0}),$ which proves our assertion.
\end{pf}

\begin{remark}
 {\rm Proposition  \ref{Existence} is consistent with previous results in the literature. Namely, the box dimension exists if $T$ is diagonal  and $A$ has some special form (cf. \cite{KP}, \cite{KP1}), \cite{M}),   or
  if $T$  itself is a nontrivial Jordan block, i.e all eigenvalues are equal in modulus. The last assertion follows from Theorem 1.1 and Theorem 1.2 in \cite{HL2}.} $\hfill \Box$
\end{remark}

\subsection{The Hausdorff Dimension of $F_k$}
We point out that, by definition, the Sierpi$\rm{\acute{n}}$ski sponges have special digit sets as in Corollary \ref{graph-dir_general4} while the perturbed fractals
$F_k$ have arbitrary integer digit sets $D_k$. For calculation, one  also needs the dimension formulas from \cite{K21} mentioned \underline{for the more general  matrices $T_k$ and digit sets $D_k$ generating}
\noindent
\underline{the perturbed fractals $F_k$}.

Let $B=[b_{ij}]=[\#E'_{i, j}]$,  $1\leq i,j\leq m$,
be the \emph{adjacency} matrix of the directed graph $(V_F,E_F)$ and $D$, $[\#E'_{i, j}]$ be as in the Section \ref{Decon}.
Below we use the superscript $t$ for the transpose of a matrix.
In the following, we implicitly decompose $B$ into
 $m_1m_2\cdots m_{n-1}$ matrices whose sum is $B$.  Let $\mathbf{i}_r=(i_1,...,i_r)^t \in \prod_{i=1}^r \{0,1,2...,m_i-1\}$, $1\leq r \leq n $. Define the matrices  $B(\mathbf{i}_n)$ depending on $\mathbf{i}_n$ as follows:  The $ij$-th entry of $B(\mathbf{i}_n)$ is the number of the digits $d_s=(x_1,x_2,...,x_n)^t\in  E'_{i, j}\subseteq D=\prod_{i=1}^n \{0,1,2...,m_i-1\}$ with $x_s=i_s$,  \  \ $1\leq s\leq n$.
Notice that some of the $B(\mathbf{i}_n)$ defined in this way may be zero matrices.
For $1\leq r \leq n $, define the partition functions $Z^l_{r-1}$ depending on $\mathbf{i}_{r-1}$ as follows:

First let $r=n.$ Then
$$Z^l_{n-1}(\mathbf{i}_{n-1}):=\mathlarger{\sum}_{i_n=0}^{m_n-1} \left|\left| \prod_{k=1}^{l} B_{i_k}(\mathbf{i}_{n-1},i_n) \right|\right|, $$
where $B_{i_k}(\mathbf{i}_n)$ is a matrix in the form of $B(\mathbf{i}_n)$ described above and the norm $|| \cdot ||$  is the 
sum of the absolute values of the elements.
More generally, for $n-1\geq r\geq 2$
$$Z^l_{r-1}(\mathbf{i}_{r-1}):=\sum_{i_r=0}^{m_r-1} \left(Z^l_{r}(\mathbf{i}_{r-1},i_r) \right)^{ \frac{\log m_{r}}{\log m_{r+1}} }$$
and for
$r=1$, $Z^l_{0}:=\sum_{i_1=0}^{m_1-1} \left(Z^l_{1}(\mathbf{i}_{1}=i_1)\right)^{ \frac{\log m_{1}}{\log m_{2}}}$  only depends on $l$.
Define a sequence $ \{ u_l \}$ by
\begin{equation}\label{u_l}
u_l = \frac{1}{l}\log_{m_1} Z^l_{0}.
\end{equation}

\begin{prop}\label{graph-dir_general} \cite{K21} Assume that $F(T,A)$ is an integral self-affine set and $T\in M_n({\Bbb{Z}})$ is as in (\ref{diagonal_general}). Then

(i)
\begin{equation}\label{KenyonPeres} {dim}_H F(T,A) = \lim_{l\rightarrow \infty}u_l\end{equation}

(ii) $\left\{  u_{l 2^m} \right\}_{m=0}^{\infty}$ is non-increasing for any fixed $l$. That is,
 $  u_{l 2^m}\downarrow {dim}_H F(T,A) $ 
as $m\rightarrow\infty$.

\noindent More generally, $T$ can be replaced by $T_k$. 
\end{prop}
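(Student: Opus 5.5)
The plan is to reduce to the primitive/irreducible case through the graph-directed deconstruction of Section \ref{Decon}, and then run a McMullen--Kenyon--Peres type argument with the partition functions $Z^l_r$.

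First I would pass from $F$ to $F'=\Gamma'_1\cup\cdots\cup\Gamma'_l$ using Proposition \ref{fractal2}. Here the graph-directed IFS satisfies the graph OSC, the labels lie in the complete residue system $D'=\prod_p\{0,\dots,m_p-1\}$, and ${dim}_H F'={dim}_H F$. Because of the OSC and the right-resolving labelling, the level-$l$ pieces $P_l(F')$ (the approximate $n$-cubes of diameter $\approx m_1^{-l}$) are essentially disjoint. Consequently they can be counted exactly by products of the matrices $B(\mathbf{i}_n)$ along paths.

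\emph{Upper bound.} I would cover $F'$ by the approximate cubes of level $l$. The quantity $\|\prod_{k=1}^l B_{i_k}(\mathbf{i}_{n-1},i_n)\|$ counts the paths that produce a given coordinate string. Applying the nested exponents $\log m_r/\log m_{r+1}$ successively reproduces the Kenyon--Peres recursion for a Bernoulli-type (here, Markov-type) weighting, and this bounds the $\alpha$-dimensional sum by $Z^l_0\, m_1^{-l\alpha}$ up to constants. A standard argument then shows that $\mathcal{H}^\alpha(F')<\infty$ whenever $\alpha>\liminf u_l$.

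\emph{Lower bound.} I would construct a Markov measure on the path space of the graph $(V_F,E'_F)$, taking the transition weights proportional to the recursive powers $(Z^l_r)^{\log m_r/\log m_{r+1}}$ (first on blocks of length $l$). Its image on $F'$ has local dimension $\ge u_l-o(1)$ on approximate cubes, by the Ledrappier--Young/Kenyon--Peres computation. The mass distribution principle, applied with Lemma \ref{piece_measure} to pass between approximate cubes and balls, then gives ${dim}_H F'\ge u_l-\varepsilon$ for each $l$, and hence ${dim}_H F'\ge\limsup u_l$. Together with the upper bound this proves (i).

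\emph{Monotonicity (ii).} I would show that $Z^{2l}_0\le (Z^l_0)^2$. This follows from submultiplicativity of the norm, $\|XY\|\le\|X\|\|Y\|$, applied inside the innermost sum, and then propagated outward through the concave powers $t\mapsto t^{\theta}$ with $\theta\le1$ using $(\sum a_ib_j)^\theta\le(\sum a_i^\theta)(\sum b_j^\theta)$. Hence $u_{2l}\le u_l$, and iterating gives $u_{l2^m}\downarrow$, with the limit ${dim}_H F$ by (i).

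The case of $T_k$ follows from Remark \ref{fixed modulus}: each diagonal block $T_{k,p}$ has eigenvalues of a single integer modulus and comes with the stated digit sets. So the same argument goes through with the coordinate blocks $p=1,\dots,s$ playing the role of the coordinates, provided the blocks are first reduced (for instance by passing to $F(T^2,A+TA)$) so that $m_1<\dots<m_s$.

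The main obstacle is the lower bound when $B$ is not primitive. I would handle it as in Proposition \ref{Existence}: pass to $B^p$ in Frobenius normal form, use finite stability of ${dim}_H$ to reduce to one primitive component, and check that $u_l$ computed for $B$ agrees asymptotically with the maximum over the components.
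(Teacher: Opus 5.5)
\textbf{The upper bound fails as written.} You cover $F'$ by the approximate cubes of a single level $l$. Any cover of $F'$ by sets whose diameters are all comparable to $\rho\approx m_1^{-l}$ has $\alpha$-sum at least a constant times $N_\rho(F')\rho^{\alpha}$, where $N_\rho(F')$ is the least number of sets of diameter at most $\rho$ covering $F'$. That quantity is $m_1^{l({dim}_B F'-\alpha)+o(l)}$. So a one-level cover can only give the box-dimension bound. Your claim that its $\alpha$-sum is $\lesssim Z^l_0 m_1^{-l\alpha}$ is false whenever ${dim}_H F<{dim}_B F$.

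A concrete counterexample: take $T=\mathrm{diag}(2,3)$ and $A=\{(0,0),(1,0),(1,1),(1,2)\}$. Then $Z^l_0=(1+3^{\log 2/\log 3})^l=3^l$, so $u_l=\log_2 3$, while ${dim}_B F=1+\log_3 2$ by Corollary \ref{graph-dir_general4}. For $\alpha$ strictly between these numbers, every single-level cover has $\alpha$-sum tending to infinity. The nested exponents $\log m_r/\log m_{r+1}$ in $Z^l_0$ encode an optimisation over \emph{different} scales in different parts of the set, and no weighting of a one-scale cover reproduces them.

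\textbf{What is missing is a genuinely multi-scale covering.} The paper follows Kenyon--Peres's proof of their Theorem 3.2, with McMullen's carpet formula replaced by the Sierpi\'nski sponge formula (Theorem 1.2 of the paper's reference [KP1]). This gets the multi-scale covering for free by sandwiching.
\begin{itemize}
\item Upper bound: grouping each infinite path into blocks of length $l$ gives $F'\subseteq F(T^l,W_l)$, where $W_l\subseteq\prod_p\{0,\dots,m_p^l-1\}$ is the set of label words of length $l$. This is a sponge for $T^l$ with the same exponents $\log m_r/\log m_{r+1}$. The number of words with prescribed coordinate strings is at most the corresponding path count, and the nested expression is monotone in these counts. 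So the sponge formula gives ${dim}_H F'\le u_l$ for \emph{every} $l$.
\item Alternative upper bound: run McMullen's Billingsley-type argument, choosing for each point infinitely many point-dependent scales at which $\mu(Q)\gtrsim diam(Q)^{\alpha}$.
\item Lower bound: cycles of length $l$ at a fixed vertex give an inscribed sponge. Primitivity lets you compare their $(v,v)$-counts with the norms, giving ${dim}_H F'\ge u_l-O(1/l)$.
\end{itemize}
Your block-Markov measure with mass distribution is this sponge lower bound done by hand, so that half is fine once the primitivity constants are tracked. Your Frobenius reduction is needed only there, and is fine.

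Your proof of (ii) is correct and is the paper's argument. Length-$2l$ coordinate strings split into pairs of length-$l$ strings, the entrywise $\ell^1$ norm is submultiplicative, and $t\mapsto t^{\theta}$ is monotone and multiplicative. Hence $Z^{2l}_0\le (Z^l_0)^2$.
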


\bigskip
\begin{remark} \rm{
For $n=2$, the proof of (i) was given by Kenyon and Peres \cite[Theorem 3.2]{KP}. The main ingredient there is dimension formula for Sierpi$\rm{\acute{n}}$ski carpets \cite[Theorem, p.1]{M}. That proof works for the general case in Proposition \ref{graph-dir_general}-(i) if  the dimension formula for Sierpi$\rm{\acute{n}}$ski carpets is replaced by the formula for Sierpi$\rm{\acute{n}}$ski sponges \cite[Theorem 1.2]{KP1}.  For Jordan block matrices, Proposition \ref{graph-dir_general}-(i) is consistent with Theorem 1.4 (p. 1143) and Corollary 3.2 (p.1147) in
\cite{HLR}. As detailed in Remark \ref{fixed modulus},   $T$ can be replaced  by $T_k$
(see Lemma 4.6 and the proof of Theorem 1.1 in \cite{KP1}).
Moreover, (ii) follows from the sub-multiplicativity of $|| \prod_{k=1}^{l} B_{i_k}(\mathbf{i}_n) ||$
 in $l$ exactly as in the planar case, see \cite[Proposition 5.10-(i)]{K1} for the case $n=2$.} \hfill $\Box$
\end{remark}

\end{document}